\documentclass[12pt]{article}
\usepackage{amsmath, amsthm, amsfonts, amssymb,mathrsfs}
\usepackage{mathtools} 
\usepackage[utf8]{inputenc}
\usepackage{tikz}
\usepackage{tikz-cd}
\usetikzlibrary{decorations.markings}
\usepackage[all,cmtip,pdf]{xy}
\usepackage{url}
\usepackage{enumitem}
\usepackage[normalem]{ulem}
\newcommand{\itemref}[1]{(\ref{#1})}
\usepackage{comment}
\usepackage{eucal}
\usepackage{CJKutf8}

\theoremstyle{plain}
\newtheorem{thm}{Theorem}[section]
\newtheorem{lemma}[thm]{Lemma}
\newtheorem{prop}[thm]{Proposition}
\newtheorem{cor}[thm]{Corollary}

\theoremstyle{definition}
\newtheorem{defn}[thm]{Definition}

\newtheorem{rem}[thm]{Remark}

\numberwithin{equation}{section}

\advance \topmargin by -\headheight
\advance \topmargin by -\headsep
\evensidemargin \oddsidemargin
\newcommand{\W}{\mathscr{W}}
\newcommand{\mf}{\mathfrak}
\newcommand{\mc}{\mathcal}
\newcommand{\+}{\oplus}
\newcommand{\on}{\operatorname}

\DeclareMathOperator{\ad}{ad}
\DeclareMathOperator{\aut}{Aut}

\DeclareMathOperator{\en}{End}

\DeclareMathOperator{\Id}{Id}
\DeclareMathOperator{\im}{Im}

\DeclareMathOperator{\Jac}{Jac}
\DeclareMathOperator{\lie}{\mathbf{Lie}}
\DeclareMathOperator{\Mat}{Mat}

\DeclareMathOperator{\Pic}{Pic}
\DeclareMathOperator{\res}{Res}

\DeclareMathOperator{\spec}{Spec}
\DeclareMathOperator{\specm}{Specm}

\DeclareMathOperator{\tr}{Tr}

\newcommand{\sll}{\mathfrak{sl}}
\newcommand{\al}{\alpha}
\newcommand{\la}{\lambda}
\newcommand{\La}{\Lambda}

\newcommand{\D}{\Delta}

\newcommand{\C}{\mathbb{C}}

\newcommand{\GG}{\mathbb{G}}

\newcommand{\R}{\mathbb{R}}
\newcommand{\Z}{\mathbb{Z}}

\newcommand{\g}{\mathfrak{g}}
\newcommand{\h}{\mathfrak{h}}
\newcommand{\n}{\mathfrak{n}}

\newcommand{\CA}{\mathcal{A}}
\newcommand{\CC}{\mathcal{C}}
\newcommand{\CD}{\mathcal{D}}
\newcommand{\CE}{\mathcal{E}}
\newcommand{\CF}{\mathcal{F}}

\newcommand{\CH}{\mathcal{H}}
\newcommand{\CL}{\mathcal{L}}

\newcommand{\CN}{\mathcal{N}}
\newcommand{\CP}{\mathcal{P}}

\newcommand{\CV}{\mathcal{V}}

\newcommand{\OO}{\mathcal{O}}

\newcommand{\ZZ}{\mathbb{Z}}
\newcommand{\vac}{\left|0\right\rangle}

\newcommand{\Top}{\on{top}}
\newcommand{\peak}{\on{peak}}
\newcommand{\stab}{\on{stab}}
\renewcommand{\mod}{\text{\textendash}\mathsf{mod}}

\newcommand{\bracsmall}[2]{\left[\begin{smallmatrix} #1 \\ #2 \end{smallmatrix}\right]}

\newcommand{\wtil}[1]{\widetilde#1}
\newcommand{\what}[1]{\widehat#1}

\begin{document}

\begin{center}

{\LARGE\bfseries
Modular invariance of characters\\
of quasi-lisse vertex algebras
\par}

\vspace{1.2em}

{\normalsize
Tomoyuki Arakawa\footnote{Okinawa Institute of Science and Technology (OIST),
\texttt{tomoyuki.arakawa@oist.jp}},
Jethro van Ekeren\footnote{Instituto de Matem\'{a}tica Pura e Aplicada (IMPA), \texttt{jethro@impa.br}}\quad and\quad
Hao Li\footnote{Okinawa Institute of Science and Technology (OIST), \texttt{hao-li@oist.jp}}
\par}

\vspace{1.5em}

\end{center}

\vspace*{5mm}

\noindent
{\small
\textbf{Abstract.} 
We study spaces of conformal blocks associated with line bundles over elliptic curves, with coefficients in a vertex algebra. For vertex algebras satisfying suitable finiteness and semisimplicity conditions, which are met by all admissible affine vertex algebras as well as admissible $W$-algebras associated with nilpotent elements of standard Levi type, we prove the holonomicity of the sheaf of conformal blocks over the moduli space of bundles. Furthermore, we show that the space of flat sections of the associated Jacobi-invariant connection is spanned by trace functions on modules. This result provides a substantial generalization of the celebrated theorem of Yongchang Zhu to quasi-lisse vertex algebras. As a special case, we deduce that for affine vertex algebras at admissible level, the dimension of the space of conformal blocks coincides with the number of admissible weights at that level.
}


\section{Introduction}

Let $(V,\omega)$ be a conformal vertex algebra of central charge $c$. In the seminal work \cite{Z96} Zhu has proved that if $V$ is lisse and rational, then the trace functions $S_W$ on irreducible $V$-modules $W$, defined by
\begin{align}\label{eq:intro-zhu-trace}
S_W(u,\tau)=\tr_W u_0 q^{L_0-c/24},
\end{align}
span the space of genus-one conformal blocks of $V$, and constitute a finite dimensional vector valued modular form on $SL_2(\Z)$. Zhu's theorem is a key part of the mathematical description of conformal field theories and has important applications to the structure theory of vertex algebras.

The purpose of this article is to establish an analogue of Zhu's theorem for a broad class of {\emph{quasi-lisse}} vertex algebras
\cite{Arakawa-Kawasetsu}
 by replacing ordinary conformal blocks on elliptic curves by conformal blocks associated with pairs $(E,\CL)$ consisting of an elliptic curve $E$ together with a line bundle $\CL$ over $E$. The corresponding trace functions depend on both the elliptic moduli parameter $\tau$ and another parameter $\alpha$ corresponding to the line bundle. The trace functions are equivariant under an action of the Jacobi group $SL_2(\Z) \ltimes \Z^2$ extending that of the modular group $SL_2(\Z)$.

Let $h$ be an element of the weight-one space $V_1$ of $V$, and suppose that $h_0$ acts on $V$ semisimply with integer eigenvalues. We refer to such a vector as a current, and to the eigenspace decomposition with respect to $h_0$ as the charge grading of $V$. If $X$ is a smooth complex algebraic curve and $\CL$ a holomorphic line bundle over $X$, one may associate to $(V,\omega,h)$ a twisted chiral algebra over $X$, and a sheaf over the moduli space of pairs $(X, \CL)$ which we refer to as 
\emph{charged conformal blocks} \cite{BD}\cite[Chapter 7]{FBZ}. In the genus-one case we consider the family $T \rightarrow B$ of pairs $(E,\CL)_{(\alpha,\tau)}$ parametrised by $(\alpha,\tau) \in B= \C \times\CH$, where $\CH$ is the complex upper half plane, $E=E_\tau$ is the elliptic curve with modulus $\tau$ and $\CL$ is the holomorphic line bundle of degree zero determined by $\alpha$. See Section \ref{sec:mod.ell} below for details.

To formulate our results we introduce a charged analogue of the quasi-lisse condition. Let $R_V$ denote Zhu's Poisson algebra and let $R_V^h$ be the Poisson algebra obtained as the quotient of $R_V$ by the commutative algebra ideal generated by vectors of nonzero charge. We say that $(V,\omega,h)$ is \emph{stably quasi-lisse} if 
{the Poisson variety $\on{Spec}R_V^h$ has finitely many symplectic leaves}.
This condition is satisfied in many important classes of examples which we discuss below, including 
{all quasi-lisse affine vertex algebras and all quasi-lisse $W$-algebras for an appropriate choice of current $h$.}

Our first result concerns the geometric structure of the sheaf of charged conformal blocks.
\begin{thm}\label{thm:intro.1}
Let $(V,\omega,h)$ be a charged conformal vertex algebra which we assume finitely strongly generated and stably quasi-lisse. Then the sheaf $\CC((E,\CL),V)$ of charged conformal blocks over $B$ carries the structure of a holonomic $\CD$-module with regular singularities on the lattice $N\alpha\in \Z+\Z\tau$, for some positive integer $N$. In particular, away from this lattice $\CC$ is a vector bundle with integrable connection $\nabla$.
\end{thm}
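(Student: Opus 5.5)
We plan to follow the strategy used for the analogous uncharged statement (Zhu, and the quasi-lisse version of Arakawa--Kawasetsu), adapted to the twisted setting. The first step is to make the sheaf $\CC$ concrete. Over $B=\C\times\CH$ the charged conformal blocks at $(\alpha,\tau)$ are identified with linear functionals $\varphi$ on $V$ annihilating a subspace $O_{(\alpha,\tau)}(V)$. This subspace is spanned by twisted residues $\res_z\, Y(a,z)b\, \wp$-type functions, where the twisted Weierstrass-type functions $P_k(z,\alpha,\tau)$ are built from the Jacobi form $\theta_1'(0)\theta_1(z+\alpha)/\theta_1(z)\theta_1(\alpha)$ and the charge of $a$ enters through the twisting. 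A vector $a$ of nonzero charge $m$ contributes functions with quasi-periodicity $e^{2\pi i m\alpha}$. We also need the analogue of the identity $L_{[-2]}$-insertion $=$ derivative in $\tau$, and of the $h_{[-1]}$-insertion $=$ derivative in $\alpha$. With these, $\CC$ is the dual of the $\OO_B$-module $V\otimes\OO_B/O(V)$, and these two identities define a connection $\nabla_\tau,\nabla_\alpha$ on it.

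The second step is finiteness. We filter $V\otimes\OO_B$ by Li's filtration. We then show that the associated graded of $V\otimes \OO_B/O(V)$ is a quotient of $R_V^h\otimes\OO_B$ modulo the Poisson-type ideal generated by Hamiltonian images $\{R_V^h,\cdot\}$, at least for $N\alpha\notin\Z+\Z\tau$. The point is that for $a$ of nonzero charge $m$ and $m\alpha\notin \Z+\Z\tau$, the twisted zero-mode relation lets one solve $a_{[-1]}b\in O(V)$ modulo lower filtration, because the leading coefficient $e^{2\pi i m\alpha}-1$ (or its elliptic analogue) is invertible. Here $N$ is taken to be a bound on the charges of a finite set of strong generators; finite strong generation ensures only finitely many charges occur. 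Since $R_V^h$ has finitely many symplectic leaves, the coinvariants $R_V^h/\{R_V^h,R_V^h\}$ are finite dimensional, by the Etingof--Schedler theorem. Hence the fibres of $\CC$ are finite dimensional away from the lattice, and $\CC$ is coherent there.

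To get the $\CD$-module statement we will mimic Arakawa--Kawasetsu. The key ingredient is a finite set $\{1=u_0,\dots,u_r\}$ spanning $V$ modulo $O(V)$ over $\OO_B[\Delta^{-1}]$, where $\Delta$ is the product of the theta factors vanishing on the lattice. Writing $\nabla(\varphi(u_i))$ in terms of the $\varphi(u_j)$ then yields a first-order system. Its coefficients are quasi-modular/Jacobi functions with poles only along $N\alpha\in\Z+\Z\tau$, so $\CC$ is a vector bundle with integrable connection off the lattice. Integrability follows from $[\nabla_\tau,\nabla_\alpha]=0$, which reduces to commuting the two insertion identities. Holonomicity on all of $B$ follows because the characteristic variety is then contained in the zero section union the conormal bundle of the lattice divisor.

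Regularity will be checked by bounding the growth of coefficients near each divisor component, e.g.\ near $\alpha=0$. There the twisted functions have at most simple poles in $\alpha$, which shows the system is Fuchsian along the divisor. The singularity at $\tau\to i\infty$ is not part of $B$.

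The main obstacle is the second step: comparing $\on{gr}$ of the twisted $O(V)$ with the ideal defining $R_V^h$ together with the Poisson coinvariant relations, uniformly in $\alpha$. We would try first to use the $q$-expansion (degenerating the twisted functions to their leading terms) to show the charged generators are killed up to invertible factors. The hope is that this invertibility holds exactly off the lattice $N\alpha\in\Z+\Z\tau$.
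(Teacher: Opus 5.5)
The genuine gap is your regularity step. Each generator of the coefficient ring, i.e.\ each Laurent coefficient in $t$ of $\psi(t,-c\al,\tau)$ (the twisted Eisenstein series $E_m(y^c,q)$), does have at most a simple pole along the lattice. But the entries of your connection matrix come from iterating the reduction modulo $O(V)$. Each charged step $a_{(-1)}b\equiv-\sum_{m\geq 0}\psi_m\,a_{(m)}b$ contributes another such factor, so the entries are polynomials in these functions and in general have higher-order poles. For instance, the coefficient $\frac{10}{3}Q_2-\frac{4}{3}Q_1^2+\frac{1}{9}E_2$ in \eqref{eq:MLDE.level.-4/3} has a double pole at $\al=0$. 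So ``the twisted functions have at most simple poles'' does not make your first-order system Fuchsian, and in the frame $(\varphi(u_i))$ it generally is not. What is missing is the conformal-weight homogeneity the paper uses. Give $u\in V_\Delta$ degree $\Delta$, a weight-$k$ (twisted) Eisenstein series degree $k$, and $\partial_\al$ degree $1$. The relations and the Ward identity are then homogeneous, so the coefficient of $\varphi(u_j)$ in $\partial_\al\varphi(u_i)$ has degree $1+\Delta(u_i)-\Delta(u_j)$. Since every generator has pole order at most its degree, that coefficient has pole order at most $1+\Delta(u_i)-\Delta(u_j)$ along each lattice component. Regularity then follows in either of two ways. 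You can rescale $\varphi(u_i)$ by $t^{\Delta(u_i)}$, where $t$ is a local equation of the component, which leaves at most simple poles. Alternatively, as the paper does, you can eliminate to a scalar equation whose coefficients satisfy Fuchs' criterion.

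Two smaller points. First, integrability is not just a matter of commuting the two insertions. The curvature consists of the commutator of the two insertion operators plus the term from differentiating, in $\tau$, the $\wp_1$ coefficient of the $\al$-equation. These combine into $\res_w f(w)h[w]u$ with $f=(2\pi i)^2q\partial_q\wp_1-\wp_1\wp_1'$, and this vanishes on blocks only because $f$ is elliptic (the paper identifies it with $\wp_3$).

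Second, the step you single out as the main obstacle is easier than you fear if you stay analytic. The function $\psi(t,-c\al,\tau)$ has residue exactly $1$ at $t=0$, and its Laurent coefficients $\psi_m$ are holomorphic wherever $c\al\notin\Z+\Z\tau$. Hence $\res_t\psi(t,-c\al,\tau)Y(a,t)b=a_{(-1)}b+\sum_{m\geq0}\psi_m\,a_{(m)}b$ reduces $a_{(-1)}b$ to lower weight uniformly. Combined with Etingof--Schedler, this gives a single finite-dimensional $W\subset V^{(0)}$ spanning the coinvariants at every point off the lattice. That route is more direct than the paper's. The paper works with formal coinvariants over the Noetherian ring $R^{(N)}$, which also supplies the degree bookkeeping above and the later Frobenius expansions. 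As a result it gets finiteness only in the convergence domain $0<-N\im(\al)<\im(\tau)$, and then needs the Jacobi group (Lemma~\ref{lem:G.action}) to reach the remaining points. If you fall back on $q$-expansions as you suggest, you will need that transport too.
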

When $h=0$ the charge grading is trivial and the condition of being stably quasi-lisse reduces to the quasi-lisse condition. In this case Theorem~ \ref{thm:intro.1} recovers the finite dimensionality result of \cite{Arakawa-Kawasetsu}.

Our second result identifies charged conformal blocks with trace functions on an appropriate category of $V$-modules. Since quasi-lisse vertex algebras generally possess non ordinary modules, the traces \eqref{eq:intro-zhu-trace} are not well defined in general. Instead we consider the trace functions
\begin{align}\label{eq:trace.def.intro}
S_W(u,\alpha,\tau)=\tr_W u_0 y^{h_0}q^{L_0-c/24},
\end{align}
where $y = e^{2\pi i \al}$ and, as above, $q = e^{2\pi i \tau}$. These trace functions converge for a much larger class of $V$-modules $W$. Indeed in Section \ref{sec:stable.rat} below we introduce the notion of a \emph{stable} $V$-module and a corresponding notion of \emph{stable rationality}.
\begin{thm}\label{thm:intro.2}
Let $(V, \omega, h)$ be as in Theorem~\ref{thm:intro.1} and assume furthermore that $V$ is stably rational. Let $W^1,\ldots,W^k$ be the complete set of irreducible stable $V$-modules up to isomorphism. Then the trace functions $S_{W^i}(u,\alpha,\tau)$ converge on the domain
\[
0<-N\im(\alpha)<\im(\tau),
\]
span the space $\CC((E,\CL)_{(\alpha,\tau)},V)$ of charged conformal blocks at each point of this domain, and are flat sections of $\nabla$.
\end{thm}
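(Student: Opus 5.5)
The argument follows Zhu's strategy, adapted to the charged, twisted setting, and has three steps: convergence of the traces, the fact that they satisfy the defining equations of charged conformal blocks, and a spanning argument that uses Theorem~\ref{thm:intro.1}.

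\emph{Convergence and flatness.} For an irreducible stable module $W^i$, stability should give two facts. First, each space $W^i_{\Delta,m}$ (with $L_0$-eigenvalue $\Delta$ and $h_0$-eigenvalue $m$) is finite dimensional. Second, the pairs $(\Delta,m)$ lie in a cone of the form $\Delta\ge \Delta_0 + |m|/N$, or a one-sided variant of it. Along each ray of this cone the graded dimensions should grow subexponentially in $\Delta$. The subexponential bound would come from the associated variety of $\on{Spec}R_V^h$ being a finite union of symplectic leaves: following Arakawa--Kawasetsu, the $q$-graded pieces of a module are controlled by a finitely generated module over $R_V^h$. Together these give absolute convergence of $\sum \dim W_{\Delta,m}\,|y|^m|q|^{\Delta}$ on $0<-N\im\alpha<\im\tau$. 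Next I would verify that $S_{W^i}$ satisfies the charged genus-one Ward identities. These are the twisted analogues of Zhu's recursion
\[
S(u_{[-n]}v)=\text{(twisted Weierstrass/Kronecker function terms)}\cdot S(u_{[k]}v),
\]
where the Kronecker functions depend on the charge of $u$ through $y$. Zhu's trace computation proves these identities directly: insert $y^{h_0}$ and use $y^{h_0}u_n y^{-h_0}=y^{\mathrm{ch}(u)}u_n$. The same computation applied to the $L_{[-2]}$ and $h_{[-1]}$ insertions gives flatness. Namely, $2\pi i\,\partial_\tau S = S(\omega_{[-2]}\cdot)-\text{(Eisenstein correction)}$ and $2\pi i\,\partial_\alpha S = S(h_{[-1]}\cdot)$, which is exactly $\nabla S=0$.

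\emph{Spanning.} By Theorem~\ref{thm:intro.1}, away from the lattice the sheaf $\CC$ is a vector bundle with an integrable connection that has regular singularities at $N\alpha\in\Z+\Z\tau$. Any flat section therefore has a convergent expansion near the cusp $q\to0$, $y\to0$ with $|y|^N>|q|$, of the form $\sum q^{\Delta}y^{m}(\log\text{ terms})\,f_{\Delta,m}(u)$, where the exponents are constrained by the cone. Given a conformal block $\varphi$, I would take the leading coefficient functional $f_{\Delta_0,m_0}$. The Ward identities imply that it factors through a stable Zhu-type algebra $A^h(V)$, in which $h_0$ is kept as a grading and nonzero-charge modes are killed. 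This algebra should be a quotient of Zhu's algebra by the ideal of charge-nonzero elements, and it is the associative counterpart of $R_V^h$. Moreover $f$ should be a symmetric (trace-like) functional on $A^h(V)$. Stable rationality should make $A^h(V)$ finite dimensional and semisimple, with simple modules the top spaces of $W^1,\dots,W^k$. Hence $f=\sum c_i\,\tr_{\Top W^i}$. Subtracting $\sum c_i S_{W^i}$ lowers the leading exponent, and induction on the order of exponents finishes the argument. This also rules out logarithmic terms: semisimplicity shows that $L_0$ acts without Jordan blocks, just as in Zhu's argument.

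\emph{Main obstacle.} The hardest step is the spanning part, specifically identifying the leading term of an arbitrary block with a symmetric functional on the correct finite-dimensional semisimple algebra. The charge twist changes which modes act as "zero modes". One must show that the stable Zhu algebra is genuinely finite dimensional, using the finiteness of symplectic leaves of $\on{Spec}R_V^h$ and the fact that its associated graded is a quotient of $R_V^h$. One must also check that the lattice of exponents is discrete enough for the induction to terminate. I would first try to prove a charged version of Zhu's lemma expressing $S(u_{[-1]}v)$ in terms of $S(u_{[0]}v)$ and $S(u\ast v)$, and then read off the algebra structure from it.
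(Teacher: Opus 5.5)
Your overall plan is the paper's: Zhu's method, with trace identities, a Frobenius expansion at the cusp, head coefficients as symmetric functionals on a Zhu-type algebra, and induction plus elimination of logarithms. Two load-bearing steps, however, are wrong or missing.

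\textbf{The algebra.} Any quotient of $A(V)$ that kills nonzero-charge elements symmetrically kills too much. For $L_k(\sll_2)$ with $h=\rho$, $e*f-f*e$ is the image of $[e,f]$, a nonzero multiple of $\rho$. Such a quotient is therefore at most $\C$ with $\rho\mapsto 0$, and the head coefficient of $S_{L(\la)}$ with $\la(\rho)\neq 0$ does not factor through it.

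What the Ward identities actually give is asymmetric. At each $\Delta$ the charges of a stable module are bounded only above. So the telescoped traces truncate only if you use the $P_1^+$ expansion for positive charge and $P_1^-$ for negative charge. At the head of the expansion ($q\to0$, $y\to\infty$) these give, besides vanishing on $J$ and on commutators, only $C(b*a)=0$ for $b$ of negative and $a$ of positive charge. This is the paper's $A^{\stab}(V)=A(V)^{(0)}\big/\bigl(A(V)A(V)^{(>0)}\bigr)^{(0)}$, a Harish-Chandra-type quotient (for $\sll_2$, a quotient of $\C[\rho]$).

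Its simple modules are the peak spaces $M_{\peak}$, the highest-charge part of $M_{\Top}$. They are not the top spaces, which are $A(V)$-modules and typically infinite dimensional (for instance for nondominant admissible $\la$). You then need stable induction (Theorem~\ref{thm:stable.induction}) to realise every simple $A^{\stab}(V)$-module as a peak. Finite dimensionality and semisimplicity come from stable rationality (Proposition~\ref{stablezhu-semisimple}). They do not come from finiteness of symplectic leaves, which does not even force $\dim R^h_V<\infty$.

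\textbf{The two-variable Frobenius expansion.} This is asserted rather than proved. The regular singularities along $N\al\in\Z+\Z\tau$ from Theorem~\ref{thm:intro.1} say nothing about the cusp. Moreover, the cusp relevant to $|q|<|y|^{-N}<1$ is $y\to\infty$, not $y\to0$.

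At $q=0$ the system $q\partial_q u=B(y,q)u$ is regular singular, but its exponents could a priori depend on $y$. In that case ``leading exponent'' and your induction are meaningless. The paper's extra input is the isospectrality Lemma~\ref{lem:constant-q-exponents}: the $q^0$ part of the zero-curvature equation gives $y\partial_y B_0=[A_0,B_0]$, so the characteristic polynomial of $B_0$ is constant. The leading coefficients then solve $y\partial_y u=A(y,0)u$, which is regular at $y=\infty$, and this yields the form of Proposition~\ref{prop:s7-implies-s4-form}.

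\textbf{Convergence.} Your argument here does not go through either. There is no two-sided cone, since at fixed $\Delta$ charges are unbounded below. Your subexponential growth estimate is not derived from anything available: finiteness of leaves enters, as in Arakawa--Kawasetsu, only through finiteness of coinvariants and hence through a differential equation.

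The route consistent with the paper goes as follows. The formal traces satisfy (Propositions~\ref{prop:conformalblock} and~\ref{prop:trace.DE}) a finite system with coefficients in $R^{(N)}$; finiteness comes from Proposition~\ref{prop:formal.finiteness}. These coefficients converge on the domain and are regular singular at $q=0$ and $y=\infty$, so the formal solutions converge.
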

The restriction on the domain of convergence is quite natural and reflects the fact that the trace functions generally possess poles along the lattice $N\alpha\in\Z+\Z\tau$. Nevertheless it is easy to see that every point in the complement of this lattice is equivalent under the action of the Jacobi group on $B$ to a point in the domain of convergence, see Lemma \ref{lem:G.action}.

Our third result establishes the Jacobi invariance of charged trace functions.
\begin{thm}\label{thm:intro.3}
Let $(V, \omega, h)$ be as in Theorem~\ref{thm:intro.2}. The connection $\nabla$ is equivariant with respect to the natural action of the Jacobi group $SL_2(\Z)\ltimes\Z^2$ on $B$. More precisely the sheaf of flat sections of $\nabla$, and in particular the span of the trace functions $S_{W^i}$, is invariant under the Jacobi transformations
\begin{align}\label{eq:Jacobi.xform.intro}
\begin{split}
[S \cdot A]\left(u, \frac{\al}{c\tau+d}, \frac{a\tau+b}{c\tau+d} \right) &= e^{2\pi i \kappa \frac{c}{c\tau+d} \al^2} S( e^{2\pi i\frac{c\al}{c\tau+d} h_{[1]}}(c\tau+d)^{L_{[0]}} u, \al, \tau), \\
[S \cdot (m, n)](u, \al + m\tau+n, \tau) &= e^{ -2\pi i \kappa (m^2 \tau + 2m \al) } S(e^{-2\pi i m h_{[1]}} u, \al, \tau),
\end{split}
\end{align}
where the index $\kappa$ is defined by $h_{[1]}h = 2\kappa \vac$.
\end{thm}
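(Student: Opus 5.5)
We prove Theorem~\ref{thm:intro.3} in two steps. First we show that the Jacobi group acts on the space of charged conformal blocks at the level of defining equations: the transformed function again satisfies the axioms of a conformal block at the transformed point of $B$. Then we deduce that the connection $\nabla$ is equivariant. Invariance of the span of the $S_{W^i}$ follows from Theorem~\ref{thm:intro.2}. The trace functions span the flat sections on the domain of convergence, so the transformed trace functions are flat sections on the transformed domain. Every point off the lattice is equivalent to a point of the convergence domain (Lemma~\ref{lem:G.action}), and flat sections extend by analytic continuation. Hence each transformed trace function is a linear combination of the $S_{W^i}$.

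For the first step we work with the explicit description of charged conformal blocks on $(E,\CL)_{(\al,\tau)}$. These are linear functionals $S(\cdot,\al,\tau)$ on $V$ vanishing on a subspace $O_{(\al,\tau)}(V)$, spanned by residues built from twisted Weierstrass-type functions. On charged vectors of charge $n$ these are the Kronecker functions $F(z,n\al,\tau)$; we also use the operators $u_{[k]}$ of Zhu's square-bracket formalism, together with the condition involving $L_{[-2]}$ and $h_{[-1]}$ that defines the $\tau$- and $\al$-derivatives in $\nabla$. The key computation is how these functions transform. Under $(\al,\tau)\mapsto(\al+m\tau+n,\tau)$ the Kronecker function for charge $k$ acquires a factor $e^{-2\pi i k m z}$. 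This factor is absorbed by the spectral-flow-type automorphism $\Delta(mh,z)$ of Li, which acts as $e^{-2\pi i m h_{[1]}}$ on the square-bracket vertex algebra. The resulting shift of $L_{[0]}$ and $h_{[0]}$ by $m h_{[0]}$ and $m^2\kappa$ produces the prefactor $e^{-2\pi i\kappa(m^2\tau+2m\al)}$.

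Under $\tau\mapsto\gamma\tau$, $\al\mapsto\al/(c\tau+d)$ we rescale $z\mapsto z/(c\tau+d)$. The $L_{[0]}$-grading gives the factor $(c\tau+d)^{L_{[0]}}$, exactly as in Zhu's proof. The Kronecker function is not modular, only Jacobi-like: it picks up $e^{2\pi i c k z\al/(c\tau+d)}$, which is compensated by $e^{2\pi i\frac{c\al}{c\tau+d}h_{[1]}}$ and yields the index factor $e^{2\pi i\kappa c\al^2/(c\tau+d)}$. One then checks that the transformed functional kills $O_{(\gamma\cdot(\al,\tau))}(V)$.

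For the second step we compute $\nabla$ on the transformed section. The $\tau$-derivative component involves $G_2(\tau)$, whose anomalous transformation term $\frac{c}{c\tau+d}$ must cancel against derivatives of $(c\tau+d)^{L_{[0]}}$ and of the exponential prefactors. Likewise, the $\al$-derivatives of the prefactors must match the $h_{[-1]}$ terms in $\nabla_\al$. This bookkeeping is the main obstacle, since it involves the conjugation identities $e^{x h_{[1]}} L_{[-2]} e^{-x h_{[1]}} = L_{[-2]} + x h_{[-1]} + \kappa x^2$ and the corresponding identity for $h_{[-1]}$, combined with the quasi-modularity of $G_2$ and of the derivative of $\log\theta_1$. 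I would verify the identity first for the generators $S$, $T$ and $(1,0)$, $(0,1)$ of the Jacobi group, using the cocycle property of the prefactors to reduce to these generators.
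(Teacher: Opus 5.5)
Your proposal is correct and is essentially the paper's argument: the substance of the paper's proof (Proposition~\ref{prop:connec.is} and the appendix computations) is your second step. There one checks that the transformed section satisfies \eqref{eq:DE.y} and \eqref{eq:DE.q} by commuting $L[t]$ and $h[t]$ past $e^{\la h_{[1]}}$, using the $G_2$-anomaly in the coefficients of $\wp_1$, and cancelling against the derivatives of $(c\tau+d)^{L_{[0]}}$, $e^{\la h_{[1]}}$ and the index prefactor with $\kappa=\langle h,h\rangle/2$; Theorem~\ref{thm:main} and Lemma~\ref{lem:G.action} then give the statement about trace functions. The only differences are that the paper gets the conformal-block condition at the transformed point directly from the fibre isomorphism $(E,\CL)_{(\al,\tau)}\cong(E,\CL)_{A\cdot(\al,\tau)}$, rather than from Kronecker-function transformation laws and Li's $\Delta$-operator, and that it treats arbitrary $A\in SL_2(\Z)$ and $(m,n)\in\Z^2$ at once, so it needs no check that the operator-valued factors form a cocycle, which your reduction to generators would require.
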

A few words about the proof of Theorem~\ref{thm:intro.2}. Using the integrability of the connection $\nabla$ we prove an isospectrality result in Section \ref{sec:holonomic}, which in turn guarantees that flat sections of $\nabla$ possess series expansions in $y$ and $q$ of a particular form. Then in Section \ref{sec:stable.rat} we introduce a charged analogue of the Zhu algebra $A(V)$, which we call the \emph{stable Zhu algebra} $A^{\stab}(V)$. This algebra plays for stable modules the same role that $A(V)$ plays for positive energy modules. Assuming semisimplicity of $A^{\stab}(V)$, we show that charged conformal blocks are exhausted by trace functions.

Our results apply in particular to admissible affine vertex algebras {and admissible $W$-algebras associated with nilpotent elements of standard Levi type.}
Let $\g$ be a simple Lie algebra and $L_k(\g)$ the simple affine vertex algebra at admissible level $k$. Taking for $h$ the Weyl vector $\rho \in \g = V_1$, {one finds that the stable Zhu algebra $A^{\mathrm{stab}}(L_k(\g))$ is a finite dimensional semisimple quotient of the polynomial algebra $\C[\mf{h}]$ on the Cartan subalgebra $\mf{h}$ of $\g$.} In particular $(L_k(\g),\omega,\rho)$ is stably quasi-lisse and stably rational. Hence the corresponding trace functions span the space of charged conformal blocks. Restricting to $u = \vac$ we recover the fact, which follows easily from results of Kac and Wakimoto \cite{KW89}, that the graded dimensions of admissible $\what\g$-modules of admissible level $k$ constitute a vector valued Jacobi form of index $\kappa = kh^\vee\dim(\g) / 24$. {In the case of the universal $W$-algebra $\W^k(\g,f)$, whose Zhu algebra is the finite $W$-algebra $U(\g,f)$, the stable Zhu algebra $A^{\mathrm{stab}}(\W^k(\g,f))$, for an appropriately chosen current $h$, is isomorphic to the finite $W$-algebra $U(\mf{l},f)$ associated with the minimal Levi subalgebra $\mf{l}$ containing $f$, and the stable Zhu algebra $A^{\mathrm{stab}}(\W_k(\g,f))$ of the simple quotient $\W_k(\g,f)$ of $\W^k(\g,f)$ is a quotient of $U(\mf{l},f)$. Similarly, $X^h_{W_k(\g,f)}$ is the closed subvariety of the Slodowy slice $f+\mf{l}^e$ at $f$ in $\mf{l}$. We thus show that all quasi-lisse $W$-algebras are stably quasi-lisse, and all quasi-lisse admissible $W$-algebras of standard Levi type are stably rational as well, see Theorem \ref{thm:stably-quasi-lisse-W} and  Theorem \ref{thm:W-algebra} below. }
This represents a large extension of {Zhu's modularity results  \cite{Z96} to quasi-lisse vertex algebras}.

The appearance of modular and Jacobi forms in representation theory has a long history. In \cite{KP84} Kac and Peterson determined modular properties of characters of integrable representations of affine Kac-Moody algebras by expressing them in terms of theta functions, and deduced consequences such as asymptotic growth of representations. In \cite{FF84} Feigin and Fuchs developed analogous character formulas for minimal representations of the Virasoro algebra. Further impetus came from the role of the moonshine module vertex algebra $V^\natural$ in monstrous moonshine \cite{FLM,B92}.

These phenomena are naturally explained by Zhu's theorem. Furthermore Zhu's theorem plays an important role in conformal field theory, and the resulting representation of $SL_2(\Z)$ on the space $\CC^\nabla(V,\omega)$ of global flat sections, appears in Huang's proof of the Verlinde formula \cite{Huang2008}. Modularity also has important structural applications for vertex algebras. For example, \cite{EMS-dimfla,MS23} use properties of weight $2$ cusp forms, including Deligne's bound on Fourier coefficients \cite{Deligne}, to obtain estimates on the weight-one spaces of orbifolds of vertex algebras. These estimates were used in \cite{ELMS21} to show that every holomorphic vertex algebra of central charge $24$ with nontrivial weight-one space is an orbifold of the Leech lattice vertex algebra.

Jacobi invariance is similarly important. Schellekens used it to constrain the weight-one Lie algebra of a holomorphic conformal vertex algebra of central charge $24$, leading to his list of $71$ possibilities \cite{Schell} \cite[Section 6]{EMS20}; see also \cite{DM04}. For lisse rational vertex algebras the Jacobi property was established in \cite{Miyamoto-theta,KM2012} by expanding \eqref{eq:trace.def.intro} as a Taylor series in $\alpha$, identifying the coefficients with $n$-point functions, and applying Zhu's results. Closer in spirit to the present work is \cite{EH2016}, where supertrace functions of topological $N=2$ vertex algebras were shown to be flat sections of superconformal blocks, see also \cite{GK2009}. The lisse condition is assumed in all these works.

It has long been expected that analogues of Zhu's theorem should hold under somewhat weaker hypotheses. Indeed the space $\CC(E_\tau,V)$ of ordinary conformal blocks is finite dimensional not only for lisse vertex algebras, but for all quasi-lisse vertex algebras \cite{Arakawa-Kawasetsu}. This class includes the simple affine vertex algebras $L_k(\g)$ at admissible levels \cite{Ara09b}. Although the representation category of admissible $L_k(\g)$ is very large, it was shown in \cite{AM95} for $\g=\sll_2$ and in \cite{A12-2} in general that the $L_k(\g)$-modules in category $\OO$ for $\what\g$ are precisely the admissible modules of Kac and Wakimoto, who had shown that the characters of the latter are modular invariant \cite{KW88,KW89}.

The connection $\nabla$ on genus-one conformal blocks gives rise to modular linear differential equations (MLDE) satisfied by vertex algebra characters. These equations have been studied since the early days of conformal field theory \cite{MMS88} \cite{EO88}, and later in the context of vertex algebras, providing useful constraints, even without assumptions of rationality \cite{KNS2013, Arakawa-Kawasetsu, Li23, LLY2026}.

The 4d/2d duality of \cite{Beem.et.al} relating $4$-dimensional $N=2$ SUSY theories with vertex algebras provides further motivation. Modular properties of Schur indices were observed in \cite{R12}, and in view of the 4d/2d duality, would be natural consequences of modularity for the corresponding vertex algebras. The associated MLDEs have been investigated in detail in \cite{BR}, and flavoured analogues have been studied in \cite{PW2023}. Here the inclusion of ``flavour'' roughly corresponds to the passage from trace functions of the form \eqref{eq:intro-zhu-trace} to that of \eqref{eq:trace.def.intro}. In particular, the vertex algebras associated with $\mathcal S$-class theories \cite{Beem.et.al,BR,ClassS} in genus $0$ have been constructed in \cite{Arakawa.S.class} as chiral quantisations of Moore-Tachikawa symplectic varieties \cite{BFN}. These vertex algebras are known to be quasi-lisse in type $A$ and believed to be so in general, and their characters are conjecturally quasi-modular and have interesting relations to multiple $q$-zeta values \cite{Milas.MZV}. Many non-admissible affine vertex algebras are also quasi-lisse \cite{AM.Joseph}.

Thus motivated, it is reasonable to seek a description of conformal blocks in terms of trace functions on appropriate modules for the whole class of quasi-lisse vertex algebras. By including moduli of line bundles in our approach, trace functions on a sufficiently large category of modules are rendered convergent and, under a reasonable rationality condition, span the corresponding spaces of conformal blocks.

There are many generalisations of Zhu's theorem in the literature. Dong, Li and Mason proved a generalisation to twisted modules \cite{DLM00}, a result fundamental to orbifold constructions of holomorphic vertex algebras \cite{LS.framed,Miya.Z3,EMS20,Jethro.expos}. Generalisations to vertex superalgebras with ``good'' and ``bad'' statistics were obtained in \cite{DZ05} and \cite{DZ10}, respectively. A version for vertex superalgebras graded by rational conformal weights was established in \cite{VE13}, and used in \cite{AE2019} to lift the Kac--Wakimoto modular transformation formulas for admissible affine vertex algebras from characters to trace functions; see also \cite{DLMadm,MTZ08}. Modularity of trace functions involving intertwining operators has been applied by Huang to rigidity of tensor categories \cite{Huang.DE,Huang-rigidity}. There has also been substantial interest in modularity for lisse but non-rational vertex algebras \cite{Miyamoto-C2}. Miyamoto showed that modularity is restored by adjoining suitable ``pseudotrace functions'', and the analysis of these functions was a key ingredient in McRae's proof that semisimplicity of Zhu's algebra implies rationality for lisse vertex algebras \cite{McRae2026,CM2016}. We note that while relatively few non-rational lisse vertex algebras are known \cite{Kausch-symplectic,AM.triplet,Sugimoto}, there is by contrast a rich supply of non-rational quasi-lisse examples \cite{Arakawa.S.class,AM.Joseph,AM.U-class}. We expect extensions of our results in all these directions to be possible.

Finally, our results describe charged conformal blocks in terms of stable $V$-modules, characterised among weight modules by a support condition. Other categories of modules, such as spectral flows of stable modules, arise by modifying this condition. It would be interesting to relate charged conformal blocks associated with these different categories, perhaps in connection with the proposal of Creutzig and Ridout to recover Verlinde-type formulas from distributional characters \cite{CR2013}, see also \cite{Ridout.sl3mod, Ridout.BPmod}.

\emph{Acknowledgements:} The authors would like to thank R. Heluani, Y.-Z. Huang, V. Kac and H. Movasati for discussions. JvE gratefully acknowledges RIMS in Kyoto for excellent working conditions during his stay as visiting professor in 2024, during which time work on this project began.
TA was partially supported  by JSPS KAKENHI Grant Numbers 21H04993, 25K21659 and 26H01997.
HL was supported by JSPS KAKENHI Grant Number 21H04993.
JvE was supported by grants Serra-2023-0001, CNPq 306498/2023-5 and FAPERJ 204.322/2025.

\section{Elliptic curves, line bundles, and modular forms}\label{sec:mod.ell}

In this section we recall background material on modular forms and elliptic functions, and line bundles over elliptic curves. Throughout the paper $\CH$ denotes the upper half complex plane, and we reserve the symbols $\tau \in \CH$ for a complex variable and $q = e^{2\pi i \tau}$, and $\al \in \C$ and $y = e^{2\pi i \al}$.

\subsection{Modular forms and elliptic functions}\label{sec:modular.functions}


For each integer $k \geq 1$ the Eisenstein series is defined as
\begin{align}\label{eq:Eisenstein.def}
\wtil{G}_{2k}(q) = (2\pi i)^{2k} \left[ -\frac{B_{2k}}{(2k)!} + \frac{2}{(2k-1)!} \sum_{n=1}^\infty n^{2k-1} \frac{q^n}{1-q^n} \right],
\end{align}
where the $B_{2k}$ are the Bernoulli numbers: $B_2 = 1/6$, $B_4 = -1/30$, $B_6 = 1/42$, etc. This series is convergent for $|q|<1$, and we set $G_{2k}(\tau) = \wtil{G}_{2k}(q)$ for $\tau \in \CH$. The holomorphic function $G_{2k}(\tau)$ is also referred to as an Eisenstein series.

For all $k \geq 1$ the Eisenstein series satisfies
\begin{align*}
G_{2k}\left(\frac{a\tau+b}{c\tau+d}\right) = (c\tau+d)^{2k} G_{2k}(\tau) - 2\pi i c (c\tau+d) \delta_{k,1} \qquad \text{for all $\left(\begin{array}{cc}a&b\\c&d\\  \end{array}
\right) \in SL_2(\Z)$,}
\end{align*}
i.e., $G_{2k}(\tau)$ is a modular form of weight $2k$ for $k \geq 2$. The Eisenstein series $G_2(\tau)$ is sometimes referred to as a quasimodular form.

For each integer $k \geq 1$ we consider the Laurent series
\begin{align}\label{eq:wp.def}
\wtil\wp_k(z, q) = z^{-k} + \pi i \delta_{k,1} + (-1)^k \sum_{n=0}^\infty \binom{2n+1}{k-1} \wtil{G}_{2n+2}(q) z^{2n+2-k}.
\end{align}
At $q = e^{2\pi i \tau}$ the series is convergent for $0 < |z| < r$ where $r$ is the minimal norm of a nonzero point in the lattice $\Lambda = \Z + \Z \tau \subset \C$. In this domain the limit coincides with a meromorphic function, defined on all of $\C$ with poles on $\La$, which we denote $\wp_k(z, \tau)$.

We remark that \eqref{eq:wp.def} differs slightly from the convention used by Zhu in {\cite[equation (3.6)]{Z96}}. Indeed $\wp_1(z, \tau) = \wp_1^{\text{Zhu}}(z, \tau) + \pi i - G_2(\tau)z$ and $\wp_2(z, \tau) = \wp_2^{\text{Zhu}}(z, \tau) + G_2(\tau)$, while $\wp_k(z, \tau) = \wp_k^{\text{Zhu}}(z, \tau)$ for $k \geq 3$.

We consider, for each integer $k \geq 1$, the series
\begin{align}\label{eq:zhu.3.9}
P^+_k(w, q) = \frac{(2\pi i)^k}{(k-1)!} \sum_{n=1}^\infty \left[ \frac{n^{k-1} w^n}{1-q^n} + (-1)^k \frac{n^{k-1} w^{-n}q^n}{1-q^n} \right],
\end{align}
convergent in the domain $|q| < |w| < 1$. Notice that $2\pi i w \partial_w P^+_k(w, q) = k P^+_{k+1}(w, q)$. This is precisely the series $P_k(w,q)$ introduced in {\cite[equation (3.9)]{Z96}}.

If we denote the limit of $P^+_k(w, q)$ also by the same symbol, then we have $P^+_k(e^{2\pi i z}, e^{2\pi i \tau}) = (-1)^k {\wp}_k(z, \tau)$ {\cite[equations (3.11)--(3.13)]{Z96}}. Let us be careful to note, however, that substitution of $w = e^{2\pi i z}$ into the formal series defining $P^+_k(w, q)$ does not yield the formal series defining $(-1)^k \wtil\wp_k(z, q)$. Indeed \eqref{eq:zhu.3.9} involves the summation $w + w^2 + w^3 + w^4 + \cdots$, and the substitution $w = e^{2\pi i z}$ here is ill defined.

It will also be convenient for us to have the following variant:
\begin{align}\label{eq:zhu.3.9.minus}
P^-_k(w, q) = \frac{(2\pi i)^k}{(k-1)!} \sum_{n=1}^\infty \left[ \frac{n^{k-1} w^n q^n}{1-q^n} + (-1)^k \frac{n^{k-1} w^{-n}}{1-q^n} \right],
\end{align}
a series in $\C((w^{-1}))[[q]]$ convergent in the domain $|q| < |w^{-1}| < 1$. As before we have $2\pi i w \partial_w P^-_k(w, q) = k P^-_{k+1}(w, q)$, and $P^-_k(e^{2\pi i z}, e^{2\pi i \tau}) = (-1)^k {\wp}_k(z, \tau)$ when the left hand side converges.

We recall the theta function {\cite[p. 17]{Mumford.Tata.1}}
\begin{align*}
\theta(z, \tau) = \theta_{11}(z, \tau) = i \sum_{n \in \Z} (-1)^n q^{(n+1/2)^2/2} e^{2\pi i (n+1/2) z},
\end{align*}
which is holomorphic on $z \in \C$ and vanishes only at $z \in \Lambda$. It satisfies the relations (see the table at {\cite[p. 19]{Mumford.Tata.1}})
\begin{align}\label{eq:theta.xforms}
\theta(z+1, \tau) &= -\theta(z, \tau), \\
\theta(z+\tau, \tau) &= -q^{-1/2} e^{-2\pi i z} \theta(z, \tau).
\end{align}
It is known that $\theta'(0, \tau) = -2\pi \eta(\tau)^3$, where $\theta'(z, \tau)$ denotes the derivative in $z$ of $\theta$, and $\eta(\tau)$ is the Dedekind eta function
\[
\eta(\tau) = q^{1/24} \prod_{n=1}^\infty (1-q^n).
\]

Now we define
\begin{align}\label{eq:psi.def}
\psi(z, \al, \tau) = \frac{\theta'(0, \tau)\theta(z+\alpha, \tau)}{\theta(z, \tau)\theta(\alpha, \tau)}
\end{align}
(see {\cite[A.9]{EH2016}}) which we see satisfies
\begin{align}\label{eq:psi.transf}
\psi(z+1, \al, \tau) &= \psi(z, \al, \tau), \\
\psi(z+\tau, \al, \tau) &= e^{-2\pi i\alpha} \psi(z, \al, \tau).
\end{align}

We now consider the Laurent series expansion of $\psi(z, \al, \tau)$ in powers of $z$. We define
\begin{align}\label{eq:Ppm.def}
\begin{split}
P_1^+(z, y, q) &= \sum_{n \geq 0} y^{-n} - \frac{e^{2\pi i z}}{e^{2\pi i z}-1} + \sum_{n = 1}^\infty \left( \frac{y^{-1}q^n}{1-y^{-1}q^n} e^{2\pi i n z} - \frac{y q^n}{1-yq^n} e^{-2\pi i n z} \right), \\
P_1^-(z, y, q) &= -\sum_{n \geq 1} y^{n} - \frac{e^{2\pi i z}}{e^{2\pi i z}-1} + \sum_{n = 1}^\infty \left( \frac{y^{-1}q^n}{1-y^{-1}q^n} e^{2\pi i n z} - \frac{y q^n}{1-yq^n} e^{-2\pi i n z} \right).
\end{split}
\end{align}
These series converge in the region $|q|<|y^{\mp 1}|<1$. The only difference between $P_1^+$ and $P_1^-$ is the first summation, and in both cases the summation is an expansion of the same rational function $y/(y-1)$. Now \cite[Proposition 5]{MTZ08} asserts that
\[
\psi(z; -\al, \tau) = -2\pi i P_1^{\pm}(z; e^{2\pi i \al}, e^{2\pi i \tau}).
\]
On the other hand we write
\begin{equation}\label{eq:P1pm-Laurent}
P_1^\pm(z,y,q)
=
-\frac{1}{2\pi i z}
+
\sum_{m= 1}^\infty E_m^\pm(y,q)\,(2\pi i z)^{m-1}.
\end{equation}
The series $P_1^\pm$ differ only in the constant coefficient, so we have
\[
E_m(y,q) := E_m^+(y,q) = E_m^-(y,q) \in \C[y^{\pm 1}][[q]],\qquad m\ge 2,
\]
while
\[
E_1^\pm(y,q) = -\frac{1}{2\pi i}P_1^\pm(y,q)-\frac{1}{2}.
\]

A more general class of \emph{twisted Weierstrass functions} has been considered in \cite{MTZ08} (see also \cite{DLM00})
\[
P_k\bracsmall{\theta}{\phi}(z, \tau)
= \frac{(-2\pi i)^k}{(k-1)!}
\sum_{n \in \mathbb{Z} + \lambda}\!\!\!{}'\,\,\, \frac{n^{k-1} e^{n z}}{1 - \theta^{-1} q^n},
\]
where $\la \in \R$ is chosen so $e^{2\pi i \la} = \phi$, and the prime means omit the summand $n=0$ if $(\theta, \phi) = (1, 1)$. Then, our $P_1^\pm(z,y, q)$ are particular expansions of $P_1\bracsmall{y}{1}(z, \tau)$. These have series expansions
\begin{align*}
P_k\bracsmall{\theta}{\phi}(z, \tau) = z^{-k} + (-1)^k \sum_{m=k}^\infty \binom{m-1}{k-1} G_m\bracsmall{\theta}{\phi}(\tau) z^{m-k},
\end{align*}
in terms of \emph{twisted Eisenstein series} $G_m\bracsmall{\theta}{\phi}(\tau)$. The functions $E_m$ above are now recovered as $E_m(y,q) = G_m\bracsmall{y}{1}(\tau)$. The results of {\cite[Section 5.2]{Li23}} assert that the ring generated by twisted Eisenstein series is Noetherian. In Section \ref{sec:finiteness} below we will require the following special case:
\begin{prop}\label{prop:Li23}
The ring $\CE \subset \C[y^{\pm 1}][[q]]$ generated by $E_k(y,q)$ for $k \geq 2$ is Noetherian.
\end{prop}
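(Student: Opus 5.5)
The plan is to show that $\CE$ is a quotient of a polynomial ring in finitely many variables and then apply Hilbert's basis theorem. The key input is that the $z$-expansion of $\psi(z,-\al,\tau)$ is governed by an algebraic differential equation whose coefficients are built from finitely many quantities.

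\textbf{Step 1 (reduction to $z$-expansions).} By \eqref{eq:P1pm-Laurent} and the identity $\psi(z,-\al,\tau)=-2\pi i P_1^\pm$, the $E_m$ for $m\ge 2$ are, up to nonzero constants, the Taylor coefficients of
\[
f(z) := -\tfrac{1}{2\pi i}\psi(z,-\al,\tau)+\tfrac{1}{2\pi i z} - E_1^\pm .
\]
So it suffices to show that all Taylor coefficients of $\psi$ at $z=0$ lie in a finitely generated algebra.

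\textbf{Step 2 (a differential equation in $z$).} Write $\psi = \theta'(0)\,\theta(z-\al)/(\theta(z)\theta(-\al))$ and take the logarithmic derivative:
\[
\frac{\partial_z\psi}{\psi} = \frac{\theta'}{\theta}(z-\al) - \frac{\theta'}{\theta}(z).
\]
Next use the classical identity
\[
\psi(z)\,\psi(-z) = \wp(z) - \wp(\al)
\]
(after normalisation, using $\theta'(0)=-2\pi\eta^3$), together with the standard formula expressing $\partial_z\psi$ through $\psi$, $\wp$ and $\zeta$-type terms. From these one obtains a relation of Lamé/Riccati type
\[
\partial_z \psi(z) = \psi(z)\bigl(\zeta(z-\al)-\zeta(z)+\zeta(\al)\bigr),
\]
with $\zeta=\wp_1$. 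Its Laurent coefficients are polynomials in $G_{2k}$ and in the Taylor coefficients of $\zeta(z-\al)$, and these in turn are expressed through $\wp(\al)$, $\wp'(\al)$, $\zeta(\al)$ and $G_4, G_6$.

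Concretely, I expect that comparing Laurent coefficients shows each $E_m$ with $m\ge 2$ is a polynomial in a finite list: $E_2$, $E_3$ (which play the role of $\wp(\al)$ and $\wp'(\al)$ up to shifts by $G_2$), $G_2$, $G_4$ and $G_6$. The twisted Eisenstein series indeed satisfy recursions of exactly this shape (Mason–Tuite–Zuevsky). The remaining issue is that $G_2,G_4,G_6$ need not themselves lie in $\CE$. To handle this, I would work inside the larger finitely generated algebra $\CE' = \C[G_2,G_4,G_6,E_2,E_3,\dots,E_M]$. Untwisted Eisenstein series appear as the $y$-independent parts, and $G_{2k}$ is recovered from $E_{2k}$ by averaging or limit arguments.

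\textbf{Step 3 (main obstacle: Noetherianity of the subring).} A subring of a Noetherian ring need not be Noetherian, so the inclusion $\CE\subset\CE'$ alone is not enough. The hard step is to show that $\CE$ is itself finitely generated, i.e.\ that the recursion expresses every $E_m$ as a polynomial in $E_2,\dots,E_M$ alone.

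The first thing I would try is to find a generator-only recursion. Expanding $\psi(z)\psi(-z)=\wp(z)-\wp(\al)$ gives quadratic relations among the $E_m$ in which odd and even indices mix. Combining these with the derivative of the identity, I would attempt to isolate $E_{m}$ with a nonzero constant coefficient (from the $z^{-1}$ pole), as a polynomial in the $E_j$ with $j<m$ plus $G$-terms. I would then show that the $G$-terms themselves arise as polynomial combinations of the low $E_j$, using $E_2^2 - c\,E_4 \sim \wp(\al)$-type eliminations.

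If this succeeds, $\CE$ is generated by finitely many $E_j$, and Hilbert's basis theorem finishes the proof. Failing that, I would fall back on invoking \cite[Section 5.2]{Li23} directly for the full twisted ring and check that $\CE$ coincides with the subring specified there.
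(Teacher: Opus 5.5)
\textbf{The gap is that Step 3, which is the entire content of the proposition, is never carried out, and the route you sketch towards it in Step 2 rests on an incorrect identity.}

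The relation $\partial_z\psi=\psi\,(\zeta(z-\al)-\zeta(z)+\zeta(\al))$ is wrong. The correct relation is $\partial_z\log\psi(z,-\al,\tau)=\frac{\theta'}{\theta}(z-\al)-\frac{\theta'}{\theta}(z)$. Its constant term at $z=0$ is $\frac{\theta'}{\theta}(-\al)=-2\pi i E_1$, whereas your right-hand side has an $\al$-independent constant term.

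With the correct relation, the Taylor coefficients of $z\psi$ are polynomials in $E_1,\wp(\al),\wp'(\al),G_4,G_6$; this is essentially the result of \cite{Libgober} quoted after the proposition. Moreover $E_1$ does not drop out. Comparing $z^2$-coefficients in $z\psi(z,-\al,\tau)\cdot(-z)\psi(-z,-\al,\tau)=z^2\wp(z)-z^2\wp(\al)$ (Weierstrass $\wp$) gives $\wp(\al)=(2\pi i)^2(2E_2+E_1^2)$. So $E_2,E_3$ are not ``$\wp(\al),\wp'(\al)$ up to $G_2$-shifts'', and $E_1$ cannot be dropped from the generating list.

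Worse, the quantities your Step 3 proposes to eliminate are not in $\CE$ at all:
\begin{itemize}
\item By \eqref{eq:Ppm.at.q=0}, each $E_m(y,0)$ with $m\ge 2$ is a constant. Hence every element of $\CE$ has constant $q^0$-coefficient, while $E_1$ and $\wp(\al)$ do not, so neither lies in $\CE$.
\item In weight $4$, $\CE$ is spanned by $E_4$ and $E_2^2$. Their $q^1$-coefficients are multiples of $y+y^{-1}$, while that of $G_4$ is a nonzero constant, so $G_4\notin\CE$.
\item ``Averaging'' or ``limits'' are not ring operations and cannot put these elements into $\CE$.
\end{itemize}

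Consequently Step 2 can at best yield $\CE\subset\C[E_1,E_2,E_3,G_4,G_6]$. As you correctly observe, this says nothing about Noetherianity of $\CE$. Nor is Step 3 a technicality that could be bypassed. $\CE$ is graded by weight with $\CE_0=\C$, so it is Noetherian if and only if it is finitely generated, i.e.\ if and only if $E_m\in\C[E_2,\dots,E_{m-1}]$ for all $m\gg0$. That is precisely the recursion you leave open, and nothing in the proposal produces it.

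What remains is your fallback, which is what the paper does. It gives no argument beyond invoking \cite[Section 5.2]{Li23}, and quotes \cite{Libgober} only for polynomiality in the larger ring that contains $E_1$. So your proposal does not go beyond that citation, and the extra argument you sketch is incorrect as it stands. If you rely on \cite{Li23}, the point to verify is exactly the one you raised: that its Noetherianity statement covers the subring generated by the $E_k$ with $k\ge 2$ alone, without the weight-one series.
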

In fact, according to \cite[Proposition 2.10]{Libgober} the functions $E_m^\pm(y,q)$ are polynomials in the functions $P_m^\pm(y,q)$, $m \leq 3$.

\subsection{Elliptic curves and line bundles}\label{sec:curves.bundles}

{{This section establishes the connection between the analytic functions in Section~\ref{sec:modular.functions} and the algebro-geometric framework of elliptic curves and degree-zero line bundles, which form the foundation for charged conformal blocks.}}

We introduce a family $T \rightarrow B$ of pairs $(E, \CL)$ consisting of a smooth elliptic curve $E$ and a holomorphic line bundle $\CL$ of degree $0$ over $E$. The base of the family will be $B = \C \times \CH$, points of which we will denote $(\alpha, \tau)$. We now describe the fibres. We consider the following group action of $\Z^2$ on $B \times \C^2$:
\begin{align}\label{eq:elliptic.family}
(m, n) \cdot (\al, \tau, v, z) = (\al, \tau, e^{-2\pi i m \alpha} v, z + m\tau + n).
\end{align}
The quotient $T = B \times \C^2 / \Z^2$ is the total space of our family, and in the fibre over the point $(\al, \tau)$ the projection $\CL \rightarrow E$ is $(v, z) \mapsto z$.

As above we write $q = e^{2\pi i \tau}$, let us here and throughout also write $y = e^{2\pi i \alpha}$. The fibre $(E, \CL)_{(\al, \tau)}$ can also be presented as the quotient of $\C \times \C^\times$ by the following action of $\Z$:
\[
m \cdot (v, w) = (y^{-m} v, q^m w).
\]
\begin{rem}
The parameter space $(\al, \tau) \in \C \times \CH$ carries an action of the Jacobi group $SL_2(\Z) \ltimes \Z^2$, pairs $(E_1, \CL_1)$, $(E_2, \CL_2)$ identified under this action are isomorphic, and the isomorphisms can be given in general by explicit formulas (see Section \ref{sec:modularity} below and {\cite[Proposition 3.2]{E2017}}). In particular, if we write $\CL_\alpha$ for the line bundle associated with the parameter $\al$, then $\CL_{\alpha + m\tau + n} \cong \CL_\al$.
\end{rem}

We now describe meromorphic sections of $\CL_\al$ and relate them to the functions introduced in Section \ref{sec:modular.functions} above.

Let $X$ be a smooth complex curve of any genus $g$. The exponential exact sequence of sheaves
\[
0 \rightarrow 2\pi i \underline{\Z} \rightarrow \OO_X \rightarrow \OO_X^\times \rightarrow 0,
\]
yields a long exact sequence in cohomology and in particular
\[
\Pic(X) = H^1(X, \OO_X^\times) \rightarrow H^2(X, \underline{\Z}) \cong \Z
\]
takes a line bundle $\CL \in \Pic(X)$ to its degree. The kernel of the degree map is isomorphic to the quotient
\[
\Jac(X) = H^1(X, \OO_X) / H^1(X, \underline{\Z}) \cong \C^g / \Z^g
\]
i.e., a complex $g$-dimensional torus. The other characterisation of degree is in terms of meromorphic sections. Namely a section $\sigma$ of $\CL$ defines a divisor $\sum_{p \in X} s_p [p]$ where $s_p$ is the degree of vanishing of $\sigma$ at $p$, and $\deg(\CL)$ is $\sum_{p \in X} s_p$. From this description of the degree, it follows that for $\deg(\CL) < 0$ one has $H^0(X, \CL) = 0$.

Let $\CL$ be a line bundle on $X$ of degree $d$. We write $h^0(X, \CL)$ for the dimension of $H^0(X, \CL)$ the vector space of global sections. The Riemann-Roch theorem asserts that
\[
h^0(X, \CL) - h^0(X, \CL^{-1} \otimes \omega_X) = d + 1 - g,
\]
where $\omega_X$ is the canonical bundle, i.e., the bundle of $1$-forms since $X$ is a curve. For a point $p \in X$ and $k \in \Z$ we can consider the line bundle $\OO_X(k p)$ associated to the divisor $k p$, characterised by its space $H^0(X, \OO_X(kp))$ of global sections being the set of meromorphic functions on $X$, regular outside $p$, and with pole at $p$ of order not greater than $k$. The degree of $\OO_X(kp)$ is $k$. We can similarly form $\CL(kp) \cong \CL \otimes \OO_X(kp)$. Note also that $\CL(kp)^{-1} \cong \CL^{-1}(-kp)$.

If $E$ is a curve of genus $1$ and $\CL$ a line bundle on $E$, then $\omega_E \cong \OO_E$ and we obtain
\[
h^0(E, \CL(kp)) - h^0(E, \CL^{-1}(-kp)) = k.
\]
For $k > 0$ the second term on the LHS vanishes so $h^0(E, \CL(kp)) = k$. For $k=0$ all we can conclude is that $h^0(E, \CL) = h^0(E, \CL^{-1})$. But if $\CL = \OO_E$ we know both sides here are $1$. If $\CL$ is nontrivial then, since it has degree $0$, any nonzero global meromorphic section of $\CL$ must have as many poles as it has zeroes. So if such a nonzero section has no poles, it has no zeroes either, which is impossible for $\CL$ nontrivial.

In short, for a smooth elliptic curve $E$ the dimensions $h^0(E, \CL(kp))$ are given for $k \geq 0$ by
\begin{align*}
1, 1, 2, 3, 4, \ldots \quad \text{if $\CL \cong \OO_E$} \\
0, 1, 2, 3, 4, \ldots \quad \text{if $\CL \not\cong \OO_E$}.
\end{align*}
These spaces of sections are described in terms of Weierstrass elliptic functions and theta functions. For $\CL = \OO_E$ the sum of all $H^0(E, \CL(kp))$ is spanned by the functions $1$ and $\wp_k(z, \tau)$ for $k \geq 2$. Let $(E, \CL)$ be the elliptic curve and line bundle corresponding to $(\al, \tau)$ with $\al \notin \La$ now. If we define
\begin{align}\label{eq:varphi.def}
\varphi(z) = \frac{\theta(z-\alpha, \tau)}{\theta(z, \tau)},
\end{align}
then from \eqref{eq:theta.xforms} we see that
\begin{align*}
\varphi(z+1) &= \varphi(z), \\
\varphi(z+\tau) &= e^{2\pi i\alpha} \varphi(z).
\end{align*}
Hence $\varphi(z) v$ defines a meromorphic section of the line bundle $\CL_\alpha$ with a simple pole at $z=0$ (and a simple zero at $z=\alpha$).

Note that the function $\psi(z, -\al, \tau)$, defined in \eqref{eq:psi.def} above, is just $\varphi(z)$ up to a normalisation which depends on $\al$ and $\tau$ but not $z$. The normalisation $\psi$ is more convenient for our purposes than $\varphi$, because the coefficients $E_m(y, q)$ of its expansion \eqref{eq:P1pm-Laurent} lie in a Noetherian ring, Proposition {\ref{prop:Li23}}.

\section{Vertex algebras and chiral algebras}

We refer to \cite{KacVA} for background on the theory of vertex algebras. For the material on chiral algebras constructed from vertex algebras via associated bundle constructions we follow \cite[Chapters 6 and 7]{FBZ}. The definitive reference on chiral algebras is \cite{BD}.

{{This section has two parts. Section~\ref{sec:vertex} fixes notation and conventions for vertex algebras and conformal vertex algebras used throughout the paper. Section~\ref{sec:chiral} recalls the associated-bundle construction of chiral algebras and discusses the notion of charged conformal block on curves, providing the geometric framework used in later sections.}}

We now recall a few pieces of notation. For a Laurent series $f(z) = \sum_n f_n z^n \in \C(\!(z)\!)$ the formal residue $\res_z f(z)$ is just the coefficient $f_{-1}$. If $\rho(w) = \sum_{n=1}^{\infty}a_nw^n\in \mathbb{C}[\![w]\!]$ is an invertible power series, i.e.,  $a_1\neq 0$, the following formal change of coordinates formula holds:
\begin{align}\label{change_of_variable}
\res_{z}f(z) = \res_{w}\left( f(\rho(w)) \rho'(w) \right).
\end{align}
One also has the formal integration by parts formula:
\begin{align*}
\res_z (f(z) g'(z)) = -\res_z(f'(z) g(z)).
\end{align*}
When we write $(1+t)^n$ for $n\in \Z$ in general, we mean
\begin{align}\label{binom}
(1+t)^n=\sum_{j\geq 0}\binom{n}{j} t^j, \quad \text{where} \quad \binom{n}{j}=\frac{n(n-1)\cdots (n-j+1)}{j!}.
\end{align}

\subsection{Vertex algebras}\label{sec:vertex}

\begin{defn}
A \emph{vertex algebra} consists of a vector space $V$, a vacuum vector $|0\rangle\in V$, a linear map
\begin{align*}
V \rightarrow \text{End}(V)[\![z, z^{-1}]\!], \quad
a \mapsto Y(a,z)=\sum_{n\in \ZZ}a_{(n)}z^{-n-1}
\end{align*}
(so $a_{(n)}b = \res_z z^n Y(a,z)b$ for all $a, b \in V$ and $n \in \Z$), satisfying the following axioms:
\begin{itemize}
\item{(Quantum field property)} For all $a, b \in V$ we have $Y(a,z)b \in V(\!(z)\!)$.
\item{(Vacuum axiom)} $Y(|0\rangle,z) = \Id_V$, and for all $a \in V$ one has $Y(a, z)\vac \in V[\![z]\!]$ and $a_{(-1)}|0\rangle=a$;
\item{(Borcherds identity)}
\begin{align}
\label{eq:Borcherds_identity}
\begin{split}
\sum_{j\geq 0}&\binom{m}{j}(a_{(n+j)}b)_{(m+k-j)}c\\ &=\sum_{j\geq 0}(-1)^j\binom{n}{j}a_{(m+n-j)}(b_{(k+j)}c)-\sum_{j\geq 0}(-1)^{j+n}\binom{n}{j}b_{(n+k-j)}(a_{(m+j)}c)
\end{split}
\end{align}
for all $a,b,c \in V$, and $k,m,n\in \ZZ$.
\end{itemize}
\end{defn}
The \emph{translation operator} $T\in \text{End}(V)$ is defined by $T(a)=a_{(-2)}|0\rangle$ for all $a \in V$, and \eqref{eq:Borcherds_identity} implies that $[T, Y(a, z)] = Y(Ta,z)=\partial_zY(a,z)$, for all $a\in V$. A useful special case of the Borcherds identity is the \emph{commutator formula}
\begin{align}\label{eq:va.comm.fla}
[a_{(m)}, b_{(n)}] = \sum_{j \geq 0} \binom{m}{j} (a_{(j)}b)_{(m+n-j)}.
\end{align}

\begin{defn}
A \emph{conformal vector} in a vertex algebra $V$ is a vector $\omega\in V$ with $Y(\omega,z)=\sum_{n\in \ZZ}L_{n}z^{-n-2}$, such that $L_{-1}=T$, $L_{0}$ is diagonalizable on $V$, and $\{L_{n}\}_{n\in \ZZ}$ satisfies the Virasoro relations:
\begin{align}\label{eq:vir.comm}
[L_{n},L_{m}]=(n-m)L_{n+m}+\frac{n^3-n}{12}\delta_{n,-m}c \Id_V,
\end{align}
for some constant $c\in \C$. The invariant $c$ is called the \emph{central charge} of $(V, \omega)$. A \emph{conformal vertex algebra} is a vertex algebra endowed with a {conformal vector} for which the eigenvalues of $L_0$ are integers bounded below.
\end{defn}
If $a \in V$ is an eigenvector of $L_0$ then we refer to the eigenvalue as its conformal weight and we denote it by $\Delta_a$. Given $a,b\in V$ and $n\in \Z$, we have
\[
\Delta_{a_{(n)}b}=\Delta_{a}+\Delta_{b}-n-1.
\]
We denote by $V_\D$ the vector subspace of vectors of conformal weight $\D$. It is useful to introduce the following notation for $a\in V_{\Delta}$:
\begin{align}\label{eq:conf.modes}
Y(a,z)=\sum_{n\in \ZZ} a_nz^{-n-\Delta}.
\end{align}
In particular we have $a_{(n)}=a_{n-\Delta+1}$, and $a_0 : V_{\D} \rightarrow V_\D$ for all $a \in V$ and $\D \in \Z$.

A \emph{current} is a vector $h \in V_1$ of conformal weight $1$, such that $h_0 = h_{(0)}$ acts semisimply on $V$ and
\begin{align*}
h_{n}h = 0 \quad \text{for $n \geq 2$} \quad \text{and} \quad h_1h = 2\kappa  \vac
\end{align*}
for some constant $\kappa \in \C$ which we call the \emph{index}. In OPE notation
\[
h(z) h(w) \sim \frac{2\kappa \vac}{(z-w)^2}.
\]
We call the eigenvalues of $h_{0}$ the \emph{charge}; we denote by $V^{(k)}$ the space of vectors in $V$ with charge $k$.

Let $\rho(t) = \sum_{n=1}^\infty \rho_n t^n = \exp(\sum_{n \geq 1} a_n t^{n+1} \partial_t) (a_1)^{t\partial_t} t$ be an invertible power series, i.e., $\rho_1 = a_0 \neq 0$. Huang established the following formula {\cite[Section 7.4]{Huang.Book.cft}}
\begin{align}\label{eq:huang.fla}
Y(a, z) = R(\rho) Y(R(\rho_z)^{-1}a, \rho(z)) R(\rho)^{-1},
\end{align}
where $\rho_z$ is the series defined by $\rho_z(t) = \rho(z+t)-\rho(z)$, and $R(\rho) \in \en(V)$ is a linear automorphism defined by the following procedure:
\begin{align}\label{eq:R(rho).def}
R(\rho) = \exp\left( -\sum_{n \geq 1} a_n L_n \right) a_0^{-L_0}.
\end{align}
This formula for $R(\rho)$ makes sense because $L_0$ acts on $V$ with integer eigenvalues, and the expression is well-defined because each $L_{>0}$ acts locally nilpotently.

We recall Zhu's modified vertex algebra structure, which is useful in formulating the modular transformation properties of trace functions. For all $a \in V$ one defines \cite{Z96}
\begin{align*}
Y[a,z] = Y(e^{2\pi i z L_0} a, \phi(z)),
\end{align*}
where $\phi(t) = e^{2\pi i t}-1$. If we write $R = R(\phi)$ then $R(\phi_z) = R(\phi) e^{-2\pi i z L_0}$ since $\phi_z(t) = e^{2\pi i z}\phi(t)$. Now we have
\begin{align*}
Y[a, z] = R^{-1} Y(Ra, z) R
\end{align*}
because of \eqref{eq:huang.fla}. So in fact $R : (V, Y[-,z]) \rightarrow (V, Y(-,z))$ is an isomorphism of vertex algebras.

The vertex algebra $(V, Y[-,z])$ is conformal with conformal vector $\wtil{\omega} = R^{-1}\omega = (2\pi i)^2(\omega - \tfrac{c}{24}\vac)$.  We write $L[z] = Y[\wtil{\omega}, z]$. One also checks that $R^{-1}(h) = \wtil{h} = 2\pi i (h + \frac{1}{2}L_1 h)$. Let's assume $h$ is primary for now. We write $h[t] = Y[\wtil h, t] = \sum_n h_{[n]} t^{-n-1}$.

We note that $h_{[1]}$ is related to the notation $I(h)$ used in \cite{AE2019}. Specifically {\cite[equation (2.14)]{AE2019}}
\[
(2\pi i)^2 h_{([1])} = I(h) = \sum_{j \geq 1} \frac{(-1)^j}{j} h_j.
\]

\subsection{Associated chiral algebras}\label{sec:chiral}

In this section we follow closely the presentation in \cite{FBZ}. It is possible to view invertible power series as continuous automorphisms of the topological algebra $\OO = \C[[t]]$. Let $V$ be a conformal vertex algebra. Then the group $\aut(\OO)$ of all continuous automorphisms of $\OO$ has a representation on $V$ via the construction $\rho \mapsto R(\rho)$ defined in \eqref{eq:R(rho).def}.

If $X$ is a smooth complex curve and $p \in X$ a point, the local ring $\OO_p$ is naturally an $\aut(\OO)$-torsor, i.e., carries a simple transitive right $\aut(\OO)$-action. Informally elements of $\OO_p$ are choices of local coordinate in a neighbourhood of $p$, more precisely in the formal disc $D_p = \spec{\OO_p}$, and $\aut(\OO)$ is the group of all changes of coordinate. Through an associated bundle construction one obtains an infinite dimensional vector bundle $\CV$ over $X$ whose fibre over $p$ is $\OO_p \times_{\aut(\OO)} V$. The bundle $\CA = \CV \otimes \omega_X$ carries the structure of a chiral algebra \cite{BD}. We do not explain precisely what a chiral algebra is here, except to say that there is an action
\[
\mu_p : \Gamma(D_p^\times, \CA) \times \CV_p \rightarrow \CV_p,
\]
where $D_p^\times$ is the punctured formal disc, and that a choice of coordinate $z \in \OO_p$ at the point $p$ induces identifications $\CV_p \rightarrow V$ and $\Gamma(D_p^\times, \CA) \rightarrow V[\![z]\!]$ upon which the action becomes
\begin{align}\label{eq:chiral.action.explicit}
\mu_p(f(z) a, b) = \res_z f(z) Y(a, z)b.
\end{align}
The associated bundle construction and Huang's formula \eqref{eq:huang.fla} together guarantee that this action is well-defined, independently of the choice of coordinate $z$.

Within this context we have the definition of the space of conformal blocks on $X$ at $p$:
\begin{defn}
A conformal block is a linear functional $\Phi : \CV_p \rightarrow \C$ which annihilates the image $\Gamma(X\backslash p, \CA) \cdot \CV_p$. The vector space of conformal blocks is denoted $\CC(X, p, (V, \omega))$ or just $\CC(X, p, V)$.
\end{defn}
Although we will not require it in this work, the construction can be adapted to include multiple points $p$ on $X$ and the insertion of nontrivial $V$-modules at the points instead of $V$.

We now describe an enhancement of these constructions in which the geometric input consists of an algebraic curve $X$ as well as a holomorphic line bundle $\CL$ over $X$. We shall be brief, as the theory we require is treated in {\cite[Chapter 7]{FBZ}} in greater generality.

Let $\GG$ denote the algebraic group characterised by $\GG(R) = R^\times$. In particular $\GG(\OO)$ may be identified with formal power series $a_0 + a_1 t + \cdots$ in which $a_0 \in \C^\times$ and $a_n \in \C$ for $n \geq 1$. The group $\aut(\OO)$ of continuous automorphisms of $\OO$ acts on $\GG(\OO)$ and we may form the semidirect product $G = \aut(\OO) \ltimes \GG(\OO)$.

Let $\CL$ be a holomorphic line bundle over $X$ and $p \in X$, then $\GG(\OO)$ acts on the set of trivialisations of $\CL|_{D_p}$ via multiplication. This action combines with the action of $\aut(\OO)$ on $\OO_p$, so that the semidirect product $G$ acts on the set $\CP_p$ of pairs $(s, z)$ consisting of $z \in \OO_p$ a coordinate on $D_p$ and $s$ a trivialisation of the restriction $\CL|_{D_p}$. In this way $\CP_p$ acquires the structure of a $G$-torsor.

At the level of Lie algebras $\lie\aut(\OO)$ is spanned by $L_m = -z^{m+1} \partial_z$ for $m \geq 0$. These elements, of course, satisfy the commutator relations of the Virasoro algebra \eqref{eq:vir.comm}. The Lie algebra $\lie\GG(\OO)$ is abelian, spanned by $h_n$ for $n \geq 0$, where (after a choice of local coordinate $z$) $h_n$ corresponds to multiplication by $z^n$. In the semidirect product $\lie(G)$ we have the Lie bracket
\begin{align}\label{eq:L.h.rel}
[L_m, h_n] = -n h_{m+n}.
\end{align}

\begin{defn}
A \emph{charged conformal vertex algebra} is a triple $(V, \omega, h)$ in which $(V, \omega)$ is a conformal vertex algebra and $h \in V$ is a current, such that $L_1h = 0$ (thus $h$ is said to be quasi-primary), and $h_0$ acts on $V$ with integer eigenvalues. Since $L_0$ and $h_0$ commute, a charged conformal vertex algebra has a decomposition $V = \bigoplus_{\D, k} V_\D^{(k)}$ by conformal weight $\D$ and eigenvalue $k$ of $h_0$. For $a \in V_\D^{(k)}$ we refer to the integer $k$ as the \emph{charge} of $a$.
\end{defn}


Let $(V, \omega, h)$ be a charged conformal vertex algebra. Since $h$ is quasi-primary, the commutator formula \eqref{eq:va.comm.fla} gives the same relation \eqref{eq:L.h.rel} as above. Thus $V$ carries a representation of $\lie(G)$. Because of the integral charge condition, this representation exponentiates to a representation of $G$. After a choice of local coordinate $z$, the elements $e^a$ and $e^{a z}$ of $\GG(\OO)$ are mapped to $e^{a h_0}$ and $e^{a h_1}$ in $\en(V)$, respectively.

Let $(V, \omega, h)$ be a charged conformal vertex algebra, $X$ a smooth complex curve and $\CL$ a holomorphic line bundle over $X$. From these data we obtain, as above, a $G$-action on $V$ and the $G$-torsors $\CP_p$ for each $p \in X$. Through an associated bundle construction one obtains an infinite dimensional vector bundle $\CV$ over $X$ whose fibre over $p$ is $\CP_p \times_{G} V$. As before $\Gamma(D_p^\times, \CV \otimes \omega_X)$ acts on $\CV_p$, and each choice of local coordinate $z$ at $p$ and trivialising section $s$ of $\CL|_{D_p}$ yields an isomorphism which identifies the action with the residue \eqref{eq:chiral.action.explicit} {\cite[Theorem 7.1.6]{FBZ}}.

\begin{defn}\label{defn:charged.CB}
A charged conformal block is a linear functional $\Phi : \CV_p \rightarrow \C$ which annihilates the image $\Gamma(X \backslash p, \CA) \cdot \CV_p$. The vector space of charged conformal blocks is denoted $\CC((X, \CL), p, (V, \omega, h))$ or just $\CC((X, \CL), p, V)$.
\end{defn}

Through the formalism of Virasoro localisation, the spaces of conformal blocks assemble to form a quasicoherent sheaf, in fact a $\CD$-module, as we vary the moduli parameters of the curve $X$ \cite{BS88} \cite[Chapter 17]{FBZ}. In the following, we shall restrict attention to the case of $X = E_\tau$ smooth elliptic curves and write $\CC(E_\tau, V)$ generally in place of $\CC(E_\tau, 0, V)$. The connection on the sheaf of conformal blocks comes from an instance of the notion of \emph{Ward identity} in physics, and is interpreted as a Gauss-Manin connection in the chiral algebra picture \cite[Section 4.5]{BD} and is given explicitly in the following definition, which is essentially the definition of $1$-point function in \cite{Z96}.
\begin{defn}
Let $(V, \omega)$ be a conformal vertex algebra. A genus-one conformal block is a function
\[
S : V \times \CH \rightarrow \C,
\]
written $S(u, \tau)$, linear in $u$ and holomorphic in $\tau$, such that for each $\tau$ we have $S(-, \tau) \in \CC(E_\tau, V)$, and $S$ satisfies the differential equation
\begin{align*}
2\pi i \frac{d}{d\tau} S(u, \tau) = S(\res_t \wp_1(t, \tau) L(t) u, \tau).
\end{align*}
We shall denote the vector space of genus-one conformal blocks by $\CC^\nabla(V, \omega)$.
\end{defn}
In a similar way, we introduce a connection on charged conformal blocks as follows.
\begin{defn}
Let $(V, \omega, h)$ be a charged conformal vertex algebra. A genus-one charged conformal block is a function
\[
S : V \times \C \times \CH \rightarrow \C,
\]
written $S(u, \al, \tau)$, linear in $u$ and holomorphic in $(\al, \tau)$, such that for each $(\al, \tau)$ we have $S(-, \al, \tau) \in \CC((E, \CL)_{(\al, \tau)}, V)$, and $S$ satisfies the differential equations
\begin{align}
\frac{\partial}{\partial\al} S(u, \al, \tau) &= S(\res_t \wp_1(t, \tau) h(t) u, \al, \tau) \label{eq:DE.y} \\
\text{and} \quad
2\pi i \frac{\partial}{\partial\tau} S(u, \al, \tau) &= S(\res_t \wp_1(t, \tau) L(t) u, \al, \tau). \label{eq:DE.q}
\end{align}
We shall denote the vector space of genus-one charged conformal blocks by $\CC^\nabla(V, \omega, h)$.
\end{defn}
Using the material on elliptic functions in Section \ref{sec:mod.ell} we now cast the definition of charged conformal blocks more explicitly. First we briefly explain for the case of ordinary conformal blocks.

We consider the elliptic curve $E_\tau = \C / \La$ and the local coordinate at the point $0$ induced by the standard coordinate $z$ on $\C$. We denote this local coordinate also by $z$. Using this choice the fibre $\CV_0$ is identified with $V$. The bundle $\CV$ is trivial, and elements of $\Gamma(E_\tau \backslash 0, \CV \otimes \omega_E)$ are linear combinations of $a \, dz$ and $\wp_k(z, \tau) a \, dz$ for $k \geq 2$ and $a \in V$. The space $\CC(E_\tau, V)$ is thus identified with the space of linear functionals $\Phi : V \rightarrow \C$ such that
\begin{align}\label{eq:CB.analytic}
\begin{split}
\Phi(a_{(0)}b) &= 0 \quad \text{for all $a, b \in V$}, \\
\text{and} \quad \Phi(\res_t \wp_2(t, \tau) Y(a, t) b) &= 0 \quad \text{for all $a, b \in V$}.
\end{split}
\end{align}
We note that
\[
\Phi(\res_t \wp_k(t, \tau) Y(a, t) b) = 0
\]
for all $k \geq 2$ follows from the case $k=2$ by substituting $Ta$ in place of $a$ and transfering the derivative to the elliptic function using the integration by parts formula.

We now describe charged conformal blocks. It is instructive to work over $\C$ and then pass to the quotient as in \eqref{eq:elliptic.family}. Consider the map $f : \C \rightarrow \C$ defined by $f(z) = z + \tau$. Denote by $z_p$ the local coordinate at $p$ induced by the global coordinate $z$. Let $a \in V^{(k)}$ and consider the local sections $(v, z_p, a)$ and $(v, z_{f(p)}, a)$. The quotient by the $\Z^2$-action identifies $(v, p)$ with $(e^{-2\pi i \al}v, f(p))$, so
\[
f^*((v, z_{f(p)}, a)) = (e^{-2\pi i \al}v, z_p, a) = (v, z_p, e^{-2\pi i \al h_0} a) = (v, z_p, e^{-2\pi i k \al} a).
\]
If $k\al \notin \La$ then the function $\psi(z, -k\al, \tau)$ is well-defined and meromorphic in $z$ and, because of the transformation formula \eqref{eq:psi.transf}, we obtain a section
\[
\psi(z, -k\al, \tau) (v, z, a) \, dz \in \Gamma(E \backslash 0, \CV \otimes \omega_E).
\]
In fact, this section has a simple pole at $0$. We now have a characterisation of charged conformal blocks similar to (\ref{eq:CB.analytic}):
\begin{align}\label{eq:twisted.CB.analytic}
\begin{split}
\Phi(a_{(0)}b) &= 0 \quad \text{for $b \in V$ and $a \in V^{(k)}$ if $k\al \in \La$}, \\
\Phi(\res_t \wp_2(t, \tau) Y(a, t) b) &= 0 \quad \text{for $b \in V$ and $a \in V^{(k)}$ if $k\al \in \La$}, \\
\text{and} \quad \Phi(\res_t \psi(t, -k\al, \tau) Y(a, t) b) &= 0 \quad \text{for $b \in V$ and $a \in V^{(k)}$ if $k\al \notin \La$}.
\end{split}
\end{align}

\begin{lemma}
If $\Phi$ is a charged conformal block on $V = \bigoplus_{k \in \Z} V^{(k)}$ then $\Phi(V^{(k)}) = 0$ for each $k \neq 0$.
\end{lemma}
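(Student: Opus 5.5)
The plan is to apply the defining relations \eqref{eq:twisted.CB.analytic} with the vacuum vector $b=\vac$, or more conveniently with $a=h$, which has charge $0$, and $b\in V^{(k)}$. Since $h\in V^{(0)}$ and $0\cdot\al\in\La$, the first relation gives $\Phi(h_{(0)}b)=0$ for every $b\in V$. For $b\in V^{(k)}$ we have $h_{(0)}b=h_0b=kb$, so $k\,\Phi(b)=0$, and hence $\Phi(b)=0$ whenever $k\neq 0$. This disposes of the statement at once, for every $(\al,\tau)$.

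The only point to verify is that the first line of \eqref{eq:twisted.CB.analytic} really applies to $a=h$. This line is derived from the section $h\,dz$ of $\Gamma(E\backslash 0,\CV\otimes\omega_E)$. That section is well defined because $h$ has charge zero, so $e^{-2\pi i\al h_0}h=h$ and the constant section is invariant under the $\Z^2$-action \eqref{eq:elliptic.family}. Its residue action $\res_t Y(h,t)b=h_{(0)}b$ then lies in $\Gamma(E\backslash 0,\CA)\cdot\CV_0$, which $\Phi$ annihilates by Definition~\ref{defn:charged.CB}.

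There is one subtlety. The identification of $\CV$ with the trivial bundle uses the trivialisation of $\CL$ pulled back from $\C$, and the group $G$ acts on $h$ only through $\aut(\OO)$ and $\GG(\OO)$. Since $h$ is quasi-primary of weight $1$ and $h_nh$ is a multiple of $\vac$ for $n\ge1$, the $\GG(\OO)$-part can shift $h$ by multiples of $\vac$. This shift does not affect the argument, because $\vac_{(0)}=0$.

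There is no real obstacle here; the main step is simply recognising that $h$ itself supplies a global section and that its zero mode measures charge.
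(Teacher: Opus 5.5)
Your proposal is correct and is essentially the paper's proof. You apply the first relation of \eqref{eq:twisted.CB.analytic} with $a=h\in V^{(0)}$ to get $\Phi(h_{(0)}b)=k\,\Phi(b)=0$ for $b\in V^{(k)}$. Your extra remarks on why the constant section $h\,dz$ is well defined are harmless but not needed.
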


\begin{proof}
We have $h \in V^{(0)}$, so if $b \in V^{(k)}$ we have that $\Phi$ annihilates $h_{(0)}b = k b$. For $k \neq 0$ we have immediately that $\Phi(b) = 0$.
\end{proof}

\section{Finiteness of charged conformal blocks}\label{sec:finiteness}

In this section we prove finite dimensionality of spaces of charged conformal blocks under appropriate hypotheses on the vertex algebra $V$. In order to do this, it is convenient to first formulate a notion of \emph{formal coinvariants} inspired by \eqref{eq:twisted.CB.analytic}, but expressed purely in terms of series in $q$ and $y$.

\begin{defn}
Let $N$ be a positive integer, and let $R^{(N)}\subset \C((y^{-1}))[[q]]$ denote the subring generated by the series $\widetilde{G}_{2k}(q)$ for $k\geq 1$, by $E_k(y^c,q)$ for $k\geq 2$ and $|c|\leq N$, and by $E_1^-(y^c,q)$ for $0 < c \leq N$. (Note that $E_1^\pm$ differs from $P_1^\pm$ by a constant, and $P_1^+(y^{-1},q) = -P_1^-(y,q)$). We also denote
\[
R=R^{(1)},\qquad \widetilde{R}=\bigcup_{N\geq 1} R^{(N)}.
\]
\end{defn}

\begin{lemma}
For each fixed $N \geq 1$, the ring $R^{(N)}$ is Noetherian.
\end{lemma}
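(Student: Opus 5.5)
The plan is to reduce to Proposition~\ref{prop:Li23}, together with the Hilbert basis theorem and the fact that a quotient of a Noetherian ring is Noetherian. Concretely, I would show that $R^{(N)}$ is a homomorphic image of a ring of the form $A[x_1,\dots,x_r]$, with $A$ Noetherian and $r$ finite. The ring $R^{(N)}$ has three kinds of generators:
\begin{itemize}
\item the Eisenstein series $\widetilde G_{2k}(q)$;
\item the twisted Eisenstein series $E_k(y^c,q)$, $k\ge 2$, for finitely many values of $c$;
\item finitely many series $E_1^-(y^c,q)$, $0<c\le N$.
\end{itemize}
The last family is finite, so it only contributes finitely many polynomial variables.

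For the first two families, the first step is to handle the Eisenstein series. The ring $\C[\widetilde G_2,\widetilde G_4,\widetilde G_6,\dots]$ is generated by $\widetilde G_2,\widetilde G_4,\widetilde G_6$. This is the classical fact that $\widetilde G_{2k}$ for $k\ge 4$ is a polynomial in $\widetilde G_4,\widetilde G_6$, a consequence of the Weierstrass differential equation. So this contributes only three variables.

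The second step is to handle, for each fixed $c$ with $1\le |c|\le N$, the ring $\CE_c$ generated by $E_k(y^c,q)$, $k\ge 2$. The substitution $y\mapsto y^c$ is a ring homomorphism $\C[y^{\pm1}][[q]]\to\C[y^{\pm1}][[q]]$ (for $c<0$ it is $y\mapsto y^{-|c|}$, still well defined on Laurent polynomial coefficients). It sends $\CE$ onto $\CE_c$, so $\CE_c$ is Noetherian by Proposition~\ref{prop:Li23}. For $c=0$ the series $E_k(1,q)$ must be interpreted appropriately. I expect these to be untwisted Eisenstein series, up to constants and with odd $k$ vanishing, so they lie in the ring from the first step. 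I would check this from \eqref{eq:Ppm.def}: at $y=1$ both $P_1^\pm$ reduce to the classical expansion of $\wp_1$ away from the constant term.

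The third step assembles these. The ring $A$ generated by the finitely many subrings $\CE_c$ is a quotient of the tensor product $\bigotimes_c \CE_c$. However, a tensor product of Noetherian $\C$-algebras need not be Noetherian, and this is the main obstacle. To overcome it I would use a finite-generation statement instead: by \cite[Proposition 2.10]{Libgober}, quoted above, each $E_m$ is a polynomial in the $P_j$ with $j\le 3$. Hence $\CE$ is in fact a finitely generated $\C$-algebra, and so is each $\CE_c$.

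With that, $R^{(N)}$ is generated over $\C$ by finitely many elements:
\begin{itemize}
\item the images of those finitely many generators for each $c$;
\item $\widetilde G_2,\widetilde G_4,\widetilde G_6$;
\item the $E_1^-(y^c,q)$.
\end{itemize}
It is therefore a quotient of a polynomial ring in finitely many variables over $\C$, and is Noetherian by the Hilbert basis theorem. If the finite-generation citation is deemed insufficient, the fallback is to rerun the argument of \cite[Section 5.2]{Li23} with finitely many twist parameters $y^c$ at once. That argument covers arbitrary twisted Eisenstein series $G_m\bracsmall{\theta}{\phi}$ with $\theta$ ranging over a finite set, and I would rely on it to produce the same finite generation.
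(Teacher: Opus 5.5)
You are right to distrust the tensor-product step. The paper's proof takes exactly that route: it applies Proposition~\ref{prop:Li23} to each $\CE^{(c)}$, asserts without further argument that the image of $\bigotimes_{|c|\le N}\CE^{(c)}$ is Noetherian, and then adjoins the finitely many $E_1^-(y^c,q)$. Replacing this by finite generation plus the Hilbert basis theorem is a sensible repair. However, your finite-generation step does not follow from what you cite. That each $E_m$ is a polynomial in $P_1,P_2,P_3$ (with coefficients in $\C[\widetilde G_2,\widetilde G_4,\widetilde G_6]$) only gives the inclusion $\CE\subseteq\C[\widetilde G_2,\widetilde G_4,\widetilde G_6,P_1,P_2,P_3]$. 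A subalgebra of a finitely generated algebra need not be finitely generated, nor even Noetherian (e.g.\ $\C[x,xy,xy^2,\dots]\subset\C[x,y]$). So ``hence $\CE$ is finitely generated'' is a non sequitur. And if your ``images of those finitely many generators'' are the $P_j(y^c,q)$, you have only exhibited a finitely generated ring \emph{containing} $R^{(N)}$, which says nothing about $R^{(N)}$ itself.

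What is missing is the reverse inclusion: you must show that the $P_j(y^c,q)$ with $j\le 3$ lie in $R^{(N)}$, and this is exactly where the $E_1$-generators are needed.

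\begin{itemize}
\item Write $z\,\psi(z,-\alpha,\tau)=\exp\bigl(\sum_{j\ge1}a_jz^j\bigr)$, where $a_j=\frac{1}{j!}(\log\theta)^{(j)}(-\alpha,\tau)$, plus $\widetilde G_j/j$ when $j$ is even.
\item Then $E_m=\lambda_m a_m+Q_m(a_1,\dots,a_{m-1})$ with $\lambda_m\neq0$. So over $\C[\widetilde G_2,\widetilde G_4,\widetilde G_6]$ the triples $(E_1,E_2,E_3)$ and $(a_1,a_2,a_3)$, equivalently $(P_1,P_2,P_3)$, are related by an invertible triangular change of variables.
\item Each $a_j$ with $j\ge4$ is a polynomial in $a_2,a_3$ and Eisenstein series, by the Weierstrass equation.
\item Hence for $0<c\le N$ every $E_k(y^c,q)$ lies in $\C[\widetilde G_2,\widetilde G_4,\widetilde G_6,E_1^-(y^c,q),E_2(y^c,q),E_3(y^c,q)]$.
\item For $c<0$, where $R^{(N)}$ has no $E_1$-generator, you also need $E_k(y^{-1},q)=(-1)^kE_k(y,q)$ for $k\ge2$. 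This follows from $\psi(-z,-\alpha,\tau)=-\psi(z,\alpha,\tau)$.
\item The case $c=0$ is as you say.
\end{itemize}

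With these additions $R^{(N)}$ is finitely generated and your conclusion holds. Your intermediate claim that $\CE$ itself (no $E_1$, no Eisenstein series) is finitely generated is neither proved nor needed. The fallback of rerunning Li's argument does not replace this step.
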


\begin{proof}

For each integer $c$ with $|c|\leq N$ we consider the ring
\[
\CE^{(c)}
:=
\C\bigl[\widetilde{G}_{2k}(q),\, E_m(y^c,q)\ \big|\ k\geq 1,\ m\geq 2\bigr]
\subset \C((y^{-1}))[[q]].
\]
By Proposition \ref{prop:Li23} the ring $\mathcal E^{(c)}$ is Noetherian.


Since $N$ is fixed, there are only finitely many integers $c$ with $|c|\leq N$. The image of the morphism $\bigotimes_{|c|\leq N}\mathcal E^{(c)} \rightarrow
\C((y^{-1}))[[q]]$ is Noetherian and $R^{(N)}$ is the subring generated, over this image, by the finite set of elements $E_1^-(y^c,q)$ for $0 < c \leq N$. Therefore $R^{(N)}$ is Noetherian.
\end{proof}


{{We say that a conformal vertex algebra $V$ is \emph{strongly generated} by a subset $U \subset V$ if $V$ is spanned by vectors of the form $u^{(1)}_{(-n_1)} \cdots u^{(r)}_{(-n_r)} \vac$ with $u^{(j)} \in U$ and $n_j \geq 1$. We call $V$ \emph{finitely strongly generated} if it is strongly generated by a finite set $\{u_1,\ldots,u_r\}$.}}

Inspired by \eqref{eq:twisted.CB.analytic}, we formulate a definition of \emph{formal coinvariants}.
\begin{defn}
Let $(V, \omega, h)$ be a charged conformal vertex algebra. Assume $V$ is finitely strongly generated by elements $\{u_1, \ldots, u_r\}$, and let $N = \max\{|c_i|\}$ where $c_i$ is the charge of $u_i$. The space of \emph{formal coinvariants} $\wtil{H}$ is the quotient ${R^{(N)}} \otimes_\C V^{(0)} / S$, where $S$ is the ${R^{(N)}}$-submodule spanned by elements
\begin{align*}
\begin{split}
a_{(0)}b \quad &\text{for $a, b \in V^{(0)}$}, \\
\res_t \wtil\wp_2(t, q) Y(a, t) b \quad &\text{for $a, b \in V^{(0)}$}, \\
\res_t P_1^+(t, y^{c}, q) Y(u_i, t) b \quad &\text{for $u_i \in V^{(c)}$ and $b \in V^{(-c)}$ where $c \in \Z_{> 0}$}, \\
\text{and} \quad \res_t P_1^-(t, y^{c}, q) Y(u_i, t) b \quad &\text{for $u_i \in V^{(c)}$ and $b \in V^{(-c)}$ where $c \in \Z_{< 0}$}.
\end{split}
\end{align*}
\end{defn}
Technically this definition of the space of formal coinvariants depends on the auxiliary datum of the choice of strong generators. However this choice is not essential, nor is the value of $N$ appearing in the definition, so long as it is finite.

We recall that the vector space quotient $R_V = V / V_{(-2)}V$ carries the structure of a Poisson algebra, when endowed with commutative product $a \cdot b = a_{(-1)}b$ and Poisson bracket $\{a, b\} = a_{(0)}b$. In \cite{Z96} Zhu introduced this algebra, as well as the finiteness condition $\dim(R_V) < \infty$ which serves as a sufficient condition guaranteeing $\dim \CC(E_\tau, V) < \infty$ {\cite[Lemma 4.4.1]{Z96}}. In fact the much weaker condition $\dim R_V / \{R_V, R_V\} < \infty$ (implied in particular by the \emph{quasi-lisse} condition) is already sufficient {\cite[Proposition 5.2]{Arakawa-Kawasetsu}}.

We form an analogous definition in the charged case.
\begin{defn}\label{def:stably.quasi-lisse}
Let $(V, \omega, h)$ be a charged conformal vertex algebra with associated Zhu Poisson algebra $R_V$. Write $I$ for the commutative algebra ideal in $R_V$ generated by $\bigoplus_{c \neq 0} V^{(c)}$. The quotient $R_V / I$ is a Poisson algebra which we denote $R_V^h$. We refer to the corresponding affine Poisson scheme $X_V^h = \specm{R_V^h}$ as the \emph{relative associated variety}.
\end{defn}
It is easy to see, but not completely obvious, that $R_V^h$ is a Poisson algebra. It is clear that $R_V^h$ is naturally a quotient of $V^{(0)}$, and any element of $I \cap R_V^{(0)}$ is expressed as a linear combination of terms of the form $a \cdot b$ where $a \in R_V^{(c)}$ and $b \in R_V^{(-c)}$ for some $c \neq 0$. But then for any $x \in R_V^{(0)}$ we have $\{a \cdot b, x\} = a \cdot \{b, x\} + b \cdot \{a, x\} \in I$.

For convenience let us denote
\[
C(V) = V^{(0)}_{(-2)}V^{(0)} + \sum_{c \neq 0} V^{(c)}_{(-1)}V^{(-c)},
\]
so that $R_V^h = V^{(0)} / C(V)$.
\begin{defn}
We shall say that $V$ is \emph{stably quasi-lisse} if $X_V^h$ has finitely many symplectic leaves.
\end{defn}
In particular if $V$ is stably quasi-lisse then $\dim R_V^h / \{R_V^h, R_V^h\} < \infty$ \cite{Arakawa-Kawasetsu, ES2010}. Notice that if we take $h = 0$ then the charge grading collapses to $V = V^{(0)}$, and $R_V^h = R_V$ so that we recover the usual quasi-lisse condition. On the other hand if $V$ is quasi-lisse, and $h \neq 0$, it is not clear if $V$ is stably quasi-lisse. The issue is that $I \subset R_V$ is not necessarily an ideal for $\{\cdot, \cdot\}$. Nevertheless, we shall examine several classes of examples below, and in these cases the stable quasi-lisse condition indeed follows from the quasi-lisse condition.
\begin{prop}\label{prop:formal.finiteness}
Let $V = \bigoplus_{\D, c} V_\D^{(c)}$ be a charged conformal vertex algebra. Suppose that
\begin{itemize}
\item $V$ is finitely strongly generated by elements $\{u_1, \ldots, u_r\}$, and let $N = \max\{|c_i|\}$ where $c_i$ is the charge of $u_i$,

\item $V$ is stably quasi-lisse.
\end{itemize}
Then the space of formal coinvariants $\wtil{H}$ is finitely generated as an $R^{(N)}$-module.
\end{prop}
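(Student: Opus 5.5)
We follow Zhu's finiteness argument as refined in \cite[Proposition 5.2]{Arakawa-Kawasetsu}, working throughout with the $R^{(N)}$-module structure and filtrations. Equip $V^{(0)}$ with the Li filtration (or the conformal weight filtration), and $R^{(N)}\otimes V^{(0)}$ with the induced filtration. The goal is to show that the associated graded of $\wtil H$ is a quotient of $R^{(N)}\otimes_\C R_V^h/\{R_V^h,R_V^h\}$. Since this tensor product is a finitely generated $R^{(N)}$-module, and $R^{(N)}$ is Noetherian by the preceding lemma, the statement then follows by the standard lifting argument. Concretely, we choose a finite set $\{v_1,\dots,v_s\}\subset V^{(0)}$ of homogeneous vectors whose images span $R_V^h/\{R_V^h,R_V^h\}$. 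This is possible because $V$ is stably quasi-lisse, which gives $\dim R_V^h/\{R_V^h,R_V^h\}<\infty$ by \cite{Arakawa-Kawasetsu, ES2010}. We then prove by induction on conformal weight $\D$ that every $b\in V^{(0)}_\D$ lies in $\sum_j R^{(N)} v_j + S + R^{(N)}\otimes V^{(0)}_{<\D}$.

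The induction step reduces $b$ modulo the three types of elements spanning $C(V)+\{V^{(0)},V^{(0)}\}$, showing that each is congruent modulo $S$ to lower-weight terms with coefficients in $R^{(N)}$.
\begin{enumerate}
\item[(i)] Elements $a_{(0)}b'$ with $a,b'\in V^{(0)}$ lie in $S$ directly.
\item[(ii)] For $a_{(-2)}b'$ with $a,b'\in V^{(0)}$, expand $\res_t\wtil\wp_2(t,q)Y(a,t)b'$ using \eqref{eq:wp.def}. This gives $a_{(-2)}b'$ plus a combination of $a_{(n)}b'$, $n\ge 0$, with coefficients $\wtil G_{2k}(q)$, all of strictly lower weight. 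Strong generation and the usual reduction to generators $a=u_i$ via the Borcherds identity are used here.
\item[(iii)] For $u_{i,(-1)}b'$ with $u_i\in V^{(c)}$, $c\neq0$, and $b'\in V^{(-c)}$, use $\res_t P_1^\pm(t,y^c,q)Y(u_i,t)b'$. By \eqref{eq:P1pm-Laurent} this equals $-\frac{1}{2\pi i}u_{i,(-1)}b'$ plus $E_1^\pm(y^c,q)\,u_{i,(0)}b'$, plus $\sum_{m\ge2}E_m(y^c,q)(2\pi i)^{m-1}u_{i,(m-1)}b'$. All correction terms have strictly lower conformal weight and coefficients in $R^{(N)}$, since $|c|\le N$ and we have arranged the sign of $c$ so that $E_1^-$ (or equivalently $P_1^+(y^{-1},q)=-P_1^-(y,q)$) appears.
\end{enumerate}
One must check that the elements of $C(V)$ are spanned by such expressions with $a$ a strong generator. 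For type (iii), note that $V^{(c)}_{(-1)}V^{(-c)}$ is, modulo $V_{(-2)}V$ and lower filtration, generated by $u_{i,(-1)}$ acting on vectors of the complementary charge. This follows since $V^{(c)}$ is spanned by monomials in the generators, and any monomial of nonzero charge contains some factor $u_i$ of nonzero charge that can be moved to the front modulo lower-weight terms by the commutator formula \eqref{eq:va.comm.fla}.

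The main obstacle is this last bookkeeping. Elements of the ideal $I\cap R_V^{(0)}$ have the form $x\cdot u\cdot w$, where the nonzero-charge generator $u$ may sit in the middle of a product. The defining relations of $\wtil H$ only allow $Y(u_i,t)$ acting on an arbitrary $b'\in V^{(-c)}$. We will handle this by writing a normally ordered monomial $x_{(-1)}u_{i,(-1)}w$ and using quasi-commutativity to rewrite it as $u_{i,(-1)}(x_{(-1)}w)$ modulo $V_{(-2)}V$ and lower-weight terms. This is legitimate in $\on{gr}$ because $R_V$ is commutative. All such error terms are of lower weight, so they are absorbed by the induction. Since every coefficient produced lies in $R^{(N)}$, the induction terminates, showing that $\wtil H$ is spanned over $R^{(N)}$ by the images of $v_1,\dots,v_s$.
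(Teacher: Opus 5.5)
Your overall strategy is the paper's: induct on conformal weight, use the $\wtil\wp_2$-relations to kill neutral $a_{(-2)}b$, and use the $P_1^\pm$-relations once a charged strong generator is in front. Your use of $R_V^h/\{R_V^h,R_V^h\}$, with the bracket part absorbed by $a_{(0)}b\in S$, is in fact the right finiteness input. The paper's choice of $W$ complementary to $C(V)$ alone tacitly needs $\dim R_V^h<\infty$.

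\textbf{The gap.} The step you flag as the main obstacle is resolved incorrectly. By the commutator formula
\[
x_{(-1)}u_{i,(-1)}w-u_{i,(-1)}x_{(-1)}w=\sum_{j\geq 0}(-1)^j\,(x_{(j)}u_i)_{(-2-j)}w ,
\]
and every term on the right has conformal weight $\Delta_x+\Delta_{u_i}+\Delta_w$, exactly the weight you started with. So these error terms are not of lower weight, and your induction does not absorb them.

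They do lie in $V_{(-2)}V$, but in $V^{(c)}_{(-2-j)}V^{(-c)}$ with $c\neq 0$, and the first factor $x_{(j)}u_i$ is in general not a generator. Neither of your reductions applies to such terms: (ii) needs a neutral first factor, and (iii) needs a generator and, as written, only treats mode $-1$. Commutativity of $R_V$ only places the error in $V_{(-2)}V$, which is a different filtration from the one you induct on. The same objection applies to your claim that a charged factor in a monomial of $V^{(c)}$ can be moved to the front ``modulo lower-weight terms''.

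\textbf{What is needed.} You must show that all of $V_{(-2)}V\cap V^{(0)}_\Delta$, not just $V^{(0)}_{(-2)}V^{(0)}$, is congruent modulo $S$ to $R^{(N)}\otimes V^{(0)}_{<\Delta}$. There are two ways to do this.
\begin{enumerate}
\item The paper uses the PBW-type fact that $V_{(-2)}V$ is spanned by monomials $u^{i_1}_{(-n_1)}\cdots u^{i_s}_{(-n_s)}|0\rangle$ with $n_1\geq 2$. Each such monomial has either a neutral generator in front, and then lies in $V^{(0)}_{(-2)}V^{(0)}$, or a charged one.
\item Alternatively, add a secondary induction at fixed total weight on the weight of a charged first factor, for all modes $\leq -1$. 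For $a=u_{(-n)}a'\in V^{(c)}$, expanding $a_{(-1-k)}b$ by the Borcherds identity gives only three kinds of term: terms with a charged generator in front; terms in $V^{(0)}_{(-2)}V^{(0)}$; and terms whose charged first factor has strictly smaller weight than $a$. The last kind is either $a'$ itself, or $a'$ and $u_{(i)}a'$ after commuting a neutral $u_{(-1)}$ to the right.
\end{enumerate}

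Either way, you also need (iii) for $u_{i,(-n)}b'$ with $n\geq 2$. This holds because $\res_t P_1^\pm(t,y^c,q)\,Y(T^{n-1}u_i,t)b'\in S$. To see it, write $Y(Tu,t)=[T,Y(u,t)]$, note that $Tb'\in V^{(-c)}$, and use that $T=\omega_{(0)}$ maps $R^{(N)}\otimes V^{(0)}$ into $S$.
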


\begin{proof}
Let $\D_0$ be chosen so that $W = \bigoplus_{\D \leq \D_0} V^{(0)}_\D$ contains a vector subspace of $V^{(0)}$ complementary to $C(V)$. Note that $W$ is finite dimensional.

The vertex algebra $V$ is spanned by the elements
\begin{align}\label{eq:PBW}
u^{i_1}_{(-n_1)} u^{i_2}_{(-n_2)} \cdots u^{i_s}_{(-n_s)} \vac,
\end{align}
where $n_j \geq 1$ for each $j$. 
Any element of $V_{(-2)}V$ can be written, by repeated application of the Borcherds identity if necessary, as a linear combination of terms of the form \eqref{eq:PBW} in which $n_1 \geq 2$. Set $a = u^{i_1}$ and $b = u^{i_2}_{(-n_2)} \cdots u^{i_s}_{(-n_s)} \vac$.

If $a \in V^{(0)}$ then $\res_t \wtil\wp_2(t, q) Y(a, t) b \in S$ and so $a_{(-2)}b$ is equal to a linear combination of elements of strictly smaller conformal weight modulo $S$. If $a \in V^{(c)}$ for some $c \neq 0$ then $\res_t P_1^\pm(t, y^c, q) Y(a, t) b \in S$ and the same conclusion holds for $a_{(-1)}b$. Thus all elements of ${R}^{(N)} \otimes V^{(0)}$ are congruent modulo $S$ to elements of ${R}^{(N)} \otimes W$.
\end{proof}
%
In the following corollary, and in many places throughout the paper, we will require that our vertex algebra $V$ be finitely strongly generated, and the number $N$ defined as in the statement of Proposition \ref{prop:formal.finiteness} will play a role. On the understanding that $N$ will always be as in Proposition \ref{prop:formal.finiteness}, we shall not repeat its definition in full. The following result is analogous to {\cite[Lemma 10.8]{EH2024.chiral.H1}}.
\begin{cor}\label{cor:informal.finiteness}
Let $V$ be a charged conformal vertex algebra. Suppose that $V$ is finitely strongly generated and stably quasi-lisse. Then $\CC((E, \CL)_{(\al, \tau)}, V)$ is finite dimensional whenever
\[
0 < -N \im(\al) < \im(\tau).
\]
\end{cor}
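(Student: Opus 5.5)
The plan is to specialise the formal statement, Proposition~\ref{prop:formal.finiteness}, at a point of the stated domain, rather than to argue directly with conformal blocks.

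\textbf{Step 1: convergence.} Fix $(\al,\tau)$ with $0<-N\im(\al)<\im(\tau)$ and put $y=e^{2\pi i\al}$, $q=e^{2\pi i\tau}$. Then $|q|<|y^{c}|<1$ for $0<c\le N$, and $|q|<|y^{-c}|<1$ for $-N\le c<0$. These are exactly the regions of convergence of the series $P_1^{+}(t,y^c,q)$ (for $c>0$) and $P_1^{-}(t,y^c,q)$ (for $c<0$) appearing in the definition of $S$. The generators $E_1^-(y^c,q)$ and $E_k(y^c,q)$ of $R^{(N)}$, together with $\wtil G_{2k}(q)$, also converge at this point. We therefore get an evaluation homomorphism $\mathrm{ev}:R^{(N)}\to\C$, which is well defined because an identity of convergent series in $\C((y^{-1}))[[q]]$ persists after evaluation. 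Since $k\al\notin\La$ for $0<|k|\le N$ in this domain, the twisted-Weierstrass relation $\psi(z,-k\al,\tau)=-2\pi i P_1^{\pm}(z,y^{k},q)$ (in the appropriate expansion) holds there.

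\textbf{Step 2: pass to charged conformal blocks.} Let $\Phi\in\CC((E,\CL)_{(\al,\tau)},V)$. By the lemma preceding this section, $\Phi$ vanishes off $V^{(0)}$, so it is determined by its restriction to $V^{(0)}$. Extend this restriction to $\wtil\Phi:R^{(N)}\otimes V^{(0)}\to\C$ by $f\otimes v\mapsto \mathrm{ev}(f)\Phi(v)$. We then check that $\wtil\Phi$ kills $S$, using \eqref{eq:twisted.CB.analytic} on each type of generator:
\begin{itemize}
\item The generators $a_{(0)}b$ and $\res_t\wtil\wp_2 Y(a,t)b$ with $a\in V^{(0)}$ are handled by the first two lines of \eqref{eq:twisted.CB.analytic}, since $0\cdot\al\in\La$.
\item The generators $\res_t P_1^{\pm}(t,y^c,q)Y(u_i,t)b$ evaluate to $-(2\pi i)^{-1}\res_t\psi(t,-c\al,\tau)Y(u_i,t)b$. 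This is killed by the third line, because $c\al\notin\La$.
\end{itemize}
One caveat needs checking: the expansion of $\psi$ in $t$ used in the residue must match the Laurent expansion of $P_1^\pm$ in $t$. It does, because the coefficients $E_m^\pm$ are exactly the evaluated series and the ambiguity between $P_1^+$ and $P_1^-$ is only in the constant term, which has been fixed by the choice of sign of $c$.

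\textbf{Step 3: conclude.} Since $\wtil\Phi$ kills $S$, it factors through $\wtil H\otimes_{R^{(N)},\mathrm{ev}}\C$. This assignment is injective in $\Phi$. By Proposition~\ref{prop:formal.finiteness}, $\wtil H$ is a finitely generated $R^{(N)}$-module, so $\wtil H\otimes_{R^{(N)}}\C$ is a finite-dimensional vector space. Hence $\dim\CC((E,\CL)_{(\al,\tau)},V)<\infty$. Concretely, the images of the finite-dimensional space $W$ from the proof of Proposition~\ref{prop:formal.finiteness} span the quotient, and a block is determined by its values on $W$.

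The main obstacle is Step 2. The hard part is keeping the formal expansions consistent with the analytic sections: we must ensure that the correct one of $P_1^{\pm}$ converges for each sign of the charge, which is precisely what the inequality $0<-N\im(\al)<\im(\tau)$ guarantees. We must also ensure that relations imposed only for the strong generators $u_i$ of nonzero charge suffice. This is harmless for the direction needed here, since $S$ is contained in the set of relations that every block satisfies.
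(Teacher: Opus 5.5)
Your argument is correct and follows essentially the same route as the paper's proof: evaluate $R^{(N)}$ at the point, observe that every charged block annihilates the specialised relations by \eqref{eq:twisted.CB.analytic}, and conclude from the finite generation of $\wtil H$ in Proposition~\ref{prop:formal.finiteness}. Your version via $\wtil H\otimes_{R^{(N)}}\C$, together with the vanishing lemma on $V^{(c)}$ for $c\neq 0$, is in fact slightly more careful than the paper's phrase ``$S\otimes_{R^{(N)}}\C$ of $V$''. There is one slip in Step 1: in this domain $\im\al<0$, so $|y|>1$, and the correct inequalities are $|q|<|y^{-c}|<1$ for $0<c\le N$ and $|q|<|y^{c}|<1$ for $-N\le c<0$. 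These are indeed the convergence regions $|q|<|w^{\mp 1}|<1$ of $P_1^{\pm}(t,w,q)$ at $w=y^c$; you swapped both the inequalities and the stated regions, so your convergence conclusion still holds but the displayed inequalities are wrong.
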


We will strengthen this result in Corolloary \ref{cor:finite.everywhere} below: in fact $\CC((E, \CL)_{(\al, \tau)}, V)$ is finite dimensional whenever $N\al \notin \Z + \Z\tau$.

\begin{proof}
Fix $\tau$ and $\al$ lying in the prescribed domain. Then the series defining the generators of $R^{(N)}$ converge at $q = e^{2\pi i \tau}$ and $y = e^{2\pi i \al}$, so there is a well defined evaluation homomorphism $R^{(N)} \rightarrow \C$. Proposition \ref{prop:formal.finiteness} implies that the vector subspace $S \otimes_{R^{(N)}} \C$ of $V$ is of finite codimension. Let $\Phi$ be a charged conformal block, in the sense of Definition \ref{defn:charged.CB}. Then $\Phi \in V^*$ annihilates $S \otimes_{R^{(N)}} \C$ by equation \eqref{eq:twisted.CB.analytic}. It follows that the vector space
$\CC((E, \CL)_{(\al, \tau)}, V)$ is finite dimensional.
\end{proof}

In the remainder of this section we establish the stable quasi-lisse condition for several classes of affine vertex algebras and $W$-algebras. The following result and its proof are closely related to {\cite[Lemma 6.2]{AE2019}}.
\begin{prop}\label{prop:quasi-lisse.enough}
Let $\g$ be a simple Lie algebra and $k$ a level for which the simple affine vertex algebra $L_k(\g)$ is quasi-lisse. Let $\omega$ be the usual Sugawara conformal vector and $\rho$ the Weyl vector. Then $(L_k(\g), \omega, \rho)$ is stably quasi-lisse. Furthermore the space of formal coinvariants $\wtil H$ in this case is finitely generated over $R^{(1)}$.
\end{prop}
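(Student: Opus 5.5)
We propose to compute the relative associated variety $X^\rho_{L_k(\g)}$ directly and show it is a finite set of points (or at least has finitely many symplectic leaves). The plan has three steps.

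First, we identify $R^\rho_{L_k(\g)}$. For $V=V^k(\g)$ we have $R_V=\C[\g^*]\cong S(\g)$. The charge grading by $\rho_0$ is the principal grading: root vectors $e_\al$ have charge $\mathrm{ht}(\al)\neq 0$, and $\h$ has charge $0$. The ideal $I$ is generated by all root vectors, so
\[
R^\rho_{V^k(\g)}=S(\g)/(\n_+ + \n_-)\cong \C[\h^*].
\]
Its Poisson bracket is zero, since $\h$ is abelian. For the simple quotient, $R_{L_k(\g)}=S(\g)/J$, where $J$ is the image of the maximal ideal. Hence $R^\rho_{L_k(\g)}=\C[\h^*]/\bar J$, where $\bar J$ is the image of $J$ under the restriction map $\C[\g^*]\to\C[\h^*]$. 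Here we identify $\g^*\cong\g$ and restrict to $\h$. Geometrically,
\[
X^\rho_{L_k(\g)}=X_{L_k(\g)}\cap\h.
\]
Because the Poisson structure is trivial, each point is a symplectic leaf. So stable quasi-lisseness is equivalent to this intersection being finite.

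Second, we show the intersection is finite. Since $L_k(\g)$ is quasi-lisse, $X_{L_k(\g)}$ lies in the nilpotent cone $\CN$, by the Arakawa–Kawasetsu characterisation that quasi-lisse affine vertex algebras have associated variety contained in $\CN$. Every nilpotent element lying in $\h$ is zero, because semisimple and nilpotent elements coincide only at $0$. Hence $X^\rho_{L_k(\g)}\subset\{0\}$ set-theoretically, and $R^\rho_{L_k(\g)}$ is a finite-dimensional local algebra. On the algebra side, the $G$-invariant polynomials of positive degree lie in the radical of $J$. Their restrictions to $\h$ are the $W$-invariants, which generate an ideal of finite codimension in $\C[\h^*]$. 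So some power of this ideal lies in $\bar J$, giving $\dim R^\rho_{L_k(\g)}<\infty$ directly.

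Third, we prove the refinement that $\wtil H$ is finitely generated over $R^{(1)}$. Take the strong generators to be a basis of $\g$ adapted to the root decomposition. The root vectors have charges $\mathrm{ht}(\al)$, which exceed $1$ in absolute value in general, so the definition of $\wtil H$ would use $N=\max\mathrm{ht}=h-1$. To lower this to $N=1$, we rerun the proof of Proposition \ref{prop:formal.finiteness} using only the Chevalley generators $e_i,f_i$ (charge $\pm1$) for the $P_1^\pm$-relations, and the Cartan elements for the $\wtil\wp_2$-relations. The key point is that the ideal of $S(\g)$ generated by $\h$-brackets is not what we need. Rather, as in \cite[Lemma 6.2]{AE2019}, every root vector is obtained from simple root vectors by iterated brackets $e_{\al+\al_i}=[e_{\al_i},e_\al]$, that is, by $(0)$-products.

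So we must show that the span of $(e_i)_{(-1)}V+(f_i)_{(-1)}V$ together with $\h_{(-2)}V$ already covers $C(V)$ modulo lower conformal weight. We plan to do this by induction on height. We would write
\[
(e_{\al+\al_i})_{(-1)}b=(e_{\al_i})_{(0)}(e_\al)_{(-1)}b-(e_\al)_{(-1)}(e_{\al_i})_{(0)}b.
\]
The first term is a $(0)$-product applied to an element of the inductive span. To absorb it, we would use the Borcherds identity to move $(e_{\al_i})_{(0)}$ inside as a combination of $(e_{\al_i})_{(-1)}$-type relations plus lower-weight terms, using the $P_1^\pm(t,y,q)$ relation at charge $\pm1$.

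This last absorption is the main obstacle. The $(0)$-mode of a charged generator is not directly in $S$, since only residues against $P_1^\pm$ are. I would first try to show that $\res_t P_1^\pm(t,y,q)Y(e_{\al_i},t)$ acting on an element of $S$ stays in $S$ up to lower weight. This should follow from the commutator formula and the fact that the relevant elliptic coefficients lie in $R^{(1)}$.
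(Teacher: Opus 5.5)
Your Steps 1 and 2 are correct and are exactly the paper's proof of the first claim: $R^\rho_{V^k(\g)}\cong\C[\h^*]$, and $X^\rho_{L_k(\g)}=X_{L_k(\g)}\cap\h\subset\CN\cap\h=\{0\}$, so $R^\rho_{L_k(\g)}$ is finite dimensional. Your remarks on the vanishing Poisson bracket and on Chevalley invariants are fine extras.

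The gap is Step 3. You leave it open, and it cannot be closed along the lines you sketch. The mode $(e_{\al_i})_{(0)}$ is not the residue of any section over $E\backslash 0$. The only relations in which $e_{\al_i}$ acts through its modes are $\res_t P_1^\pm(t,y,q)Y(e_{\al_i},t)x\in S$. Their expansion $-\frac{1}{2\pi i}(e_{\al_i})_{(-1)}x+E_1^\pm(y,q)\,(e_{\al_i})_{(0)}x+\cdots$ lets you trade $(e_{\al_i})_{(-1)}x$ for lower modes, but it never removes the term $(e_{\al_i})_{(0)}x$. Nothing makes $S$ stable under $(e_{\al_i})_{(0)}$, even modulo lower weight, so the induction on height does not close.

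The paper does not carry out this reduction either. It asserts that $V^k(\g)$ and $L_k(\g)$ are strongly generated by $E^{\al_i}_{(-1)}\vac$ and $F^{\al_i}_{(-1)}\vac$, all of charge $\pm1$, and reads off $N=1$. You were right to distrust this: any strong generating set must span $V_1=\g$, so it contains root vectors of every height up to $\mathrm{ht}(\theta)$.

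In fact the $R^{(1)}$ claim already fails for $\sll_3$. At the boundary admissible level $k=-3+\frac{3}{2}$, the Kac--Wakimoto formula gives, up to a power of $q$,
\[
\tr_{L_k(\sll_3)}y^{\rho_0}q^{L_0}=\prod_{n\geq1}\left(\frac{(1-q^{2n})^2}{(1-q^n)^2}\prod_{\beta>0}\frac{(1-q^{2n}y^{\mathrm{ht}(\beta)})(1-q^{2n}y^{-\mathrm{ht}(\beta)})}{(1-q^{n}y^{\mathrm{ht}(\beta)})(1-q^{n}y^{-\mathrm{ht}(\beta)})}\right).
\]
This has a simple pole at $y^2=q^{-1}$, i.e.\ at $\al=-\tau/2$, coming from the highest root. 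That point lies in the strip $0<-\im(\al)<\im(\tau)$, which avoids $\Z+\Z\tau$ and on which every element of $R^{(1)}$ converges. If $\wtil H$ were finitely generated over $R^{(1)}$, the trace functions would satisfy on this strip a linear system in $\al$ with coefficients in $R^{(1)}$, obtained from \eqref{eq:DE.y}. They would then be holomorphic on the whole strip, which the pole contradicts.

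What Steps 1 and 2 actually give, combined with Proposition \ref{prop:formal.finiteness} applied to a basis of $\g$ made of root vectors and elements of $\h$, is finite generation over $R^{(N)}$ with $N=\mathrm{ht}(\theta)$. This equals $R^{(1)}$ only for $\g=\sll_2$.
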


\begin{proof}
Let $V = V^k(\g)$ denote the universal affine vertex algebra, and $L = L_k(\g)$ the simple quotient. If $E^{\al_1}, \ldots, E^{\al_\ell} \in \g$ are the simple root vectors and $F^{\al_1}, \ldots, F^{\al_\ell} \in \g$ the corresponding negative root vectors, then $V$ and $L$ are both strongly generated by the $E^{\al_i}_{(-1)}\vac$ and $F^{\al_i}_{(-1)}\vac$ which have charge, respectively, $+1$ and $-1$.

It is well known that $R_V \cong \C[\g^*]$, i.e., $\spec(R_V)$ is $\g^*$ as an affine space, which may be identified with $\g$ via the Killing form. The surjection $R_V \rightarrow R_V / I$ corresponds geometrically to restriction to the locus where $E^{\al} = F^\al = 0$, in particular $\spec(R_V^h) = \h$.

Passage to the quotient $L$ of $V$ corresponds geometrically to intersection with the associated variety $X_L = \spec(R_L)$. For a simple affine vertex algebra $L = L_k(\g)$, it is known that $L$ is quasi-lisse if and only if $X_L \subset \CN$, where $\CN \subset \g$ is the nilpotent cone \cite{Ara09b}. Thus the spectrum of $R_L^h$ is the intersection $\h \cap X_{L} \subset \h \cap \CN = \{0\}$. It follows that $R_L^h$ is finite dimensional, which implies in particular that $L$ is stably quasi-lisse.

The second statement follows from the choice of $h$, so that our strong generators all have charge $\pm 1$.
\end{proof}
Proposition~\ref{prop:quasi-lisse.enough} applies, in particular, to all simple affine vertex algebras $L_k(\g)$ of admissible level $k$, since these are quasi-lisse {\cite[Theorem 5.14]{Ara09b}}. We recall briefly that an admissible level for the simple Lie algebra $\g$ \cite{KW89} is a rational number of the form $k = -h^\vee + p/u$ where $p, u \in \Z_{\geq 1}$ are coprime, and $p \geq h^\vee$ if $(r^\vee, u) = 1$ in which case $k$ is said to be a principal admissible level, and $p \geq h$ if $(r^\vee, u) = r^\vee$ in which case $k$ is said to be a coprincipal admissible level.

The first examples of quasi-lisse $L_k(\g)$ of \emph{non-admissible} level $k$ were discovered in \cite{AM.Joseph} in connection with the Deligne exceptional series. In particular $L_k(\g)$ is quasi-lisse for $\g = D_4, E_6, E_7, E_8$ and $k$ integral in the range $-h^\vee/6-1 \leq k \leq -1$.

We next discuss the stable quasi-lisse condition for affine $W$-algebras. Let $\W^k(\g,f)=H_{DS,f}^0(V^k(\g))$ be the universal affine $W$-algebra associated with the simple Lie algebra $\g$, nilpotent element $f \in \g$, and level $k\in \C$. Here $H_{DS,f}(?)$ denotes the functor of quantized Drinfeld-Sokolov reduction associated with $f$ \cite{FF90,KacRoaWak03}. We also denote by $\W_k(g,f)$ the unique simple graded quotient of $\W^k(\g,f)$.

Let $\mf{s}=\{e,h,f\}$ be an $\mf{sl}_2$-triple in $\g$ associated with $f$. The adjoint action of $h$ induces a grading $\g=\bigoplus_{j\in \frac{1}{2}\Z}\g_j$ of $\g$ by eigenvalues, i.e., $\g_j=\{x\in \g\mid [h,x]=2j x\}$. If we denote by $\g^f$ the centraliser of $f$ in $\g$, and $\g_j^f=\g^f\cap \g_j$, then we have the induced grading $\g^f=\bigoplus_{j\in \frac{1}{2}\Z_{\leq 0}} \g^f_{j}$. According to \cite{De-Kac06} the associated variety of $\W^k(\g,f)$ is the Slodowy slice at $f$, that is
\begin{align}\label{eq:DSK.iso}
R_{\W^k(\g,f)} \cong \C[S_f]\cong \C[\g^f],
\end{align}
where $S_f=f+\g^e$ is the Slodowy slice.

We denote by $\g^{\natural}=\g^f_0$ the centralizer of $\mf{s}$ in $\g$.  Let $\mf{t}$ be a maximal toral subalgebra of $\g^\natural$ and $\mf{h}$ a Cartan subalgebra of $\g$ containing $h$ and $\mf{t}$. Note that $\mf{t}$ is also a maximal toral subalgebra of $\g^f$. We denote by $\Delta \subset \h^*$ the root system of $\g$ with respect to $\mf{h}$, and by $\Delta^f \subset \mf{t}^*$ the restricted root system, that is, the set of nonzero weights of $\mf{t}$ acting on $\g^f$ \cite{BruGoo07,BruGooKle08}. The centralizer of $\mf{t}$ in $\g$ is a minimal Levi subalgebra $\mf{l}$ of $\g$ containing $f$, and we clearly have the following restricted root space decomposition
\begin{align}
\g^f = \mf{l}^f \+ \bigoplus_{\alpha \in \Delta^f} \g^f_{\alpha}
\end{align}
Here the zero weight space $\mf{l}^f = \mf{l} \cap \g^f$ is the centralizer of $f$ in $\mf{l}$. Let $\Pi^f$ be a base of the restricted root system $\Delta^f$, so that any element of $\Delta^f$ can be written as $\sum_{\alpha\in \Delta^f} a_\alpha \alpha$ with either all $a_{\alpha}\in \Z_{\geq 0}$ or all $a_{\alpha}\in \Z_{\leq 0}$. Finally set
\begin{align}\label{eq:rho_f}
\rho_f=\frac{1}{2}\sum_{\alpha\in \Delta^f} \alpha,
\end{align}

The weight-one space of $\W^k(\g,f)$ generates the universal affine vertex algebra associated with $\g^{\natural}$ and we have a vertex algebra embedding
\begin{align}\label{eq:weight-one-subalgenra}
V^{k^{\natural}}(\g^\natural)\hookrightarrow \W^k(\g,f),
\end{align}
where $k^{\natural}$ is some complex number that depends on $k$.

Let $\{h_1,\dots, h_{\dim \mf{t}}, x_1,\dots, x_r\}$ be a basis of
$\mf{l}^f$ such that $\{h_1,\dots, h_{\dim \mf{t}}\}$ is a basis of $\mf{t}$. Let us also choose a basis $\{y_{\alpha,i}\}$ of each restricted weight space $\g_{\alpha}^f$, so that $\C[\g^f] = \C[h_i, x_i, y_{\alpha,i}]$. Considering the surjection $\W^k(\g,f)\twoheadrightarrow R_{\W^k(\g,f)}\cong \C[\g^f]$, we may choose lifts of the listed generators to a set of strong generators of $\W^k(\g,f)$, whose associated quantum fields we denote
\begin{align*}
h_i(z), \quad x_i(z),\quad \text{and} \quad y_{\alpha,i}(z),
\end{align*}
respectively, with the properties that $\{h_i(z)\}$ generates the image of the Heisenberg algebra associated with $\mf{t}$ by \eqref{eq:weight-one-subalgenra},
\begin{align*}
h_i(z)x_j(w)\sim 0, \quad \text{and} \quad h_i(z)y_{\alpha,j}(w)\sim \frac{\alpha(h_i)}{z-w}y_{\alpha,j}(w).
\end{align*}
We denote the element in the image of \eqref{eq:weight-one-subalgenra} corresponding to the element $\rho_f$ also by ${\rho_f}$.

Notice that the charge grading of $\W^k(\g,f)$ by $\rho_f$ runs over $\Z_{\geq 0}$ and so, from Definition \ref{def:stably.quasi-lisse}, we have $R_{\W^k(\g,f)}^{\rho_f} = R_{\W^k(\g,f)}^{(0)} = \C[h_i, x_i]$, and the isomorphism \eqref{eq:DSK.iso} yields
\begin{align}
R_{\W^k(\g,f)}^{\rho_f} \cong \C[S^{\mf{l}}_f]
\end{align}
at the level of Poisson algebras, where $S^{\mf{l}}_f$ is the Slodowy slice $f+\mf{l}^e$ at $f$ in the Levi subalgebra $\mf{l}$ (cf. \cite[Theorem 4.3]{BruGooKle08}).

\begin{thm}\label{thm:stably-quasi-lisse-W}
A quasi-lisse $W$-algebra $\mathscr{W}_k(\g,f)$ is stably quasi-lisse with respect to $\rho_f$, defined as above. In particular, the simple $W$-algebra $\mathscr{W}_k(\g,f)$ is stably quasi-lisse whenever $L_k(\g)$ is quasi-lisse (equivalently $X_{L_k(\g)} \subset \mathcal{N}$), and $f\in X_{L_k(\g)}$.
\end{thm}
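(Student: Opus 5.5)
The plan is to identify the relative associated variety $X^{\rho_f}_{\W_k(\g,f)}$ with a closed Poisson subvariety of the nilpotent Slodowy slice $\CN_{\mf l}\cap S^{\mf l}_f$ in the Levi $\mf l$. That variety has only finitely many symplectic leaves, and the property passes to closed Poisson subvarieties.

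\textbf{Step 1 (Poisson embedding).} The surjection $\W^k(\g,f)\twoheadrightarrow \W_k(\g,f)$ respects the charge grading by $\rho_f$. It therefore induces a surjection of Poisson algebras $R^{\rho_f}_{\W^k(\g,f)}\twoheadrightarrow R^{\rho_f}_{\W_k(\g,f)}$, because the ideal $I$ generated by nonzero-charge vectors maps onto the corresponding ideal. Combined with $R^{\rho_f}_{\W^k(\g,f)}\cong \C[S^{\mf l}_f]$, this realises $X^h:=X^{\rho_f}_{\W_k(\g,f)}$ as a closed Poisson subvariety of $S^{\mf l}_f$.

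\textbf{Step 2 (geometric description).} Since the $\rho_f$-charges are nonnegative, the quotient $R_{\W_k}/I$ is, set-theoretically, the coordinate ring of the fixed locus of the one-parameter torus $\exp(t\,\mathrm{ad}\,\rho_f)$ acting on $X_{\W_k(\g,f)}$. A point is fixed exactly when all positive-weight functions vanish there. Hence
\[
X^h\subset X_{\W_k(\g,f)}\cap S^{\mf l}_f,\qquad X_{\W_k(\g,f)}\subset S_f .
\]

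\textbf{Step 3 (nilpotency).} I would next show $X_{\W_k(\g,f)}\subset \CN\cap S_f$.
\begin{itemize}
\item For the second statement this is immediate from $X_{\W_k(\g,f)}\subset X_{H_{DS,f}(L_k(\g))}=X_{L_k(\g)}\cap S_f$ together with $X_{L_k(\g)}\subset\CN$; here the hypothesis $f\in X_{L_k(\g)}$ ensures the reduction is nonzero.
\item For the general quasi-lisse case I would use the contracting Kazhdan $\C^*$-action on $S_f$, which contracts everything to $f$. Finitely many leaves forces every point to lie in a leaf whose closure is conic, and since the $\mathrm{Ad}(G)$-invariant functions on $S_f$ are Casimirs, they must be constant on each leaf. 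Being constant on leaves that accumulate at $f$, they take their value at $f$, which is a nilpotent element. So all invariant polynomials vanish on $X_{\W_k}$ and $X_{\W_k}\subset\CN$.
\end{itemize}
I expect this Casimir and conic-closure argument to be the main obstacle. If it proves delicate, I would instead cite the known characterisation that a $W$-algebra is quasi-lisse iff $X_{\W_k}\subset\CN\cap S_f$.

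\textbf{Step 4 (conclusion).} Nilpotency in $\g$ of an element of $\mf l$ is equivalent to nilpotency in $\mf l$, so Steps 2 and 3 give $X^h\subset \CN_{\mf l}\cap S^{\mf l}_f$. The latter is a transversal slice in $\mf l$. Its symplectic leaves are the (components of) intersections with the finitely many nilpotent $L$-orbits of $\mf l$, so there are finitely many.

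Finally, $X^h$ is a closed Poisson subvariety by Step 1. Its defining ideal is Poisson, so it is preserved by Hamiltonian flows, and each symplectic leaf of the ambient variety meeting $X^h$ lies inside it. Thus the leaves of $X^h$ are among those of $\CN_{\mf l}\cap S^{\mf l}_f$, which are finitely many, and $\W_k(\g,f)$ is stably quasi-lisse.
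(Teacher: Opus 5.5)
Your proof is correct, and its skeleton agrees with the paper's. Both reduce the first assertion to the containment $X^{\rho_f}_{\W_k(\g,f)}\subset S^{\mf{l}}_f\cap\mc{N}^{\mf{l}}$ and then conclude finiteness of leaves. Both treat the second assertion through $X_{H_{DS,f}(L_k(\g))}=S_f\cap X_{L_k(\g)}$.

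The difference is in how the containment is obtained. The paper takes as known that $\W_k(\g,f)$ is quasi-lisse if and only if the image $Y$ of $\C[\g^*]^G$ in $R_{\W_k(\g,f)}$ is a point. It then transfers this to the Levi side purely through Poisson centres. By Premet, $\C[\mf{l}^*]^L$ gives the Poisson centre of $\C[S^{\mf{l}}_f]$. Since $\C[\h^*]^{W_L}$ is finite over $\C[\h^*]^W$ (Chevalley restriction), the image $Y_L$ of $\C[\mf{l}^*]^L$ in $R^{\rho_f}_{\W_k(\g,f)}$ is finite over $Y$, hence also a point. This says exactly that $X^{\rho_f}$ lies in $\mc{N}^{\mf{l}}$.

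You instead prove the needed direction of that equivalence yourself, using that Casimirs are constant on leaves together with the contracting Kazhdan action. You then use only that $R^{\rho_f}_{\W_k(\g,f)}$ is a quotient of $R_{\W_k(\g,f)}$, giving $X^{\rho_f}\subset X_{\W_k(\g,f)}\cap S^{\mf{l}}_f\subset\CN\cap\mf{l}=\mc{N}^{\mf{l}}$. The elementary equality $\CN\cap\mf{l}=\mc{N}^{\mf{l}}$ is the geometric shadow of the finiteness of $\mf{l}^*/\!/L\to\g^*/\!/G$ over $0$. So your route avoids Premet's theorem for $\mf{l}$ and the fibre-product step.

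You are implicitly using that the identification $R^{\rho_f}_{\W^k(\g,f)}\cong\C[S^{\mf{l}}_f]$ is induced by restriction along $S^{\mf{l}}_f\subset S_f$, so that your two embeddings of $X^{\rho_f}$ agree. The paper's diagram of invariants relies on the same compatibility. You also make explicit the final step: a closed Poisson subvariety of $S^{\mf{l}}_f$ contained in the nilpotent slice is a union of finitely many leaves. The paper compresses this into ``Whence the first assertion''.

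Two small remarks. First, the claim that finitely many leaves force each leaf to be conic deserves a line. A simpler route: a homogeneous invariant $P$ of positive degree is constant on each leaf, so it takes finitely many values on $X_{\W_k(\g,f)}$. This finite set is stable under the Kazhdan rescaling $z\mapsto t^m z$ with $m>0$, hence equals $\{0\}$.

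Second, in Step 2 you do not need the $\rho_f$-charges to be nonnegative, and in fact they are not: restricted roots come in pairs $\pm\alpha$. The fixed-locus description holds with ``nonzero weight'' in place of ``positive weight''. In any case, the inclusion $X^{\rho_f}\subset X_{\W_k(\g,f)}$ is immediate from the surjection $R_{\W_k(\g,f)}\twoheadrightarrow R^{\rho_f}_{\W_k(\g,f)}$.
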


\begin{proof}
Let $G$ be the adjoint group of $\g$, and $L$ the Levi subgroup of $G$ whose Lie algebra is $\mf{l}$. We have the restriction map
\begin{align}\label{eq:restricion-invariants}
\C[\g^*]^G\hookrightarrow \C[\mf{l}^*]^L.
\end{align}
We choose $\mf{h}$ a Cartan subalgebra of $\g$ contained in $\mf{l}$, and denote by $W$ and $W_L \subset W$ the Weyl groups of $G$ and $L$, respectively. By the Chevalley restriction theorem, the embedding \eqref{eq:restricion-invariants} is identified with the embedding $\C[\h^*]^W \hookrightarrow \C[\h^*]^{W_L}$ which clearly presents the latter algebra as a finitely generated module over the former, and hence we see that the morphism $\mf{l}^*/\!/L\to \g^*/\!/G$ is finite.

By \cite{Pre07}, the restriction maps $\C[\g^*]^G \to  \C[S_f]$ and $\C[\mf{l}^*]^L\to  \C[S_f^{\mf{l}}]$ are embeddings, and their images are identified with Poisson centers of $\C[S_f]$ and $\C[S_f^{\mf{l}}]$, respectively. Let us denote by $Y$ and $Y_L$ the spectra of the images
of $\C[\g^*]^G $ and $\C[\mf{l}^*]^L$ under the morphisms
\begin{align*}
\C[\g^*]^G \to  \C[S_f] &\cong R_{\W^k(\g,f)}\to R_{\W_k(\g,f)} \\
\text{and} \quad \C[\mf{l}^*]^L\to  \C[S_f^{\mf{l}}] &\cong R_{\W^k(\g,f)}^{\rho_f} \to R_{\W_k(\g,f)}^{\rho_f},
\end{align*}
respectively. Then we have
\begin{align*}
Y_L\cong Y\times_{\g^*/\!/G}\mf{l}^*/\!/L,
\end{align*}
and it therefore follows that
$Y_L$ is finite over $Y$.

Since $\W_k(\g,f)$ is quasi-lisse, or equivalently $Y=\{0\}$ as topological spaces, we deduce that $Y_L=\{0\}$ as topological spaces as well. Hence
$\on{Specm}(R_{\W_k(\g,f)})$ is contained in $S^{\mf{l}}_f \cap \mc{N}^{\mf{l}}$, where $\mc{N}^{\mf{l}}$ is the nilpotent cone of $\mf{l}$. Whence the first assertion.

Next we suppose that $L_k(\g)$ is quasi-lisse, or equivalently that
$X_{L_k(\g)}\subset \mc{N}$. By \cite[Theorem 4.21]{Ara09b}, we have $X_{H_{DS,f}(L_k(\g))}= S_f\cap X_{L_k(\g)}\subset S_f\cap \mc{N}$. Hence,
for any $f\in X_{L_k(\g)}$, we have that $H_{DS,f}(L_k(\g))$ is nonzero and quasi-lisse. Since $\W_k(\g,f)$ is a quotient of $H_{DS,f}(L_k(\g))$ by \cite[Theorem 4.15]{Ara09b}, we deduce that it is quasi-lisse as well, and thus stably quasi-lisse by the first assertion.
\end{proof}

\section{Trace functions and charged conformal blocks}\label{sec:circulating.trace}

Let $(V, \omega, h)$ be a charged conformal vertex algebra. In this section we prove that certain trace functions on an appropriate category of $V$-modules, which we call \emph{stable $V$-modules}, are flat sections of charged conformal blocks in a formal sense. The precise statement is Proposition \ref{prop:conformalblock} below.

\begin{defn}
Let $M$ be a $V$-module. We shall say that $M$ is a weight module if it possesses a decomposition $M = \bigoplus_{\D, k} M_{\D}^{(k)}$ into eigenspaces for $L_0$ and $h_0$. Here we denote by $M_{\D}^{(k)}$ the eigenspace in which $L_0=\D$ and $h_0=k$. We shall say that $M$ is a stable $V$-module if
\begin{itemize}
\item $\dim M_{\D}^{(k)} < \infty$ for all $\D$ and $k$,

\item there exists $h$ such that $M_{\D}^{(k)} = 0$ for all $\D < h$,

\item for each $\D$ there exists $K$ such that $M_{\D}^{(k)} = 0$ for all $k > K$.
\end{itemize}
If $V = V^k(\g)$ then $V$-modules in category $\OO$ are examples.
\end{defn}

\begin{prop}\label{prop:conformalblock}
{{Let $(V,\omega,h)$ be a charged conformal vertex algebra and let $u\in V$.}} Let $M$ be a stable $V$-module. Then the trace function
\[
\tr_M u_0 y^{h_0} q^{L_0},
\]
viewed as a formal series in powers of $y$ and $q$, satisfies the following identities:
\begin{align}
\tr_M (\res_z Y[a, z]b \, dz)_0 y^{h_0} q^{L_0} &= 0 \quad \text{for $a, b \in V^{(0)}$} \label{eq:tr.formal.1} \\
\tr_M (\res_z \wtil\wp_2(z, q) Y[a, z]b \, dz)_0 y^{h_0} q^{L_0} &= 0\quad \text{for $a, b \in V^{(0)}$} \label{eq:tr.formal.2} \\
\tr_M (\res_z P_1^+(z, y^c, q) Y[a, z]b \, dz)_0 y^{h_0} q^{L_0} &= 0 \quad \text{for $a \in V^{(c)}$, $b \in V^{(-c)}$ where $c \in \Z_{> 0}$} \label{eq:tr.formal.3+} \\
\tr_M (\res_z P_1^-(z, y^c, q) Y[a, z]b \, dz)_0 y^{h_0} q^{L_0} &= 0 \quad \text{for $a \in V^{(c)}$, $b \in V^{(-c)}$ where $c \in \Z_{< 0}$}.  \label{eq:tr.formal.3-}
\end{align}
\end{prop}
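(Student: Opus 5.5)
We follow Zhu's argument in \cite[Section 4]{Z96}, adapted to the insertion of $y^{h_0}$, and we work purely formally in $\C((y^{-1}))[[q]]$ (or the appropriate completion), using only the cyclicity of the trace and the commutator formula \eqref{eq:va.comm.fla}. The stability hypotheses on $M$ make each trace a well defined formal series. For fixed conformal weight, the charge is bounded above and each $M_\D^{(k)}$ is finite dimensional, so each $q$-coefficient is a Laurent series in $y^{-1}$. In addition, every operator $a_n$ shifts $(L_0,h_0)$-weights by a fixed amount. This lets us apply the identity $\tr_M AB\,y^{h_0}q^{L_0}=\tr_M B\,y^{h_0}q^{L_0}A$ coefficientwise: each coefficient is a finite sum, or at worst a Laurent series whose truncation is uniform.

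The key step is a charged version of Zhu's recursion (Zhu's Proposition 4.3.2). Let $a\in V^{(c)}$ be homogeneous and $b\in V^{(-c)}$. Write $o(x)=x_0$ and $F_M(x)=\tr_M o(x)y^{h_0}q^{L_0}$.

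For $c\neq 0$ we use $y^{h_0}q^{L_0}a_n=y^{c}q^{-n}a_ny^{h_0}q^{L_0}$. Moving $a_0$ around the trace gives $\tr_M a_0\,o(b)\,y^{h_0}q^{L_0}=y^{c}\tr_M o(b)\,a_0\,y^{h_0}q^{L_0}$, hence $(1-y^{c})\tr_M a_0o(b)y^{h_0}q^{L_0}=\tr_M[a_0,o(b)]y^{h_0}q^{L_0}$. The commutator is expanded via \eqref{eq:va.comm.fla}. We then write $o(a_{[n]}b)$ using the modes of $Y[a,z]$, i.e.\ $Y(e^{2\pi i zL_0}a,e^{2\pi i z}-1)$, as in Zhu's Lemma 4.3.1/Proposition 4.3.2. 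Since $F_M(a_{(-1)}\cdot)$-type terms are not directly available, the efficient route is the one in Zhu and in \cite{MTZ08}. We compute $\tr_M Y(a,w)Y(b,w')y^{h_0}q^{L_0}$ as a formal 2-point function. Using the trace identity with the twisting factor $y^c q^{\Delta}$, one shows it equals
\[
-\frac{1}{2\pi i}\,F_M\bigl(\res_z P_1^{\pm}(z-z',y^c,q)\,Y[a,z-z']b\bigr)
\]
up to the constant term. The expansion $P_1^+$ versus $P_1^-$ is dictated by the region in which the geometric series $\sum_n (y^{c}q^n)^{\pm1}$ converge formally in $\C((y^{-1}))[[q]]$: $P_1^+$ for $c>0$ and $P_1^-$ for $c<0$. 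This matches the two expansions of $y/(y-1)$ in \eqref{eq:Ppm.def}. Applying the residue in $z$ and comparing coefficients yields \eqref{eq:tr.formal.3+} and \eqref{eq:tr.formal.3-}: the $n=0$ term $(1-y^c)^{-1}\tr[a_0,o(b)]$ is exactly the constant piece of $P_1^\pm$, and the terms with $n\neq0$ produce the factors $y^{\mp c}q^{n}/(1-y^{\mp c}q^n)$.

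For $c=0$ the factor $y^{h_0}$ commutes with $a_n$, and Zhu's argument goes through verbatim. $\tr_M o(a_{[0]}b)\ldots=\tr_M[a_0,o(b)]\ldots=0$ gives \eqref{eq:tr.formal.1}. Zhu's recursion with $P_2$, i.e.\ $\wtil\wp_2$ (allowing for the normalisation difference $\wp_2=\wp_2^{\text{Zhu}}+G_2$, which is absorbed by \eqref{eq:tr.formal.1} and $L_{[0]}$-type terms), gives \eqref{eq:tr.formal.2}.

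The main obstacle is the bookkeeping in the $c\neq0$ case. We must verify that the formal expansion produced by the trace manipulation is precisely $P_1^\pm(z,y^c,q)$ as defined in \eqref{eq:Ppm.def}, including the constant term $\sum_{n\ge0}y^{-n}$ versus $-\sum_{n\ge1}y^n$, with the correct sign conventions. We must also check that all geometric series $(1-y^{\pm c}q^n)^{-1}$ are legitimate in the ring where the traces live. That legitimacy is exactly where the third stability condition (charge bounded above for each weight) enters and forces the choice of $\pm$ according to the sign of $c$.
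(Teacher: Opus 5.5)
Your reduction of the charge-zero identities to Zhu's argument is acceptable: for $a,b\in V^{(0)}$ everything can be done on each $M^{(k)}$ separately, and the $G_2$ discrepancy is absorbed by \eqref{eq:tr.formal.1}. The charged identities \eqref{eq:tr.formal.3+} and \eqref{eq:tr.formal.3-} are the real content of the proposition, and they are not proved. The missing step is exactly the one you set aside as ``not directly available''.

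Cyclicity plus the commutator formula only handle the Fourier part of $P_1^\pm(z,y^c,q)$, via $2\pi i(\res_z e^{-2\pi i nz}Y[a,z]b\,dz)_0=[a_{-n},b_n]$. The pole part $-\frac{e^{2\pi i z}}{e^{2\pi i z}-1}$ needs the Borcherds identity with $n=-1$ (normal ordering), which gives, as operators on $M$,
\[
2\pi i\Bigl(\res_z \frac{e^{2\pi i z}}{e^{2\pi i z}-1}\,Y[a,z]b\,dz\Bigr)_0=\sum_{n\ge 0}\bigl(a_{-n}b_n+b_{-1-n}a_{1+n}\bigr).
\]
With this the proof is a direct computation. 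For $n>0$, twisted cyclicity gives
\[
\tr_M a_{-n}b_n y^{h_0}q^{L_0}=-\frac{y^cq^n}{1-y^cq^n}\tr_M[a_{-n},b_n]y^{h_0}q^{L_0},\qquad \tr_M b_{-n}a_n y^{h_0}q^{L_0}=\frac{y^{-c}q^n}{1-y^{-c}q^n}\tr_M[a_n,b_{-n}]y^{h_0}q^{L_0}.
\]
So every term of the normal-ordered sum except $a_0b_0$ cancels against the Fourier part of $P_1^\pm$. What remains is $-\tr_M a_0b_0y^{h_0}q^{L_0}$, which the constant term of $P_1^\pm$ must cancel.

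Your substitute, the formal 2-point function, does not close this gap. The displayed identity is not correct as stated: the charged 2-point function is a sum over $m\ge0$ of twisted Weierstrass functions of $z-z'$ times $F_M(a_{[m]}b)$, not a residue. Moreover, extracting a 1-point relation from it would require an associativity statement identifying the continued 2-point function with $F_M(Y[a,z-z']b)$. You neither state nor justify that statement.

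Second, your one explicit computation has the wrong factor. From $\tr_M a_0b_0y^{h_0}q^{L_0}=y^c\tr_M b_0a_0y^{h_0}q^{L_0}$ one gets
\[
(1-y^{-c})\tr_M a_0b_0y^{h_0}q^{L_0}=\tr_M[a_0,b_0]y^{h_0}q^{L_0},
\]
not the version with $(1-y^{c})$. This is exactly the bookkeeping you flag as the main obstacle. The constant term of $P_1^+(z,y^c,q)$ is $\sum_{n\ge0}y^{-cn}$ and that of $P_1^-(z,y^c,q)$ is $-\sum_{n\ge1}y^{cn}$. These are precisely the expansions of $(1-y^{-c})^{-1}$ in $\C((y^{-1}))$ for $c>0$ and $c<0$ respectively. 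Since $(1-y^{c})^{-1}$ matches neither, your identification of the $n=0$ term fails as written.

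With the exponent corrected, your idea of inverting $1-y^{-c}$ and $1-y^{\pm c}q^n$ inside $\C((y^{-1}))[[q]]$ is legitimate. It is equivalent to the paper's iteration towards higher charge, and it is where the upper bound on the charges of $M$ is used. What is still missing is combining it with the normal-ordering formula above.
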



In short, the proposition identifies trace functions on stable $V$-modules as charged conformal blocks relative to Zhu's vertex algebra structure $Y[-, z]$ on $V$.

\begin{proof}
Recall
\[
Y[u, z] = e^{2\pi i \D_u z} Y(u, e^{2\pi i z} - 1).
\]
From the commutator formula and Borcherds identity, we have the following useful formulas:
\begin{align}\label{eq:comm.[]}
\begin{split}
2\pi i (\res_z e^{-2\pi i nz} Y[a, z]b dz)_0
&= (\res_w (1+w)^{-n+\D_a-1} Y(a, w)b dw)_0 \\
&= \sum_{j =0}^\infty \binom{\D-1-n}{j} (a_{(j)}b)_0 \\
&= [a_{-n}, b_n],
\end{split}
\end{align}
for any $n \in \Z$, and
\begin{align}\label{eq:bor.[]}
\begin{split}
2\pi i\left(\res_z \frac{e^{2\pi i z}}{e^{2\pi i z}-1} Y[a, z]b dz\right)_0
&= (\res_w w^{-1} (1+w)^{\D_a} Y(a, w)b dw)_0 \\
&= \sum_{j =0}^\infty \binom{\D_a}{j} (a_{(j-1)}b)_0 \\
&= \sum_{n =0}^\infty \left( a_{-n} b_n + b_{-1-n} a_{1+n} \right).
\end{split}
\end{align}

We prove \eqref{eq:tr.formal.1}. By the commutator identity \eqref{eq:comm.[]} we have
\begin{align*}
2\pi i (\res_z Y[a, z]b \, dz)_0 = [a_0, b_0]
\end{align*}
as endomorphisms of $M$. 
Since $[L_0, a_0] = [h_0, a_0] = 0$, and by symmetry of the trace, we have
\begin{align*}
\tr_M a_0 b_0 y^{h_0} q^{L_0} = \tr_M b_0 y^{h_0} q^{L_0} a_0 = \tr_M b_0 a_0 y^{h_0} q^{L_0}
\end{align*}
and so we are done.

Before proving \eqref{eq:tr.formal.2}, \eqref{eq:tr.formal.3+} and \eqref{eq:tr.formal.3-}, we make some preparations. Let $c \in \Z$ arbitrary and suppose $a \in V^{(c)}$ and $b \in V^{(-c)}$. For any $n \in \Z$ we have 
\begin{align*}
\tr_{M_\D^{(k)}} a_{-n} b_n
= \tr_{M_{\D-n}^{(k-c)}} b_n a_{-n}
\end{align*}
and hence
\begin{align*}
\tr_{M_\D^{(k)}} a_{-n} b_n y^{h_0} q^{L_0}
= y^{c} q^{n} \tr_{M_{\D-n}^{(k-c)}} b_n a_{-n} y^{h_0} q^{L_0}.
\end{align*}
If we now assume that $n > 0$ then we can iterate the manipulation just carried out to obtain the following
\begin{align*}
\tr_{M_\D^{(k)}} a_{-n} b_n y^{h_0} q^{L_0}
= \sum_{m=1}^\infty (y^c q^{n})^m \tr_{M_{\D-mn}^{(k-mc)}} [b_n, a_{-n}] y^{h_0} q^{L_0},
\end{align*}
which is well-defined as the sum on the right hand side truncates (and becomes finite) due to the positive energy condition on $M$. Summing over all $\D$ and $k$ gives
\begin{align*}
\tr_{M} a_{-n} b_n y^{h_0} q^{L_0}
= -\sum_{m=1}^\infty (y^c q^{n})^m \tr_{M} [a_{-n}, b_n] y^{h_0} q^{L_0},
\end{align*}
or more compactly
\begin{align}\label{ab.telescope}
\tr_{M} a_{-n} b_n y^{h_0} q^{L_0}
= -\frac{y^c q^n}{1 - y^c q^n} \tr_{M} [a_{-n}, b_n] y^{h_0} q^{L_0}.
\end{align}
Similarly, for $n > 0$ we have
\begin{align}\label{ba.telescope}
\tr_M b_{-n} a_n y^{h_0} q^{L_0} = +\frac{y^{-c}q^n}{1-y^{-c}q^n} \tr_M [a_n, b_{-n}] y^{h_0} q^{L_0}.
\end{align}

We now prove \eqref{eq:tr.formal.3+} and \eqref{eq:tr.formal.3-}. Let $a \in V^{(c)}$ and $b \in V^{(-c)}$. By combining \eqref{eq:bor.[]} and \eqref{eq:comm.[]}, and then \eqref{ab.telescope} and \eqref{ba.telescope} we see that
\begin{align}
&2\pi i \tr_M ( \res_z \left( - \frac{e^{2\pi i z}}{e^{2\pi i z}-1} + \sum_{n = 1}^\infty \left( \frac{y^{-c}q^n}{1-y^{-c}q^n} e^{2\pi i n z} - \frac{y^{c} q^n}{1-y^{c}q^n} e^{-2\pi i n z} \right) \right) Y[a, z] b \, dz )_0 y^{h_0} q^{L_0} \nonumber \\
= {} & -\sum_{n =0}^\infty \tr_M \left( a_{-n} b_n + b_{-1-n} a_{1+n} \right) y^{h_0} q^{L_0} \nonumber \\
&+ \sum_{n = 1}^\infty \left( \frac{y^{-c}q^n}{1-y^{-c}q^n} \tr_M [a_n, b_{-n}] - \frac{y^{c} q^n}{1-y^{c}q^n} \tr_M [a_{-n}, b_n] \right) y^{h_0} q^{L_0} \label{eq:zeta.to.2point} \\
= {} & -\sum_{n =0}^\infty \tr_M \left( a_{-n} b_n + b_{-1-n} a_{1+n} \right) q^{L_0} y^{h_0} + \sum_{n = 1}^\infty \tr_M \left( b_{-n} a_{n} + a_{-n} b_n \right) y^{h_0} q^{L_0} \nonumber \\
= {} & -\tr_M a_0 b_0 y^{h_0} q^{L_0}. \nonumber
\end{align}
If $c > 0$ then we have
\begin{align*}
\tr_{M^{(k)}} a_0 b_0 y^{h_0} q^{L_0}
&= \tr_{M^{(k)}} ([a_0, b_0] + b_0 a_0) y^{h_0} q^{L_0} \\
&= \tr_{M^{(k)}} [a_0, b_0] q^{L_0} y^{h_0} + y^{-c} \tr_{M^{(k+c)}} a_0 b_0 y^{h_0} q^{L_0} \\
&= \sum_{m=0}^\infty y^{-mc} \tr_{M^{(k+mc)}} [a_0, b_0] y^{h_0} q^{L_0}
\end{align*}
Summing on $k$ yields
\begin{align*}
\tr_{M} a_0 b_0 y^{h_0} q^{L_0}
= \frac{1}{1-y^{-c}} \tr_M [a_{0}, b_{0}] y^{h_0} q^{L_0}
\end{align*}
where the fraction is shorthand for its geometric series expansion. This completes the proof of \eqref{eq:tr.formal.3+}. If $c < 0$ then we similarly have
\begin{align*}
\tr_{M^{(k)}} a_0 b_0 y^{h_0} q^{L_0}
&= y^{c} \tr_{M^{(k-c)}} b_0 a_0 y^{h_0} q^{L_0} \\
&= y^c \tr_{M^{(k-c)}} (-[a_0, b_0] + a_0 b_0) y^{h_0} q^{L_0} \\
&= -\sum_{m=1}^\infty y^{mc} \tr_{M^{(k-mc)}} [a_0, b_0] y^{h_0} q^{L_0},
\end{align*}
which yields
\begin{align*}
\tr_{M} a_0 b_0 y^{h_0} q^{L_0}
= -\frac{y^c}{1-y^{c}} \tr_M [a_{0}, b_{0}] y^{h_0} q^{L_0}.
\end{align*}
This completes the proof of \eqref{eq:tr.formal.3-}. Notice that in both of these calculations, traces on certain charge eigenspaces are rewritten in terms of traces on spaces of more positive charge. Because of the stability condition on $M$, these sums truncate. This is one of the reasons we have distinguished the cases $c>0$ and $c<0$.

We now prove \eqref{eq:tr.formal.2}. Let $a, b \in V^{(0)}$. We note that \eqref{eq:zeta.to.2point} holds for $c=0$, in which case it reduces to
\begin{align}\label{eq:trace.master}
\tr_M a_0 b_0 y^{h_0} q^{L_0} = 2\pi i \tr_M (\res_z \wtil\wp_1(z, q) Y[a, z] b \, dz)_0 y^{h_0} q^{L_0}.
\end{align}

If we replace $a$ with $L_{[-1]}a$ in \eqref{eq:trace.master}, then on the left hand side we have $(L_{[-1]}a)_0 = 2\pi i (L_0 a + L_{-1} a)_0 = 0$ and on the right hand side we have $Y[L_{[-1]}a, z] b = \partial_z Y[a, z]$. By integration by parts the derivative can be moved to $\wtil\wp_1(z, q)$, which gives us
\begin{align*}
\tr_M (\res_z \wtil\wp_2(z, q) Y[a, z] b \, dz)_0 y^{h_0} q^{L_0} = 0.
\end{align*}
\end{proof}

In the course of the proof of Proposition \ref{prop:conformalblock} we proved the following useful identity.
\begin{prop}\label{prop:zeta.to.2point}
Let $M$ be a stable $V$-module and let $a, b \in V^{(0)}$. Then
\begin{align*}
&\tr_M a_0 b_0 y^{h_0} q^{L_0}
= \tr_M ( \res_z \wtil\wp_1(z, q) Y[a, z] b \, dz )_0 y^{h_0} q^{L_0}.
\end{align*}
\end{prop}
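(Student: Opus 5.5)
The plan is to specialise the computation \eqref{eq:zeta.to.2point}, carried out in the proof of Proposition~\ref{prop:conformalblock}, to the case $c=0$, and then identify the resulting kernel with $\wtil\wp_1(z,q)$. Throughout I work with formal series in $y$ and $q$, exactly as in Proposition~\ref{prop:conformalblock}.

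\emph{Step 1: the two-point identity at $c=0$.} Let $a,b\in V^{(0)}$. Both telescoping identities \eqref{ab.telescope} and \eqref{ba.telescope} were derived for arbitrary $c$ and $n>0$, so they hold for $c=0$. In that case the factors become $q^n/(1-q^n)$. These are honest power series in $q$, so no expansion in $y$ is needed. This is why, unlike in \eqref{eq:tr.formal.3+} and \eqref{eq:tr.formal.3-}, the mode $n=0$ never has to be telescoped. Combining \eqref{eq:comm.[]} and \eqref{eq:bor.[]} exactly as in \eqref{eq:zeta.to.2point} gives
\[
2\pi i\,\tr_M\bigl(\res_z F(z,q)\,Y[a,z]b\,dz\bigr)_0 y^{h_0}q^{L_0} = -\tr_M a_0b_0\,y^{h_0}q^{L_0},
\]
where
\[
F(z,q) = -\frac{e^{2\pi i z}}{e^{2\pi i z}-1}+\sum_{n\ge1}\frac{q^n}{1-q^n}\bigl(e^{2\pi i n z}-e^{-2\pi i n z}\bigr).
\]
The sum over $n$ must be moved past the residue and the trace. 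This is legitimate because, for each fixed power of $q$, only finitely many $n$ contribute. Also, $Y[a,z]b=e^{2\pi i\D_a z}Y(a,e^{2\pi iz}-1)b$ lies in $V((z))$, so only finitely many Laurent coefficients of $F$ pair nontrivially with it.

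\emph{Step 2: identify $F$ with $\wtil\wp_1$.} Expand $F$ as a Laurent series in $z$ with coefficients in $\C[[q]]$. First,
\[
-\frac{e^{2\pi iz}}{e^{2\pi iz}-1}=-\frac{1}{2\pi i z}-\frac12-\sum_{k\ge1}\frac{B_{2k}}{(2k)!}(2\pi i z)^{2k-1}.
\]
Second, the coefficient of $z^{2k-1}$ in the $q$-sum is $2(2\pi i)^{2k-1}\sum_n n^{2k-1}q^n/\bigl((2k-1)!\,(1-q^n)\bigr)$, and the even powers of $z$ cancel. Comparing with \eqref{eq:Eisenstein.def} and \eqref{eq:wp.def} at $k=1$, I expect $F(z,q)=-\frac{1}{2\pi i}\wtil\wp_1(z,q)$ termwise. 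The singular term and the constant $-\tfrac12\leftrightarrow\pi i$ match after multiplying by $-2\pi i$. The higher terms reassemble into $\sum_n \wtil G_{2n+2}z^{2n+1}$ with the sign $(-1)^1$.

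\emph{Step 3: conclude.} Substituting the identification of Step~2 into Step~1 yields $\tr_M a_0b_0y^{h_0}q^{L_0}$ as a multiple of $\tr_M(\res_z\wtil\wp_1(z,q)Y[a,z]b\,dz)_0y^{h_0}q^{L_0}$.

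I expect the main obstacle to be the bookkeeping of normalisations, and I will check it carefully: the powers of $2\pi i$ coming from \eqref{eq:comm.[]} and \eqref{eq:bor.[]}, the factor $-1/(2\pi i)$ relating $F$ to $\wtil\wp_1$, and the normalisation of $Y[-,z]$ and of the modes $(\cdot)_0$. The issue is that \eqref{eq:trace.master} carries an explicit factor $2\pi i$, whereas the statement does not. I will recompute the constant directly from these formulas so the two displays can be reconciled, presumably through the convention relating $2\pi i(\res_z\cdots)_0$ to the zero mode. The remaining analytic points, namely interchanging sums and matching formal series in $q$ coefficientwise, are routine.
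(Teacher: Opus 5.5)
Your proposal follows the paper's own argument. The paper gives no separate proof: it observes that the identity is the case $c=0$ of \eqref{eq:zeta.to.2point}, with the kernel recognised as $\wtil\wp_1$. Your Steps 1 and 2 are correct, including $F(z,q)=-\frac{1}{2\pi i}\wtil\wp_1(z,q)$ and the justification of the interchanges.

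The one thing you left open is the constant in Step 3, and your own displays already determine it. Substituting Step 2 into Step 1 gives
\[
-\tr_M\bigl(\res_z\wtil\wp_1(z,q)\,Y[a,z]b\,dz\bigr)_0\,y^{h_0}q^{L_0}=-\tr_M a_0b_0\,y^{h_0}q^{L_0},
\]
so the constant is exactly $1$, as in the statement.

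Do not look for a convention that reconciles this with \eqref{eq:trace.master}. The factor $2\pi i$ there is a misprint. It is harmless in the paper, because that display is only used to derive \eqref{eq:tr.formal.2}, whose right side is $0$. The constant $1$ is also what Proposition~\ref{prop:trace.DE} needs: taking $a=\omega$ must return $q\partial_q\tr_M b_0y^{h_0}q^{L_0}$ with no extra factor of $2\pi i$.
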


We now pass from unnormalised to normalised trace functions.
\begin{prop}\label{prop:trace.DE}
The normalised trace function
\[
S_M(u, y, q) = \tr_M u_0 y^{h_0} q^{L_0-c/24}
\]
satisfies
\begin{align*}
2\pi i y \frac{\partial}{\partial y} S_M(u, y, q) &= S_M(\res_t \wtil\wp_1(t, q) h[t] u, y, q ) \\
(2\pi i)^2 q \frac{\partial}{\partial q} S_M(u, y, q) &= S_M(\res_t \wtil\wp_1(t, q) L[t] u, y, q).
\end{align*}
\end{prop}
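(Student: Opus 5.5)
Both identities will be obtained as special cases of Proposition~\ref{prop:zeta.to.2point}, by choosing the zero-mode insertion $a_0$ to be $h_0$ or $L_0$. The normalising factor $q^{-c/24}$ is handled separately.

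For the first identity, take $a=\wtil h$. Since $h$ is quasi-primary, $\wtil h = R^{-1}h = 2\pi i\,h$. It has charge zero, so it lies in $V^{(0)}$. Its zero mode for $Y[-,z]$, read through the $(-)_0$ notation used in the proof of Proposition~\ref{prop:conformalblock}, is $(\wtil h)_0 = 2\pi i\, h_0$. Indeed, \eqref{eq:comm.[]} with $n=0$ shows that the $[\,]$-zero mode of a weight-one vector coincides with its ordinary zero mode up to the factor $2\pi i$ coming from $R$.

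On the other hand, $y\partial_y y^{h_0} = h_0 y^{h_0}$ termwise. The trace is a formal series and $h_0$ commutes with $u_0$ and with $q^{L_0}$, so
\[
2\pi i\, y\partial_y \tr_M u_0 y^{h_0}q^{L_0} = \tr_M (2\pi i\, h_0)\, u_0\, y^{h_0}q^{L_0}.
\]
Apply Proposition~\ref{prop:zeta.to.2point} with $a=\wtil h$ and $b=u$, first assuming $u\in V^{(0)}$. The right-hand side becomes $\tr_M(\res_t \wtil\wp_1(t,q)h[t]u)_0 y^{h_0}q^{L_0}$. If $u$ has nonzero charge, both sides vanish: the left because $u_0$ shifts charge, so its trace is zero; the right because $h[t]u$ has the same nonzero charge. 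Multiplying through by the constant $q^{-c/24}$ then gives the first identity.

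For the second identity, take $a=\wtil\omega = (2\pi i)^2(\omega - \tfrac{c}{24}\vac)$, which lies in $V^{(0)}$. Its $[\,]$-zero mode should be $(2\pi i)^2(L_0 - c/24)$, up to the normalisation conventions for $(-)_0$ in \eqref{eq:comm.[]}. This is consistent with $L_{[0]}$ being the grading operator of Zhu's structure. The point of subtracting $\tfrac{c}{24}\vac$ is that the vacuum's zero mode is the identity, so it contributes exactly the $-c/24$ shift.

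Now use
\[
(2\pi i)^2 q\partial_q\bigl(q^{L_0-c/24}\bigr) = (2\pi i)^2 (L_0-c/24)\, q^{L_0-c/24}.
\]
The trace of $u_0$ against this is exactly $\tr_M \wtil\omega_0\, u_0\, y^{h_0}q^{L_0-c/24}$. Proposition~\ref{prop:zeta.to.2point} with $a=\wtil\omega$ and $b=u$ then yields the claim, since $L[t] = Y[\wtil\omega,t]$.

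The main obstacle is bookkeeping of normalisations. First, one must pin down exactly what $(\cdot)_0$ means for $Y[-,z]$ versus $Y(-,z)$, and verify via \eqref{eq:comm.[]} and $R$ that $\wtil h$ and $\wtil\omega$ have zero modes $2\pi i\,h_0$ and $(2\pi i)^2(L_0-c/24)$ respectively. This must be consistent with the constant factor in Proposition~\ref{prop:zeta.to.2point}, which is stated without the $2\pi i$ appearing in \eqref{eq:trace.master}. I would check these constants against the simplest case $u=\vac$ and $M=V$ before finalising. Second, one must justify termwise differentiation of the formal series; this is immediate from finite-dimensionality of the weight spaces $M_\Delta^{(k)}$.
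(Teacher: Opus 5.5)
Your proposal is correct and is essentially the paper's proof: you specialise Proposition~\ref{prop:zeta.to.2point} to $b=u$ and $a=\omega$ (resp.\ $a=h$). Taking $a=\wtil\omega$, $\wtil h$ instead, as you do, just folds the $(2\pi i)$-factors and the $q^{-c/24}$ normalisation into the same step. The constants you were unsure about do work out: Proposition~\ref{prop:zeta.to.2point} as stated, without the stray $2\pi i$ of \eqref{eq:trace.master}, is the correctly normalised identity, and $(\wtil h)_0=2\pi i\,h_0$, $(\wtil\omega)_0=(2\pi i)^2(L_0-c/24)$ follow from linearity of the ordinary zero mode (the zero mode of $\vac$ is the identity), not from any separate $Y[-,z]$-mode.
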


\begin{rem}
The second of these equations is essentially Zhu's differential equation {\cite[Lemma 5.2.1]{Z96}}.
\end{rem}

\begin{proof}
By setting $a = \omega$ in Proposition \ref{prop:zeta.to.2point} and $b = u$ we obtain
\begin{align*}
q \frac{\partial}{\partial q} \tr_M b_0 y^{h_0} q^{L_0}
&= \tr_M L_0 b_0 q^{L_0} y^{h_0} = \tr_M ( \res_t \wtil\wp_1(t, q) Y[\omega, t] b )_0 y^{h_0} q^{L_0}.
\end{align*}
It is straightforward to see that this implies the second of the claimed differential equations, upon normalisation. The proof of the other differential equation is very similar.
\end{proof}

\begin{rem}\label{rem:Ramond.OK}
Suppose the triple $(V, \omega, h)$ conforms to the definition of a charged conformal vertex algebra except that the eigenvalues of $L_0$ on $V$ might not all lie in $\Z$. We define a charged conformal block as a linear function $\Phi : V \rightarrow \C$ satisfying \eqref{eq:twisted.CB.analytic}, \eqref{eq:DE.y} and \eqref{eq:DE.q} as before. All the results of Section \ref{sec:finiteness} go through without modification. The results of the present section also hold as long as we take for $M$ a Ramond twisted $V$-module. We refer to \cite{De-Kac06, VE13} for details, but in brief this means that, for all $u \in V$, the field $Y^M(u, z)$ acts with integral conformally indexed modes \eqref{eq:conf.modes}, and in particular every $u \in V$ has a well defined zero mode $u_0$ in $M$.
\end{rem}

\section{Holonomicity}\label{sec:holonomic}

Let us denote $X = \{(y, q) \in \C^2 \mid 0 \leq |q| < |y^{-1}|^N < 1 \}$ and let $U$ be a connected and simply connected open subset of $\{(y, q) \in X \mid q \neq 0\}$. In this section we show that the differential equations \eqref{eq:DE.y} and \eqref{eq:DE.q} equip the sheaf $\CC|_U$ with the structure of an integrable connection $\nabla$. Furthermore the equations have a regular singularity at $q=0$.


\subsection{Integrability of the connection}\label{sec:integrable}

For notational convenience we shall introduce operators
\begin{align*}
[R_L S](a, y, q) &= -S(\res_z \wtil\wp_1(z, q) L[z] a, y, q), \\
\text{and} \quad [R_h S](a, y, q) &= -S(\res_z \wtil\wp_1(z, q) h[z] a, y, q).
\end{align*}
(The minus signs are introduced to make the commutation relations of $R_L$ and $R_h$ more cleanly reflect those of $L$ and $h$, as in the dual action of a Lie algebra, see below.) Then the differential equations \eqref{eq:DE.y} and \eqref{eq:DE.q} are expressed as
\begin{align*}
\left( y\frac{\partial}{\partial y} + \frac{1}{2\pi i} R_h \right)S &= 0, \\
\left( q\frac{\partial}{\partial q} + \frac{1}{(2\pi i)^2} R_L \right) S &= 0.
\end{align*}
Explicitly the connection $\nabla : \CC \rightarrow \CC \otimes \Omega_U$ becomes
\begin{align}\label{eq:ward-connection}
\nabla(S) = \left(\frac{\partial}{\partial q} + \frac{1}{(2\pi i)^2 q} R_L\right)S \otimes dq + \left(\frac{\partial}{\partial y} + \frac{1}{2\pi i y} R_h\right)S \otimes dy.
\end{align}

\begin{prop}
The connection $\nabla$ on $\CC|_U$ is integrable.
\end{prop}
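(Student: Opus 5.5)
Integrability amounts to the vanishing of the curvature, i.e. the commutation of the two covariant derivatives
\[
D_q = q\frac{\partial}{\partial q} + \frac{1}{(2\pi i)^2}R_L, \qquad D_y = y\frac{\partial}{\partial y} + \frac{1}{2\pi i}R_h
\]
on sections of $\CC|_U$. This is equivalent to the vanishing of $\nabla^2$, since $[q\partial_q, y\partial_y]=0$ and the $1/q$, $1/y$ prefactors in \eqref{eq:ward-connection} only rescale the curvature by $dq\wedge dy/(qy)$. Expanding $[D_q,D_y]$ gives
\[
q\partial_q\Big(\tfrac{1}{2\pi i}R_h\Big) - y\partial_y\Big(\tfrac{1}{(2\pi i)^2}R_L\Big) + \frac{1}{(2\pi i)^3}[R_L,R_h].
\]
Here $R_h$ and $R_L$ depend on $q$ only through $\wtil\wp_1(z,q)$ and are independent of $y$, so the middle term vanishes. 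The task is therefore to show that
\[
[R_L, R_h] = -(2\pi i)^2\, \bigl(q\partial_q R_h\bigr)
\]
holds as operators on charged conformal blocks, i.e. modulo functionals vanishing on the subspace defining $\CC$ in \eqref{eq:twisted.CB.analytic}.

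The computation would proceed in Zhu's bracket structure $Y[-,z]$, where $\wtil\omega$ generates a Virasoro algebra and $\wtil h$ is a current with $L_{[n]}\wtil h$ behaving as for a primary weight-one field. Indeed, $h$ is quasi-primary by the definition of a charged conformal vertex algebra, and only $L_{[1]}\wtil h=0$ and $L_{[0]}\wtil h=\wtil h$ are needed. For $a\in V$, I write
\[
R_h R_L S(a) = S\bigl(\res_z \wtil\wp_1(z)\, L[z]\, \res_w \wtil\wp_1(w)\, h[w]\, a\bigr)
\]
and similarly for the other order. The commutator of $\res_z\wtil\wp_1(z)L[z]$ and $\res_w\wtil\wp_1(w)h[w]$ is computed with the commutator formula \eqref{eq:va.comm.fla} in the bracket modes: $[L[z],h[w]]=\partial_w(h[w]\delta(z-w))$, the second-order term being absent because $\wtil h$ is quasi-primary. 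This produces
\[
-\res_w \wtil\wp_1(w)\,\wtil\wp_1'(w)\, h[w]
\]
together with a correction coming from the non-periodicity of $\wtil\wp_1$. Following Zhu's proof of the analogous statement for two-point functions, the key identity to use is the quasi-periodicity/addition relation
\[
\wtil\wp_1(z)\,\wtil\wp_1(w) - \wtil\wp_1(z-w)\bigl(\wtil\wp_1(w)-\wtil\wp_1(z)\bigr) = \text{(terms in } \wtil\wp_2 \text{ and } G_2),
\]
combined with the derivative formula $q\partial_q \wtil\wp_1$, which is expressible via $\wtil\wp_1\wtil\wp_2$, $\wtil\wp_3$ and $G_2$ (the heat-equation type identity for Weierstrass functions). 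This rewrites $(2\pi i)^2 q\partial_q R_h$ in the same form.

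After this, the difference of the two sides should be a combination of terms of the form
\[
S\bigl(\res_w \wtil\wp_k(w)\, Y[x,w]\,a\bigr), \qquad k\ge 2,
\]
with $x\in V^{(0)}$ (here $x$ is $\wtil h$ or $L_{[-1]}\wtil h$), together with zero-mode terms $S(x_{[0]}a)$. These vanish on charged conformal blocks by \eqref{eq:twisted.CB.analytic}, using its extension to all $k\ge 2$ by integration by parts as noted after \eqref{eq:CB.analytic}. Since $h$ has charge zero, only the uncharged relations are needed, and no $P_1^\pm$ terms arise. I would also check that $\nabla$ preserves $\CC$, i.e. that $D_q S$ and $D_y S$ still satisfy \eqref{eq:twisted.CB.analytic}; this is the standard Ward-identity argument, which I take as given with the definition of the connection.

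The main obstacle is the elliptic-function bookkeeping in the commutator computation. One must track exactly which terms involving $G_2$, $\wtil\wp_2$ and the residue of the double-pole term survive, and verify that they cancel against $q\partial_q\wtil\wp_1$. I would first test the computation formally on trace functions from Proposition \ref{prop:trace.DE}, where integrability is automatic, in order to fix signs. I would then carry out the general argument using only the defining relations.
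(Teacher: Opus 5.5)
Your proposal is correct and follows the paper's proof. The curvature reduces to $[R_L,R_h]+(2\pi i)^2 q\partial_q R_h$. The commutator $[L[z],h[w]]=\partial_w\bigl(h[w]\delta(z-w)\bigr)$ then gives $[R_L,R_h]S(a)=S(\res_w \wtil\wp_1(w)\wtil\wp_1'(w)h[w]a)$, and the heat-type identity turns the sum into a residue against an elliptic function, which the defining relations of $\CC$ annihilate.

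The addition formula and the ``non-periodicity correction'' you anticipate are not needed, since the delta-function commutator produces no extra term. With the paper's normalisation one has exactly $(2\pi i)^2q\partial_q\wtil\wp_1-\wtil\wp_1\wtil\wp_1'=\wtil\wp_3+\pi i\,\wtil\wp_2$. The paper's displayed identity omits the $\wtil\wp_2$ part, but that term is killed by the defining relations of $\CC$ just as the $\wtil\wp_3$ term is.
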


\begin{proof}
We must verify $\nabla^2(S) = 0$. We compute directly
\begin{align*}
\nabla^2(S) &= \left[ \partial_q + \frac{1}{(2\pi i)^2 q}R_L, \partial_y + \frac{1}{2\pi iy}R_h \right]S \otimes dq \wedge dy \\
&= \left(\frac{1}{(2\pi i)^3 qy} [R_L, R_h] + \frac{1}{2\pi i y}(\partial_qR_h) \right)S \otimes dq \wedge dy,
\end{align*}
where the term $(\partial_qR_h)$ stands for the endomorphism
\[
[(\partial_q R_h) S](a, y, q) = -S\left(\res_z (\partial_q \wtil\wp_1(z, q)) h[z] a, y, q\right).
\]
We note that a similar term $(\partial_yR_L)$ vanishes because $\wtil\wp_1(z, q)$ is independent of $y$. For the purpose of computing $[R_L, R_h]$, we now do an auxiliary computation, in which we write $\wtil\wp_1(z)$ in place of $\wtil\wp_1(z, q)$. We have
\begin{align*}
\res_z \res_w \wtil\wp_1(z) \wtil\wp_1(w) [L[z], h[w]] &= \res_z \res_w \wtil\wp_1(z) \wtil\wp_1(w) \left( h'[w] \delta(z, w) + h[w] \partial_w \delta(z, w) \right) \\
&= \res_w \wtil\wp_1(w)^2 h'[w] + \res_w \wtil\wp_1(w) \wtil\wp_1'(w) h[w]  \\
&= -\res_w \wtil\wp_1(w) \wtil\wp_1'(w) h[w].
\end{align*}
Therefore
\begin{align*}
([R_L, R_h]S)(a, y, q) = S(\res_w \wtil\wp_1(w) \wtil\wp_1'(w) h[w] a, y, q).
\end{align*}
Now we recall the identity
\begin{align*}
(2\pi i)^2 q \partial_q \wtil\wp_1(w, q) - \wtil\wp_1(w, q) \wtil\wp'(w, q) = \wtil\wp_3(w, q).
\end{align*}
Notice that the right hand side is an elliptic function (see {\cite[Section 7]{E2017}}), so
\begin{align*}
\nabla^2(S) = - \frac{1}{(2\pi i)^3qy} A^*S \otimes dq \wedge dy
\end{align*}
where
\begin{align*}
{{[A^*S](u, y, q) = S(\res_w \wtil\wp_3(w, q) h[w] u, y, q),}}
\end{align*}
and this vanishes by definition of $\CC$.
\end{proof}

\subsection{Regularity}\label{sec:regularity}

We continue in the setup of Section \ref{sec:integrable}. We make use of the following standard result, in this case paraphrased from {\cite[Proposition 8.8]{1970-Katz}}.
\begin{thm}
Let $(\CF, \nabla)$ be a coherent sheaf equipped with integrable connection. Then $\CF$ is locally free, i.e., isomorphic to the sheaf of sections of a vector bundle.
\end{thm}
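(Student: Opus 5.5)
The statement to prove is: a coherent sheaf $\CF$ on a complex manifold (or smooth variety) $U$ carrying an integrable connection $\nabla$ is locally free. The argument is local, so fix a point $p \in U$ and write $\OO=\OO_{U,p}$ for its local ring, $\mathfrak{m}$ for its maximal ideal, and $k=\OO/\mathfrak m$ for the residue field. Since $\CF$ is coherent, the stalk $\CF_p$ is a finitely generated $\OO$-module. By Nakayama's lemma we may choose a minimal set of generators $e_1,\dots,e_r$ of $\CF_p$ lifting a basis of $\CF_p/\mathfrak m\CF_p$. Local freeness then amounts to showing that these generators admit no nontrivial $\OO$-linear relation.

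The plan is to use the connection to differentiate relations. Let $J\subset\OO^r$ be the module of relations $\sum f_i e_i=0$, and suppose $J\neq 0$. Every relation has all $f_i\in\mathfrak m$, by minimality of the generators. Choose a nonzero relation $(f_i)$ of minimal total vanishing order $d\ge 1$ at $p$, and pick a coordinate vector field $\partial$ (in local coordinates $x_1,\dots,x_n$ centred at $p$) for which some $\partial f_i$ has order exactly $d-1$. Apply $\nabla_\partial$ to the relation:
\[
0=\sum_i (\partial f_i)\,e_i+\sum_i f_i\,\nabla_\partial e_i .
\]
Expand $\nabla_\partial e_i=\sum_j \Gamma_{ij}e_j$ with $\Gamma_{ij}\in\OO$; such an expansion exists because the $e_j$ generate $\CF_p$. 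This yields a new relation with coefficients $\partial f_j+\sum_i f_i\Gamma_{ij}$. The correction terms have order at least $d$, so the new relation has order exactly $d-1$. This contradicts minimality when $d-1\ge 1$.

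When $d=1$ the new relation has a coefficient that is a unit. That contradicts the fact, established above, that all relations have coefficients in $\mathfrak m$. Hence $J=0$, $\CF_p$ is free, and freeness at the stalk propagates to a neighbourhood by coherence, so $\CF$ is locally free.

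I expect the main delicacy to be the order bookkeeping. Since $d\ge 1$, every term $f_i\Gamma_{ij}$ has order at least $d$, which is strictly greater than $d-1$, so these terms cannot cancel the leading part. The leading term of $\partial f_i$ is nonzero for a suitable choice of $\partial$ because some $f_i$ has a nonzero homogeneous part of degree $d$. Integrability of $\nabla$ is in fact not needed for this argument; only the Leibniz rule is used. To keep things clean I would work in the completion $\hat\OO=\C[[x_1,\dots,x_n]]$, which is faithfully flat over $\OO$, so that freeness can be checked there and orders of power series are unambiguous.
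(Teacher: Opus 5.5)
Your argument is correct. The paper does not prove this statement; it quotes it as a standard fact from Katz (Proposition 8.8 of the 1970 reference), so your proof takes a different route from the one the paper relies on.

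The usual proof of the cited result passes to the completion $\hat\OO\cong\C[[x_1,\dots,x_n]]$ at a point. It then uses the connection to produce a full set of formal horizontal sections, via the Taylor-type operator $\sum_{\alpha}\frac{(-x)^{\alpha}}{\alpha!}\nabla_{\partial}^{\alpha}$, which gives $\hat{\CF}_p\cong\hat\OO\otimes_{\C}(\hat{\CF}_p)^{\nabla}$. That route needs integrability, since the operators $\nabla_{\partial_i}$ must commute. In return it gives more than freeness: a formal trivialisation of the module by flat sections.

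Your order-lowering argument on a relation of minimal $\mathfrak m$-adic order is more elementary and self-contained. It uses only three things:
\begin{itemize}
\item the Leibniz rule;
\item regularity of the local ring together with its coordinate derivations;
\item characteristic zero, which guarantees that some $\partial_i$ lowers the order of a nonzero leading form by exactly one.
\end{itemize}
It therefore correctly shows that integrability is not needed for local freeness. It also works verbatim in the ring of convergent power series, which is the setting in which the paper applies the result on $U$. So the passage to $\hat\OO$ that you mention is optional.
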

In particular, the restriction $\CC|_U$ is a vector bundle of finite rank $r$ with connection $\nabla$. Since we assume $U$ simply connected, $\CC|_U$ is trivial and we may choose a trivialisation $\CC|_U \rightarrow U \times \C^r$ with frame $\mathbf{e}=(e_1,\dots,e_r)$. Let $u = u(y, q)$ be the coordinate column vector of a horizontal section $S \in \Gamma(U,\CC)$ with respect to $e$. Then $u$ satisfies a first-order system
\begin{equation}\label{eq:system-AB}
q\frac{\partial}{\partial q} u = B(y,q)\,u,\qquad y\frac{\partial}{\partial y} u = A(y,q)\,u
\end{equation}
for some matrices $A,B \in \Mat_r(\OO_X)$. Indeed the entries of the matrices $A, B$ are expressed as elements of the ring $R^{(N)} \subset \C(\!(y^{-1})\!)[\![q]\!]$ and these series are convergent in $X$.

The intersection of $X$ with the locus $q=0$ is the set $L = \{y \in \C \mid |y| > 1\}$. The matrices $A$ and $B$ have series expansions
\[
A(y,q) = \sum_{n=0}^\infty A_n(y)\,q^n \quad \text{and} \quad B(y,q)=\sum_{n=0}^\infty B_n(y)\,q^n
\]
where $A_n(y), B_n(y) \in \C(\!(y^{-1})\!)$ are convergent on $L$.

We wish to use regularity of the connection at $q=0$ to prove that $u(y,q)$ has a series expansion of Frobenius type, in analogy with the classical theory of ordinary differential equations with regular singular point. To be able to do this, we need the following simple but key isospectrality result.
\begin{lemma}\label{lem:constant-q-exponents}
The characteristic polynomial $\det(\lambda I - B_0(y))$ of $B_0(y)$ is independent of $y$.
\end{lemma}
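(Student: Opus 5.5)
We will deduce the isospectrality of $B_0(y)$ from the integrability of $\nabla$ (the preceding Proposition). In the chosen frame, flatness of $\nabla$ gives the compatibility condition for the system \eqref{eq:system-AB}. Since the operators $q\partial_q$ and $y\partial_y$ commute, applying each to the other equation and subtracting yields
\begin{align*}
y\frac{\partial B}{\partial y} - q\frac{\partial A}{\partial q} = [A, B].
\end{align*}
This must hold as an identity of matrix-valued holomorphic functions on $U$. It is not only a consequence of the existence of some flat sections: integrability gives an $r$-dimensional space of flat sections spanning each fibre, so the condition holds identically. Since the entries of $A,B$ are given by convergent series in $R^{(N)}$ whose expansions extend to $X$, the identity extends to $X$ by analytic continuation, and it can be compared coefficientwise in $q$.

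Next we substitute the expansions $A=\sum A_n(y)q^n$ and $B=\sum B_n(y)q^n$ and take the coefficient of $q^0$. The term $q\partial_q A$ has no $q^0$ component, so we obtain on $L=\{|y|>1\}$ the Lax-type equation
\begin{align*}
y\frac{d}{dy} B_0(y) = [A_0(y), B_0(y)].
\end{align*}
From this, isospectrality is standard. For each $k$ we have $y\frac{d}{dy}\tr B_0^k = k\,\tr\bigl([A_0,B_0]B_0^{k-1}\bigr) = 0$ by cyclicity of the trace. Hence all power sums of the eigenvalues are constant on the connected set $L$, so the coefficients of $\det(\lambda I - B_0(y))$ are constant by Newton's identities. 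Alternatively, one can solve $y\,dG/dy = A_0G$ with $G(y_0)=I$ along paths and check that $G^{-1}B_0G$ is constant. We prefer the trace argument because it avoids any monodromy issue on the non-simply-connected annulus $L$.

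The main obstacle is justifying that the frame, and hence $A,B$, extends over the boundary stratum $q=0$ with holomorphic, or at least $\C((y^{-1}))$-valued convergent, coefficients. The frame was only chosen on $U$, which avoids $q=0$. To handle this, we plan to take the frame from the $R^{(N)}$-module structure of the formal coinvariants, using Proposition \ref{prop:formal.finiteness}. Concretely, we choose finitely many elements of $W$ whose images generate $\wtil H$, and express the actions of $R_L$ and $R_h$ on them with coefficients in $R^{(N)}$. These coefficients are power series in $q$ with coefficients convergent on $L$, as the text indicates. If $\wtil H$ is not free, we would restrict to a Zariski-open subset where a generating set becomes a basis, or work with the induced action on the finite free quotient given by the fibre rank $r$. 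In that case we would accept possible meromorphic dependence on $y$ in $A_0$. The trace argument above still works away from a discrete set in $L$, and constancy of the characteristic polynomial then extends to all of $L$ by continuity.
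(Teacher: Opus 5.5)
Your proposal is correct and follows the paper's proof essentially step for step: the zero-curvature identity from $\nabla^2=0$, its $q^0$ coefficient giving the Lax equation $y\partial_y B_0=[A_0,B_0]$, the vanishing of $y\partial_y\tr(B_0^k)$ by cyclicity of the trace, and Newton's identities (your sign in the compatibility equation is the right one; the paper's intermediate display has a harmless sign slip). Your extra paragraph on extending the frame across $q=0$ goes beyond what the paper does, since the paper simply assumes as part of the setup that the entries of $A$ and $B$ lie in $R^{(N)}$ and converge on $X$, so that paragraph is not needed for the lemma as the paper states it.
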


\begin{proof}
The integrability condition $\nabla^2=0$ implies the identity
\begin{equation}\label{eq:flatness-AB}
[y\partial_y-A,\;q\partial_q-B]=0,
\end{equation}
equivalently
\begin{equation}\label{eq:zero-curvature-AB}
(y\partial_y)B-(q\partial_q)A+[A,B]=0.
\end{equation}
Taking the coefficient of $q^0$ in \eqref{eq:zero-curvature-AB} gives
\begin{equation}\label{eq:lax-B0}
y\partial_y B_0=[A_0,B_0],
\end{equation}
where $A_0(y)$ is the $q^0$-term of $A(y,q)$.

For $k\ge 1$ we have, by Leibniz and \eqref{eq:lax-B0},
\[
y\partial_y(B_0^k)=\sum_{i=0}^{k-1}B_0^i\,(y\partial_y B_0)\,B_0^{k-1-i}
=\sum_{i=0}^{k-1}B_0^i\,[A_0,B_0]\,B_0^{k-1-i}=[A_0,B_0^k].
\]
Taking traces yields $y\partial_y\tr(B_0^k)=0$ for all $k\ge 1$. Hence all coefficients of
$\det(\lambda I-B_0)$ are $y$-independent (e.g. by Newton identities), proving the result.
\end{proof}

\begin{prop}\label{prop:s7-implies-s4-form}
Let $u$ be a solution of \eqref{eq:system-AB}. Then $u$ can be expressed by a convergent series of the form
\begin{align}\label{eq:u.Frob.0}
u(y,q) = \sum_{\al=1}^K q^{r_\al} u_\al(y,q),
\end{align}
where the $r_\al$ are constants, no two of which differ by an integer, and each function $u_\al(y, q)$ has a series expansion of the form
\begin{align}\label{eq:u.Frob.1}
u_\al(y,q) = \sum_{j=0}^\infty \sum_{\mu=0}^{M_j} \log(q)^\mu q^j u_{\al,m,j}(y).
\end{align}
Furthermore, each function $u_{\al,\mu,0}(y)$ has an expansion of the form
\begin{align}\label{eq:u.Frob.2}
u_{\al,\mu,0}(y) = \sum_{\beta=1}^{L} \sum_{\nu=0}^{N} y^{h_\beta} \log(y)^\nu \sum_{\ell=0}^\infty C_{\al,\beta,\mu,\nu,0,\ell} y^{-\ell},
\end{align}
where the $C_{\al,\beta,\mu,\nu,0,\ell}$ are $r$-component column vectors written relative to the frame $\textbf{e}$.
\end{prop}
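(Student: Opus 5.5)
The plan is to treat the $q$-equation in \eqref{eq:system-AB} as a family of ordinary differential equations with regular singular point at $q=0$, parametrised holomorphically by $y \in L$. The $y$-equation is then used to control how the Frobenius data depend on $y$. The first input is Lemma \ref{lem:constant-q-exponents}: the eigenvalues of the residue matrix $B_0(y)$ are constants $\lambda_1,\dots,\lambda_s$, independent of $y$. I would group them into classes modulo $\Z$, choose one representative $r_\al$ in each class (say the one of minimal real part), and so obtain the finitely many exponents $r_\al$ of \eqref{eq:u.Frob.0}, no two differing by an integer.

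Next I would construct a fundamental solution of the form $F(y,q)\, q^{\Lambda(y)}$, with $F$ holomorphic in $q$ near $0$ and $\Lambda(y)$ having constant eigenvalues, whose differences are never nonzero integers. The tool is the standard shearing argument: gauge transformations $u \mapsto P(y)\, u$, followed by $q$-shears $\operatorname{diag}(q^{-1},1)$ on generalized eigenspaces, reduce $B_0$ to a residue with no eigenvalues differing by a positive integer. Since the eigenvalues are constant, the spectral projectors $\frac{1}{2\pi i}\oint (\lambda - B_0(y))^{-1}\, d\lambda$ are holomorphic in $y$. Hence the shears can be made uniformly in $y$, at least locally on $L$ (or on a simply connected subdomain, using an extension of the frame). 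Once resonances are removed, the recursive equations for the coefficients of $F=\sum_j F_j(y) q^j$ are uniquely solvable, with $F_j$ holomorphic in $y$, and the usual majorant estimates give convergence locally uniformly in $y$. Writing $q^{\Lambda} = q^{\Lambda_{ss}} q^{\Lambda_{nil}}$ produces the $\log(q)^\mu$ terms, and grouping by exponent class gives \eqref{eq:u.Frob.0} and \eqref{eq:u.Frob.1}. Any horizontal $u$ is $F q^{\Lambda} c(y)$, and the $y$-equation forces $c$ to satisfy a linear ODE in $y$.

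For \eqref{eq:u.Frob.2}, I would take the $q^0$ part. Substituting \eqref{eq:u.Frob.1} into $y\partial_y u = A u$ and comparing the coefficients of $q^{r_\al}\log(q)^\mu$, the leading vectors $u_{\al,\mu,0}(y)$ satisfy a finite triangular system $y\partial_y v = A_0(y) v + (\text{terms in } u_{\al,\mu',0},\ \mu'>\mu)$. Here $A_0(y) \in \C((y^{-1}))$, convergent on $|y|>1$. The ring $R^{(N)}$ is built from the $E_k(y^c,q)$ and $E_1^-(y^c,q)$, whose $q^0$ coefficients are rational in $y$ (geometric series such as $y^c/(1-y^c)$ expanded in $y^{-1}$). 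So $A_0$ is rational, and in the coordinate $w = y^{-1}$ it is holomorphic at $w=0$. The point $y=\infty$ is therefore a regular (indeed Fuchsian) singularity of $y\partial_y$. Classical Frobenius theory at $w=0$, applied inductively in $\mu$ from the top down, then yields expansions $\sum_\beta \sum_\nu y^{h_\beta}\log(y)^\nu \sum_\ell C\, y^{-\ell}$ as claimed.

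The main obstacle is the uniform-in-$y$ non-resonant reduction in the second step, that is, making sure the shearing gauge transformations and the resulting $\Lambda$ are holomorphic in $y$. Constancy of the spectrum from Lemma \ref{lem:constant-q-exponents} is exactly what makes this work. If the global treatment on $L$ is troublesome, I would first work on a small disc in $y$ and then continue analytically, using the flatness relation \eqref{eq:lax-B0} to transport the Jordan data of $B_0$ along $y$.
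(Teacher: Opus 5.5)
Your proposal is correct. Your treatment of \eqref{eq:u.Frob.2} is the paper's: compare coefficients of $q^{r_\alpha}\log(q)^\mu$ in $y\partial_y u = Au$, and use that $A(y,0)$ is holomorphic at $y=\infty$. One small simplification: since $r_\alpha$ is constant and $y\partial_y$ does not act on $\log q$, the coupling terms in $\mu'>\mu$ that you allow for are actually zero, so each $u_{\alpha,\mu,0}$ solves $y\partial_y v = A_0(y)v$ by itself.

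For \eqref{eq:u.Frob.0}--\eqref{eq:u.Frob.1} your route differs from the paper's.

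\emph{The paper's route.} It converts the $q$-system into scalar equations of order $r$ and quotes Mandai's Frobenius theorem for Fuchsian equations with holomorphic parameters. That theorem a priori allows $y$-dependent exponents $r_\alpha(y)$. Only afterwards does the paper invoke Lemma~\ref{lem:constant-q-exponents}: the coefficient of $q^{r(y)}\log(q)^M$ is an eigenvector of $B_0(y)$ with eigenvalue $r(y)$, so $r(y)$ is constant.

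\emph{Your route.} You use isospectrality up front. With constant spectrum the Riesz projectors are holomorphic in $y$, so the shearing reduction to a non-resonant residue can be done holomorphically in $y$. Each shear leaves a block-triangular residue with diagonal blocks $B_0^{11}-I$ and $B_0^{22}$, so the spectrum stays constant and the step iterates. After that, the recursion for $F$ has holomorphic solutions. Your fallback via \eqref{eq:lax-B0} also works: $B_0(y)=G(y)B_0(y_1)G(y)^{-1}$ with $y\partial_y G=A_0G$, so even the Jordan type of $B_0$ is constant.

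\emph{Comparison.} The paper's argument is shorter, but it outsources the analysis to a general theorem and to the reduction to scalar equations. Yours is self-contained, stays with the first-order system, and yields more. The coefficients $u_{\alpha,\mu,j}$ are holomorphic in $y$. The $\log(q)$-degree is bounded by the nilpotency index of $\Lambda_{\mathrm{nil}}$, hence by $r-1$ uniformly in $\alpha$ and $j$. That is exactly the uniform bound on the $M_{\alpha,j}$ which the paper proves separately in Lemma~\ref{lem:ex}.
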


\begin{proof}
The first order equation $q\partial_q u = B(y,q) u$ can be converted into equations of order $r$ on each of the components, in the usual way. By \cite[Theorem 3.3]{Mandai2000} it is possible to write $u(y, q)$ in the form \eqref{eq:u.Frob.0} and \eqref{eq:u.Frob.1}, where for each $\al = 1, \ldots, K$ the function $r_\al = r_\al(y)$ potentially has nontrivial dependence on $y$.

In particular each $u_\al(y, q)$ takes the form (here $M = M_0$)
\begin{equation}\label{eq:frob-log-q-compact}
q^{r(y)} \Big( \sum_{\mu=0}^{M} \log(q)^\mu v_m(y) \Big) + O(q^{r(y)+1}),
\qquad v_{M}(y)\neq 0.
\end{equation}
We apply $(q\partial_q-B)$ to \eqref{eq:frob-log-q-compact} and, since
$q\partial_q(q^{r(y)})=r(y)\,q^{r(y)}$ and $q\partial_q \log (q)^{M} = M \log(q)^{M-1}$ lowers the logarithmic degree, we see that the coefficient of $q^{r(y)}\log(q)^M$ in $(q\partial_q - B)u$ is
\[
r(y) v_{M}(y) - B_0(y) v_{M}(y).
\]
Thus $(q\partial_q-B)u=0$ implies $B_0(y)v_{M}(y)=r(y)v_M(y)$, so $r(y)$ is an eigenvalue of $B_0(y)$. By Lemma \ref{lem:constant-q-exponents}, the eigenvalues of $B_0(y)$ are independent of $y$, hence $r(y)$ is constant. 

If we now substitute \eqref{eq:u.Frob.1} into the equation $y\partial_y u = A(y,q) u$, it is clear that the leading coefficients $u_{\al,\mu,0}(y)$ satisfy
\[
y\partial_y u = A(y,0) u.
\]
We recall that the entries of $A(y, q)$, considered as power series, lie in $R^{(N)}$ and in particular are polynomial in the functions $P_k^{\pm}(y^c, q)$, and at $q=0$ these simplify to
\[
Q_k(y) = \sum_{n=0}^{\infty} n^{k-1} y^{-n}.
\]
In particular the coefficients of $A(y,0)$ are holomorphic at $y=\infty$. It follows that $u_{\al,\mu,0}(y)$ has a Frobenius expansion of the type \eqref{eq:u.Frob.2}.
\end{proof}

\section{Stable rationality and the stable Zhu algebra}\label{sec:stable.rat}

\begin{defn}
Let $(V, \omega, h)$ be a charged conformal vertex algebra, and let us denote by $V\mod^{\omega, h}$ the category of finitely generated stable $V$-modules. We say $V$ is stably rational if $V\mod^{\omega, h}$ is a semisimple category.
\end{defn}

Let $(V, \omega, h)$ be a charged conformal vertex algebra, and let $M = \bigoplus_{\D, k} M_\D^{(k)}$ be a nonzero stable $V$-module. If $\D_0$ is the minimal value of $\D$ for which $M_\D = \bigoplus_k M_\D^{(k)}$ is nonzero, then we denote $M_{\D_0}$ by $M_{\Top}$. If $k_0$ is the maximal value of $k$ for which $M_{\D_0}^{(k_0)}$ is nonzero, then we denote $M_{\D_0}^{(k_0)}$ by $M_{\peak}$.

\begin{rem}
The explanation for the terminology, in which $M_{\Top}$ refers the eigenspace with lowest eigenvalue of $L_0$, traces back to the fact that modules of ``highest weight'' in representation theory correspond in quantum theory to those whose energy spectrum is bounded below.
\end{rem}

\subsection{The stable Zhu algebra}\label{sec:stable.zhu}

We begin by recalling the definition of the Zhu algebra $A(V)$ of a conformal vertex algebra $(V, \omega)$ \cite{Z96}. Let $J \subset V$ denote the vector subspace spanned by elements of the form
\[
\res_w w^{-2}(1+w)^{\Delta_a}Y(a,w)b \, dw, \quad \text{$a, b \in V$}.
\]
By the Borcherds identity \eqref{eq:Borcherds_identity} we have
\begin{align*}
\bigl(\res_w w^{-2}(1+w)^{\Delta_a}Y(a,w)b \bigr)_0
&=
\sum_{j\in \Z_+}\binom{\Delta(a)}{j}(a_{(-2+j)}b)_0 \\
&=
\sum_{j\in \Z_+}(-1)^j\bigl(a_{-1-j}b_{1+j}-b_{-1-j}a_{1+j}\bigr).
\end{align*}
Hence, if $M$ is a positive-energy $V$-module and $M_{\Top}$ denotes its lowest conformal-weight subspace, then the assignment
\[
V \to \en(M_{\Top}), \qquad a \mapsto a_0,
\]
induces a linear map
\[
V/J \to \rm{End}(M_{\Top}).
\]
The remarkable observation of Zhu is that $V/J$ acquires the structure of an associative algebra with the product
\[
a * b = \res_w w^{-1}(1+w)^{\Delta(a)}Y(a,w)b \, dw, \quad \text{$a, b \in V$},
\]
unital with unit $\vac$, and the map above is a homomorphism of associative algebras. The algebra $(V/J, *)$ is denoted $A(V)$. For any irreducible left $A(V)$-module $N$, there exists an irreducible positive energy $V$-module $M$ such that $M_{\Top} = N$ as $A(V)$-modules.

We record the following useful result \cite[Equation (2.1.4)]{Z96}.
\begin{lemma}\label{lem:Zhu.comm}
For all $a, b \in A(V)$ the relation holds
\[
a * b - b * a = 2\pi i a_{([0])}b.
\]
\end{lemma}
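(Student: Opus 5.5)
The plan is to compute both sides directly in terms of ordinary modes using the identities already established in the proof of Proposition \ref{prop:conformalblock}. The key ingredient is the formula
\[
a * b = \res_w w^{-1}(1+w)^{\D_a} Y(a,w)b \, dw .
\]
Set $\D=\D_a$. For $b * a$ we need skew-symmetry,
\[
Y(b,w)a = e^{wT}Y(a,-w)b ,
\]
and we then rewrite the result as a residue in $Y(a,\cdot)b$ modulo $J$. Zhu's standard route avoids computing $b*a$ from scratch: he shows that
\[
b * a \equiv \res_w w^{-1}(1+w)^{\D-1}Y(a,w)b \pmod{J}.
\]
We will reproduce this congruence in two steps.

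\emph{Step 1.} Using skew-symmetry and the change of variables $w \mapsto -w/(1+w)$ (formula \eqref{change_of_variable}), we get
\[
\res_w w^{-1}(1+w)^{\D_b}Y(b,w)a = \res_w w^{-1}(1+w)^{\D_a-1}Y(a,w)b ,
\]
exactly, without even passing modulo $J$. The factor $e^{wT}$ is absorbed by the coordinate change, in the same way that Huang's formula \eqref{eq:huang.fla} underlies the map $R$.

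\emph{Step 2.} Subtracting gives
\[
a*b-b*a = \res_w w^{-1}\bigl((1+w)^{\D}-(1+w)^{\D-1}\bigr)Y(a,w)b = \res_w (1+w)^{\D-1}Y(a,w)b .
\]

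\emph{Step 3.} Compare this with the left-hand side of \eqref{eq:comm.[]} at $n=0$, which reads
\[
2\pi i\, \res_z Y[a,z]b \, dz = \res_w (1+w)^{\D_a-1}Y(a,w)b \, dw .
\]
This is the substitution $w = e^{2\pi i z}-1$, under which $dw = 2\pi i(1+w)\,dz$ and $Y[a,z] = (1+w)^{\D}Y(a,w)$. Since $\res_z Y[a,z]b = a_{[0]}b$, we conclude
\[
a*b - b*a = 2\pi i\, a_{[0]}b
\]
as elements of $V$, and hence in $A(V)=V/J$. The identity in $V$ actually holds before passing to the quotient, so no further argument about $J$ is needed beyond well-definedness of $*$, which is Zhu's result.

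The main obstacle is Step 1: getting the exponents right after skew-symmetry and the coordinate change, and keeping track of the sign. The expansion of $Y(a,-w)$ involves $(-1)^{j+1}$ factors, and the translation $e^{wT}$ must be converted correctly. I would first verify this identity mode by mode. Both sides expand to
\[
\sum_j \binom{\D-1}{j} a_{(j-1)}b ,
\]
with the left side rewritten through
\[
b_{(n)}a = \sum_i (-1)^{n+i+1}T^{(i)}a_{(n+i)}b ,
\]
followed by binomial identities. If the exact equality in $V$ fails, I would fall back on proving it modulo $J$, as in Zhu's Lemma 2.1.3, using that $\res_w w^{-2-m}(1+w)^{\D}Y(a,w)b \in J$ for $m\ge0$.
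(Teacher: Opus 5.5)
Your Step 1 is false as an identity in $V$, and so is your closing claim that $a*b-b*a=2\pi i\,a_{[0]}b$ holds before passing to $A(V)=V/J$. The substitution $w\mapsto -w/(1+w)$ does not absorb the factor $e^{wT}$ produced by skew-symmetry. What it absorbs is a factor $(1+w)^{-L_0}$ acting on $a_{(n)}b$, which has weight $\D_a+\D_b-n-1$. Trading $e^{wT}$ for $(1+w)^{-L_0}$ is legitimate only modulo $J$.

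Here is a concrete counterexample. Take $V^k(\g)$ with $k\neq -h^\vee$, and set $a=x_{(-1)}\vac$, $b=y_{(-1)}\vac$ with $[x,y]\neq 0$.
\begin{itemize}
\item The left side of Step 1 is $y_{(-1)}x_{(-1)}\vac+[y,x]_{(-1)}\vac$.
\item The right side is $x_{(-1)}y_{(-1)}\vac$.
\item Their difference is $[y,x]_{(-2)}\vac+[y,x]_{(-1)}\vac=-(L_{-1}+L_0)[x,y]$, which is nonzero.
\end{itemize}
Correspondingly $a*b-b*a-2\pi i\,a_{[0]}b=(L_{-1}+L_0)[x,y]$. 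This is nonzero in $V$ but lies in $J$. Your proposed mode-by-mode check would have exposed this: skew-symmetry produces terms $T^{(i)}a_{(m)}b$ with $i\geq 1$ and $m\geq 0$, and nothing on the right cancels them.

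The missing idea is that $(L_{-1}+L_0)u=\res_w w^{-2}(1+w)^{\D_u}Y(u,w)\vac$ lies in $J$ for every homogeneous $u$; this is the case $b=\vac$ of the spanning set of $J$.
\begin{enumerate}
\item Apply this to $T^{k-1}u$ inductively. This gives $T^ku\equiv(-1)^k\D_u(\D_u+1)\cdots(\D_u+k-1)\,u$, that is, $e^{wT}u\equiv(1+w)^{-\D_u}u \pmod J$ coefficientwise in $w$.
\item Insert this into $b*a=\res_w w^{-1}(1+w)^{\D_b}e^{wT}Y(a,-w)b$. Only finitely many terms contribute to the residue.
\item Then make your change of variables. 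This yields Zhu's congruence $b*a\equiv\res_w w^{-1}(1+w)^{\D_a-1}Y(a,w)b \pmod J$.
\end{enumerate}
After this, your Steps 2 and 3 are correct and conclude the proof in $A(V)$.

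In the paper's bracket notation the argument is even shorter. We have $a*b=2\pi i\res_z\frac{e^{2\pi i z}}{e^{2\pi i z}-1}Y[a,z]b\,dz$. Skew-symmetry reads $Y[b,z]a=e^{zL_{[-1]}}Y[a,-z]b$, and $L_{[-1]}=2\pi i(L_{-1}+L_0)$ maps $V$ into $J$. Hence $b*a\equiv 2\pi i\res_z\frac{1}{e^{2\pi i z}-1}Y[a,z]b\,dz$, and subtracting leaves $2\pi i\res_z Y[a,z]b\,dz=2\pi i\,a_{[0]}b$.

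Your fallback points in this direction. As written, however, the main argument asserts an identity that fails and omits the one step where the quotient by $J$ is actually used. The paper gives no argument beyond citing Zhu's (2.1.4), which is exactly this computation modulo $J$.
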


Assume now that $(V, \omega, h)$ is a charged conformal vertex algebra. Then $A(V)$ inherits a $\Z$-grading by charge from that on $V$, we write $A(V)=\bigoplus_{c\in\mathbb Z} A(V)^{(c)}$.

Let $M$ be a stable $V$-module, so that $M_{\Top}$ is a left $A(V)$-module. Then since $a_0$ raises the $h_0$-eigenvalue by $c$ for $a\in V^{(c)}$, the space $M_{\peak}$ is annihilated by $a_0$ for all $a\in V^{(c)}$ with $c>0$. This motivates us to define:
\begin{defn}
The stable Zhu algebra of the charged conformal vertex algebra $(V, \omega, h)$ is the quotient algebra
\[
A^{\stab}(V) = A(V)^{(0)} \big/ \bigl(A(V)A(V)^{(>0)}\bigr)^{(0)}.
\]
\end{defn}

By construction, the action of $A(V)$ on $M_{\Top}$ induces an action of $A^{\stab}(V)$ on $M_{\peak}$.


\begin{thm}\label{thm:stable.induction}
Let $P$ be an irreducible left $A^{\stab}(V)$-module. Then there exists an irreducible stable $V$-module $M$ for which $M_{\peak} = P$ as $A^{\stab}(V)$-modules.
\end{thm}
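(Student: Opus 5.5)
We will carry out a two-stage induction. First we pass from $A^{\stab}(V)$ to $A(V)$, and then from $A(V)$ to $V$ using Zhu's theorem as recalled above.

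\emph{Step 1: induction from $A^{\stab}(V)$ to $A(V)$.} Give $P$ the structure of an $A(V)^{(\geq 0)}$-module by letting $A(V)^{(0)}$ act through the quotient map to $A^{\stab}(V)$, and letting $A(V)^{(>0)}$ act by zero. This is well defined: $A(V)^{(\geq 0)} \to A^{\stab}(V)$, killing $A(V)^{(>0)}$, is an algebra homomorphism, since $(A(V)A(V)^{(>0)})^{(0)}$ contains $(A(V)^{(\geq 0)}A(V)^{(>0)})^{(0)}$. Put
\[
\wtil N = A(V)\otimes_{A(V)^{(\geq 0)}} P .
\]
Grade $\wtil N$ by charge, placing $P$ in charge $0$. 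By construction $\wtil N^{(0)} = (A(V)/A(V)A(V)^{(>0)})^{(0)}\otimes_{A^{\stab}(V)} P = P$, and $\wtil N^{(k)} = 0$ for $k>0$. The second claim holds because $A(V)^{(k)}$ with $k>0$ lies in $A(V)^{(\geq 0)}$ and therefore acts by zero on $P$. Since $P$ is irreducible, $\wtil N$ has a unique maximal graded submodule not meeting charge $0$; call the quotient $N$. Then $N$ is an irreducible $A(V)$-module with $N^{(0)}=P$ and $N^{(k)}=0$ for $k>0$. Irreducibility of $N$ follows because any nonzero submodule, being $h_0$-stable (charge grading via Lemma~\ref{lem:Zhu.comm}: $[h,a]_*=2\pi i\, h_{([0])}a$ gives the grading by the action of $h$), must meet the top charge $P$ and hence generate everything.

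\emph{Step 2: induction from $A(V)$ to $V$.} Apply Zhu's theorem to get an irreducible positive energy $V$-module $M$ with $M_{\Top}=N$. Because $h$ has weight one, $h_0$ commutes with $L_0$, so $M$ decomposes as $\bigoplus M_\D^{(k)}$. The charge shift is fixed so that the top charge on $M_{\Top}$ is $0$, or more precisely the $h_0$-eigenvalue on $P$, given by the action of $h \in A^{\stab}(V)$. Then $M_{\peak}=N^{(0)}=P$.

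It remains to check that $M$ is stable, and I expect this to be the main obstacle. Three properties are needed. Positive energy is automatic from Zhu's construction. For the charge bound at fixed $\D$: $M_{\Top+n}$ is spanned by monomials $a^1_{-n_1}\cdots a^s_{-n_s}v$ with $v \in N$ and $\sum n_j=n$, where the $a^j$ run through strong generators. The modes $a_0$ with $a$ of positive charge are absorbed into $N$, which has charge $\le 0$. Each mode $a_{-m}$ with $m\ge1$ raises the charge by at most $N$, the maximal generator charge, so $M_{\Top+n}^{(k)}=0$ for $k>nN$ plus the top charge. For finite-dimensionality of $M_\D^{(k)}$: with $\D$ fixed, the same spanning argument reduces the question to the finite-dimensionality of each charge space $N^{(k)}$ together with finiteness of the generator set. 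One would argue that $N^{(k)}$ is finite dimensional using the triangular structure $N^{(k)} = A(V)^{(k)}P$ and the stably quasi-lisse / finiteness hypotheses. Without such a hypothesis this could fail, so if needed I would weaken the conclusion to ``weight module with the support conditions'', or invoke the standing finiteness assumptions (finite strong generation and $\dim P<\infty$, which holds since $A^{\stab}(V)$ is finite dimensional in the cases of interest) to bound $\dim N^{(k)}$ via PBW-type spanning of $A(V)^{(k)}$ by products of images of generators.
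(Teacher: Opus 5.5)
Your proposal is essentially the paper's proof: you give $P$ an $A(V)^{(\geq 0)}$-action with $A(V)^{(>0)}$ acting by zero, form $\wtil N=A(V)\otimes_{A(V)^{(\geq 0)}}P$, use that $h$ acts on the charge pieces by distinct scalars to pass to the irreducible quotient $N$ with $N^{(0)}=P$, and then apply Zhu induction. The paper never checks that the resulting $M$ is stable, so the issue you raise is an omission in the paper, not a flaw in your argument; your diagnosis is correct (the statement implicitly needs $\dim P<\infty$, since $M_{\peak}$ is a weight space of a stable module), and under finite strong generation your sketched fix closes the gap, namely ordered monomials in $A(V)$ with the negative-charge generators on the left to get $\dim N^{(k)}<\infty$, together with the PBW spanning set of $M$.
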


\begin{proof}
Through the surjection $A(V)^{(0)} \rightarrow A^{\stab}(V)$ we obtain a left action of $A(V)^{(0)}$ on $P$. We extend the action to an action of $A(V)^{(\geq 0)}$ by setting $A(V)^{(> 0)} P = 0$. We now consider the $A(V)$-module
\[
\wtil{N} = A(V) \otimes_{A(V)^{(\geq 0)}} P.
\]
It is clear that $\wtil N$ inherits a $\Z$-grading from the charge grading on $A(V)$. Furthermore under this grading we have $\wtil N = \bigoplus_{k \in \Z_{\leq 0}} \wtil N^{(k)}$ and $\wtil N^{(0)} = P$.

Since $h$ lies in the centre of $A^{\stab}(V)$, it acts as a scalar in $P$, and so $h$ acts in the graded pieces of $\wtil N$ by distinct scalars. It follows that $A(V)$-submodules of $\wtil N$ are graded. Let $S \subset \wtil N$ be a proper $A(V)$-submodule, then we claim $S \subset \bigoplus_{k < 0} \wtil N$. Indeed it is clear that $S^{(0)} \subset \wtil N^{(0)}$ is stable under the action of $A(V)^{(0)}$ hence also of $A^{\stab}(V)$, hence trivial by irreducibility of $P$.

We now define $I \subset \wtil N$ to be the sum of all proper $A(V)$-submodules, which as we have seen above is contained in $\bigoplus_{k < 0} \wtil N$, so we may form the irreducible $A(V)$-module $N = \wtil N / I$ which is $\Z_{\leq 0}$-graded with $N^{(0)} = P$.

Finally we apply the Zhu induction to $N$ to obtain an irreducible $V$-module $M$ with $M_{\Top} = N$ and hence $M_{\peak} = P$.
\end{proof}

\begin{lemma}\label{lem:peak-simple}
Let $M$ be an irreducible stable $V$-module. Then $M_{\peak}$ is an irreducible
left $A^{\stab}(V)$-module.
\end{lemma}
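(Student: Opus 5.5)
The plan is to reduce to the classical fact that the top space of an irreducible positive energy $V$-module is an irreducible $A(V)$-module, and then transfer irreducibility from $M_{\Top}$ to its peak. First, $M$ is irreducible and stable, so $M_{\Top}$ is nonzero and finite dimensional in each charge, and $M_{\Top}$ is an irreducible $A(V)$-module. To see this, take a nonzero $A(V)$-submodule $Q \subset M_{\Top}$. The submodule of $M$ generated by $Q$ is $M$, and by the standard PBW-type spanning argument its lowest weight space is spanned by the vectors $a_0 Q$. Hence it meets $M_{\Top}$ exactly in $A(V)\cdot Q = Q$, which forces $Q = M_{\Top}$. Moreover $M_{\Top}$ is graded by charge, with $M_{\Top}^{(k)} = 0$ for $k > k_0$, and $M_{\peak} = M_{\Top}^{(k_0)}$.

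Next, let $P \subset M_{\peak}$ be a nonzero $A^{\stab}(V)$-submodule, that is, a nonzero $A(V)^{(0)}$-submodule. Consider $A(V)\cdot P \subset M_{\Top}$. By irreducibility this equals $M_{\Top}$. Taking the charge $k_0$ component gives
\[
M_{\peak} = \bigl(A(V)\cdot P\bigr)^{(k_0)} = A(V)^{(0)} P + \sum_{c>0} A(V)^{(-c)}\bigl(A(V)^{(c)} P\bigr).
\]
The second sum vanishes, because $A(V)^{(c)}P \subset M_{\Top}^{(k_0+c)} = 0$ for $c>0$. Hence $M_{\peak} = A(V)^{(0)}P = P$. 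Here I use that $A(V)=\bigoplus_c A(V)^{(c)}$ and that an element of charge $0$ in $A(V)A(V)$ acting on $P$ decomposes into products of graded pieces. The same computation also confirms that the ideal $\bigl(A(V)A(V)^{(>0)}\bigr)^{(0)}$ acts by zero on $M_{\peak}$, so the $A^{\stab}(V)$-action is well defined.

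The main point requiring care is the first step: the irreducibility of $M_{\Top}$ for an irreducible positive energy module. This is standard from Zhu's theory, using the fact that $M$ is spanned by modes $a_{n}$ applied to any nonzero vector. I will argue it directly as follows. Take $Q$ as above and let $\langle Q\rangle$ be the $V$-submodule it generates. By the associativity and commutator formulas, $\langle Q\rangle$ is spanned by vectors $a_{-n}v$ with $v \in A(V)Q$ and $n\ge 0$. Taking the degree-$\D_0$ part then gives $\langle Q\rangle_{\D_0} = Q$. I will also need $M_{\peak}\neq 0$, which holds by definition since $k_0$ exists by the stability condition.
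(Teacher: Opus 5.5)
Your proof is correct and follows essentially the same route as the paper. The paper takes the $V$-submodule $N$ generated by $P$, notes $N=M$, uses Zhu's spanning argument to get $N_{\Top}=A(V)P$, and reads off the top-charge component, where only $A(V)^{(0)}$ contributes, so $M_{\peak}=P$. You package the first two steps as the irreducibility of $M_{\Top}$ over $A(V)$ and make the charge bookkeeping explicit; the extra sum over $c>0$ in your display is redundant, since it already lies in $A(V)^{(0)}P$, but it is harmless.
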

\begin{proof}
Let $0\neq P\subset M_{\peak}$ be an $A^{\stab}(V)$-submodule, and let $N\subset M$ be the $V$-submodule generated by $P$. Since $M$ is irreducible, one has $N=M$.

By the usual Zhu algebra argument, $N_{\Top}$ is generated
from $P$ by the action of $A(V)$. Passing to the highest $h_0$-eigenspace, this action
factors through $A^{\stab}(V)$. Hence
\[
N_{\peak}=P.
\]
Since $N=M$, it follows that $P=M_{\peak}$.
\end{proof}

\begin{prop}\label{stablezhu-semisimple}
If $V$ is stably rational then $A^{\stab}(V)$ is finite dimensional and semisimple.
\end{prop}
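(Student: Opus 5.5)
The plan is to imitate the standard argument showing that rationality of $V$ forces $A(V)$ to be finite dimensional and semisimple. Throughout, Theorem~\ref{thm:stable.induction} and Lemma~\ref{lem:peak-simple} play the role of Zhu's correspondence. Together they give a bijection between isomorphism classes of irreducible $A^{\stab}(V)$-modules and irreducible stable $V$-modules via $M \mapsto M_{\peak}$. Injectivity holds because an irreducible stable module is recovered from its peak: the construction in Theorem~\ref{thm:stable.induction} produces the unique irreducible quotient, and any irreducible $M$ with $M_{\peak}=P$ receives a nonzero map from the induced module.

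The first step is to show that every finitely generated left $A^{\stab}(V)$-module $P$ is semisimple. Regard $A^{\stab}(V)$ itself, or any cyclic module $P$, as an $A(V)^{(\geq 0)}$-module as in the proof of Theorem~\ref{thm:stable.induction}. Form $\wtil N = A(V)\otimes_{A(V)^{(\geq 0)}} P$, and then apply a generalised Verma (Zhu) induction to get a $V$-module $\bar M(P)$ whose top is a quotient of $\wtil N$. We must check that this module is stable and that its peak is still $P$. For stability, charges are bounded above because $\wtil N$ is $\Z_{\le 0}$-graded relative to $P$ and modes preserve the bounded charge-per-weight structure. For the peak, we quotient by the maximal submodule meeting the peak trivially, which is graded since $h_0$ and $L_0$ act semisimply.

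Finite dimensionality of weight spaces is not automatic. To handle it, we take $P$ finite dimensional first and use finite strong generation of $V$ to bound each $M_\Delta^{(k)}$. Then $\bar M(P)$ is a finitely generated stable module, hence semisimple by stable rationality, a direct sum of irreducibles $M^i$. Taking peaks, which is an exact functor on weight decompositions restricted to the weight and charge $(\Delta_0,k_0)$, shows that $P=\bigoplus M^i_{\peak}$ is a sum of irreducibles by Lemma~\ref{lem:peak-simple}. Hence every finite-dimensional $A^{\stab}(V)$-module is semisimple.

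The second step is to show that there are only finitely many irreducibles, each finite dimensional. The plan is to use that $h$, and more to the point the image of $\omega$, is central in $A^{\stab}(V)$: by Lemma~\ref{lem:Zhu.comm}, $\omega$ and $h$ commute with charge-zero elements modulo the relevant ideal. Stable rationality together with the finiteness results of Section~\ref{sec:finiteness} bounds the number of irreducible stable modules. Their normalized traces are linearly independent charged conformal blocks, since they have distinct leading $(q,y)$-exponents or are distinguished by their peaks. By Corollary~\ref{cor:informal.finiteness}, the space of conformal blocks is finite dimensional, so there are finitely many irreducibles, and each peak $M_{\peak}$ is finite dimensional by definition of stable module.

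Finally, we conclude that $A^{\stab}(V)$ is semisimple. Apply the first step to the regular module, working with finite-dimensional quotients $A^{\stab}(V)/\mathrm{Ann}$. The Jacobson radical annihilates every irreducible. Semisimplicity of all finitely generated modules, applied to the cyclic regular module, shows the radical is zero. The regular module is then a direct sum of irreducibles, of which only finitely many types occur, each finite dimensional. Since the regular module is cyclic, the sum is finite, so $A^{\stab}(V)\cong\prod_i \en(M^i_{\peak})$ is finite dimensional.

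The main obstacle is the first step for infinite-dimensional $P$ such as the regular module. We must ensure that the induced module is a finitely generated stable $V$-module with finite-dimensional weight spaces. If this fails, the fallback is the Dong--Li--Mason approach: prove that every finite-dimensional quotient of $A^{\stab}(V)$ is semisimple and that the irreducibles are finitely many and finite dimensional. Then deduce that the intersection of their annihilators is zero, using that $A^{\stab}(V)$ embeds into $\prod \en(M^i_{\peak})$ because the trace or peak action on all stable modules is faithful.
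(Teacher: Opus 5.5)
Your Step 1 is the paper's argument: induce a finite-dimensional $P$ to a stable module, split it by stable rationality, and read off the peak with Lemma~\ref{lem:peak-simple}. You are right that $\dim M_\Delta^{(k)}<\infty$ needs an argument the paper omits. Finite strong generation supplies it: if you order PBW monomials in $A(V)$ with negative-charge generators on the left, $\wtil N^{(-k)}$ is spanned by finitely many monomials applied to $P$.

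For Step 2 the paper simply assumes there are finitely many simple stable modules. Deriving this from Corollary~\ref{cor:informal.finiteness} is a reasonable improvement, but it uses hypotheses that are not in the statement (stably quasi-lisse, finite strong generation, convergence of traces). You must also add that a flat section vanishing at one point vanishes identically, because by \eqref{eq:DE.y} and \eqref{eq:DE.q} all its derivatives there are values of the section. Only then does independence of the $S_{W^i}$ as functions give independence inside $\CC$ at a point.

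The genuine gap is the final step. Everything you prove concerns finite-dimensional $A^{\stab}(V)$-modules, and your opening bijection only covers these: Theorem~\ref{thm:stable.induction} yields a \emph{stable} module only when $P$ is finite-dimensional. Step 1 therefore says nothing about the regular module. The faithfulness of $A^{\stab}(V)$ on $\bigoplus_i M^i_{\peak}$ in your fallback is also not implied by anything you have. A quotient of $A^{\stab}(V)$ with no finite-dimensional modules is invisible to stable modules, to their traces, and to Step 1.

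This actually happens. Take $V=L_{-1/2}(\sll_2)$ with $h=0$. Then $A^{\stab}(V)=A(V)$, and ``stable'' means positive energy with finite-dimensional $L_0$-eigenspaces. Such modules lie in $\OO$, since each $M_\Delta$ is a finite-dimensional $\sll_2$-module. So the category is semisimple by the result used in Proposition~\ref{prop:admissible-affine-stable}, and $V$ is finitely strongly generated and quasi-lisse, hence stably quasi-lisse for $h=0$. Yet $A(V)$ acts on the top of the admissible module of highest weight $-\tfrac12\varpi$, which is an infinite-dimensional irreducible Verma module. So $A^{\stab}(V)$ is infinite-dimensional.

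The statement therefore needs an extra input. The paper's proof has the same weak spot: ``each such irreducible module is finite-dimensional, since it is the peak space of a stable module''.

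A clean repair uses Zhu's filtration, under which $\operatorname{gr}A^{\stab}(V)$ is a quotient of $R_V^h$. If $\dim R_V^h<\infty$, then $A^{\stab}(V)$ is finite-dimensional, and Step 1 applied to the regular module finishes the proof without Step 2. This holds for admissible $L_k(\g)$ with $h=\rho$, and for $\W_k(\g,f)$ of standard Levi type, where $X_V^h$ is a point.

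Alternatively, suppose every irreducible $A^{\stab}(V)$-module is known to be finite-dimensional. Then, with your Step 2, the quotient by the Jacobson radical $J$ is finite-dimensional. Noetherianity (from the same filtration) makes $A^{\stab}(V)/J^2$ finite-dimensional, Step 1 then gives $J=J^2$, and Nakayama gives $J=0$.
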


\begin{proof}
Let $P$ be a finite-dimensional left $A^{\stab}(V)$-module. Without loss of generality $P$ is a sum of generalised eigenspaces of $h$ of some fixed eigenvalue $k$, and likewise $L$ of eigenvalue $\D$. By a similar argument as in the proof of Theorem \ref{thm:stable.induction}, there exists a stable $V$-module $M$ such that
\[
M_{\peak}\cong P
\]
as $A^{\stab}(V)$-modules. Indeed we form $\wtil N$ as in that proof, and then apply Zhu induction directly to obtain a stable $V$-module.

Since $V$ is stably rational, the category $V\mod^{\omega, h}$ is semisimple. Hence
\[
M\cong \bigoplus_{i=1}^k M^i
\]
for irreducible stable $V$-modules $M^1,\dots,M^k$. Taking peak spaces, we recover $P$ as the intersection of $M_{\D, k}$ with
\[
M_{\peak}\cong \bigoplus_{i=1}^k M^i_{\peak}.
\]
Each $M^i_{\peak}$ is an irreducible $A^{\stab}(V)$-module by Lemma \ref{lem:peak-simple}. Therefore $P$ is semisimple. Since $P$ was arbitrary, every
finite-dimensional left $A^{\stab}(V)$-module is semisimple. Thus $A^{\stab}(V)$ is semisimple.

Assume now that $V\mod^{\omega, h}$ has only finitely many simple objects up to isomorphism. By
Theorem~\ref{thm:stable.induction}, every irreducible left $A^{\stab}(V)$-module arises as
$M_{\peak}$ for some irreducible stable $V$-module $M$. Hence $A^{\stab}(V)$ has only finitely
many irreducible left modules up to isomorphism. Moreover, each such irreducible module is
finite-dimensional, since it is the peak space of a stable module. Therefore
$A^{\stab}(V)$ is a finite-dimensional semisimple algebra.
\end{proof}

In the rest of this section we discuss stable rationality of affine vertex algebras and $W$-algebras.
\begin{prop}\label{prop:admissible-affine-stable}
Let $V=L_k(\g)$ be a simple affine vertex algebra at an admissible level, and take $h=\rho$. Then the category of finitely generated stable $V$-modules coincides with category $\mathcal O$. In particular, $V$ is stably rational. Hence, by Proposition~\ref{stablezhu-semisimple}, the stable Zhu algebra $A^{\stab}(V)$ is finite-dimensional and semisimple.
\end{prop}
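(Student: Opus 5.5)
The proof has two inclusions, followed by an appeal to known semisimplicity results.

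First, every module in category $\OO$ for $\what\g$ at level $k$ on which $L_k(\g)$ acts is a finitely generated stable module. By \cite{A12-2} (and \cite{AM95} for $\sll_2$), the $L_k(\g)$-modules in $\OO$ are exactly the admissible modules of Kac and Wakimoto, and the category is semisimple with finitely many simples $L(\la)$. Each $L(\la)$ is a weight module for $L_0$ (Sugawara) and $h_0=\rho_{(0)}$, since $\rho\in\h$ acts semisimply. Its conformal weights are bounded below by $\D_\la$. Each space $L(\la)_\D$ is a finite-dimensional $\g$-module whose $\h$-weights are bounded above by $\la+$ (finitely many); hence $\rho$-eigenvalues are bounded above and $\dim L(\la)_\D^{(m)}<\infty$.

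Second, conversely, let $M$ be a finitely generated stable $L_k(\g)$-module. I would show $M\in\OO$ as follows.

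\begin{itemize}
\item Weight spaces: $\h$ acts locally finitely, because $\h$ commutes with $L_0$ and $h_0$ and preserves the finite-dimensional spaces $M_\D^{(m)}$. Using the semisimplicity of the $\h$-action on $L_k(\g)$, I would pass to the generalized weight decomposition. Semisimplicity will follow once $M$ is shown to be in $\OO$, by Arakawa's result applied to highest weight vectors; alternatively, one first takes the submodule generated by $\h$-eigenvectors.
\item Local nilpotence of $\what\n_+$: each $x_n$ with $n>0$ lowers $L_0$, so it acts nilpotently by the lower bound on $\D$. For $e_\al\in\g$ with $\al>0$, $e_{\al,0}$ preserves $\D$ and raises the $\rho$-charge by $\mathrm{ht}(\al)>0$. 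The third stability condition (charges bounded above for each fixed $\D$) then gives local nilpotence.
\item Finiteness: each generator lies in a finite sum of spaces $M_\D^{(m)}$, so $U(\what\n_+)$ generates a finite-dimensional subspace. Together with finite generation over $U(\what\n_-)$, this yields finitely generated $\h$-weight modules with $\what\n_+$ acting locally finitely, i.e.\ modules in $\OO$.
\end{itemize}

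The main obstacle is diagonalizability of $\h$, since stability only gives semisimplicity of $\rho_0$ and $L_0$. I would try to avoid it as follows: the argument above already shows $M$ lies in the category $\OO$ with possibly generalized weights. Arakawa's classification \cite{A12-2} shows that every highest-weight subquotient is admissible, and since admissible weights do not link with one another, extensions split. So $M$ is a direct sum of admissible $L(\la)$, which are $\h$-semisimple.

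Stable rationality then follows because the category of finitely generated stable modules equals $\OO$ for $L_k(\g)$, which is semisimple with finitely many simples by \cite{A12-2}. Finite dimensionality and semisimplicity of $A^{\stab}(V)$ follow from Proposition~\ref{stablezhu-semisimple}, including the finiteness of the number of simple objects used in its last paragraph.
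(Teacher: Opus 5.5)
Your overall route is the paper's. You get local nilpotence of $\what\n_+$ from the lower bound on $L_0$ and the upper bound on $\rho_0$-charges, use finite generation to land in $\OO$, and conclude with \cite{A12-2} and Proposition~\ref{stablezhu-semisimple}; you are right that the latter needs finitely many simples. However, the direction $\OO\Rightarrow$ stable rests on a false claim: $L(\la)_\D$ is in general \emph{not} a finite-dimensional $\g$-module. For non-integrable admissible $\la$ it is infinite-dimensional. For example, for $L_{-4/3}(\sll_2)$ and $\la=-\frac23\varpi$, the top space of $L(\la)$ is the irreducible $\sll_2$-module of highest weight $-\frac23\varpi$, which is a Verma module. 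This is exactly why the paper works with stable rather than ordinary modules. So your derivation of $\dim L(\la)^{(m)}_\D<\infty$ and of the upper bound on charges has no basis. The repair is the paper's weight count. $L(\la)$ has finite-dimensional $\what\h$-weight spaces, and its weights are $\la-\sum_{i=0}^{\ell}m_i\al_i$ with $m_i\in\Z_{\ge0}$ and $\al_0=\delta-\theta$. The $L_0$-eigenvalue fixes $m_0$. The $\rho$-charge $\bar\la(\rho)+m_0\theta(\rho)-\sum_{i\ge1}m_i\al_i(\rho)$, with all $\al_i(\rho)>0$, is then bounded above, and fixing it leaves only finitely many $(m_1,\dots,m_\ell)$.

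Second, $\h$-diagonalisability. The paper's proof does not discuss it (it speaks of the $\h$-weights of $M$ directly), so you are right to raise it, but your patch does not settle it. Non-linkage only handles extensions between non-isomorphic simples. There a lift $w$ of the highest-weight vector of the quotient $L(\la)$ is automatically an $\h$-eigenvector, because each $(h-\la(h))w$ is a singular vector of weight $\la$ in the simple submodule $L(\mu)\not\cong L(\la)$, hence zero. The real danger is a self-extension $0\to L(\la)\to E\to L(\la)\to 0$ in which $\h$ acts on the two-dimensional generalized weight space $E_\la$ by a Jordan block. Linkage says nothing about these: once generalized weight spaces are allowed, even a simple Verma module, alone in its block, has such self-extensions obtained by deforming the highest weight. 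Moreover, the semisimplicity theorem of \cite{A12-2} is formulated in the usual $\h$-semisimple category $\OO$. You must use that $E$ is an $L_k(\g)$-module, for instance as follows. $E_\la\subset E_{\Top}$ is annihilated by $\what\n_+$, and $Z(\g)$ acts on $E_{\Top}$ through its image in $A(L_k(\g))$. That image has reduced, zero-dimensional spectrum contained in the regular locus of $\h^*/W$ (\cite{A12-2}, \cite{AEkeren19}, as quoted in the proof of Theorem~\ref{thm:W-algebra}). Since $\h^*\to\h^*/W$ is unramified at regular points, the Harish-Chandra map forces $\h$ to act on $E_\la$ by scalars, and the extension splits. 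Combined with finite length (finitely many admissible weights, each occurring with multiplicity at most the dimension of a weight space), this gives $\h$-semisimplicity, and hence membership in $\OO$, for every finitely generated stable module.
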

  
\begin{proof}
Let
\[
M=\bigoplus_{\Delta,r} M_\Delta^{(r)}
\]
be a finitely generated stable \(V\)-module. Thus each \(M_\Delta^{(r)}\) is
finite-dimensional, the \(L_0\)-spectrum of \(M\) is bounded below, and, for each fixed
\(\Delta\), the \(\rho_0\)-eigenvalues occurring in \(M_\Delta\) are bounded above.

Let \(a\in \g\). Since \(a\) has conformal weight \(1\), the mode \(a_{(n)}\) lowers
conformal weight by \(n\). Hence, for \(n>0\), the operator \(a_{(n)}\) acts locally
nilpotently on \(M\). It follows that \(\g\otimes t\C[t]\) acts locally nilpotently on
\(M\).

Next, let \(x_\alpha\in\g_\alpha\) be a positive root vector. Then \((x_\alpha)_{(0)}\)
preserves conformal weight and satisfies
\[
[\rho_0,(x_\alpha)_{(0)}]=\alpha(\rho)(x_\alpha)_{(0)},
\qquad \alpha(\rho)>0.
\]
Since the \(\rho_0\)-spectrum on each \(M_\Delta\) is bounded above, \(x_\alpha(0)\)
acts locally nilpotently on \(M_\Delta\), and hence on \(M\). Thus \(\n_+\) acts
locally nilpotently on \(M\).

As \(M\) is finitely generated and \(L_0,\rho_0\) act semisimply, we may choose a finite
set of homogeneous generators. The preceding local nilpotence then implies that the
weights of \(M\) are contained in a finite union of sets of the form
\[
\lambda_i-\sum_{\beta\in\widehat{\Delta}_+}\Z_{\geq0}\beta .
\]
Moreover, for each fixed pair \((\Delta,r)\), only finitely many \(\mathfrak h\)-weights
can occur in such a finite union of downward affine cones. Since \(M_\Delta^{(r)}\) is
finite-dimensional by stability, it follows that the affine weight spaces of \(M\) are
finite-dimensional. Therefore \(M\) belongs to category \(\mathcal O\).

Conversely, let \(M\in\mathcal O\). Then \(M\) has finite-dimensional
\(\widehat{\mathfrak h}\)-weight spaces and its weights are contained in a finite
union of downward affine cones. This implies that the \(L_0\)-spectrum is bounded
below. Moreover, for each fixed conformal weight \(\Delta\) and each \(r\), only
finitely many finite \(\mathfrak h\)-weights \(\mu\) with \(\mu(\rho)=r\) can occur in
this finite union of cones. Hence \(M_\Delta^{(r)}\) is a finite direct sum of
finite-dimensional \(\widehat{\mathfrak h}\)-weight spaces, and is therefore
finite-dimensional. Finally, for each fixed \(\Delta\), the \(\rho_0\)-eigenvalues are
bounded above. Thus \(M\) is stable.

Therefore the category of finitely generated stable \(V\)-modules coincides with the
category of \(L_k(\g)\)-modules in \(\mathcal O\). Since \(k\) is admissible, this category
is semisimple by \cite[Main Theorem]{A12-2}. Hence \(V\) is stably rational. The final
assertion follows from Proposition~\ref{stablezhu-semisimple}.
\end{proof}

Recall that a nilpotent element $f$ of $\g$ is said to be of standard Levi type if it is a principal nilpotent element in some Levi subalgebra of $\g$. Note that if $\g$ is of type $A$ then all nilpotent elements are of standard Levi type.
\begin{thm}\label{thm:W-algebra}
Let $V = L_k(\g)$ be a simple affine vertex algebra at an admissible level $k$, and let $f$ be a nilpotent element of standard Levi type that is contained in $X_{L_k(\g)}$. Then the simple $W$-algebra $\mathscr{W}_k(\g,f)$ is stably rational with respect to $\rho_f$ \textup{(}defined in \eqref{eq:rho_f}\textup{)}.
\end{thm}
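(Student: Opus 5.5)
The plan is to transport the semisimplicity of category $\mathcal O$ for $L_k(\g)$ to the stable module category of $\W_k(\g,f)$ through a Levi-type reduction. The idea is that stable $\W_k(\g,f)$-modules with respect to $\rho_f$ behave like category $\mathcal O$ relative to the minimal Levi $\mf{l}$. Since $f$ is of standard Levi type, $f$ is principal in $\mf{l}$. Hence the finite $W$-algebra $U(\mf{l},f)$ is the centre $Z(\mf{l})$, a polynomial algebra, which is commutative.

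\textbf{Step 1 (stable Zhu algebra).} Recall from the discussion before Theorem~\ref{thm:stably-quasi-lisse-W} that $A(\W^k(\g,f))\cong U(\g,f)$. The charge grading by $\rho_f$ is induced by the restricted root decomposition of $\g^f$. Using the PBW basis of $U(\g,f)$ with respect to the generators $h_i,x_i,y_{\alpha,i}$, and the Brundan--Goodwin--Kleshchev triangular decomposition, I would show that
\[
A^{\stab}(\W^k(\g,f))=U(\g,f)^{(0)}/\bigl(U(\g,f)U(\g,f)^{(>0)}\bigr)^{(0)}\cong U(\mf{l},f).
\]
This is the Miura-type map of \cite{BruGooKle08}. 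It follows that $A^{\stab}(\W_k(\g,f))$ is a quotient of $U(\mf{l},f)=Z(\mf{l})$, and in particular is commutative. So every irreducible stable module has a one-dimensional peak space, determined by a central character of $\mf{l}$, that is, by a $W_L$-orbit of weights.

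\textbf{Step 2 (finiteness and classification).} Using the reduction functor $H_{DS,f}$ (here the "minus" reduction), I would identify irreducible stable $\W_k(\g,f)$-modules with reductions $H_{DS,f}(L(\lambda))$ of admissible $L_k(\g)$-modules in $\mathcal O$. Arakawa's results give that these reductions are zero or irreducible, and that every irreducible $\W_k$-module in the relevant category arises this way. Since $\mathcal O$ for $L_k(\g)$ has finitely many simples, this gives finitely many irreducible stable modules and a finite-dimensional commutative semisimple quotient of $Z(\mf{l})$. The condition $f\in X_{L_k(\g)}$ guarantees that $\W_k(\g,f)$ is nonzero and quasi-lisse, by Theorem~\ref{thm:stably-quasi-lisse-W}.

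\textbf{Step 3 (semisimplicity, the main obstacle).} Finitely many simples with semisimple $A^{\stab}$ does not by itself exclude nonsplit extensions among stable modules. I would attack this as follows. Take a short exact sequence $0\to M_1\to M\to M_2\to 0$ of finitely generated stable modules, and reduce to $M_2$ irreducible with peak of $(L_0,\rho_{f,0})$-weight $(\Delta,r)$.

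Case one: the weight $(\Delta,r)$ occurs in $M_1$ only below the peak. Then the peak vector of $M_2$ lifts to a vector annihilated by positive modes and by positive-charge zero modes. By Theorem~\ref{thm:stable.induction} and the universal property of $\wtil N$, this gives a splitting.

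Case two: the weight occurs in $M_1$ at the same level. Then I would use the central character. The Zhu algebra of the simple $W$-algebra acts on generalised weight spaces through $Z(\g)$-characters, and distinct admissible weights in the same block are separated by the $\mf{l}$-central character.

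If this direct route fails, the fallback is an equivalence argument. For standard Levi type, a Kazhdan--Lusztig-type "inverse reduction" or the Arakawa--van Ekeren rationality argument expresses stable $\W_k$-modules as $H_{DS,f}$ of objects in $\mathcal O$ for $L_k(\g)$. Exactness of $H_{DS,f}$ then transfers semisimplicity from Proposition~\ref{prop:admissible-affine-stable}. The hardest point is precisely this essential surjectivity on extensions, namely realizing every stable module as a reduction.
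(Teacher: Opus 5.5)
Your Step~1 matches the paper: by Brundan--Goodwin--Kleshchev and Kostant, $A^{\stab}(\W^k(\g,f))\cong U(\mf{l},f)\cong\mc{Z}(\mf{l})$, so $A^{\stab}(\W_k(\g,f))$ is a commutative quotient of $\mc{Z}(\mf{l})$. After that the proposal has genuine gaps.

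In Step~2 you claim that every irreducible stable $\W_k(\g,f)$-module is a reduction $H_{DS,f}(L(\lambda))$ of an admissible module, and that these reductions are irreducible or zero. You assert this without proof. Such results are not available for an arbitrary $f$ of standard Levi type; Arakawa's irreducibility theorems concern special nilpotents such as minimal or principal ones. More importantly, even granting finitely many simples, you only learn that the finitely generated commutative algebra $A^{\stab}(\W_k(\g,f))$ has finitely many maximal ideals, hence is finite dimensional. Nothing shows it is reduced: $\C[x]/(x^2)$ has a single simple module but is not semisimple. So the ``semisimple quotient of $Z(\mf{l})$'' you carry into Step~3 is unjustified.

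Step~3 is open by your own account. In Case one, the submodule generated by a lifted peak vector is only a quotient of the module induced as in Theorem~\ref{thm:stable.induction}. It may meet $M_1$, so no splitting follows. Case two is a heuristic rather than an argument.

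The missing idea is to get reducedness from regularity of central characters, working entirely with Zhu algebras and with no classification of modules. The paper combines four inputs:
\begin{itemize}
\item Arakawa's isomorphism $A(H^0_{DS,f}(L_k(\g)))\cong H^0_f(A(L_k(\g)))$;
\item Premet's identification of $\mc{Z}(\g)$ with the centre of $U(\g,f)$;
\item Arakawa's Main Theorem on admissible modules;
\item Theorem~3.4 of Arakawa--van Ekeren.
\end{itemize}
Together these show that the image $\mc{Z}$ of $\mc{Z}(\g)$ has reduced, $0$-dimensional spectrum lying in the regular locus of $\h^*/W$.

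Theorem~4.7 of Brundan--Goodwin--Kleshchev makes $\mc{Z}(\g)\to\mc{Z}(\mf{l})$ compatible with $\C[\h^*]^W\to\C[\h^*]^{W_L}$ under the Harish-Chandra isomorphisms. Hence $\spec A^{\stab}(H^0_{DS,f}(L_k(\g)))$ sits inside $\spec\mc{Z}\times_{\h^*/W}\h^*/W_L$. Since $\h^*/W_L\to\h^*/W$ is finite and unramified over regular points, this fibre product is a finite reduced set of points. So $A^{\stab}$ of $H^0_{DS,f}(L_k(\g))$, and of its quotient $\W_k(\g,f)$, is finite dimensional and semisimple.

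The paper's argument ends exactly there. It proves semisimplicity of $A^{\stab}(\W_k(\g,f))$, which is the hypothesis actually used by Lemma~\ref{lem:coef.exhaust} and Proposition~\ref{prop:ex.N=0}. It does not attempt the extension analysis of your Step~3. Note that Proposition~\ref{stablezhu-semisimple} only runs from stable rationality to semisimplicity of $A^{\stab}$, not back.
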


\begin{proof}
Recall the decomposition \eqref{eq:rho_f}. By hypothesis, $f$ is a principal nilpotent element in $\mf{l}$.
By \cite{Ara07,De-Kac06}, we have
\begin{align*}
A(\W^k(\g,f))\cong U(\g,f),
\end{align*}
where $U(\g,f)$ is the finite $W$-algebra associated with $(\g,f)$ \cite{Pre02}. Here $A(\W^k(\g,f))$ denotes the Ramond twisted Zhu algebra of $\W^k(\g,f)$. It follows from {\cite[Theorem 4.3]{BruGooKle08}} that \begin{align*}
A^{\on{stab}}(\W^k(\g,f))\cong U(\mf{l},f),
\end{align*}
the finite $W$-algebra associated with $(\mf{l},f)$. Since $f\in \mf{l}$ is principal, we have
\begin{align*}
U(\mf{l},f) \cong \mc{Z}(\mf{l}),
\end{align*}
where $\mc{Z}(\mf{l})$ is the center of the enveloping algebra $U(\mf{l})$ of $\mf{l}$ \cite{Kos78}. In particular, the stable Zhu algebra $A^{\on{stab}}(\W^k(\g,f))$ is commutative. On the other hand, one knows that the natural map
$\mc{Z}(\g)\to U(\g,f)$ is injective and the image is identified with the center of $U(\g,f)$ \cite{Pre07}.

By hypothesis, $H_{DS,f}^0(L_k(\g))$ is a nonzero quotient of $\W^k(\g,f)$,
and by {\cite[Theorem 8.1]{A2012Dec}}
we have
\begin{align}
A(H_{DS,f}^0(L_k(\g))\cong H_{f}^0(A(L_k(\g))),
\label{eq:commutativity-Zhu-BRST}
\end{align}
where on the right hand side $H_{f}^0(?)$ denotes the finite dimensional analogue of the quantized Drinfeld-Sokolov reduction, see \cite[Section 3]{A2012Dec} for details.

Let $\mc{Z}\subset A(H_{DS,f}^0(L_k(\g))$ denote the image of the center $\mc{Z}(\g)$ of $U(\g)$ under the composition
\begin{align*}
\mc{Z}(\g) \to U(\g,f)=A(H_{DS,f}^0(V^k(\g))\to A(H_{DS,f}^0(L_k(\g))).
\end{align*}
If we write also $\mc{Z}'$ for $\on{im}(\mc{Z}(\g)\to A(\W_k(\g,f)))$ then $\mc{Z}'$ is the image of $\mc{Z}$ by \eqref{eq:commutativity-Zhu-BRST}.

By \cite[Main Theorem]{A12-2} and \cite[Theorem 3.4]{AEkeren19}, we have that $\on{Spec}\mc{Z}'$ is reduced and $0$-dimensional, and is contained in the set of regular elements of $\h^*/W$ under the Harish-Chandra isomorphism $\mc{Z}(\g) \cong \C[\h^*]^W$. Therefore $\on{Spec}\mc{Z}$ is reduced and $0$-dimensional, and is contained in the set of regular elements in $\h^*/W$.

By \cite[Theorem 4.7]{BruGooKle08},
we have a commutative diagram
\begin{center}
\begin{tikzcd}
\C[\h^*]^{W}  \arrow[d]& \mc{Z}(\g)\arrow[l,"\sim"'] \arrow{d}{c}\arrow[r, tail, twoheadrightarrow]& \mc{Z}\arrow[d]\\%
\C[\h^*]^{W_L} &\mc{Z}(\mf{l})=A^{\on{stab}}(\W^k(\g,f))\arrow[l,"\sim"']\arrow[r, tail, twoheadrightarrow]&A^{\on{stab}}(H_{DS,f}^0(L_k(\g))),
\end{tikzcd}
\end{center}
where the left horizontal arrows are the respective Harish-Chandra homomorphisms and the map $c : \mc{Z}(\g)\to \mc{Z}(\mf{l})$ is as in \cite{BruGooKle08}. Hence
\begin{align*}
\spec A^{\on{stab}}(H_{DS,f}^0(L_k(\g)))\cong \spec{\mc{Z}}\times_{\mf{h}^*/W}\mf{h}^*/W_L.
\end{align*}

Since the morphism $\h^*/W_L\to \h^*/W$ is finite and unramified over the locus of regular elements, {{$\spec A(H_{DS,f}^0(L_k(\g)))$ is reduced and $0$-dimensional.}} Therefore $A^{\on{stab}}(H_{DS,f}^0(L_k(\g)))$ is semisimple, and thus, so is $ A^{\on{stab}}(\W_k(\g,f))$.
\end{proof}
Theorem \ref{thm:W-algebra} provides a large supply of examples to which our main results apply, beyond admissible level affine vertex algebras.
\begin{rem}\label{rem:SXY.prediction}
It would be interesting to compare the dimension of $A^{\on{stab}}(\W_k(\g,f))$ with the prediction {\cite[Equation (3.5)]{SXY2024}} for the number of irreducible $\W_k(\g, f)$-algebra modules in the case of boundary admissible level $k$, obtained from counting fixed points in affine Springer fibres.
\end{rem}

\begin{rem}\label{rem:non.even.W}
If $f$ is an odd nilpotent element then $V = \W^k(\g,f)$ and its quotients may contain elements of conformal weight in $1/2 + \Z$. By the symbol $A(V)$ we refer, still, to the associative algebra defined exactly as at the start of this section, it is the Ramond twisted Zhu algebra of $V$. The association $M \mapsto M_{\Top}$ induces a bijection between irreducible positive energy Ramond twisted $V$-modules and irreducible left $A(V)$-modules. See Remark \ref{rem:Ramond.OK} and \cite{De-Kac06}.
\end{rem}

\begin{rem}
There exist quasi-lisse simple $W$-algebras with trivial weight-one space, such algebras will typically not be stably rational and the modular invariance properties of their trace functions is an interesting question. A specific class of examples is provided by the simple subregular $W$-algebra $W_k(D_4(a_1))$ at level $k = -6 + p/u$ where $p, u \geq 6$ are coprime. Further examples come from any distinguished nilpotent orbit.
\end{rem}

\subsection{Exhaustion by trace functions}\label{sec:exhaustion}

In this subsection we complete the proof of Theorem~\ref{thm:intro.2}. Let $S$ be a genus-one charged conformal block, i.e.\ a flat section of the connection $\nabla$ on $\CC$ in the sense of Section~\ref{sec:holonomic}. Our aim is to show that, when $A^{\stab}(V)$ is finite-dimensional and semisimple, $S$ is a finite linear combination of trace functions on irreducible stable $V$-modules. The argument follows from Zhu's exhaustion method \cite{Z96}, applied first to the $y$-expansion of each $q$-sector and then to powers of $\log(q)$. If $V$ is stably rational, the hypothesis on $A^{\stab}(V)$ holds by Proposition~\ref{stablezhu-semisimple}.

By Proposition~\ref{prop:s7-implies-s4-form}, applied to a local trivialisation of $\CC$, the block $S(a,y,q)$ admits a Frobenius expansion. We use the conventions of \eqref{eq:u.Frob.0}--\eqref{eq:u.Frob.2}. Thus $\al$ indexes a $q$-sector $q^{r_\al}$ ($r_\al$ constant, no two differing by an integer), $j \in \Z_{\geq 0}$ is an extra power of $q$, $\mu \leq M_{\al,j}$ is the power of $\log(q)$ in the slice $q^{r_\al+j}$, $\beta$ indexes a $y$-sector $y^{h_\beta}$, and $\ell$ indexes the tail $\sum_\ell y^{-\ell}$ at $y=\infty$. We have
\begin{align}
S(a,y,q)
&= \sum_{\al=1}^K \sum_{j=0}^\infty \sum_{\mu=0}^{M_{\al,j}}
q^{r_\al} \log(q)^\mu\, q^j\, S_{\al,\mu,j}(a,y),
\label{eq:Frob} \\
\text{in which} \quad S_{\al,\mu,0}(a,y)
&= \sum_{\beta=1}^L \sum_{\nu=0}^{N_{\al,\mu,\beta}}
y^{h_\beta} \log(y)^\nu
\sum_{\ell=0}^\infty C_{\al,\beta,\mu,\nu,0,\ell}(a)\, y^{-\ell}.
\label{eq:FrobFrob}
\end{align}
Each $C_{\al,\beta,\mu,\nu,0,\ell} : V \to \C$ is linear, as in \eqref{eq:u.Frob.2}. The subscripts $(\al,\mu,0)$ on $S_{\al,\mu,0}$ and the fourth entry $0$ in $C_{\al,\beta,\mu,\nu,0,\ell}$ record the $q$-index $j=0$ in \eqref{eq:Frob}. When $\ell=0$ we set $C_{\al,\beta,\mu,\nu,0} := C_{\al,\beta,\mu,\nu,0,0}$ for the \emph{head coefficient} of $y^{h_\beta}\log(y)^\nu$. For $j \geq 1$, the coefficient $S_{\al,\mu,j}$ has the same $y$-expansion with $0$ replaced by $j$. The $y$-direction arguments below use the slice $j=0$; a uniform bound on the integers $M_{\al,j}$ is proved later in Lemma~\ref{lem:ex}.

For $a,b \in V^{(0)}$ we write $a *_q b = \res_z \wtil{\wp}_1(z,q)\,Y[a,z]b\,dz$. The Ward identities \eqref{eq:DE.y} and \eqref{eq:DE.q} become
\begin{align}
S(h *_q a,y,q) &= y\frac{d}{dy}S(a,y,q), \label{ward2} \\
S((\omega-\tfrac{c}{24}\vac) *_q a,y,q) &= q\frac{d}{dq}S(a,y,q). \label{ward1}
\end{align}

The first step is to show that the head coefficients descend to linear functionals on the stable Zhu algebra.

\begin{prop}\label{prop:coef.stab}
For all $\al,\beta,\mu,\nu$, the head coefficient $C_{\al,\beta,\mu,\nu,0,0}$ (i.e.\ $C_{\al,\beta,\mu,\nu,0}$ in \eqref{eq:FrobFrob}) descends to a symmetric linear functional on $A^{\on{stab}}(V)$.
\end{prop}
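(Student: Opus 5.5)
The plan is to follow Zhu's argument, adapted to the double expansion in $q$ and $y$. We extract the leading terms in $q$ and then in $y$ of the conformal block identities satisfied by $S$, and check that the head coefficient $C := C_{\al,\beta,\mu,\nu,0,0}$ kills the three kinds of elements defining $A^{\stab}(V)$. These are: the subspace $J$ (so $C$ factors through $A(V)$), commutators $a*b-b*a$ in $A(V)^{(0)}$ (symmetry), and the charge-zero part of $A(V)A(V)^{(>0)}$. Charge-nonzero elements need no work: by the Lemma at the end of Section 3, $S(-,y,q)$ vanishes on $V^{(k)}$ for $k\neq 0$, and so does every coefficient of its expansion.

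\emph{Step 1 ($q\to 0$ limit).} For $a,b\in V^{(0)}$ we have $S(\res_z \wtil\wp_2(z,q)Y[a,z]b\,dz,y,q)=0$. We expand $\wtil\wp_2(z,q)=z^{-2}+\wtil G_2(q)+O(z^2)$ and pass from $Y[\,]$ to $Y(\,)$ via $w=e^{2\pi iz}-1$. The $q^0$ part of this relation is then a combination of $\res_w w^{-2}(1+w)^{\D_a}Y(a,w)b$ and $a_{([0])}b$-type terms. Since $\wtil G_{2k}(q)$ are power series in $q$ with constant coefficients, we can read off the coefficient of $q^{r_\al}\log(q)^\mu$ at $j=0$ and obtain that $S_{\al,\mu,0}$ annihilates $J\cap V^{(0)}$. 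For this we use that $S_{\al,\mu,0}(a_{(0)}b,y)=0$, which is the $q$-leading part of $S(a_{(0)}b)=0$. This is exactly Zhu's computation at $q=0$, and it is unaffected by $y$ because $\wtil\wp_k$ does not depend on $y$. The same argument shows that $S_{\al,\mu,0}$ kills $a*b-b*a=2\pi i\,a_{([0])}b$, using Lemma \ref{lem:Zhu.comm}. Passing then to the $y$-head coefficient of $y^{h_\beta}\log(y)^\nu$ at $\ell=0$ is linear, so $C$ vanishes on $J^{(0)}$ and on commutators. Hence $C$ is a symmetric functional on $A(V)^{(0)}$.

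\emph{Step 2 (the ideal $A(V)A(V)^{(>0)}$).} The charge-zero part of this ideal is spanned by $b*a$ with $a\in V^{(c)}$, $c>0$, and $b\in V^{(-c)}$. Since $a_{(0)}$-type commutators are already killed, we may equally treat $a*b$. We use the third relation defining the formal coinvariants, $S(\res_t P_1^+(t,y^c,q)Y[a,t]b\,dt)=0$. We set $q=0$ in $P_1^+$, which leaves
\[
\frac{1}{1-y^{-c}}-\frac{e^{2\pi it}}{e^{2\pi it}-1}.
\]
Under $w=e^{2\pi it}-1$, the second term produces $\res_w w^{-1}(1+w)^{\D_a}Y(a,w)b = a*b$. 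The relation becomes
\[
(1-y^{-c})^{-1}S_{\al,\mu,0}(a_{[0]}b\text{-term})=S_{\al,\mu,0}(a*b)/2\pi i
\]
up to normalisation. Now we expand in $y^{-1}$: the factor $(1-y^{-c})^{-1}=1+y^{-c}+\cdots$ has leading term $1$. The commutator term on the left is already killed by Step 1 at the head level. More precisely, we compare head coefficients in each $y$-sector $y^{h_\beta}\log(y)^\nu$, where the corrections $y^{-c}$ only shift to the tail $\ell\geq c$. This gives $C(a*b)=C(\text{const}\cdot a_{([0])}b)=0$ at $\ell=0$.

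\emph{Main obstacle.} The hardest part is the bookkeeping of mixing between $y$-sectors. Multiplying by series in $y^{-1}$ can move a tail term $y^{h_{\beta'}-\ell}$ onto the head of another sector if $h_\beta-h_{\beta'}\in\Z$. I would fix this at the outset by grouping the exponents $h_\beta$ so that no two differ by an integer, exactly as was done for the $r_\al$ in Proposition \ref{prop:s7-implies-s4-form}. With this normalisation the head coefficient is well defined, and multiplication by $1+O(y^{-1})$ preserves it; the $\log(y)$ powers are also unaffected because these coefficients carry no $\log$. A secondary issue is the mixing of $\log(q)$ powers through $\wtil G_2(q)$ and the passage from $Y[\,]$ to $Y(\,)$. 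These coefficients are constants in $\log q$, however, so they preserve the $\log(q)^\mu$-grading at order $q^{r_\al}$.
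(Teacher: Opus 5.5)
Your Step~2 breaks down where you say the commutator term is ``already killed by Step~1''. Step~1 only gives $C(a_{[0]}b)=0$ for $a,b\in V^{(0)}$. The relation $\Phi(a_{(0)}b)=0$ in \eqref{eq:twisted.CB.analytic} holds only for $a$ of charge $k$ with $k\al\in\La$; for charged $a$ the constant function is replaced by $\psi(t,-k\al,\tau)$. For $a\in V^{(c)}$, $b\in V^{(-c)}$ with $c\neq 0$, the head coefficient does not kill $a_{[0]}b$. Consequently both your swap $C(b*a)=C(a*b)$ and your conclusion $C(a*b)=0$ are false.

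For example, take $V=L_k(\sll_2)$ with $h=\rho$, and let $S=S_W$ for an admissible highest weight module $W$ whose highest weight $\la$ satisfies $\la([e,f])\neq 0$ (e.g.\ $L_{-4/3}(-\tfrac{2}{3}\varpi)$). The head coefficient is $u\mapsto\tr_{W_{\peak}}u_0$, the trace over the highest weight line. So for $a=e$, $b=f$ one finds $C(a*b)=\la([e,f])=2\pi i\,C(a_{[0]}b)\neq 0$, while $C(b*a)=0$.

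The head relation you actually wrote down is correct, and it suffices if you keep the commutator. It says $C(a_{[0]}b)=\frac{1}{2\pi i}C(a*b)$. Substituting $2\pi i\,a_{[0]}b=a*b-b*a$ (Lemma~\ref{lem:Zhu.comm}) gives $C(b*a)=0$ for $a\in V^{(c)}$, $b\in V^{(-c)}$, $c>0$. That is exactly the charge-zero part of $A(V)A(V)^{(>0)}$, and it is the paper's argument. The $P_1^-$ relation for $c<0$, whose $y^0$ term at $q=0$ vanishes, gives the same statement directly.

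There is a second, smaller gap in Step~1. The $\wtil\wp_2$ relation exists only for $a,b\in V^{(0)}$, so you have not shown that $C$ kills $\res_w w^{-2}(1+w)^{\D_a}Y(a,w)b$ for $a\in V^{(c)}$, $b\in V^{(-c)}$, $c\neq 0$. These elements have total charge zero, so the charge lemma of Section~3 does not dispose of them. Yet they lie in $J^{(0)}$ and are needed for $C$ to factor through $A(V)^{(0)}$.

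The paper obtains them by replacing $a$ with $Ta$ in the charged relations at $q=0$. Integrating by parts kills the $y$-dependent constant and turns $-\frac{e^{2\pi it}}{e^{2\pi it}-1}$ into a multiple of $\wtil\wp_2(t,0)$. Equivalently, apply the corrected head relation to $L_{[-1]}a$, using $(L_{[-1]}a)_{[0]}b=0$ and $(L_{[-1]}a)*b=2\pi i\res_w w^{-2}(1+w)^{\D_a}Y(a,w)b$. Both routes need the charged relations for every $a\in V^{(c)}$, as in \eqref{eq:twisted.CB.analytic}, not just for the generators $u_i$ appearing in the formal coinvariants.

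Your point about grouping the exponents $h_\beta$ modulo $\Z$ is a sensible precaution that the paper leaves implicit.
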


\begin{proof}
The argument is analogous to \cite[Lemma~5.3.2]{Z96}. At $q=0$,
\[
\wtil{\wp}_2(z,q=0) = (2\pi i)^2 \frac{e^{2\pi i z}}{(e^{2\pi i z}-1)^2},\]
and so
\[
\res_t \wtil{\wp}_2(t,q=0)\,Y[a,t]b\,dt
= 2\pi i \res_w w^{-2}(1+w)^{\Delta_a}Y(a,w)b\,dw.
\]
Since $S$ annihilates the left-hand side for $a,b \in V^{(0)}$, each $C_{\al,\beta,\mu,\nu,0}$ annihilates $\res_w w^{-2}(1+w)^{\Delta_a}Y(a,w)b\,dw$.

Similarly, at $q=0$,
\begin{align}\label{eq:Ppm.at.q=0}
P_1^+(z,y,q=0) &= \sum_{n\geq 0} y^{-n} - \frac{e^{2\pi i z}}{e^{2\pi i z}-1},
\qquad
P_1^-(z,y,q=0) = -\sum_{n\geq 1} y^{n} - \frac{e^{2\pi i z}}{e^{2\pi i z}-1},
\end{align}
and $S$ annihilates
\begin{align}
\res_t \Bigl(1 - \frac{e^{2\pi i t}}{e^{2\pi i t}-1}\Bigr) Y[a,t]b\,dt
\quad &\text{for $a \in V^{(c)}$, $b \in V^{(-c)}$, $c>0$}, \label{eq:Pp.limit} \\
\res_t \Bigl(- \frac{e^{2\pi i t}}{e^{2\pi i t}-1}\Bigr) Y[a,t]b\,dt
\quad &\text{for $a \in V^{(c)}$, $b \in V^{(-c)}$, $c<0$}. \label{eq:Pm.limit}
\end{align}
Replacing $a$ by $Ta$ in \eqref{eq:Pp.limit}--\eqref{eq:Pm.limit}, we see that $C_{\al,\beta,\mu,\nu,0}$ annihilates the term $\res_w w^{-2}(1+w)^{\Delta_a}Y(a,w)b\,dw$ for all $a \in V^{(c)}$, $b \in V^{(-c)}$, hence factors through $A(V)$ and vanishes off charge $0$.

Since $2\pi i \res_t \frac{e^{2\pi i t}}{e^{2\pi i t}-1} Y[a,t]b\,dt = a*b$, the expressions in \eqref{eq:Pp.limit} and \eqref{eq:Pm.limit} equal $a_{[0]}b - \frac{1}{2\pi i}a*b$ and $-\frac{1}{2\pi i}a*b$, respectively.  Therefore, by Lemma~\ref{lem:Zhu.comm}, we have $C_{\al,\beta,\mu,\nu,0}$ annihilates $a*b$ when $a \in V^{(c)}$, $b \in V^{(-c)}$, $c<0$. Lemma~\ref{lem:Zhu.comm} and \eqref{eq:twisted.CB.analytic} imply that $C_{\al,\beta,\mu,\nu,0}$ annihilates $a*b-b*a$ for $a,b \in V^{(0)}$. Thus $C_{\al,\beta,\mu,\nu,0}$ descends to a symmetric linear functional on $A^{\stab}(V)$.
\end{proof}

We next verify two head-coefficient identities used in the exhaustion arguments below (in the $j=0$ slice of \eqref{eq:Frob}; here $N_{\al,\mu,\beta}$ is the top $\log(y)$-power in the sector $y^{h_\beta}$ of $S_{\al,\mu,0}$, and $M_{\al,0}$ is the top $\log(q)$-power in $q^{r_\al}$):
\begin{align}
C_{\al,\beta,\mu,\nu,0}\bigl((h-h_{\beta})*a\bigr) &= 0 && \text{for $\nu=N_{\al,\mu,\beta}$,} \label{eq:h.annihilate} \\
C_{\al,\beta,M_{\al,0},\nu,0}\bigl((\omega-\tfrac{c}{24}-r_{\al})*a\bigr) &= 0 && \text{for each $\nu \geq 0$.} \label{eq:omega.annihilate}
\end{align}
Identity \eqref{eq:omega.annihilate} is the $q$-identity of \cite[Theorem~5.3.1, (5.3.16)]{Z96}; \eqref{eq:h.annihilate} is the analogous $y$-identity from the current $h$ and \eqref{ward2}. At $q=0$,
\[
\wtil{\wp}_1(z,q=0) = 2\pi i\, \frac{e^{2\pi i z}}{e^{2\pi i z}-1},
\]
so the constant term in $q$ of $h*_q a$ is $h*a$.

\emph{Proof of \eqref{eq:h.annihilate}.}
Fix $\al$, $\mu$, and $\beta$, and set $\nu=N_{\al,\mu,\beta}$. Then $C_{\al,\beta,\mu,\nu+1,0}\equiv 0$ on the sector $y^{h_\beta}$, by maximality of $\nu$.
Since \eqref{ward2} involves only $y\partial_y$, it restricts to each slice $S_{\al,\mu,0}$ in \eqref{eq:FrobFrob}.
Substituting into \eqref{ward2} and comparing the head coefficient of $y^{h_\beta}(\log y)^\nu$, the left-hand side gives $C_{\al,\beta,\mu,\nu,0}(h*a)$.
On the right-hand side, only $(\log y)^\nu$ and $(\log y)^{\nu+1}$ contribute, since $y\partial_y(\log y)^\nu$ contains $(\log y)^{\nu-1}$ and therefore lies in the comparison at $\nu-1$; the $(\log y)^{\nu+1}$ term vanishes by maximality of $\nu$. Thus
\[
C_{\al,\beta,\mu,\nu,0}(h*a)=h_\beta\,C_{\al,\beta,\mu,\nu,0}(a),
\]
which is \eqref{eq:h.annihilate}.

\emph{Proof of \eqref{eq:omega.annihilate}.}
Fix $\al$, $\beta$, and $\nu \geq 0$.
Since \eqref{ward1} involves only $q\partial_q$, it restricts to the sector $q^{r_\al}$ in the $j=0$ slice, and $\nu$ may be arbitrary in \eqref{eq:FrobFrob}.
Applying $q\partial_q$ to this sector as in \eqref{eq:frob-log-q-compact}, only the top power $\log(q)^{M_{\al,0}}$ contributes to the coefficient of $q^{r_\al}(\log q)^{M_{\al,0}}$ on the right of \eqref{ward1}; comparing with the left-hand side yields \eqref{eq:omega.annihilate}.

The following lemma is the common exhaustion step; it will be applied separately in the $y$- and $q$-directions.

\begin{lemma}\label{lem:coef.exhaust}
Suppose $A^{\stab}(V)$ is finite-dimensional and semisimple. Let $C : V \to \C$ satisfy
\begin{align*}
C(a) &= 0 && \text{for $a \in J$ and for $a \in V^{(c)}$ for $c \neq 0$}, \\
C(a*b-b*a) &= 0 && \text{for $a,b \in V^{(0)}$}, \\
C(a*b) &= 0 && \text{for $a \in V^{(-c)}$, $b \in V^{(c)}$ for $c > 0$},
\end{align*}
where $J \subset V$ is as in Section \ref{sec:stable.zhu}. Then:
\begin{enumerate}[label=(\roman*), ref=\roman*]
\item\label{lem:coef.exhaust:h}
Fix $h_\beta \in \R$ and an integer $n \geq 1$. If $C((h-h_\beta)^n * a)=0$ for all $a \in V$, there exist irreducible stable $V$-modules $W^1,\ldots,W^k$ and constants $c_i \in \C$ such that
\[
C(a) = \sum_{i=1}^k c_i\, \tr_{W^i_{\peak}}(\bar a)
\qquad \text{for all $a \in V$},
\]
where $\bar a \in A^{\stab}(V)$ is the image of $a$ and $h$ acts on $W^i_{\peak}$ with eigenvalue $h_\beta$.
\item\label{lem:coef.exhaust:omega}
Fix $r \in \R$ and an integer $n \geq 1$. If $C((\omega-\tfrac{c}{24}-r)^n * a)=0$ for all $a \in V$, there exist irreducible stable $V$-modules $W^1,\ldots,W^k$ and constants $c_i \in \C$ such that
\[
C(a) = \sum_{i=1}^k c_i\, \tr_{W^i_{\peak}}(\bar a)
\qquad \text{for all $a \in V$},
\]
where $\bar a \in A^{\stab}(V)$ is the image of $a$ and $(\omega-\tfrac{c}{24})$ acts as $r$ on $W^i_{\peak}$.
\end{enumerate}
\end{lemma}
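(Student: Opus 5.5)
The plan is to push $C$ down to a symmetric functional on the finite-dimensional semisimple algebra $A^{\stab}(V)$, use the extra hypothesis to place it in one central block, and then realise that block's simple modules as peak spaces of irreducible stable modules. First, the hypotheses make $C$ vanish on $J$, so it factors through $A(V)=V/J$. It also vanishes off charge $0$, so it factors through $A(V)^{(0)}$. Next I must show that $C$ kills $\bigl(A(V)A(V)^{(>0)}\bigr)^{(0)}$. This subspace is spanned by products $a*b$ with $a\in A(V)^{(-c)}$ and $b\in A(V)^{(c)}$, $c>0$, which vanish under $C$ by the third hypothesis. (A product of several factors is first regrouped by associativity into the form $a*b$ with $b$ of positive charge.) Hence $C$ descends to a linear functional $\bar C$ on $A^{\stab}(V)$. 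It is symmetric by the second hypothesis.

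Write $A^{\stab}(V)\cong\bigoplus_i \on{End}(P_i)$ by Wedderburn, with $P_i$ the irreducible modules, which are finitely many. A symmetric linear functional on a matrix algebra is a scalar multiple of the trace. It follows that $\bar C=\sum_i c_i \tr_{P_i}$. For part (i), let $\bar h$ be the image of $h$. It is central in $A^{\stab}(V)$, as noted in the proof of Theorem \ref{thm:stable.induction}, since $h_{([0])}$ acts by the charge, which is zero on $A(V)^{(0)}$, and Lemma \ref{lem:Zhu.comm} applies. So $\bar h$ acts on each $P_i$ by a scalar $h_i$. Choose $a$ with image the central idempotent $e_i$. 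Then $0=\bar C((\bar h-h_\beta)^n e_i)=c_i (h_i-h_\beta)^n\dim P_i$, so $c_i=0$ unless $h_i=h_\beta$. Here the hypothesis $C((h-h_\beta)^n*a)=0$ is read in $A(V)$, where $(h-h_\beta)^n$ denotes the $*$-power. Finally, Theorem \ref{thm:stable.induction} provides irreducible stable modules $W^i$ with $W^i_{\peak}\cong P_i$, which gives the stated formula.

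Part (ii) is identical, with $\bar h$ replaced by the image of $\omega-\frac{c}{24}$. I would argue that this image is also central: Lemma \ref{lem:Zhu.comm} gives $\omega*a-a*\omega=2\pi i\,\omega_{([0])}a$, and this lies in $J$ (the standard fact that $\omega$ is central in $A(V)$, \cite{Z96}). Its scalar on $P_i$ is the conformal weight of $W^i_{\Top}$ shifted by $-c/24$.

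I expect the main obstacle to be the ideal step, i.e. checking carefully that $(A(V)A(V)^{(>0)})^{(0)}$ is really spanned by two-fold products $a*b$ with $b$ of positive charge, so that the third hypothesis suffices. A closely related point is making sure that the polynomial conditions $C((h-h_\beta)^n*a)=0$ pass correctly from $V$ to the quotient algebra. The remaining steps are the standard Wedderburn argument.
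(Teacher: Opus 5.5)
Your proposal is correct and follows essentially the same route as the paper: descend $C$ to a symmetric functional on $A^{\stab}(V)$, write it as $\sum_i c_i\tr_{P_i}$ by semisimplicity, use centrality of the image of $h$ (resp.\ of $\omega-\tfrac{c}{24}$) to rule out components with the wrong scalar, and realise the $P_i$ as peak spaces via Theorem~\ref{thm:stable.induction}. Two of your steps are more precise than the paper's version. You check the descent directly from the three hypotheses (correctly identifying $\bigl(A(V)A(V)^{(>0)}\bigr)^{(0)}$ as spanned by products $a*b$ with $b$ of positive charge), where the paper cites Proposition~\ref{prop:coef.stab}. You also evaluate on the central idempotent $e_i$, which cleanly isolates $c_i(h_i-h_\beta)^n\dim P_i$, where the paper evaluates on an arbitrary $a$ acting nontrivially on $P^i$.
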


\begin{proof}
By Proposition~\ref{prop:coef.stab}, $C$ descends to a symmetric linear functional on $A^{\stab}(V)$, again denoted $C$. Semisimplicity implies
\[
C(b)=\sum_{i=1}^k c_i\, \tr_{P^i}(b)
\qquad (b \in A^{\stab}(V)),
\]
for simple $A^{\stab}(V)$-modules $P^1,\ldots,P^k$ and constants $c_i \in \C$ (cf.\ \cite[Lemma~5.3.3]{Z96}).

For \itemref{lem:coef.exhaust:h}, $h$ is central, so $h$ acts on each $P^i$ as a scalar $h_i$. If $c_i \neq 0$ and $h_i \neq h_\beta$, choose $a \in V^{(0)}$ whose image $\bar a \in A^{\stab}(V)$ acts nontrivially on $P^i$; then $C((h-h_\beta)^n * a)=(h_i-h_\beta)^n C(a) \neq 0$ for $n \geq 1$, a contradiction. Hence $h|_{P^i}=h_\beta$ whenever $c_i \neq 0$.

For \itemref{lem:coef.exhaust:omega}, $(\omega-\tfrac{c}{24})$ acts on each $P^i$ as a scalar $r_i$. The same argument with $(\omega-\tfrac{c}{24}-r)^n$ in place of $(h-h_\beta)^n$ shows $r_i=r$ whenever $c_i \neq 0$.

By Theorem~\ref{thm:stable.induction}, $P^i=W^i_{\peak}$ for irreducible stable $W^i$, and $C(a)=\sum_i c_i \tr_{W^i_{\peak}}(\bar a)$ for all $a \in V$, since both sides vanish on $J$ and on $V^{(c)}$ for $c \neq 0$.
\end{proof}

We now treat the $y$-expansion of the leading $q$-coefficient $S_{\al,\mu,0}$ in \eqref{eq:Frob}. Fix $\al$ and $\mu$ and decompose
\begin{equation}\label{eq:T.logy.decomp}
S_{\al,\mu,0}(a,y)=\sum_{\nu\geq 0}\log(y)^\nu T_{\al,\mu,\nu}(a,y).
\end{equation}
Here $T_{\al,\mu,\nu}$ is the coefficient of $(\log y)^\nu$ in the slice $S_{\al,\mu,0}$ (the subscript $\nu$ indexes $\log y$, whereas the subscript $0$ on $S_{\al,\mu,0}$ records $j=0$ in \eqref{eq:Frob}). Set
\[
\nu_{\max} := \max\bigl\{\nu \in \Z_{\geq 0} \mid \text{some head coefficient $C_{\al,\beta,\mu,\nu,0}$ is nonzero}\bigr\},
\]
with the convention that if no such $\nu$ exists, then $\nu_{\max}=0$ and $T_{\al,\mu,\nu_{\max}}=0$.

\begin{lemma}\label{lem:top.ex}
Suppose $A^{\stab}(V)$ is finite-dimensional and semisimple, and let $S$ be a genus-one charged conformal block with expansion \eqref{eq:Frob}. Fix $\al$ and $\mu$. Then the top $\log(y)$-coefficient $T_{\al,\mu,\nu_{\max}}$ in \eqref{eq:T.logy.decomp} satisfies
\[
T_{\al,\mu,\nu_{\max}}(a,y) = \sum_{i=1}^k c_i\, \tr_{W^i_{\Top}}(a_0 y^{h_0})
\]
for irreducible stable $V$-modules $W^1,\ldots,W^k$ and constants $c_i \in \C$.
\end{lemma}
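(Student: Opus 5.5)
We will follow Zhu's exhaustion method along the $y$-direction, working throughout in the $j=0$ slice of \eqref{eq:Frob}. First we record the structure of $T_{\al,\mu,\nu_{\max}}$. By \eqref{eq:FrobFrob} it has the form
\[
T_{\al,\mu,\nu_{\max}}(a,y)=\sum_{\beta}y^{h_\beta}\sum_{\ell\ge0}C_{\al,\beta,\mu,\nu_{\max},0,\ell}(a)\,y^{-\ell}.
\]
The Ward identity \eqref{ward2} restricted to $q=0$ reads $S_{\al,\mu,0}(h*a,y)=y\partial_y S_{\al,\mu,0}(a,y)$. Here we use that $\wtil\wp_1(z,0)$ produces $h*a$ and that \eqref{ward2} commutes with extracting the $q^{r_\al}\log(q)^\mu$ coefficient. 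Comparing top $\log(y)$-powers gives $T_{\al,\mu,\nu_{\max}}(h*a,y)=y\partial_y T_{\al,\mu,\nu_{\max}}(a,y)$, since the $\log(y)^{\nu_{\max}+1}$ contributions vanish. Thus $T:=T_{\al,\mu,\nu_{\max}}$ is itself a "logarithm-free" solution of the $y$-Ward identity. The same truncation also shows that $T$ satisfies the $q=0$ limits of the remaining relations in \eqref{eq:twisted.CB.analytic}, namely \eqref{eq:Pp.limit}, \eqref{eq:Pm.limit} and the $\wtil\wp_2$ relation.

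Next, for each $\beta$ we treat the head coefficient $C_\beta:=C_{\al,\beta,\mu,\nu_{\max},0}$. By Proposition \ref{prop:coef.stab} and \eqref{eq:h.annihilate} (with $n=1$), it satisfies the hypotheses of Lemma \ref{lem:coef.exhaust}\itemref{lem:coef.exhaust:h}. Hence
\[
C_\beta(a)=\sum_i c_{\beta,i}\tr_{W^{\beta,i}_{\peak}}(a_0),
\]
where $W^{\beta,i}$ are irreducible stable modules whose peak charge is $h_\beta$. Set
\[
T'(a,y):=T(a,y)-\sum_{\beta,i}c_{\beta,i}\tr_{W^{\beta,i}_{\Top}}(a_0y^{h_0}).
\]
By Proposition \ref{prop:conformalblock} and Proposition \ref{prop:trace.DE} specialised to the $q^{\D_0}$ coefficient, each function $\tr_{W_{\Top}}a_0y^{h_0}$ satisfies the same $q=0$ relations and Ward identity. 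Its expansion is $y^{h_\beta}\sum_{\ell\ge0}(\cdots)y^{-\ell}$, because the charges on $W_{\Top}$ are bounded above by the peak charge, with head coefficient $\tr_{W_{\peak}}a_0$. So $T'$ has the same shape as $T$ but with all head coefficients zero. Note that two exponents $h_\beta,h_{\beta'}$ may differ by an integer; in that case we group them and take $h_\beta$ to be the maximal exponent in each class mod $\Z$, so that "head coefficient" is well defined.

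Finally, we show $T'=0$. This is the main obstacle. Suppose not. In each class mod $\Z$, choose the largest exponent $h_\beta-\ell_0$ with nonzero coefficient $C'$. Then $C'$ is the head coefficient of a solution with the same properties, and the proof of Proposition \ref{prop:coef.stab} applies verbatim: it used only the $q=0$ relations and the ordering in $y$. So $C'$ is a symmetric functional on $A^{\stab}(V)$ annihilated by $h-(h_\beta-\ell_0)$. By Lemma \ref{lem:coef.exhaust} it is a combination of $\tr_{W_{\peak}}$. We then iterate the subtraction, lowering the leading exponent each time. To see that the iteration terminates, we use finite dimensionality of $A^{\stab}(V)$: only finitely many central characters of $h$ occur, so only finitely many exponents $h_\beta-\ell$ can carry nonzero head coefficients. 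Once these are exhausted, the remainder has no admissible leading term and hence vanishes. Collecting all the subtracted traces gives the stated formula. The delicate point is verifying that Proposition \ref{prop:coef.stab} really applies to leading coefficients of sectors below the top one; I expect this to follow from the Lemma \ref{lem:coef.exhaust} identities after subtracting the already-matched trace functions.
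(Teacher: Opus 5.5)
Your proposal is correct and follows the paper's own proof. You express each head coefficient $C_{\al,\beta,\mu,\nu_{\max},0}$ as a combination of $\tr_{W^i_{\peak}}$ using Proposition~\ref{prop:coef.stab}, \eqref{eq:h.annihilate} and Lemma~\ref{lem:coef.exhaust}(i) with $n=1$, subtract the matching $\tr_{W^i_{\Top}}(a_0 y^{h_0})$, and iterate downward in the $h$-exponent, terminating because $A^{\stab}(V)$ has only finitely many irreducibles. The step you flag as delicate is the one the paper compresses into ``$T'$ satisfies the same setup'', and your own observation settles it: the subtracted trace functions obey the same $q=0$ relations and log-free Ward identity, so the argument of Proposition~\ref{prop:coef.stab} applies verbatim at the new leading exponent of $T'$ (your grouping of exponents mod $\Z$ is a useful clarification the paper leaves implicit).
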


\begin{proof}
By \eqref{eq:FrobFrob} and \eqref{eq:T.logy.decomp}, $T_{\al,\mu,\nu_{\max}}$ is the coefficient of $\log(y)^{\nu_{\max}}$ in $S_{\al,\mu,0}$. For each $\beta$ with $N_{\al,\mu,\beta}=\nu_{\max}$, the $\beta$-sector of \eqref{eq:FrobFrob} contributes only powers $\log(y)^\nu$ with $\nu\leq\nu_{\max}$, and its leading piece is
\[
y^{h_\beta}\log(y)^{\nu_{\max}}\,C_{\al,\beta,\mu,\nu_{\max},0}(a)
\]
plus terms with $y^{h_\beta-\ell}$ for $\ell\geq 1$ (the tail at $y=\infty$). In particular, the factor multiplying $y^{h_\beta}\log(y)^{\nu_{\max}}$ in $T_{\al,\mu,\nu_{\max}}$ is the head functional $C_{\al,\beta,\mu,\nu_{\max},0}$.

Apply part~\itemref{lem:coef.exhaust:h} of Lemma~\ref{lem:coef.exhaust} to $C=C_{\al,\beta,\mu,\nu_{\max},0}$, using \eqref{eq:h.annihilate} (with $\nu_{\max}=N_{\al,\mu,\beta}$ on each contributing sector) and $n=1$. This gives constants $c_{\beta,i}$ and irreducible stable $W^i$ with $h|_{W^i_{\peak}}=h_\beta$ such that
\[
C_{\al,\beta,\mu,\nu_{\max},0}(a)=\sum_i c_{\beta,i}\,\tr_{W^i_{\peak}}(\bar a).
\]
If $h_\beta$ is not minimal among the $h$-exponents in $T_{\al,\mu,\nu_{\max}}$, set
\[
T'_{\al,\mu,\nu_{\max}}(a,y):=T_{\al,\mu,\nu_{\max}}(a,y)-\sum_i c_{\beta,i}\,\tr_{W^i_{\Top}}(a_0 y^{h_0}).
\]
By \eqref{ward2}, $T'_{\al,\mu,\nu_{\max}}$ satisfies the same setup with a strictly smaller leading $h$-exponent. Since $A^{\stab}(V)$ has only finitely many irreducible modules, iterating downward in $h_\beta$ yields the formula.
\end{proof}

\begin{prop}\label{prop:top.ex.N=0}
Under the same hypotheses as in Lemma \ref{lem:top.ex}, we have $N_{\al,\mu,\beta}=0$ for all $\beta$. In particular,
\[
S_{\al,\mu,0}(a,y) = \sum_{i=1}^k c_i\, \tr_{W^i_{\Top}}(a_0 y^{h_0})
\]
for all $\al$ and $\mu$.
\end{prop}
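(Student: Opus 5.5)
Fix $\al$ and $\mu$; the plan is to argue by contradiction on $\nu_{\max}$ and then peel off the top $\log(y)$-layer. Suppose $\nu_{\max}\geq 1$. By Lemma~\ref{lem:top.ex},
\[
T_{\al,\mu,\nu_{\max}}(a,y)=\sum_i c_i\,\tr_{W^i_{\Top}}(a_0y^{h_0})=:F(a,y).
\]
The function $F$ is itself the $q^0$ part of a linear combination of trace functions. By Proposition~\ref{prop:trace.DE} (equivalently Proposition~\ref{prop:zeta.to.2point} taken at $q=0$), it satisfies the $y$-Ward identity \eqref{ward2} exactly:
\[
F(h*a,y)=y\partial_y F(a,y).
\]
I will substitute the decomposition \eqref{eq:T.logy.decomp} into \eqref{ward2}, which restricts to the slice $S_{\al,\mu,0}$ because it involves only $y\partial_y$, and compare coefficients of $\log(y)^{\nu_{\max}-1}$. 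Since $y\partial_y\log(y)^\nu=\nu\log(y)^{\nu-1}+\log(y)^\nu y\partial_y$, this comparison gives
\[
T_{\al,\mu,\nu_{\max}-1}(h*a,y)-y\partial_yT_{\al,\mu,\nu_{\max}-1}(a,y)=\nu_{\max}\,F(a,y).
\]

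Next I pass to head coefficients. Take $\beta$ with $h_\beta$ maximal among the exponents appearing in $F$ with some $c_i\neq 0$. Extract the coefficient of $y^{h_\beta}$ (with $\ell=0$) in the identity above. On the left this is
\[
C_{\al,\beta,\mu,\nu_{\max}-1,0}\bigl((h-h_\beta)*a\bigr);
\]
note that the $y$-tails only contribute lower exponents, so they do not enter. On the right it is
\[
\nu_{\max}\sum_{i:\,h|_{W^i_{\peak}}=h_\beta}c_i\,\tr_{W^i_{\peak}}(\bar a).
\]
By Proposition~\ref{prop:coef.stab}, $C':=C_{\al,\beta,\mu,\nu_{\max}-1,0}$ descends to a symmetric functional on $A^{\stab}(V)$. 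Since $h$ is central there, $(h-h_\beta)*a$ has image $(\bar h-h_\beta)\bar a$.

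Semisimplicity now decomposes $A^{\stab}(V)=\prod_j\Mat(P^j)$. On the block of $P^j$, the element $\bar h$ acts as the scalar $h_j$. Hence the functional $\bar a\mapsto C'((\bar h-h_\beta)\bar a)$ vanishes on every block with $h_j=h_\beta$. The right-hand side, however, is $\nu_{\max}\sum c_i\tr_{P^i}$, which is supported exactly on those blocks. Evaluating on a central idempotent $e_i$ of such a block forces $\nu_{\max}c_i\dim P^i=0$, so $c_i=0$. This contradicts the choice of $\beta$. Iterating over all exponents, or equivalently pairing directly with all idempotents, forces $F=0$, contradicting the definition of $\nu_{\max}$ unless $\nu_{\max}=0$. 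Therefore $N_{\al,\mu,\beta}=0$ for all $\beta$.

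With $\nu_{\max}=0$ we have $S_{\al,\mu,0}=T_{\al,\mu,0}$, and Lemma~\ref{lem:top.ex} gives the displayed trace formula. The main obstacle I expect is bookkeeping in the head-coefficient comparison. One must make sure that the tail terms $y^{h_{\beta'}-\ell}$ with $\ell\geq1$ from other sectors cannot land on $y^{h_\beta}$. If $h$-exponents of different sectors differ by integers this can happen, so I would first regroup sectors so that exponents are taken modulo $\Z$, choosing $h_\beta$ maximal within its class. The other point to check is that $C'$, being a head coefficient at a non-top log level, still satisfies the hypotheses of Proposition~\ref{prop:coef.stab}; that proposition is stated for all $\nu$, so this holds.
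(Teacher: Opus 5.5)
Your strategy matches the paper's: decompose \eqref{ward2} in powers of $\log(y)$, compare $y^{h_\beta}$-coefficients at level $\log(y)^{\nu_{\max}-1}$, and use Proposition~\ref{prop:coef.stab}, centrality of $h$ and semisimplicity of $A^{\stab}(V)$ to kill the top head coefficient. Your idempotent finish is tighter than the paper's. The paper goes through $(h-h_\beta)^2$ and part (i) of Lemma~\ref{lem:coef.exhaust} with $n=2$, then asserts that all of $T_{\al,\mu,\nu_{\max}-1}$ is a trace combination; only the head-level statement is actually needed.

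There is, however, a gap in your choice of $\beta$. You take $h_\beta$ to be the largest exponent occurring in $F=T_{\al,\mu,\nu_{\max}}$. You then treat the $y^{h_\beta}$-coefficient $C'$ of $T_{\al,\mu,\nu_{\max}-1}$ as a head coefficient, so that Proposition~\ref{prop:coef.stab} applies. Nothing rules out terms $y^{h_\beta+m}$, $m\in\Z_{>0}$, in $T_{\al,\mu,\nu_{\max}-1}$. A sector can have zero head coefficient at log level $\nu_{\max}$, so that $F$ sees only its tail, while its head at level $\nu_{\max}-1$ is nonzero. In that case $C'$ is a tail coefficient. Write $D_\gamma$ for the $y^\gamma$-coefficient of $T_{\al,\mu,\nu_{\max}-1}$. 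The $P_1^{\pm}$ relations at $q=0$ give only $D_\gamma(b*a)=D_{\gamma+c}(a*b)$ for $a\in V^{(c)}$, $b\in V^{(-c)}$, $c>0$. So $C'$ need not vanish on $(A(V)A(V)^{(>0)})^{(0)}$, and your block decomposition is not available. Regrouping exponents mod $\Z$ only handles tails of different sectors colliding. Conversely, if you take $h_\beta$ to be the top exponent of its class over all log levels, the right-hand side may vanish and no contradiction arises.

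The repair is to choose $\beta$ as the paper does. By the definition of $\nu_{\max}$ there is a $\beta$ with $C_{\al,\beta,\mu,\nu_{\max},0}\neq 0$. At this $h_\beta$ both coefficients are genuine head coefficients, so both descend to $A^{\stab}(V)$. Comparing $y^{h_\beta}\log(y)^{\nu_{\max}}$-coefficients (this is \eqref{eq:h.annihilate}) and applying part (i) of Lemma~\ref{lem:coef.exhaust} gives $C_{\al,\beta,\mu,\nu_{\max},0}=\sum_i c_i\tr_{P^i}$ with $h|_{P^i}=h_\beta$. Your idempotent argument then forces $C_{\al,\beta,\mu,\nu_{\max},0}=0$, a contradiction.

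Alternatively, keep your $\beta$ but first subtract from $T_{\al,\mu,\nu_{\max}-1}$ the trace functions accounting for its exponents above $h_\beta$ in that class. Each such top coefficient is annihilated by $h-\gamma$, because $F$ has no $y^\gamma$ term, and is therefore a peak-trace combination. This subtraction leaves the inhomogeneous identity unchanged, since trace functions satisfy \eqref{ward2} exactly.
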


\begin{proof}
Suppose $N_{\al,\mu,\beta}>0$ for some $\beta$, and choose $\nu=\nu_{\max}$ and $a \in V$ with $C_{\al,\beta,\mu,\nu,0}(a)\neq 0$.
Decomposing \eqref{ward2} as in \eqref{eq:T.logy.decomp} gives
\begin{equation}\label{eq:ward2.logy}
T_{\al,\mu,\nu-1}(h*a,y)
= \nu\, T_{\al,\mu,\nu}(a,y)
+ y\frac{\partial}{\partial y} T_{\al,\mu,\nu-1}(a,y).
\end{equation}
Applying \eqref{ward2} again and comparing head coefficients at $y^{h_\beta}$, using \eqref{eq:h.annihilate}, we obtain
\[
C_{\al,\beta,\mu,\nu-1,0}((h-h_\beta)^2*a)=0.
\]
By part~\itemref{lem:coef.exhaust:h} of Lemma~\ref{lem:coef.exhaust} with $n=2$, $T_{\al,\mu,\nu-1}$ is a linear combination of $\tr_{W^i_{\Top}}(a_0 y^{h_0})$; each such trace function satisfies \eqref{ward2}, so $T_{\al,\mu,\nu-1}(h*a,y)=y\partial_y T_{\al,\mu,\nu-1}(a,y)$. Substituting into \eqref{eq:ward2.logy} gives $\nu\, T_{\al,\mu,\nu}(a,y)=0$, contradicting the choice of $\nu$. Hence $N_{\al,\mu,\beta}=0$ for all $\beta$, so $\nu_{\max}=0$ and Lemma~\ref{lem:top.ex} gives the trace expansion for $S_{\al,\mu,0}$.
\end{proof}

We turn to the $q$-expansion. Write
\[
S(a,y,q)=\sum_{\mu=0}^\infty \log(q)^\mu S^{(q)}_\mu(a,y,q),
\]
and for an irreducible stable $V$-module $W$ set
\[
S_W(a,y,q):=\tr_W a_0 y^{h_0} q^{L_0-c/24}.
\]

\begin{lemma}\label{lem:ex}
Suppose $A^{\stab}(V)$ is finite-dimensional and semisimple, and let $S$ be a genus-one charged conformal block with expansion \eqref{eq:Frob}. Then the integers $M_{\al,j}$ occurring in \eqref{eq:Frob} are bounded above uniformly in $\al$ and $j$. Set $M := \max_{\al,j} M_{\al,j}$. Then
\[
S^{(q)}_M(a,y,q) = \sum_{i=1}^k c_i\, S_{W^i}(a,y,q)
\]
for irreducible stable $V$-modules $W^1,\ldots,W^k$ and constants $c_i \in \C$.
\end{lemma}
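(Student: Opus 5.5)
We follow Zhu's argument for \cite[Theorem~5.3.1]{Z96}, organised by the $\log(q)$-degree. The plan has three steps: bound the $\log(q)$-degree, identify the top-degree component on its leading $q$-slice with traces, and then subtract and induct upward in the $q$-exponent.

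\emph{Uniform bound on $M_{\al,j}$.} Each $S$ is a horizontal section of the rank-$r$ system \eqref{eq:system-AB}, which has a regular singularity at $q=0$. In the Frobenius form of Proposition~\ref{prop:s7-implies-s4-form}, the powers of $\log(q)$ come from $q^{B_0}$-type factors, so the $\log(q)$-degree in every slice is at most $r-1$. This gives $M_{\al,j}\le r-1$ uniformly, and $M=\max_{\al,j}M_{\al,j}$ is well defined. Decomposing \eqref{ward1} by powers of $\log(q)$ gives
\[
S^{(q)}_{\mu}((\omega-\tfrac{c}{24})*_q a) = q\partial_q S^{(q)}_\mu(a) + (\mu+1)S^{(q)}_{\mu+1}(a),
\]
so $S^{(q)}_M$ satisfies \eqref{ward1} with no correction term. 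Since \eqref{ward2} and the coinvariant relations \eqref{eq:twisted.CB.analytic} do not involve $\log(q)$, the component $S^{(q)}_M$ is itself a genus-one charged conformal block with no $\log q$ terms.

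\emph{Leading slice of $S^{(q)}_M$.} Fix a $q$-sector $\al$ and let $r_\al+j_0$ be the smallest exponent in which $S^{(q)}_M$ is nonzero. If $j_0>0$, first multiply by $q^{-j_0}$; this is harmless because the statement only concerns the leading slice. The head coefficients of that slice satisfy \eqref{eq:omega.annihilate} with $r=r_\al+j_0$. By Proposition~\ref{prop:top.ex.N=0}, applied verbatim since its proof uses only \eqref{ward2} and the $j=0$ coinvariant relations, the leading slice equals $\sum_i c_i \tr_{W^i_{\Top}}(a_0y^{h_0})$. Lemma~\ref{lem:coef.exhaust}\itemref{lem:coef.exhaust:omega} then forces every $W^i$ to have top weight $L_0-\tfrac c{24}=r_\al+j_0$.

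\emph{Subtract and induct.} Set
\[
S' := S^{(q)}_M - \sum_i c_i S_{W^i}.
\]
By Proposition~\ref{prop:conformalblock} and Proposition~\ref{prop:trace.DE}, each $S_{W^i}$ is a $\log$-free flat section, and so is $S'$. Its $q^{r_\al+j_0}$-slice vanishes, so $S'$ is supported in strictly higher exponents within the sector. We iterate this subtraction.

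\emph{Main obstacle: termination.} The iteration must end after finitely many steps. I would argue via finite-dimensionality: the flat sections form a space of dimension at most $\operatorname{rank}\CC|_U$ (Corollary~\ref{cor:informal.finiteness}). The traces $S_{W}$ for distinct irreducible stable $W$ are linearly independent, because they have distinct peak data by Lemma~\ref{lem:peak-simple}, and only finitely many of them exist because $A^{\stab}(V)$ has finitely many simple modules (Theorem~\ref{thm:stable.induction}). Hence the span of all trace functions together with $S^{(q)}_M$ is finite dimensional. A nonzero flat section that vanishes to arbitrarily high order in $q$ in every sector would contradict the finite Frobenius basis of the regular singular system, which has finitely many exponents $r_\al+j$ with nonzero leading data. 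So after finitely many subtractions the remainder has no leading slice, hence is zero. This yields $S^{(q)}_M=\sum_i c_iS_{W^i}$.
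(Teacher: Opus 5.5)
Your proof is correct. Its core is the paper's argument. You pass to the top $\log(q)$-component $S^{(q)}_M$, which by maximality of $M$ in \eqref{ward1} is itself a log-free block. You identify its lowest slice in each sector with $\sum_i c_i\tr_{W^i_{\Top}}(a_0y^{h_0})$ via Proposition~\ref{prop:top.ex.N=0} and Lemma~\ref{lem:coef.exhaust}\itemref{lem:coef.exhaust:omega}, subtract the corresponding $S_{W^i}$, and iterate.

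Where you genuinely differ is the uniform bound on $M_{\al,j}$. You take it from the regular singularity of \eqref{eq:system-AB} at $q=0$. For fixed $y$ there is a fundamental matrix $H(q)q^{C}$ with $C$ a constant $r\times r$ matrix; in the resonant case $C$ is not literally $B_0$, but that does not matter. So each component has $\log(q)$-degree at most $r-1$, and hence so does each $S(a,y,q)$, being an $R^{(N)}$-combination of components.

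The paper instead argues inside the exhaustion framework. A nonzero $\log(q)$-component of degree above $M_*$ would have head coefficients obeying the $\omega$-relation at an exponent exceeding every $r^{(i)}$, which Lemma~\ref{lem:coef.exhaust}\itemref{lem:coef.exhaust:omega} forbids. Your bound is explicit and uses neither semisimplicity nor the finiteness of the set of irreducible stable modules. It also avoids the bookkeeping with $J$ and $M_*$. The paper's version keeps the bound tied to module data.

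For termination, the paper uses that each new leading exponent must be one of the finitely many $r^{(i)}$. Your leading-slice step already gives you this, and it is the simplest way to finish. Your finite-dimensionality argument is also valid, but it should be stated precisely. On the finite-dimensional span of $S^{(q)}_M$ and the $S_{W^i}$, the vanishing order in a given sector takes only finitely many values, so it cannot increase indefinitely. The linear independence of the $S_W$ is not needed for this.
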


\begin{proof}
List the irreducible stable $V$-modules as $W^1,\ldots,W^k$, with conformal dimensions $r^{(i)}$. Let $J$ be the largest integer among the differences $r^{(i)}-r^{(j)}$, and set
\[
M_* := \max\{ M_{\al,j} \mid q^{r_\al+j} \text{ occurs in \eqref{eq:Frob} with $0 \leq j \leq J$} \}.
\]
Suppose $S^{(q)}_{\wtil M}\neq 0$ for some $\wtil M > M_*$. Then the leading $q$-terms of $S^{(q)}_{\wtil M}$ lie in sectors $q^{r_\al}$ with $r_\al > r^{(i)}$ for every $i$. The corresponding head coefficients satisfy \eqref{eq:h.annihilate} and the analogue of \eqref{eq:omega.annihilate} with $M_{\al,0}$ replaced by $\wtil M$ in the relevant $q$-sectors. By part~\itemref{lem:coef.exhaust:omega} of Lemma~\ref{lem:coef.exhaust}, they would be linear combinations of traces on modules with $(\omega-\tfrac{c}{24})$-eigenvalue $r_\al$ on the peak space. No such irreducible stable module exists, since $r_\al$ exceeds every $r^{(i)}$. This contradiction shows $M_{\al,j} \leq M_*$ for all $\al$ and $j$, so $M=\max_{\al,j} M_{\al,j}$ is finite.
By maximality of this $M$, the coefficient $S^{(q)}_M$ satisfies \eqref{ward1} and \eqref{ward2}. By Proposition~\ref{prop:top.ex.N=0}, its $y$-expansion involves no $\log(y)$ and each head coefficient is a linear combination of traces on $W^i_{\peak}$. Applying Lemma~\ref{lem:coef.exhaust} and subtracting the corresponding $S_{W^i}$, we obtain a new block satisfying \eqref{ward1} and \eqref{ward2} whose leading $q$-term has strictly larger exponent $r_\al$ or smaller $\log(q)$-degree. Since $A^{\stab}(V)$ has only finitely many irreducible modules, this exhaustion procedure terminates with the stated formula for $S^{(q)}_M$.
\end{proof}

\begin{prop}\label{prop:ex.N=0}
Under the same hypotheses as in Lemma \ref{lem:ex}, every genus-one charged conformal block satisfies
\[
S(a,y,q) = \sum_{i=1}^k c_i\, S_{W^i}(a,y,q).
\]
\end{prop}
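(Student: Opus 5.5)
The plan is to show that all $\log(q)$ terms vanish, using the same mechanism as Proposition~\ref{prop:top.ex.N=0} (which handled $\log(y)$), and then to peel off the top coefficient by Lemma~\ref{lem:ex}.

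Write $S=\sum_{\mu=0}^{M}\log(q)^\mu S^{(q)}_\mu$ with $M$ the uniform bound from Lemma~\ref{lem:ex}. The first step is to argue that $M=0$. Suppose $M>0$. Substitute this decomposition into the Ward identity \eqref{ward1} and compare coefficients of $\log(q)^{M-1}$. This gives
\[
S^{(q)}_{M-1}\bigl((\omega-\tfrac{c}{24}\vac)*_q a,y,q\bigr)=M\,S^{(q)}_M(a,y,q)+q\tfrac{\partial}{\partial q}S^{(q)}_{M-1}(a,y,q),
\]
while \eqref{ward2} holds for each $S^{(q)}_\mu$ separately. By Lemma~\ref{lem:ex}, $S^{(q)}_M=\sum_i c_iS_{W^i}$, and each trace function satisfies \eqref{ward1} by Proposition~\ref{prop:trace.DE}.

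The next step is to show that $S^{(q)}_{M-1}$ is itself a combination of trace functions. For this I would compare head coefficients in each $q$-sector $q^{r_\al}$ at $j=0$. Applying the displayed identity twice, exactly as in the proof of Proposition~\ref{prop:top.ex.N=0}, shows that the head coefficients of $S^{(q)}_{M-1}$ are annihilated by $(\omega-\tfrac{c}{24}-r_\al)^2*$. Proposition~\ref{prop:coef.stab} together with Lemma~\ref{lem:coef.exhaust}\itemref{lem:coef.exhaust:omega} (with $n=2$) then expresses them as traces on peak spaces. Proposition~\ref{prop:top.ex.N=0} handles the $y$-direction. Subtracting the corresponding trace functions sector by sector, in increasing order of $r_\al$ (finitely many modules, hence the induction terminates as in Lemma~\ref{lem:ex}), yields $S^{(q)}_{M-1}=\sum_i d_iS_{W^i}$. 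Consequently $S^{(q)}_{M-1}$ satisfies \eqref{ward1}, the displayed identity reduces to $M\,S^{(q)}_M=0$, and this contradicts the maximality of $M$. Hence $M=0$.

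Once $M=0$, $S=S^{(q)}_0$, and Lemma~\ref{lem:ex} with $M=0$ gives $S=\sum_i c_iS_{W^i}$ directly.

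The main obstacle is the sector-by-sector subtraction. After removing the trace functions that match the lowest head coefficients, one must check that the difference is again a solution of the relevant (inhomogeneous) system, so that the argument can be iterated. One must also ensure that the $j\ge1$ slices, which carry no independent head-coefficient information, are forced to vanish. For this I would reason as in Zhu's proof: a nonzero remainder would have a new leading $q^{r_\al+j}$ term whose head coefficient satisfies the hypotheses of Lemma~\ref{lem:coef.exhaust}, since the $q^0$-part of $*_q$ is $*$. That coefficient would therefore be a trace on a peak space with $r$-value exceeding all $r^{(i)}$, or be matched and removed. Finiteness of the set of irreducible stable modules bounds the number of steps.
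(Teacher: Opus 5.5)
Your proposal is correct and follows the paper's proof essentially step for step. In both, you assume $M>0$ and take the $\log(q)^{M-1}$ component of \eqref{ward1}. You then show that the $j=0$ head coefficients of $S^{(q)}_{M-1}$ are annihilated by $(\omega-\tfrac{c}{24}-r_\al)^2*$, and use part (ii) of Lemma~\ref{lem:coef.exhaust} with $n=2$, Proposition~\ref{prop:top.ex.N=0} and the exhaustion step of Lemma~\ref{lem:ex} to write $S^{(q)}_{M-1}$ as a combination of trace functions. This forces $M\,S^{(q)}_M=0$, and you finish with Lemma~\ref{lem:ex}.

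The bookkeeping you flag as the main obstacle is left implicit in the paper as well. It goes through because, at each stage of the peeling, the inhomogeneous term $M\,S^{(q)}_M$ contributes at the current leading order only top-level traces of modules whose $(\omega-\tfrac{c}{24})$-eigenvalue matches that order. Modules with smaller eigenvalue have already been forced to have zero coefficient at earlier stages, so the $n=2$ argument applies at every step.
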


\begin{proof}
By Lemma~\ref{lem:ex}, $S=\sum_{\mu=0}^M \log(q)^\mu S^{(q)}_\mu$ for some $M \geq 0$. Suppose $M>0$ and $S^{(q)}_M \neq 0$. By \eqref{ward1},
\begin{equation}\label{eq:ward1.logq}
S^{(q)}_{M-1}((\omega-\tfrac{c}{24}\vac)*_q a,y,q)
= M\, S^{(q)}_M(a,y,q)
+ q\frac{\partial}{\partial q} S^{(q)}_{M-1}(a,y,q).
\end{equation}
Applying \eqref{ward1} again and comparing head coefficients in the sector $q^{r_\al}$ (the $j=0$ slice of \eqref{eq:FrobFrob}) yields $C_{\al,\beta,M-1,0}((\omega-\tfrac{c}{24}-r_\al)^2*a)=0$ for all $\beta$. By part~\itemref{lem:coef.exhaust:omega} of Lemma~\ref{lem:coef.exhaust} with $n=2$, each such head coefficient is a linear combination of traces on $W^i_{\peak}$ with $(\omega-\tfrac{c}{24})$-eigenvalue $r_\al$. By Proposition~\ref{prop:top.ex.N=0}, $S^{(q)}_{M-1}$ has no positive powers of $\log(y)$, and the exhaustion step in the proof of Lemma~\ref{lem:ex} therefore gives
\[
S^{(q)}_{M-1}(a,y,q) = \sum_{i=1}^k c_i\, S_{W^i}(a,y,q),
\]
and therefore $S^{(q)}_{M-1}((\omega-\tfrac{c}{24}\vac)*_q a,y,q)=q\partial_q S^{(q)}_{M-1}(a,y,q)$. Substituting into \eqref{eq:ward1.logq} forces $S^{(q)}_M=0$, a contradiction. Hence $M=0$ and the proposition follows from Lemma~\ref{lem:ex}.
\end{proof}

\begin{thm}\label{thm:main}
Let $(V, \omega, h)$ be a charged conformal vertex algebra, which we assume to be strongly finitely generated, stably quasi-lisse and stably rational. Let the irreducible stable $V$-modules be $W^1, \ldots, W^k$. Then the trace functions
\[
S_{W^i}(u, \al, \tau) = \tr_{W^i} u_0 y^{h_0} q^{L_0-c/24}
\]
converge on the domain $0 < -N \im(\al) < \im(\tau)$, span the space $\CC((E, \CL)_{(\al, \tau)}, V)$ at each point of this domain, and satisfy \eqref{eq:DE.y} and \eqref{eq:DE.q}.
\end{thm}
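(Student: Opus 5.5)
The plan is to assemble the theorem from the results already established, in three parts: convergence, the differential equations, and spanning.

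\emph{Convergence.} Let $W=W^i$ be an irreducible stable module. Its $L_0$-spectrum is bounded below by $\D_0$, and for each $\D$ the charges are bounded above by some $K(\D)$. I would show that $K(\D)\le K_0 + N(\D-\D_0)$ for a constant $K_0$. Indeed $W$ is generated by $W_{\Top}$, since it is irreducible, and by strong generation it is spanned by monomials in modes of the generators $u_j$ applied to $W_{\Top}$. Each mode that raises conformal weight by $m\ge1$ changes charge by at most $N$. Weight-preserving modes can be absorbed using the fact that $W_{\Top}$ is a finite-dimensional $A(V)$-module; this finiteness follows from Proposition~\ref{stablezhu-semisimple} and the $\Z_{\le0}$-graded induction of Theorem~\ref{thm:stable.induction}. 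I expect this Poincar\'e--Birkhoff--Witt-type bookkeeping to be the main obstacle. A weaker bound of the form $K(\D)\le K_0+N'(\D-\D_0)$ with some other constant $N'$ would not recover the stated domain, so the precise constant $N$ matters.

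Given this charge bound, and polynomial growth of $\dim W_\D^{(k)}$, the double series $\sum_{\D,k}\dim W_\D^{(k)}\,|y|^k|q|^{\D}$ is dominated by $\sum_\D |q|^\D |y|^{K_0+N(\D-\D_0)}\cdot(\text{tail in } k\to-\infty)$. The tail converges when $|y|>1$, which is $\im\al<0$. The sum over $\D$ converges when $|q||y|^N<1$, which is $-N\im\al<\im\tau$. Replacing $\dim$ by the trace of $u_0$ only introduces a bounded factor on each finite-dimensional piece.

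\emph{Differential equations and block property.} Proposition~\ref{prop:conformalblock} shows that each $S_{W^i}$ satisfies, formally, the relations defining $S$ in $\wtil H$. Proposition~\ref{prop:trace.DE} gives the Ward identities. After identifying $Y[-,z]$ with $Y(-,z)$ via $R$ and the series $P_1^\pm,\wtil\wp_k$ with $\psi,\wp_k$ in the convergence region, these become \eqref{eq:twisted.CB.analytic}, \eqref{eq:DE.y} and \eqref{eq:DE.q}. So each $S_{W^i}$ is a flat section of $\nabla$ in the domain.

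\emph{Spanning.} First, by Corollary~\ref{cor:informal.finiteness} and Section~\ref{sec:holonomic}, $\CC$ restricted to the domain (in the $(y,q)$ chart, simply connected pieces $U$) is a vector bundle of finite rank with integrable connection. Second, by Proposition~\ref{stablezhu-semisimple}, $A^{\stab}(V)$ is finite-dimensional and semisimple. Third, Proposition~\ref{prop:ex.N=0} then expresses every flat section as $\sum c_iS_{W^i}$.

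Because the connection is integrable, every vector in the fibre at a point extends to a flat section on $U$. So the values of the flat sections, and hence the values of the $S_{W^i}$, span $\CC((E,\CL)_{(\al,\tau)},V)$ at each point of the domain.
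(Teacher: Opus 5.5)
Your treatment of the block relations, the Ward identities and the spanning statement is the paper's own assembly (Proposition~\ref{stablezhu-semisimple}, Proposition~\ref{prop:ex.N=0}, Section~\ref{sec:circulating.trace}). Your observation that every fibre vector extends to a flat section on a simply connected $U$ is exactly what turns ``every flat section is a combination of the $S_{W^i}$'' into pointwise spanning. The problem is convergence, the one place where you depart from the paper, and as written that argument does not work.

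(i) $W_{\Top}$ is not finite-dimensional in general. Proposition~\ref{stablezhu-semisimple} only controls $W_{\peak}$, and the $\Z_{\le 0}$-graded module built in Theorem~\ref{thm:stable.induction} typically has infinitely many nonzero charge pieces. For $L_{-4/3}(\sll_2)$ with $\la=-\tfrac{2}{3}\varpi$, the top is an irreducible Verma module of $\sll_2$. Your charge bound $K(\D)\le K(\D_0)+N(\D-\D_0)$ survives this. It only needs $W_{\Top}$ to be stable under zero modes with charges bounded above, which is part of stability, together with the standard spanning of $W$ by $u^{i_1}_{-n_1}\cdots u^{i_s}_{-n_s}W_{\Top}$ with $n_j\ge 1$. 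But the dimension estimate you need next loses its easy source.

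(ii) ``Polynomial growth'' of $\dim W^{(k)}_\D$ is unproved, and it is false in $\D$. If $\kappa\neq 0$, the charge-zero modes $h_{-n}$ applied to a top vector span a Fock module, so $\dim W^{(k)}_\D\ge p(\D-\D_0)$, the partition number. What the domination really requires is a bound subexponential in $\D$ and $|k|$ jointly. The direction $k\to-\infty$ at fixed $\D$ needs a genuine argument precisely because $W_{\Top}$ is infinite.

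(iii) Boundedness of $\tr u_0$ on each finite-dimensional piece separately says nothing about absolute convergence. You would need a uniform subexponential bound on $|\tr_{W^{(k)}_\D} u_0|$, and for general $u$ you have no handle on it.

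The paper makes no such estimates. It refers convergence, together with the differential equations, back to Section~\ref{sec:circulating.trace}, where the trace functions are shown to be formal solutions of the relations of Proposition~\ref{prop:conformalblock} and of the Ward identities of Proposition~\ref{prop:trace.DE}. Made precise, this is Zhu's argument. By Proposition~\ref{prop:formal.finiteness}, the values of a trace function on finitely many representatives of $V^{(0)}/C(V)$ satisfy a finite first-order linear system with coefficients in $R^{(N)}$. These coefficients converge for $|q|<|y|^{-N}<1$, and the system is regular-singular at $q=0$ (and, on each $q$-coefficient, at $y=\infty$). Formal solutions of such a system converge on that domain. Replacing your dimension count by this argument closes the gap; your linear charge bound then remains a good explanation of why the domain has slope $N$.
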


\begin{proof}
By Proposition~\ref{stablezhu-semisimple}, stable rationality implies that $A^{\stab}(V)$ is finite-dimensional and semisimple. Proposition~\ref{prop:ex.N=0} shows that every genus-one charged conformal block is a linear combination of the $S_{W^i}$. Convergence and the differential equations were established in Section~\ref{sec:circulating.trace}; finite-dimensionality of $\CC((E, \CL)_{(\al, \tau)}, V)$ follows from Theorem~\ref{thm:intro.1}.
\end{proof}

\begin{rem}\label{rem:Ramond.Zhu}
The results of this section continue to hold (verbatim) if we allow $V$ to be graded by non integral conformal weights, as in Remark \ref{rem:Ramond.OK}, as long as we replace the category of $V$-modules by the category of Ramond twisted $V$-modules.
\end{rem}

\section{Modularity of the connection}\label{sec:modularity}

In Section \ref{sec:curves.bundles} we introduced a family $T \rightarrow B$ of pairs $(E, \CL)_{(\al, \tau)}$ consisting of $E = E_\tau$ a smooth elliptic curve and $\CL = \CL_\al$ a holomorphic line bundle over $E$ of degree $0$. In Section \ref{sec:chiral} we described the bundle $\CC$ of charged conformal blocks over $B$, associated with a charged conformal vertex algebra $(V, \omega, h)$ and the connection $\nabla$ on $\CC$ corresponding to the differential equations \eqref{eq:DE.y} and \eqref{eq:DE.q}.

In Sections \ref{sec:finiteness} and \ref{sec:holonomic} we saw that, for suitable $V$, the pair $(\CC, \nabla)$ is an integrable connection in the domain
\begin{align*}
X  = \{(\al, \tau) \in B \mid N \al \notin \Z + \Z \tau\} \subset B
\end{align*}
(where $N$ is a positive integer which depends on $V$), and the trace function $S_{M}(u, \al, \tau)$ associated with a stable $V$-module is convergent for $(\al, \tau)$ in a certain domain $Y \subset X$, and converges in $Y$ to a flat section of $(\CC, \nabla)$. Explicitly
\begin{align*}
Y  = \{(\al, \tau) \in B \mid -\im(\tau) < N \im(\al) < 0 \} \subset X.
\end{align*}
Furthermore, if $V$ is stably rational, the finite set of trace functions $S_M$ associated with irreducible stable $V$-modules forms a basis of the space of flat sections of $(\CC, \nabla)$ on $Y$.

Although the connection $\nabla$ develops singularities on the lattice $N \al \in \Z + \Z \tau$, we now prove that these singularities are regular. The proof is straightforward, and just requires some careful book keeping with respect to pole orders.
\begin{prop}
Let $(V, \omega, h)$ be a charged conformal vertex algebra, which we assume to be finitely strongly generated and stably quasi-lisse. Then the connection $\nabla$ on $\CC$ exhibits regular singularities at $N \al \in \Z + \Z \tau$.
\end{prop}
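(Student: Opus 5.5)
The plan is to check regularity through the growth of the connection matrices near the singular locus, reading off pole orders from the coefficients. By the Jacobi-group action on $B$ (Lemma \ref{lem:G.action}) and the translation invariance $\CL_{\al+m\tau+n}\cong\CL_\al$, it suffices to treat a neighbourhood of a single point $(\al_0,\tau_0)$ with $N\al_0\in\Z+\Z\tau_0$. After translating, we may take $c\al_0 = 0$ for the relevant charges $c$, so locally the singular divisor is a union of smooth hypersurfaces $D_{c,m,n}=\{c\al = m\tau+n\}$ with $|c|\le N$.

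First, I would redo the finiteness argument of Proposition \ref{prop:formal.finiteness} over a localisation. The relation coming from the generator $u_i$ of charge $c\neq 0$ involves $\psi(t,-c\al,\tau)$. Its Laurent coefficients $E_m(y^c,q)$ and $E_1^\pm(y^c,q)$ are meromorphic in $(\al,\tau)$. Near $D_{c,m,n}$ they have poles of bounded order in the local defining function $f=c\al-m\tau-n$, because $\theta(c\al,\tau)$ has a simple zero there. Consequently $f^{k}E_m$ is holomorphic for a bounded $k=k(m)$. Multiplying each relation by a suitable power of $f$, the reduction of $a_{(-1)}b$ (charge $c\neq0$) or $a_{(-2)}b$ (charge $0$) to lower conformal weight costs at most a fixed power of $f^{-1}$ per step. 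Since only finitely many steps reach the finite-dimensional $W$ of Proposition \ref{prop:formal.finiteness}, I would obtain a lattice: a finitely generated $\OO$-submodule of the coinvariant sheaf near $D$, spanned by $W$, that generates $\CC^*$ after inverting $f$.

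Second, I would compute $\nabla$ on this lattice. The operators $R_h$ and $R_L$ involve $\wp_1(t,\tau)$, which is regular in $\al$. However, re-expressing $\res\,\wp_1 h[t]u$ and $\res\,\wp_1 L[t]u$ in terms of the basis $W$ uses the reductions above, so the connection matrix has entries that are meromorphic with poles of bounded order along $D$. To get regularity rather than just meromorphy, I would use the criterion of Deligne that it suffices to check regularity along generic curves transverse to each component. Restrict to a disc $\al\mapsto(\al,\tau_0)$ transverse to $D_{c,m,n}$; there the equation is an ODE in $\al$. Regularity follows if flat sections have moderate growth $O(|f|^{-K})$.

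Third, I would obtain moderate growth. Near the lattice point the dual connection form should be $f^{-1}$ times a holomorphic matrix after a gauge transformation. To achieve this, I would filter $W$ by conformal weight and rescale each weight-$\Delta$ piece by $f^{\,d\Delta}$ for a suitable $d$. The pole order produced by reducing a vector of weight $\Delta$ grows at most linearly in the weight drop. This linear bound follows from the fact that the Laurent coefficient of $t^{m-1}$ in $\psi$ has pole order at most $m$ in $f$, by the homogeneity of $\psi(t,-c\al,\tau)\sim 1/t+1/(c\al)$-type expansions. The resulting Fuchsian form gives logarithmic poles and hence regular singularities.

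The main obstacle is exactly this pole-order bookkeeping. One must show that the pole order of $E_m(y^c,q)$ along $D$ is at most $m$, and that each reduction step lowers conformal weight by at least the pole order it introduces, so that the weight rescaling produces a logarithmic lattice. I would first verify the bound from the expansion $\psi(t,\al,\tau)=\sum_j t^{j-1}g_j$ with $g_j$ having a pole of order $\le j$ at $\al=0$, using $\theta(t+\al)/\theta(\al)$.
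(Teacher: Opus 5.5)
Your argument is correct. Its core is the same as the paper's: homogeneity for the grading in which the Laurent coefficient $E_m(y^c,q)$ of $\psi$ has degree $m$ and pole order at most $m$ along $D$. With $f=c\al-m\tau-n$, this means the coefficient $A_{ij}$ of $S(u_j)$ in $\partial_\al S(u_i)$ has pole order at most $1+\D(u_i)-\D(u_j)$. Like the paper, you then test regularity on the transversal curves $\tau=\tau_0$.

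The difference is the final step. The paper eliminates variables to get a scalar ODE for one component $S_j$, clears denominators, normalises $f_N=1$, and invokes Fuchs' criterion. You instead pass to $\tilde S_i=f^{\D(u_i)}S_i$, for which
$\partial_\al\tilde S_i=\D(u_i)\tfrac{c}{f}\tilde S_i+\sum_j f^{\D(u_i)-\D(u_j)}A_{ij}\tilde S_j$
has at most a simple pole, and you conclude by the Sauvage (moderate growth) criterion. Your route involves no division. The paper's passage from $\deg f_i=\varepsilon-i$ to ``$f_i/f_N$ has pole order at most $N-i$'' tacitly needs $f_N$ to attain exactly the pole order $\varepsilon-N$ allowed by its degree, and cyclic-vector elimination can also create apparent singularities. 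In exchange, the paper's route directly produces scalar equations of the kind displayed in Section \ref{sec:examples}.

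Several parts of your scaffolding can be dropped or simplified:
\begin{itemize}
\item Regularity is local, so Lemma \ref{lem:G.action} is not needed. The Jacobi group cannot move a nonzero torsion point $\al_0$ to $0$ anyway; what you actually use is a local change of coordinate.
\item Distinct components $\{c\al=m\tau+n\}$ are pairwise disjoint. An intersection would force $(c_2m_1-c_1m_2)\tau=c_1n_2-c_2n_1$ with $\tau\notin\R$. So locally $D$ is a single smooth hypersurface.
\item Your ``main obstacle'' is immediate. The function $t\,\theta(-c\al,\tau)\,\psi(t,-c\al,\tau)$ is holomorphic near $(0,\al_0)$, so every Laurent coefficient has at most a simple pole, which is $\le m$. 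The $t^{-1}$ coefficient is constant, so solving for $a_{(-1)}b$ never requires dividing by a function vanishing on $D$.
\item You need no linear independence of $W$ and no lattice construction. The closed system for $(S(w))_{w\in W}$ already has the required form, and flat sections inject into its solution space, which gives the moderate growth.
\end{itemize}
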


\begin{proof}
Let us take $\tau \in \CH$ as fixed. Since $\CC$ is of finite rank, the equation $\nabla S = 0$ reduces as in Section \ref{sec:integrable} to a first order system of ODEs $S' = A S$, where here and throughout the rest of this proof we denote $\partial_\al = 2\pi i y \partial_y$ by $(-)'$ for convenience. Here $A$ is a matrix whose entries lie in the ring $R^{(N)}$, and $S$ is a column vector with entries $S_i = S(u_i, \al, \tau)$ where $u_1, u_2, \ldots, u_r$ is a choice of basis of $V^{(0)} / C(V)$. We assume without loss of generality that $u_i$ is homogeneous of conformal weight $\D(u_i)$. The ODE is obtained explicitly by reduction of \eqref{eq:DE.y} modulo the relations \eqref{eq:twisted.CB.analytic}. We notice that the ring $R^{(N)}$ as well as the differential equation and the relations are homogeneous if we assign degrees as follows
\begin{align*}
\deg(\wtil G_k(q)) &= k & \deg(u) &= \D_u \\
\deg(P_k(y^c, q)) &= k, & \deg(\partial_\al ) & = 1.
\end{align*}
It follows that the entries $A_{ij}$ of $A$ are homogeneous in the sense that $\deg(A_{ij}) = 1 + \D(u_j) - \D(u_i)$ for each $i, j$. It is convenient, then, for us to say that a matrix $B \in \Mat_{r \times r}(R^{(N)})$ has degree $a$ if $\deg(B_{ij}) = a + \D(u_j) - \D(u_i)$ for each $i, j$. Then degree is clearly multiplicative. The equation $S' = A S$ is homogeneous, as are the equations $S'' = (A' + A^2) S$, $S^{(3)} = (A'' + 3AA' + A^3) S$, etc., obtained by successive differentiation. Here we have used that $\partial_\al$ acts in $R^{(N)}$ with degree $+1$. Let us write these equations in general as $S^{(i)} = P_i S$. For fixed $j \in \{1, \ldots, r\}$ we may eliminate variables to obtain an ODE of order $N$ for $S_j$, namely
\begin{align*}
\sum_{i=0}^N f_i S_j^{(i)} = 0.
\end{align*}
Later we will normalise so that $f_{N} = 1$, but for now we may clear denominators and assume $f_i \in R^{(N)}$ for all $i$. The coefficients $f_i$ must then satisfy
\[
\sum_{j=0}^N (P_i)_{jk} f_i = 0, \quad k = 1,\ldots, r.
\]
Since $\deg((P_i)_{jk}) = i + \D(u_k) - \D(u_j)$, we have $\deg(f_i) = \varepsilon - i$ for some $\varepsilon$.

The final observation is that for $f \in R^{(N)}$ of degree $k$, the poles of $f(\al, \tau)$ on the lattice $N\al \in \Z + \Z \tau$ are of order at most $k$. It follows, then, that after normalisation so that $f_N = 1$, the function $f_i$ has poles of order at most $N-i$. The ODE is thus Fuchsian.
\end{proof}

The spaces $B$ and $T$ carry compatible actions of the groups $SL_2(\Z)$ and $\Z^2$, and thus of their semidirect product the Jacobi group $J = SL_2(\Z) \ltimes \Z^2$. The fibres over points in the same $J$-orbit are isomorphic. These actions are well known and are as follows. For $A = \left(\begin{smallmatrix} a & b \\ c & d \\ \end{smallmatrix}\right) \in SL_2(\Z)$ and $(\al, \tau) \in B$ we have
\[
A \cdot (\al, \tau) = \left( \frac{\al}{c\tau+d}, \frac{a\tau+b}{c\tau+d} \right).
\]
Writing $(\al', \tau') = A \cdot (\al, \tau)$, the action of $A$ on $T$ is
\[
A \cdot (\al, \tau, v, z) = \left( \al', \tau', e^{2\pi i \frac{c}{c\tau+d} \alpha z} v, \frac{z}{c\tau+d} \right).
\]
For $(m, n) \in \Z^2$ the action is
\begin{align*}
(m, n) \cdot (\al, \tau, v, z) = (\al + m\tau + n, \tau, e^{-2\pi i m z} v, z).
\end{align*}
It is straightforward to verify that
\[
(E, \CL)_{A \cdot (\al, \tau)} \cong (E, \CL)_{(\al, \tau)} \quad \text{and} \quad (E, \CL)_{(m, n) \cdot (\al, \tau)} \cong (E, \CL)_{(\al, \tau)},
\]
for all $A \in SL_2(\Z)$ and $(m, n) \in \Z^2$.

The main result of this section is that the connection $\nabla$ is $J$-equivariant, and that flat sections of $(\CC, \nabla)$ transform as Jacobi forms of a certain index $\kappa$ which is determined by the charged conformal vertex algebra $(V, \omega, h)$. In particular if $u \in V$ is primary of conformal weight $k$, then the flat sections $S_M(u, \al, \tau)$ of $\CC$ transform as a vector-valued Jacobi form of weight $k$ and $\kappa$ is the index defined as in Section \ref{sec:vertex}, namely $h_{1}h = 2\kappa \vac$. The precise statement is now as follows.

\begin{prop}\label{prop:connec.is}
Let $S(u, \al, \tau)$ be a flat section of $(\CC, \nabla)$.
\begin{enumerate}[label={(\arabic*)},ref={\thecor~(\arabic*)}]
\item\label{prop:connec.is.modular} Let $A \in SL_2(\Z)$ and $(\al', \tau') = A \cdot (\al, \tau)$. If we define $S'(u, \al, \tau)$ by
\begin{align*}
S'(u, \al', \tau') = e^{2\pi i \kappa \frac{c}{c\tau+d} \al^2} S( e^{2\pi i\frac{c\al}{c\tau+d} h_{[1]}}(c\tau+d)^{L_{[0]}} u, \al, \tau),
\end{align*}
then $S'(u, \al, \tau)$ is also a flat section of $(\CC, \nabla)$.

\item\label{prop:connec.is.elliptic} Let $(m, n) \in \Z^2$ and $(\al', \tau') = (m, n) \cdot (\al, \tau)$. If we define $S'(u, \al, \tau)$ by
\begin{align*}
S'(u, \al', \tau') = e^{ -2\pi i \kappa (m^2 \tau + 2m \al) } S(e^{-2\pi i m h_{[1]}} u, \al, \tau),
\end{align*}
then $S'(u, \al, \tau)$ is also a flat section of $(\CC, \nabla)$.
\end{enumerate}
\end{prop}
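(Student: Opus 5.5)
The plan is to verify directly that $S'$ is pointwise a charged conformal block and satisfies \eqref{eq:DE.y} and \eqref{eq:DE.q}. I would work with Zhu's bracket structure $Y[-,z]$ and the associated modes $L_{[n]}, h_{[n]}$; in these coordinates the defining relations \eqref{eq:twisted.CB.analytic} and the connection are expressed through $\wp_k(z,\tau)$ and $\psi(z,-k\al,\tau)$, whose transformation laws are classical. The key structural fact is that both transformations are induced by isomorphisms of pairs $(E,\CL)_{(\al,\tau)}\cong(E,\CL)_{J\cdot(\al,\tau)}$. On the fibre $\CV_0$ such an isomorphism acts by the element of $G=\aut(\OO)\ltimes\GG(\OO)$ that records the change of local coordinate and trivialisation at $0$. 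For $A\in SL_2(\Z)$ this is $z\mapsto z/(c\tau+d)$ together with multiplication of the trivialisation by $e^{2\pi i\frac{c\al}{c\tau+d}z}$. In bracket modes this acts as $(c\tau+d)^{L_{[0]}}$ and $e^{2\pi i\frac{c\al}{c\tau+d}h_{[1]}}$. For $(m,n)$ the change is multiplication by $e^{-2\pi i m z}$, which acts as $e^{-2\pi i m h_{[1]}}$.

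The steps, in order, are as follows.

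\textbf{Step 1: conformal-block property pointwise.} I will check this by transporting sections. Pulling back $\Gamma(E\backslash 0,\CA)$ along the isomorphism maps sections $\wp_k\,a\,dz$ and $\psi(z,-k\al',\tau')\,a\,dz$ to sections of the same kind on the source fibre. I will use the transformation laws of $\wp_k$ under $SL_2(\Z)$, and those of $\theta$ (hence of $\psi$, via \eqref{eq:theta.xforms}) under both groups. Conjugating $Y[a,z]$ by the operators above with Huang's formula \eqref{eq:huang.fla} then reduces this step to the coordinate-independence of the chiral action.

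\textbf{Step 2: the flatness equations, compared by direct computation.} I will differentiate $S'$ using the chain rule in $(\al,\tau)\mapsto(\al',\tau')$ and use the flatness of $S$. The prefactor contributes the anomalous terms. Differentiating $e^{2\pi i\kappa\frac{c}{c\tau+d}\al^2}$, or $e^{-2\pi i\kappa(m^2\tau+2m\al)}$, produces scalar terms. These must cancel the central terms that arise when $h[t]$ and $L[t]$ are commuted past $e^{x h_{[1]}}$, via $[h_{[1]},h_{[-1]}]\sim 2\kappa$ and $[L_{[n]},h_{[1]}]=-h_{[n+1]}$. They must also cancel the $-2\pi i c(c\tau+d)$ defect in the transformation of $G_2$, which enters through $\wp_1$.

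\textbf{Step 3: the $\wp_1$ terms.} I will use that $\wp_1(z,\tau)$ transforms as
\[
\wp_1\!\left(\tfrac{z}{c\tau+d},\tau'\right)=(c\tau+d)\,\wp_1(z,\tau)+\text{(linear in }z)+\text{const}.
\]
The extra linear piece yields $h_{[1]}$ and $L_{[1]}$ insertions, and these match exactly the derivatives of the operator $e^{x h_{[1]}}(c\tau+d)^{L_{[0]}}$ with respect to $\al$ and $\tau$. The elliptic case is analogous, using $\wp_1(z)$ quasi-periodicity in $z$ only through the translated line bundle, and $\wp_1$ itself is unchanged.

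The main obstacle will be the $\tau$-equation in the $SL_2(\Z)$ case. Here the derivative of $\frac{c\al}{c\tau+d}$ in $\tau$ produces $h_{[1]}$ terms, and moving $L[t]$ through $e^{xh_{[1]}}$ produces $h[t]$-insertions weighted by $\wp_1$. These must combine, using \eqref{eq:DE.y} applied to $S$, with the $\kappa\al^2$ prefactor derivative. I would first verify the whole identity on the trace functions $S_W$, where everything is explicit in the region $Y$. Since the $S_{W^i}$ span the flat sections in the stably rational case, this verification would settle the equation there. For the general statement I would then carry out the cancellation by completing the square in the exponents $e^{x h_{[1]}}$, tracking only terms up to the anomaly.
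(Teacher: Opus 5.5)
Your plan is the paper's own: Appendix A proves \eqref{eq:DE.xform.y} and \eqref{eq:DE.xform.q} by this chain-rule computation, commuting $h[t]$ and $L[t]$ past $e^{\la h_{[1]}}$ via \eqref{eq:h[t].exp.comm} and \eqref{eq:L[t].exp.comm}. There the derivatives of the scalar prefactor cancel the resulting central terms, and the $G_2$ defect in $\wp_1$ cancels the derivatives of the operator $e^{xh_{[1]}}(c\tau+d)^{L_{[0]}}$ (in your indexing the linear term of $\wp_1$ gives an $L_{[0]}$ insertion, not $L_{[1]}$), while your Step 1 makes explicit what the paper takes from the isomorphism of fibres. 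Drop the preliminary check on trace functions: proving the transformed $S_W$ flat needs exactly the same computation, since their Jacobi behaviour is a consequence of this proposition rather than an input, and it would only cover the stably rational case anyway.
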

The proof of this proposition is essentially by direct calculation, given below in Appendices \ref{app.mod} and \ref{app.ell}. We now indicate the idea behind the calculations. Let $A \in SL_2(\Z)$ and let us consider the isomorphism $f : (E, \CL)_{(\al, \tau)} \rightarrow (E, \CL)_{A \cdot (\al, \tau)}$ given by
\[
f(v, z) = \left( e^{2\pi i \frac{c}{c\tau+d} \alpha z} v, \frac{z}{c\tau+d} \right).
\]
This isomorphism induces a nontrivial identification between the fibres $\CV_0$ on the two curves $E_{\tau}$ and $E_{A \cdot \tau}$, as per the associated bundle construction described in Section \ref{sec:chiral}. Explicitly, for each $u \in V$, we have in $\CV_0$ the equality
\[
\left( e^{2\pi i \frac{c\al}{c\tau+d} z} v, \frac{z}{c\tau+d}, u \right) = \left( v, z, e^{2\pi i \frac{c\al}{c\tau+d} h_{[1]}} (c\tau+d)^{L_{[0]}} u \right).
\]

Let us now consider a flat local section $S(u, \al, \tau)$ of $(\CC, \nabla)$, more precisely $S((v, z, u), \al, \tau)$. We define a new section $S'((v, z, u), \al, \tau)$ by
\begin{align}\label{eq:mod.xform}
\begin{split}
S'((v', z', u), \al', \tau')
&= e^{2\pi i \kappa \frac{c}{c\tau+d} \al^2} S((v', z', u), \al, \tau) \\
&= e^{2\pi i \kappa \frac{c}{c\tau+d} \al^2} S((v, z, e^{2\pi i\frac{c\al}{c\tau+d} h_{[1]}}(c\tau+d)^{L_{[0]}} u), \al, \tau),
\end{split}
\end{align}
where $(\al', \tau', v', z') = A \cdot (\al, \tau, v, z)$. The proof of Proposition~\ref{prop:connec.is.modular} consists in using \eqref{eq:DE.y} and \eqref{eq:DE.q} to prove the following:
\begin{align}
\frac{\partial}{\partial\al'} S((v', z', u), \al', \tau') = S((v', z', \res_t \wp_1(t, \tau) h[t] u), \al', \tau' ) \label{eq:DE.xform.y} \\
\text{and} \quad 2\pi i \frac{\partial}{\partial\tau'} S((v', z', u), \al', \tau') = S((v', z', \res_t \wp_1(t, \tau) L[t] u), \al', \tau' ). \label{eq:DE.xform.q}
\end{align}
The verification is straightforward but lengthy and we relegate it to Appendix \ref{app.mod}.

Similarly, for a flat local section $S((v, z, u), \al, \tau)$ of $(\CC, \nabla)$ and $(m, n) \in \Z^2$, we define a new section $S'((v, z, u), \al, \tau)$ by
\begin{align}\label{eq:ell.xform}
\begin{split}
S'((v', z', u), \al', \tau') &= e^{ -2\pi i \kappa (m^2 \tau + 2m \al) } S((v', z', u), \al, \tau) \\
&= e^{ -2\pi i \kappa (m^2 \tau + 2m \al) } S((v, z, e^{-2\pi i m h_{[1]}} u), \al, \tau)
\end{split}
\end{align}
where $(\al', \tau', v', z') = (m, n) \cdot (\al, \tau, v, z)$. The proof of Proposition \ref{prop:connec.is.elliptic} consists again in using \eqref{eq:DE.y} and \eqref{eq:DE.q} to prove \eqref{eq:DE.xform.y} and \eqref{eq:DE.xform.q}. The computations are significantly easier in this case, and are given in Appendix \ref{app.ell}.

Using the action of $SL_2(\Z)$ on $X$, we now show that the trace functions can be analytically continued along paths from $Y$ to any point of $X$.
\begin{lemma}\label{lem:G.action}
Every $SL_2(\Z)$-orbit in $X$ meets $Y$. Furthermore for every $x \in X$ there exists a sequence of elements $A_0 = I, A_1, \ldots, A_k \in SL_2(\Z)$ such that $A_i Y$ intersects $A_{i-1}Y$ nontrivially, and $x \in A_k Y$.
\end{lemma}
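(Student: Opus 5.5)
The plan is to reduce the statement to a problem about real linear forms on $\Z^2$. For $(\al,\tau)\in B$ we write uniquely
\[
N\al = x\tau + w, \qquad x,w\in\R .
\]
Then $N\im(\al)=x\im(\tau)$, so $(\al,\tau)\in Y$ if and only if $-1<x<0$. Likewise $(\al,\tau)\in X$ if and only if $(x,w)\notin\Z^2$.

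\emph{Step 1: transformation of $(x,w)$.} For $A=\left(\begin{smallmatrix} a&b\\ c&d\end{smallmatrix}\right)$ put $(\al',\tau')=A\cdot(\al,\tau)$. Substituting $\tau=(d\tau'-b)/(-c\tau'+a)$ and $c\tau+d=(-c\tau'+a)^{-1}$ into $N\al'=N\al/(c\tau+d)$ gives
\[
N\al'=(xd-wc)\,\tau'+(wa-xb).
\]
So $(x,w)$ transforms linearly, $(x',w')=(x,w)\,A^{-1}$, and in particular $X$ is $SL_2(\Z)$-stable. Hence the orbit of $(\al,\tau)$ meets $Y$ exactly when the linear form $\ell(c,d)=xd-wc$ takes a value in $(-1,0)$ on some primitive vector $(c,d)$. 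Every primitive vector occurs as the bottom row of some $A$.

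\emph{Step 2: finding a primitive vector with $\ell(c,d)\in(-1,0)$.} Since $(x,w)\notin\Z^2$, some primitive vector $v_0$ has $\beta=\ell(v_0)\notin\Z$; for instance take $v_0=(0,1)$ if $x\notin\Z$ and $(-1,0)$ if $w\notin\Z$. Complete $v_0$ to a basis $v_0,u$ of $\Z^2$. Then each $v_0+ku$ is primitive, each $u+kv_0$ is primitive, and
\[
\ell(v_0+ku)=\beta+k\,\ell(u), \qquad \ell(u+kv_0)=\ell(u)+k\beta .
\]
This is a Euclidean-algorithm setting on the pair $(\beta,\ell(u))$: we may replace either value by itself plus an integer multiple of the other.

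If $x$ and $w$ are rational with common denominator $m$, then $\ell$ takes values in $\frac1m\Z$, and the algorithm reaches a primitive vector with value $\pm g/m$, where $g=\gcd$ of the numerators. Since $(x,w)\notin\Z^2$ we have $g<m$, so $\pm g/m\in(-1,1)\setminus\{0\}$. Replacing the vector $v$ by $-v$ if necessary gives a value in $(-1,0)$.

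If $x$ or $w$ is irrational, then $\ell(\Z^2)$ is dense, and the same reduction produces primitive vectors with arbitrarily small nonzero values. Such a value can then be chosen of either sign, again by replacing $v$ with $-v$.

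This arithmetic step is the one I expect to need the most care, mainly in keeping primitivity throughout the Euclidean reduction.

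\emph{Step 3: chaining.} First, $T=\left(\begin{smallmatrix}1&1\\0&1\end{smallmatrix}\right)$ fixes $x$, so $TY=Y$ and also $T^{-1}Y=Y$. Second, $S=\left(\begin{smallmatrix}0&-1\\1&0\end{smallmatrix}\right)$ acts by $(x,w)\mapsto(\pm w,\pm x)$. Choosing a point of $Y$ with both $x$ and $w$ in $(-1,0)$ (after fixing signs), we get $SY\cap Y\neq\emptyset$, and hence $S^{-1}Y\cap Y\neq\emptyset$.

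Now let $x\in X$. By Step 2 there is $B$ with $Bx\in Y$, so $x\in A Y$ for $A=B^{-1}$. Write $A=g_1\cdots g_k$ with each $g_i\in\{S^{\pm1},T^{\pm1}\}$, and set $A_i=g_1\cdots g_i$. Then
\[
A_iY\cap A_{i-1}Y=A_{i-1}\bigl(g_iY\cap Y\bigr)\neq\emptyset .
\]
This gives the required sequence $A_0=I,A_1,\ldots,A_k$ with $x\in A_kY$.
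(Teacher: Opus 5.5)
\textbf{Step 2 has a genuine gap, and it cannot be closed, because the statement is false at some points of $X$.}

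Your Step 1 is correct. It shows that the $SL_2(\Z)$-orbit of $(\al,\tau)$ meets $Y$ if and only if $\ell(c,d)=xd-wc\in(-1,0)$ for some primitive $(c,d)$. Primitivity costs nothing: if $\ell(c,d)\in(-1,0)$, dividing $(c,d)$ by its gcd keeps the value in $(-1,0)$. So the real criterion is that the subgroup $x\Z+w\Z\subset\R$ meets $(0,1)$.

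Both arithmetic claims in Step 2 are false.
\begin{itemize}
\item In the rational case, $(x,w)\notin\Z^2$ does not force $g<m$. For $(x,w)=(\tfrac32,0)$ you get $m=2$, $g=3$, and $x\Z+w\Z=\tfrac32\Z$.
\item In the irrational case, $x\Z+w\Z$ is dense only when $x,w$ are linearly independent over $\mathbb{Q}$. For $(x,w)=(\sqrt2,0)$ it is $\sqrt2\Z$.
\end{itemize}
These are genuine counterexamples to the lemma. Take $N=1$, $\tau=i$, $\al=\tfrac32 i$. Then $\al\notin\Z+\Z i$, so $(\al,\tau)\in X$. Yet for every $A\in SL_2(\Z)$ one computes $\im(\al')/\im(\tau')=\tfrac32 d$, which never lies in $(-1,0)$. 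So this orbit misses $Y$. Consequently $(\al,\tau)\notin A_kY$ for every $A_k$, and the second assertion fails as well.

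In general, the orbit of a point of $X$ misses $Y$ exactly when $N\al=\gamma(p\tau+q)$ with $(p,q)$ primitive and $\gamma\in[1,\infty)\setminus\Z$. This is a union of rays, strictly larger than the lattice $N\al\in\Z+\Z\tau$ removed in defining $X$.

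The paper's own proof has the same defect. Its claim that suitable $(c,d)$ exist unless $\im(\al\bar\tau)/\im(\tau)$ and $\im(\al)/\im(\tau)$ both lie in $\frac1N\Z$ fails at these points; in the example above the two ratios are $0$ and $\frac32$.

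What your argument does prove correctly is the lemma for those points whose group $x\Z+w\Z$ meets $(0,1)$. Your Step 3 is valid once some $A$ with $(\al,\tau)\in AY$ is available, and it is cleaner than the paper's version. It chains through the generators $S^{\pm1},T^{\pm1}$ using $gY\cap Y\neq\emptyset$.

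A correct statement must either exclude the rays described above or enlarge the group. For $N=1$, enlarging suffices: adjoin the translations $\al\mapsto\al+m\tau+n$ of the Jacobi group, which shift $x$ by integers.
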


In fact for almost all points we can take $k=1$, and for the remainder $k=2$ suffices.

\begin{proof}
Let $x = (\al, \tau) \in X$. Let $(\al', \tau') = A \cdot x = (\frac{\al}{c\tau+d}, \frac{a\tau+b}{c\tau+d})$. An easy computation, using $\det(A)=1$, shows that
\[
\im \al' = \im \frac{\al}{c\tau+d} = \frac{\im(\al(c\overline\tau+d))}{|c\tau+d|^2} \quad \text{and} \quad \im \tau' = \im \frac{a\tau+b}{c\tau+d} = \frac{\im \tau}{|c\tau+d|^2}.
\]
We would like to impose conditions on $(c, d)$ so that
\[
\frac{\im \al'}{\im \tau'} = c \frac{\im(\al \overline\tau)}{\im \tau} + d \frac{\im \al}{\im \tau} \in (-1/N, 0).
\]
In fact it is possible to find appropriate $c, d \in \Z$ unless
\begin{align}\label{eq:bad.alpha.condition}
\frac{\im(\al \overline\tau)}{\im \tau}, \frac{\im \al}{\im \tau} \in \frac{1}{N} \Z.
\end{align}
If we now write $\tau = u + iv$ and $\al = x + iy$ (where $u, v, x, y \in \R$), then \eqref{eq:bad.alpha.condition} becomes
\[
\frac{y}{v} = \frac{n}{N} \quad \text{and} \quad \frac{uy-xv}{v} = \frac{-m}{N}
\]
for some $m, n \in \Z$. Rearranging gives
\[
\al = x + iy = \frac{m+nu}{N} + i\frac{nv}{N} = \frac{1}{N}(m + n\tau).
\]
So indeed for all $x \in X$, there is some $A \in SL_2(\Z)$ such that $A \cdot x$ lies in $Y$ the domain of convergence of the trace functions.

We pass to the second claim now. Whenever $(c, d)$ can be chosen with $c \neq 0$ then $A \cdot Y$ and $Y$ intersect nontrivially. When $c=0$ we have $d = \pm 1$ and $(\al', \tau') = (\pm \al, \tau + n)$ for some $n$. But then any other image of $Y$ (under an element of $SL_2(\Z)$ for which $c \neq 0$) meets both $Y$ and $A \cdot Y$ nontrivially.
\end{proof}
Let $(E, \CL)$ be the fibre of $T \rightarrow B$ over a point $(\al, \tau) \in X$. Since, by Lemma \ref{lem:G.action}, $(\al, \tau)$ has an image $(\al', \tau') \in Y$, we can conclude that charged conformal blocks on $(E, \CL)$ are expressible in terms of trace functions on stable $V$-modules. More precisely we have the following corollary.
\begin{cor}\label{cor:finite.everywhere}
Let $V$ be a charged conformal vertex algebra.
\begin{enumerate}
\item Suppose that $V$ is finitely strongly generated and stably quasi-lisse. Then the vector space $\CC((E, \CL)_{(\al, \tau)}, V)$ is finite dimensional whenever $N\al \notin \Z + \Z\tau$.

\item Suppose furthermore that $V$ is stably rational. Then $\CC((E, \CL)_{(\al, \tau)}, V)$, is spanned by the trace functions $S_{W^i}(u, \al', \tau')$ on the irreducible stable $V$-modules $W^1, \ldots, W^r$, for some choice of $(\al', \tau') = A \cdot (\al, \tau)$.
\end{enumerate}
\end{cor}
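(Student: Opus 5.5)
The plan is to transport everything from the convergence domain $Y$ to an arbitrary point of $X$ using the Jacobi group, and specifically the $SL_2(\Z)$ part. The underlying principle is that the fibres $(E,\CL)_{(\al,\tau)}$ and $(E,\CL)_{A\cdot(\al,\tau)}$ are isomorphic, and this isomorphism gives a linear isomorphism between the corresponding spaces of charged conformal blocks.

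Concretely, fix $(\al,\tau)\in X$. By Lemma \ref{lem:G.action} we choose $A\in SL_2(\Z)$ with $(\al',\tau')=A\cdot(\al,\tau)\in Y$. The isomorphism
\[
f(v,z)=\bigl(e^{2\pi i\frac{c}{c\tau+d}\al z}v,\ z/(c\tau+d)\bigr)
\]
identifies the fibres $\CV_0$ via the invertible operator
\[
u\mapsto e^{2\pi i\frac{c\al}{c\tau+d}h_{[1]}}(c\tau+d)^{L_{[0]}}u .
\]
This operator is well defined because $h_{[1]}$ lowers weight and $L_{[0]}$ is semisimple with integer eigenvalues. Since $f$ carries $\Gamma(E\backslash 0,\CA)$ on one curve onto the corresponding space on the other, precomposition with this operator sends $\CC((E,\CL)_{(\al',\tau')},V)$ bijectively onto $\CC((E,\CL)_{(\al,\tau)},V)$. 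Up to the scalar $e^{2\pi i\kappa\frac{c}{c\tau+d}\al^2}$, this is the same assignment $S\mapsto S'$ as in Proposition \ref{prop:connec.is}, read pointwise.

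For part (1), the fibre at $(\al',\tau')\in Y$ is finite dimensional by Corollary \ref{cor:informal.finiteness}, so the fibre at $(\al,\tau)$ is finite dimensional too. For part (2), Theorem \ref{thm:main} shows that at $(\al',\tau')$ the space of blocks is spanned by the $S_{W^i}(-,\al',\tau')$. Transporting these through the isomorphism gives a spanning set at $(\al,\tau)$, namely $u\mapsto S_{W^i}(e^{\cdots h_{[1]}}(c\tau+d)^{L_{[0]}}u,\al',\tau')$. This is the sense in which the statement says the space is spanned by trace functions at a Jacobi-equivalent point.

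The main obstacle is checking that the fibre-level identification really preserves the annihilator conditions \eqref{eq:twisted.CB.analytic}. These conditions are phrased with the specific sections $\wp_2$ and $\psi(t,-k\al,\tau)$, and we need them to match under the coordinate change $z\mapsto z/(c\tau+d)$ combined with the gauge factor $e^{2\pi i c\al z/(c\tau+d)}$. To handle this I would rely on coordinate independence of the chiral action, \cite[Theorem 7.1.6]{FBZ} together with Huang's formula \eqref{eq:huang.fla}, rather than on explicit elliptic identities. Then $f^*$ maps global sections of $\CA$ on $E\backslash 0$ bijectively to global sections, and conformal blocks, being defined intrinsically in Definition \ref{defn:charged.CB}, correspond. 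If an explicit check is needed, it reduces to the transformation laws of $\theta$ under $SL_2(\Z)$ and the computations deferred to Appendix \ref{app.mod}.
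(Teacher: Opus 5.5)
Your argument is the paper's own: move $(\al,\tau)$ into $Y$ by Lemma~\ref{lem:G.action}, identify the two block spaces through the fibre isomorphism $(E,\CL)_{(\al,\tau)}\cong(E,\CL)_{(\al',\tau')}$, and then apply Corollary~\ref{cor:informal.finiteness} and Theorem~\ref{thm:main}, which is all the paper's short proof does. One bookkeeping slip: under the convention of Proposition~\ref{prop:connec.is}, precomposition with $g=e^{2\pi i\frac{c\al}{c\tau+d}h_{[1]}}(c\tau+d)^{L_{[0]}}$ sends blocks at $(\al,\tau)$ to blocks at $(\al',\tau')$, not the reverse, so the transported spanning set at $(\al,\tau)$ is $u\mapsto S_{W^i}(g^{-1}u,\al',\tau')$; this does not affect either conclusion.
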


\begin{proof}
By Lemma \ref{lem:G.action} any point $(\al, \tau) \in X$ is in the orbit under $SL_2(\Z)$ of a point $(\al', \tau') \in Y$. {The corresponding fibres $(E, \CL)_{(\al, \tau)}$ and $(E, \CL)_{(\al', \tau')}$ are isomorphic.} The first claim then follows immediately from Corollary \ref{cor:informal.finiteness} and the second claim from Theorem \ref{thm:main}.
\end{proof}


We now show the consequences of these results for the example of simple affine vertex algebras at admissible level. As before we consider the charged conformal vertex algebra $V = (L_k(\g), \omega, \rho)$, as in Proposition {\ref{prop:quasi-lisse.enough}} where we showed that $V$ is stably quasi-lisse, and Proposition \ref{prop:admissible-affine-stable}, where we showed that $V$ is stably rational also. By Proposition \ref{prop:admissible-affine-stable} and the main result of \cite{A12-2}, we see that the category of stable $V$-modules is semisimple, with simple objects precisely the admissible $\what\g$-modules $L_k(\la)$ as $\la$ ranges over $\textup{Prin}^k$ the set of principal/coprincipal admissible weights of level $k$.

We now obtain the following as a consequence of our main results. We write $L_k(\g)$ for $(L_k(\g), \omega, \rho)$.
\begin{prop}
Let $k$ be an admissible level for the simple Lie algebra $\g$, then:
\begin{enumerate}
\item The trace functions $\tr_{L(\la)} u_0 y^{h_0} q^{L_0 - c/24}$, as $\la$ ranges over the set $\on{Adm}^k$ of admissible weights of level $k$, span the space $\CC((E, \CL)_{(\al, \tau)}, L_k(\g))$ for all $\al \notin \Z + \Z\tau$;

\item for any smooth elliptic curve $E$ and degree $0$ holomorphic line bundle $\CL$ over $E$, the dimension of $\CC((E, \CL), L_k(\g))$ equals the cardinality of $\on{Adm}^k$, namely $|\check{P}/u\check{P}| \cdot |P_+^{p, \on{reg}}|$ if $k$ is principal admissible, and $|\check{P}/uP| \cdot |{}^{L}\check{P}_+^{p, \on{reg}}|$ if coprincipal;

\item the characters $\tr_{L(\la)} y^{h_0} q^{L_0 - c/24}$, as $\la$ ranges over $\on{Adm}^k$, constitute a vector-valued Jacobi form of weight $0$ and index $\kappa = k h^\vee \dim(\g) / 24$.
\end{enumerate}
\end{prop}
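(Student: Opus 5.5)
The plan is to specialise the general results to $V=(L_k(\g),\omega,\rho)$ and then combine them with the known structure of admissible modules. By Proposition~\ref{prop:quasi-lisse.enough}, $V$ is finitely strongly generated by the $E^{\al_i}_{(-1)}\vac$, $F^{\al_i}_{(-1)}\vac$ of charge $\pm1$, so $N=1$, and $V$ is stably quasi-lisse. By Proposition~\ref{prop:admissible-affine-stable}, $V$ is stably rational, and its finitely generated stable modules are exactly the $L_k(\g)$-modules in category $\OO$. By \cite{A12-2}, the simple objects of this category are the $L(\la)$ with $\la\in\on{Adm}^k$. A small check is needed here: $\rho$ is not literally an integral-charge current. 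Its eigenvalues $\rho_0$ on $V$ are integers (heights of roots), and $\rho_{1}\rho=(k\,\cdot)(\rho|\rho)\vac$ is a scalar, so $(V,\omega,\rho)$ is a charged conformal vertex algebra. Its modules $L(\la)$ may carry non-integral $\rho_0$- and $L_0$-eigenvalues, but that only multiplies trace functions by overall factors $y^{\la(\rho)}q^{h_\la}$, and these are harmless in the arguments of Section~\ref{sec:circulating.trace}.

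For part (1), I would invoke Corollary~\ref{cor:finite.everywhere} with $N=1$. For $\al\notin\Z+\Z\tau$, Lemma~\ref{lem:G.action} gives $A\in SL_2(\Z)$ with $A\cdot(\al,\tau)\in Y$. There Theorem~\ref{thm:main} says the $S_{L(\la)}$ span the blocks. Transporting back via Proposition~\ref{prop:connec.is}(1) and the fibre isomorphism $(E,\CL)_{A\cdot(\al,\tau)}\cong(E,\CL)_{(\al,\tau)}$, the analytically continued trace functions span $\CC((E,\CL)_{(\al,\tau)},L_k(\g))$.

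For part (2), the dimension equals the number of \emph{linearly independent} trace functions. Independence would follow from distinctness of leading terms: the $L(\la)$ are pairwise non-isomorphic highest weight modules, so the top/peak data $(h_\la,\la)$ differ. I would argue that the $S_{L(\la)}(u,\al,\tau)$, viewed as functions of $u\in V$, have independent restrictions to $u\in V^{(0)}$. One can use the pairs $(\la(\rho),h_\la)$ together with the $A^{\stab}$-characters on peak spaces, since $A^{\stab}(V)$ is commutative semisimple and separates the $L(\la)_{\peak}=\C v_\la$. Equivalently, I would use Kac--Wakimoto's result that the normalised characters are linearly independent as functions of $(\al,\tau)$, already for $u=\vac$ after allowing $u=h$-powers. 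The count $|\on{Adm}^k|$ is then the cited formula from \cite{KW89} for principal/coprincipal admissible weights. The case $\al\in\Z+\Z\tau$ (trivial $\CL$) is the one I expect to be the main obstacle, since the statement claims the count for every degree-$0$ $\CL$, while our results only cover $N\al\notin\La$. There I would either restrict the claim to nontrivial $\CL$ or argue via regular singularities and the flat family that the rank persists. I am uncertain about the trivial-bundle fibre, where ordinary blocks might differ.

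For part (3), I would set $u=\vac$ in Proposition~\ref{prop:connec.is}. Then $L_{[0]}\vac=0$ and $h_{[1]}\vac=0$, so the exponentials act trivially, and the transformations reduce to the scalar Jacobi factors of weight $0$. Since the span of characters is invariant, there are matrices $\rho(A)$ and $\rho(m,n)$, which gives a vector-valued Jacobi form. It remains to compute the index: $\kappa=\tfrac12(\rho|\rho)\,k=\tfrac{k}{2}\cdot\tfrac{h^\vee\dim\g}{12}$ by the Freudenthal--de Vries strange formula, i.e.\ $kh^\vee\dim(\g)/24$. This uses the normalisation with long roots of square length $2$.
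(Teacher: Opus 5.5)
You follow the paper's own route: Propositions~\ref{prop:quasi-lisse.enough} and~\ref{prop:admissible-affine-stable} to bring $(L_k(\g),\omega,\rho)$ under Theorem~\ref{thm:main} and Corollary~\ref{cor:finite.everywhere}, the Kac--Wakimoto parametrisation for the count, and Proposition~\ref{prop:connec.is} at $u=\vac$ with the strange formula for (3).

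Your leading-term independence argument fills a step the paper skips, and it is correct. It uses the distinct characters $\la|_{\h}$ of $A^{\stab}(V)$ on the one-dimensional peak spaces $L(\la)_{\peak}=\C v_\la$. Your ``equivalent'' variant does not work. A diagram automorphism $\sigma$ fixing $\rho$ lifts to $L_k(\g)$, so $S_{L(\sigma\la)}(u)=S_{L(\la)}(u)$ for every $u$ built from $\rho$ alone. You therefore need general elements of $\h\subset V^{(0)}$, not just powers of $h=\rho$.

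Beyond this, the proposal inherits a genuine error from the paper, and one of your fallback options fails.

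\textbf{The value $N=1$ is wrong.} You take from Proposition~\ref{prop:quasi-lisse.enough} (and the paper's proof also uses) that $V$ is strongly generated by the $E^{\al_i}_{(-1)}\vac$, $F^{\al_i}_{(-1)}\vac$, hence $N=1$. This is false. Weight-one monomials in these generators are just the generators themselves, so $\h\subset V_1=\g$ is not in their span. A strong generating set must contain all of $\g$, and $E^\theta$ has charge $\on{ht}(\theta)$. So $N=\on{ht}(\theta)$, which equals $1$ only for $\sll_2$.

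This changes the conclusion. Take $\g=\sll_3$, $k=-3/2$ (boundary admissible, $u=2$). The vacuum character is $\hat R(2\tau,z)/\hat R(\tau,z)$ up to a power of $q$. On the line $z=\al\rho$ the denominator contains the factor $1-y^2q$, which the numerator does not cancel. So $S_{L_k(\g)}(\vac,\al,\tau)$ has a pole at $\al=-\tau/2$, a point with $\al\notin\Z+\Z\tau$ and $0<-\im\al<\im\tau$. The available results therefore give (1)--(2) only off the locus $\{j\al\in\Z+\Z\tau,\ 1\le j\le\on{ht}(\theta)\}$. As stated, they are justified only for $\sll_2$.

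Relatedly, your integrality check conflates $\rho$ with $\rho^\vee$. With long roots of square length $2$, $(\al_i|\rho)=(\al_i|\al_i)/2$ is fractional for short simple roots. So in non-simply-laced type a natural integral current is $\rho^\vee$, and then $\kappa=k|\rho^\vee|^2/2$, which is not $kh^\vee\dim\g/24$. For $B_2$, for instance, $|\rho|^2=5/2$ but $|\rho^\vee|^2=5$.

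\textbf{The trivial bundle.} You are right that $\al\in\Z+\Z\tau$ is not covered; the paper's proof is silent here. But ``regular singularities and the flat family'' cannot show that the rank persists. Fibre dimensions of a regular holonomic module can jump on the singular locus. At $\al\in\Z+\Z\tau$ the fibre is the ordinary block space $\CC(E_\tau,L_k(\g))$, cut out by different relations in every charge. The trace functions themselves also break down there. In the paper's $L_{-4/3}(\sll_2)$ example, $\theta_0(1,q)=0$ while $\theta_1(1,q^3)\neq0$, so $\chi_1$ has a pole at $\al=0$. What the results actually support is (2) restricted to nontrivial $\CL$, and, for $\g\neq\sll_2$, further restricted to the locus above.
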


{{Here $P$ and $\check{P}$ are the weight and coweight lattices of $\g$, and $p,u$ are as in the definition of admissible level $k=-h^\vee+p/u$ above. We recall that $\nu\in P$ is \emph{dominant integral} if $\left<\nu,\al^\vee\right>\in\Z_{\geq 0}$ for all simple roots $\al$, and is \emph{regular} if $\left<\la,\al^\vee\right>\neq 0$ for all such $\al$. With this convention we set
\begin{equation*}
\begin{aligned}
P_+^{p}:=\{\nu \in P\mid{}& \text{$\nu$ is dominant integral and $(\nu,\theta)=p$}\}.
\end{aligned}
\end{equation*}
and $P_+^{p,\on{reg}}$ its subset of regular weights. If $k$ is coprincipal, we write ${}^{L}\check{P}_+^{p,\on{reg}}$ for the analogous set of regular dominant integral coweights for the Langlands dual root system ${}^{L}\Delta$. The parametrisation of admissible weights \cite{KW89} yields bijections $\on{Adm}^k\cong (\check{P}/u\check{P})\times P_+^{p,\on{reg}}$ in the principal case {\cite[Section 1.5]{FKW}}, and $\on{Adm}^k\cong (\check{P}/uP)\times {}^{L}\check{P}_+^{p,\on{reg}}$ in the coprincipal case {\cite[Proposition 3.3]{A12-2}}.}}

\begin{proof}
By Propositions \ref{prop:quasi-lisse.enough} and \ref{prop:admissible-affine-stable} the charged conformal vertex algebra $(L_k(\g), \omega, \rho)$ is stably quasi-lisse and stably rational, and its irreducible stable modules are precisely $L(\la)$ as $\la$ ranges over $\on{Adm}^k$. The first claim follows from Theorem \ref{thm:main} (with $N=1$). The claim about the dimension of $\CC((E, \CL), L_k(\g))$ then follows from the parametrisation of admissible weights. The Jacobi invariance follows from Proposition \ref{prop:connec.is}, and the specific value for the index comes from the Freudenthal-de Vries strange formula $|\rho|^2 = h^\vee \dim(\g) / 12$.
\end{proof}
In particular the index is generally fractional and can be either positive or negative, according to the sign of the level $k$.

\section{Examples of MLDEs}\label{sec:examples}

We have discussed a number of examples already throughout the text, in particular simple affine vertex algebras at admissible levels, and quasi-lisse $W$-algebras obtained as quantum Drinfeld-Sokolov reductions. We have not explicitly discussed modular linear differential equations (MLDEs), though there is interest in determining them explicitly in examples. In this section we briefly examine the explicit form of the connection $\nabla$ on $\CC$ and the associated MLDEs in two simple cases.


%

\subsection{The rational vertex algebra $L_1(\sll_2)$}

We take $V = L_1(\sll_2)$ with $\omega$ the Sugawara conformal vector and $h = \rho = \al/2$ the Weyl vector. In particular $[\rho, E] = E$ and $[\rho, F] = -F$. The central charge of $(V, \omega, \rho)$ is $c=1$ and the index is $\kappa = 1/4$. The vertex algebra $V$ is rational and has two irreducible modules $W^0 = V$ and another which we denote $W^1$. The normalised characters of these $V$-modules, defined by
\[
\chi_i(\al, \tau) = \tr_{W^i} y^{h_0} q^{L_0-c/24},
\]
are given by the formulas $\chi_i(\al, \tau) = \theta_{i0}(\al, \tau) / \eta(\tau)$ where {\cite[p. 17]{Mumford.Tata.1}}
\begin{align*}
\theta_{i0}(\al, \tau) = \sum_{n \in \Z} q^{(n+i/2)^2/2} e^{2\pi i (n+i/2)z}.
\end{align*}
These characters are the restrictions to $u=\vac$ of two charged conformal blocks $S_{W^i}(u, y, q)$.

On the other hand let $S(u, y, q)$ be a charged conformal block for $(V, \omega, h)$, and write $S_{0}$ and $S_1$ for its restrictions to $u = \vac$ and $u = h$, respectively. In this example $V^{(0)} / C(V)$ is $2$ dimensional, spanned by $\vac$ and $h$. The equations \eqref{eq:DE.y} and \eqref{eq:DE.q} therefore reduce to a $2 \times 2$ holonomic system on $S_{0}$ and $S_1$. We find in fact that
\begin{align}\label{eq:sl2.y.eq}
y \frac{\partial}{\partial y} \left[ \begin{array}{c} S_0 \\ S_1 \\ \end{array} \right] = \left[ \begin{array}{cc} 0 & 1 \\ \tfrac{1}{16}(4Q_2-4Q_1^2-E_2) & Q_1 \\ \end{array} \right] \left[ \begin{array}{c} S_0 \\ S_1 \\ \end{array} \right]
\end{align}
and
\begin{align}\label{eq:sl2.q.eq}
q \frac{\partial}{\partial q} \left[ \begin{array}{c} S_0 \\ S_1 \\ \end{array} \right] = \left[ \begin{array}{cc} \tfrac{1}{48}(12Q_2-12Q_1^2-E_2) & -Q_1 \\ \tfrac{1}{16}(4Q_1^3-12Q_1Q_2+8Q_3-E_2Q_1) & \tfrac{1}{48}(36Q_1^2-36Q_2+E_2) \\ \end{array} \right] \left[ \begin{array}{c} S_0 \\ S_1 \\ \end{array} \right]
\end{align}
{{To avoid cumbersome factors of $2\pi i$ in these equations, we have set
\[
Q_k(y, q) = \frac{1}{2}\delta_{k,1} + (2\pi i)^{-k} P^+_k(y, q),
\]
where $P^+_k(y,q)$ is as in \eqref{eq:zhu.3.9} with $w=y$ (a series in $\C[y][[q]]$, not the three-variable series $P_1^+(z,y,q)$ of \eqref{eq:Ppm.def}). Here $E_k(q) = -(2\pi i)^{-k}(k!/B_k)$ is the Eisenstein series normalised with constant coefficient $1$.}}


In particular we deduce from \eqref{eq:sl2.y.eq} the following MLDE in $y$ for $u(y, q) = S_0(\al, \tau)$:
\begin{align}\label{eq:MLDE.level.1}
\begin{split}
\left(y\frac{\partial}{\partial y}\right)^2 u
- Q_1 \left(y\frac{\partial}{\partial y}\right) u
&-\left(\frac{1}{4} Q_2 - \frac{1}{4} Q_1^2 - \frac{1}{16} E_2 \right) u = 0.
\end{split}
\end{align}

\subsection{The admissible level vertex algebra $L_{-4/3}(\sll_2)$}

We take $V = L_{-4/3}(\sll_2)$ with $\omega$ and $h$ as above. The central charge of $(V, \omega, \rho)$ is $c=-6$ and the index is $\kappa = -1/3$. This vertex algebra is stably rational with three stable modules $L_k(\la)$, where $\la$ takes the values $0$, $- \frac{2}{3}\varpi$ and $- \frac{4}{3}\varpi$, where $\varpi = \al/2$ is the fundamental weight. (In fact $\rho = \varpi$ since $\g = \sll_2$.)

The characters of these modules are well known from the work of Kac and Wakimoto \cite{KW89, KW2017}. To express them easily we may introduce the theta functions
\begin{align*}
\theta_j(y, q) = \sum_{n \in \Z} (-1)^n y^{n+1/2-j/3} q^{(n+1/2-j/3)^2/2}, \qquad j=0,1,2.
\end{align*}
Then the three normalised characters are $\chi_i(y,q) = {\theta_i(y, q^3)}/{\theta_0(y,q)}$.

In this example $V^{(0)} / C(V)$ is $3$-dimensional, spanned by the images of the vectors $\vac$, $h$ and $h_{(-1)}h$. If we write $S_i(\al, \tau)$ for $i=0,1,2$ for the evaluations of a charged conformal block $S(u, \al, \tau)$ on $\vac$, $h$ and $h_{(-1)}h$, respectively, then the equations \eqref{eq:DE.y} and \eqref{eq:DE.q} reduce to a $3 \times 3$ holonomic system on $(S_0, S_1, S_2)$. The explicit form for this system is quite tedious to write out in full, but we obtain from it the following MLDE in $y$ for $u(y, q) = S_0(\al, \tau)$:
\begin{align}\label{eq:MLDE.level.-4/3}
\begin{split}
\left(y\frac{\partial}{\partial y}\right)^3 u
- 2Q_1 \left(y\frac{\partial}{\partial y}\right)^2 u
&-\left(\frac{10}{3} Q_2 - \frac{4}{3} Q_1^2 + \frac{1}{9} E_2 \right) \left(y\frac{\partial}{\partial y}\right) u \\
&+ \left(\frac{2}{27} E_2 Q_1 - \frac{52}{27} Q_3 + \frac{20}{9} Q_1 Q_2 - \frac{8}{27} Q_1^3 \right) u = 0.
\end{split}
\end{align}
The three normalised characters $\chi_i(\alpha, \tau)$ satisfy this MLDE.

\appendix

\section{Verification of invariance under the Jacobi group}

Before beginning the calculations, we record some commutator relations that will be used.
\begin{lemma}\label{lem:ABcomm}
For endomorphisms $A$ and $B$ of a vector space and any $k \in \Z_+$, we have the equality
\[
B A^k = \sum_{j \in \Z_+} (-1)^j \binom{k}{j} A^{k-j} \ad_A(B).
\]
\end{lemma}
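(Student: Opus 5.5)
The identity should be read with $\ad_A^j(B)$ in the $j$-th summand; for $j=0$ this is $B$ itself, so the $j=0$ term is $A^kB$. I would prove it by expressing right multiplication by $A$ in terms of operators on $\en(U)$ that commute with each other, where $U$ is the underlying vector space.

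\emph{Setup.} On $\en(U)$ I introduce three linear operators:
\begin{itemize}
\item left multiplication $\mathsf L_A(X)=AX$;
\item right multiplication $\mathsf R_A(X)=XA$;
\item the adjoint action $\ad_A(X)=AX-XA$.
\end{itemize}
By definition $\ad_A=\mathsf L_A-\mathsf R_A$, so $\mathsf R_A=\mathsf L_A-\ad_A$.

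\emph{Commutation.} Left and right multiplications commute, by associativity: $\mathsf L_A\mathsf R_A(X)=AXA=\mathsf R_A\mathsf L_A(X)$. Since $\ad_A$ is a linear combination of $\mathsf L_A$ and $\mathsf R_A$, it commutes with $\mathsf L_A$ as well.

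\emph{Expansion.} Because $\mathsf L_A$ and $\ad_A$ commute, the ordinary binomial theorem applies in the algebra of endomorphisms of $\en(U)$:
\[
\mathsf R_A^k=(\mathsf L_A-\ad_A)^k=\sum_{j=0}^k(-1)^j\binom{k}{j}\mathsf L_A^{k-j}\ad_A^j .
\]
Applying both sides to $B$ and using $\mathsf R_A^k(B)=BA^k$ and $\mathsf L_A^{k-j}(Y)=A^{k-j}Y$ gives
\[
BA^k=\sum_{j\in\Z_+}(-1)^j\binom{k}{j}A^{k-j}\ad_A^j(B).
\]
The sum over $\Z_+$ is finite because $\binom{k}{j}=0$ for $j>k$.

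There is no real obstacle beyond checking that $\mathsf L_A$ and $\ad_A$ commute. As a fallback, one can instead induct on $k$. The inductive step multiplies the identity for $k$ on the right by $A$ and rewrites each $\ad_A^j(B)A$ as $A\,\ad_A^j(B)-\ad_A^{j+1}(B)$. Pascal's rule $\binom{k}{j}+\binom{k}{j-1}=\binom{k+1}{j}$ then recombines the coefficients into those for $k+1$.
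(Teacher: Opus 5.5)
Your proof is correct, and you have read the statement as the paper intends. The $j$-th summand must carry $\ad_A^j(B)$. As printed, with $\ad_A(B)$ in every term, the identity is false: take $A=\Id$ and $B\neq 0$, and the right-hand side vanishes. The paper's own use of the lemma in the appendix confirms your reading, since there the $\ad_{h_{[1]}}^2$ and $\ad_{h_{[1]}}^{\geq 3}$ terms are what produce the coefficients $-k$ and $\binom{k}{2}$.

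The paper records this lemma without proof, so there is no argument of its own to compare with. Your proof fills that gap completely. You write right multiplication by $A$ on the endomorphism algebra as left multiplication by $A$ minus $\ad_A$, note that these two operators commute by associativity, and apply the binomial theorem. This works verbatim in any associative algebra. Your fallback induction using Pascal's rule is an equally valid alternative.
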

Applying Lemma \ref{lem:ABcomm} to $A = h_{[1]}$ and $B = L_{([m])}$ and using the relations
\begin{align*}
\ad_{h_{[1]}}(L_{([m])}) &= h_m \\
\ad_{h_{[1]}}^2(L_{([m])}) &= \ad_{h_{[1]}}(h_{[m]}) = \delta_{m,-1} \left<h, h\right> = \delta_{m,-1} \left<h, h\right> \\
\ad_{h_{[1]}}^{\geq 3}(L_{([m])}) &= 0,
\end{align*}
yields
\begin{align*}
L_{([m])} h_{[1]}^k = h_{[1]}^k L_{([m])} - k h_{[1]}^{k-1} h_{[m]} + \delta_{m,-1} \binom{k}{2} \left<h, h\right> h_{[1]}^{k-2}.
\end{align*}
From this, in turn, we obtain
\begin{align*}
\sum_{k \in \Z_+} \frac{\la^k}{k!} L_{([m])} h_{[1]}^k
&= \sum_{k \in \Z_+} \frac{\la^k}{k!} h_{[1]}^k L_{([m])} - \sum_{k \in \Z_+} \frac{\la^k}{k!} k h_{[1]}^{k-1} h_{[m]} + \delta_{m,-1} \sum_{k \in \Z_+} \frac{\la^k}{k!} \binom{k}{2} \left<h, h\right> h_{[1]}^{k-2} \\
&= e^{\la h_{[1]}} L_{([m])} - \la e^{\la h_{[1]}} h_{[m]} + \delta_{m,-1}  \la^2\frac{\left<h, h\right>}{2} e^{\la h_{[1]}}.
\end{align*}
Rewriting in terms of the field $L[t] = \sum_m L_{([m])} t^{-m-1}$ we get
\begin{align}
L[t] e^{\la h_{[1]}}
&= \sum_{m \in \Z} L_{([m])} e^{\la h_{[1]}} t^{-m-1} \nonumber \\
&= e^{\la h_{[1]}} \sum_{m \in \Z} \left( L_{([m])} - \la h_{[m]} + \delta_{m,-1} \la^2\frac{\left<h, h\right> }{2} \right) t^{-m-1} \nonumber \\
&= e^{\la h_{[1]}} L[t] - \la e^{\la h_{[1]}} h[t] + \la^2\frac{\left<h, h\right> }{2}e^{\la h_{[1]}}. \label{eq:L[t].exp.comm}
\end{align}

Applying Lemma \ref{lem:ABcomm} to $A = h_{[1]}$ and $B = h_{[m]}$ and using the relations
\begin{align*}
\ad_{h_{[1]}}(h_{[m]}) &= \delta_{m,-1} \left<h, h\right> \\
\ad_{h_{[1]}}^{\geq 2}(h_{[m]}) &= 0,
\end{align*}
yields
\begin{align*}
h_{[m]} h_{[1]}^k = h_{[1]}^k h_{[m]} - \delta_{m,-1} k \left<h, h\right> h_{[1]}^{k-1}.
\end{align*}
we can say that
\begin{align}
h[t] e^{ \la h_{[1]} } &= e^{ \la h_{[1]} } h[t] - \left<h, h\right> \sum_{k=0}^\infty \frac{1}{(k-1)!} \la^{k} h_{[1]}^{k-1} \nonumber \\
&= e^{ \la h_{[1]} } h[t] - \la \left<h, h\right> e^{ \la h_{[1]} }.  \label{eq:h[t].exp.comm}
\end{align}

\subsection{Verification of invariance under modular transformations}\label{app.mod}

Let $A \in SL_2(\Z)$ and $(\al', \tau') = A \cdot (\al, \tau)$ as in Section \ref{sec:modularity}. The partial derivatives in $\tau$ and $\al$ transform as follows: If $(\al', \tau') = A(\al, \tau)$ then
\begin{align*}
\frac{\partial}{\partial \al'} &= (c\tau+d) \frac{\partial}{\partial \al} \\
\frac{\partial}{\partial \tau'} &= (c\tau+d)^2 \frac{\partial}{\partial \tau} + c\al(c\tau+d) \frac{\partial}{\partial \al}
\end{align*}

\subsubsection{Modular invariance of the $y$-equation}\label{sec:inv.DE.y}

Let $S((v, z, u), \al, \tau)$ be a flat section of $\CC$ and let $S'((v', z', u), \al', \tau')$ be given by equation \eqref{eq:mod.xform}. We need to show \eqref{eq:DE.xform.y}. We compute
\begin{align*}
& \frac{\partial}{\partial\al'}S'((v', z', u), \al', \tau') \\
= {} & (c\tau+d) \frac{\partial}{\partial\al}S'((v', z', u), \al', \tau') \\
= {} & (c\tau+d) \frac{\partial}{\partial\al}\left(e^{2\pi i \kappa \frac{c}{c\tau+d} \al^2} S((v, z, e^{2\pi i\frac{c\al}{c\tau+d} h_{[1]}}(c\tau+d)^{L_{[0]}} u), \al, \tau)\right) \\
= {} & (c\tau+d) \frac{\partial}{\partial\al}\left(e^{2\pi i \kappa \frac{c}{c\tau+d} \al^2} (c\tau+d)^{\nabla_u} \sum_{j=0}^\infty \frac{1}{j!} \left(2\pi i\frac{c\al}{c\tau+d}\right)^j S((v, z,  h_{[1]}^j u), \al, \tau)\right) \\
= {} & (c\tau+d) \sum_{j=0}^\infty \frac{1}{j!} \frac{\partial}{\partial\al}\left(e^{2\pi i \kappa \frac{c}{c\tau+d} \al^2} (c\tau+d)^{\nabla_u} \left(2\pi i\frac{c\al}{c\tau+d}\right)^j \right) S((v, z,  h_{[1]}^j u), \al, \tau) \tag{D}\label{eqn:y.D} \\
& + e^{2\pi i \kappa \frac{c}{c\tau+d} \al^2}(c\tau+d)^{\nabla_u+1} \sum_{j=0}^\infty \frac{1}{j!} \left(2\pi i\frac{c\al}{c\tau+d}\right)^j \frac{\partial}{\partial\al} S((v, z,  h_{[1]}^j u), \al, \tau). \tag{C}\label{eqn:y.C}
\end{align*}
We simplify \eqref{eqn:y.D} first and then return to \eqref{eqn:y.C} later. We have
\begin{align*}
\text{\eqref{eqn:y.D}} = {} & (c\tau+d)^{\nabla_u+1} \sum_{j=0}^\infty \frac{1}{j!} \frac{d}{d\al}\left(e^{2\pi i \kappa \frac{c}{c\tau+d} \al^2} \left(2\pi i\frac{c\al}{c\tau+d}\right)^j \right) S((v, z,  h_{[1]}^j u), \al, \tau) \\
= {} & (c\tau+d)^{\nabla_u+1} e^{2\pi i \kappa \frac{c}{c\tau+d} \al^2} \sum_{j=0}^\infty \frac{1}{j!} \frac{d}{d\al}\left( 2\pi i\frac{c\al}{c\tau+d} \right)^j S((v, z,  h_{[1]}^j u), \al, \tau) \tag{D1}\label{eqn:y.D1} \\
&+ (c\tau+d)^{\nabla_u+1} \frac{d}{d\al} (e^{2\pi i \kappa \frac{c}{c\tau+d} \al^2}) S((v, z, e^{2\pi i\frac{c\al}{c\tau+d} h_{[1]} } u), \al, \tau). \tag{D2}\label{eqn:y.D2}
\end{align*}
We have
\begin{align*}
\text{\eqref{eqn:y.D2}} = {} & (c\tau+d) 2\pi i \kappa \frac{c}{c\tau+d} (2\al) e^{2\pi i \kappa \frac{c}{c\tau+d} \al^2} S((v, z, e^{2\pi i\frac{c\al}{c\tau+d} h_{[1]} } (c\tau+d)^{L_{[0]}} u), \al, \tau) \\
= {} & 2\pi i \kappa {c} (2\al) S'((v', z', u), \al', \tau').
\end{align*}
And we have
\begin{align*}
\text{\eqref{eqn:y.D1}}
= {} & (c\tau+d)^{\nabla_u+1} e^{2\pi i \kappa \frac{c}{c\tau+d} \al^2} \sum_{j=0}^\infty \frac{1}{j!} \frac{2\pi i c}{c\tau+d} j (2\pi i\frac{c\al}{c\tau+d})^{j-1} S((v, z,  h_{[1]}^j u), \al, \tau) \\
= {} & {2\pi i c} (c\tau+d)^{\nabla_u} e^{2\pi i \kappa \frac{c}{c\tau+d} \al^2} \sum_{j=0}^\infty \frac{1}{(j-1)!} \left(2\pi i\frac{c\al}{c\tau+d}\right)^{j-1} S((v, z,  h_{[1]}^j u), \al, \tau) \\
= {} & {2\pi i c} (c\tau+d)^{\nabla_u} e^{2\pi i \kappa \frac{c}{c\tau+d} \al^2} S((v, z, e^{2\pi i\frac{c\al}{c\tau+d} h_{[1]} } h_{[1]} u), \al, \tau) \\
= {} & {2\pi i c} (c\tau+d) e^{2\pi i \kappa \frac{c}{c\tau+d} \al^2} S((v, z, e^{2\pi i\frac{c\al}{c\tau+d} h_{[1]} }(c\tau+d)^{L_{[0]}} h_{[1]} u), \al, \tau) \\
= {} & {2\pi i c} (c\tau+d) S'((v', z', h_{[1]} u), \al', \tau').
\end{align*}

Now we turn to \eqref{eqn:y.C}, rewriting it using \eqref{eq:DE.y} as
\begin{align*}
\text{\eqref{eqn:y.C}}
= {} & e^{2\pi i \kappa \frac{c}{c\tau+d} \al^2}(c\tau+d)^{\nabla_u+1} \sum_{j=0}^\infty \frac{1}{j!} \left(2\pi i\frac{c\al}{c\tau+d}\right)^j S((v, z, \res_t \wp_1(t, \tau) h[t] h_{[1]}^j u), \al, \tau) \\
= {} &  e^{2\pi i \kappa \frac{c}{c\tau+d} \al^2}(c\tau+d)^{\nabla_u+1} S((v, z, \res_t \wp_1(t, \tau) h[t] e^{2\pi i\frac{c\al}{c\tau+d} h_{[1]}} u), \al, \tau)
\end{align*}
To proceed further we commute $h[t]$ with $e^{h_{[1]}}$ using equation \eqref{eq:h[t].exp.comm} (with $\la = 2\pi i\frac{c\al}{c\tau+d}$). We have
\begin{align*}
\text{\eqref{eqn:y.C}} = {} & e^{2\pi i \kappa \frac{c}{c\tau+d} \al^2}(c\tau+d)^{\nabla_u+1} S((v, z, \res_t \wp_1(t, \tau) h[t] e^{2\pi i\frac{c\al}{c\tau+d} h_{[1]}} u), \al, \tau) \\
= {} & e^{2\pi i \kappa \frac{c}{c\tau+d} \al^2}(c\tau+d)^{\nabla_u+1} S((v, z, \res_t \wp_1(t, \tau) e^{2\pi i\frac{c\al}{c\tau+d} h_{[1]}} h[t] u), \al, \tau) \tag{C1}\label{eqn:y.C1} \\
&- 2\pi i\frac{c\al \left<h, h\right>}{c\tau+d} e^{2\pi i \kappa \frac{c}{c\tau+d} \al^2}(c\tau+d)^{\nabla_u+1} S((v, z, \res_t \wp_1(t, \tau)e^{ 2\pi i\frac{c\al}{c\tau+d} h_{([1])}} u), \al, \tau). \tag{C2}\label{eqn:y.C2}
\end{align*}
We continue
\begin{align*}
\text{\eqref{eqn:y.C2}} = {} & -2\pi i{c\al \left<h, h\right>} e^{2\pi i \kappa \frac{c}{c\tau+d} \al^2}(c\tau+d)^{\nabla_u} S((v, z, e^{ 2\pi i\frac{c\al}{c\tau+d} h_{[1]}} u), \al, \tau) \\
= {} & -2\pi i{c\al \left<h, h\right>} e^{2\pi i \kappa \frac{c}{c\tau+d} \al^2} S((v, z, e^{ 2\pi i\frac{c\al}{c\tau+d} h_{[1]}} (c\tau+d)^{L_{[0]}} u), \al, \tau) \\
= {} & -2\pi i{c\al \left<h, h\right>} S'((v', z', u), \al', \tau').
\end{align*}
Before simplifying \eqref{eqn:y.C1} we recall that $\wp_1(t, \tau) = \sum_{i=0}^\infty P^{i+1}(\tau) t^{i}$, where for all $k$ we have
\[
P^k(\tau') = (c\tau+d)^k P^k(\tau),
\]
except when $k=2$ in which case $P^2(\tau) = -G_2(\tau)$ and
\[
G_2(\tau') = (c\tau+d)^2 G_2(\tau) - 2\pi i c (c\tau+d).
\]
We apply this in the following simplifications:
\begin{align*}
\text{\eqref{eqn:y.C1}} = {} & e^{2\pi i \kappa \frac{c}{c\tau+d} \al^2} (c\tau+d)^{\nabla_u+1} \sum_{i} P^{i+1}(\tau) S((v, z, e^{2\pi i\frac{c\al}{c\tau+d} h_{[1]}} h_{[i]}u), \al, \tau) \\
= {} & e^{2\pi i \kappa \frac{c}{c\tau+d} \al^2} (c\tau+d)^{\nabla_u+1} \sum_{i} (c\tau+d)^{-i-1} P^{i+1}(\tau') S((v, z, e^{2\pi i\frac{c\al}{c\tau+d} h_{[1]}} h_{[i]}u), \al, \tau) \\
&- e^{2\pi i \kappa \frac{c}{c\tau+d} \al^2} (c\tau+d)^{\nabla_u+1} 2\pi i c (c\tau+d)^{-1} S((v, z,  e^{2\pi i\frac{c\al}{c\tau+d} h_{[1]}} h_{[1]}u), \al, \tau) \\
= {} & e^{2\pi i \kappa \frac{c}{c\tau+d} \al^2} \sum_{i} P^{i+1}(\tau') S((v, z, e^{2\pi i\frac{c\al}{c\tau+d} h_{[1]}} (c\tau+d)^{L_{[0]}} h_{[i]}u), \al, \tau) \\
&- 2\pi i c (c\tau+d) e^{2\pi i \kappa \frac{c}{c\tau+d} \al^2} S((v, z, e^{2\pi i\frac{c\al}{c\tau+d} h_{[1]}} (c\tau+d)^{L_{[0]}} h_{[1]}u), \al, \tau) \\
= {} & \sum_{i} P^{i+1}(\tau') S'((v', z', h_{[i]}u), \al', \tau')
- 2\pi i c (c\tau+d) S'((v', z', h_{[1]}u), \al', \tau') \\
= {} & S'((v', z', \res_t \wp_1(t, \tau') h[t] u), \al', \tau') \tag{C0}\label{eqn:y.C0} \\
&- 2\pi i c (c\tau+d) S'((v', z', h_{[1]}u), \al', \tau'). \tag{C'}\label{eqn:y.C'}
\end{align*}
It remains to check that \eqref{eqn:y.C'} + \eqref{eqn:y.C2} + \eqref{eqn:y.D1} + \eqref{eqn:y.D2} = 0. But \eqref{eqn:y.C'} + \eqref{eqn:y.D1} = 0, and once we set
\[
\kappa = \frac{\left<h, h\right>}{2}
\]
we get \eqref{eqn:y.C2} + \eqref{eqn:y.D2} = 0.

\subsubsection{Modular invariance of the $q$-equation}\label{sec:inv.DE.q}

Let $S((v, z, u), \al, \tau)$ be a flat section of $\CC$ and let $S'((v', z', u), \al', \tau')$ be given by equation \eqref{eq:mod.xform}. We need to show \eqref{eq:DE.xform.q}. We compute
\begin{align*}
2\pi i\frac{\partial}{\partial\tau'}S'((v', z', u), \al', \tau')
= {} & 2\pi i(c\tau+d)^2 \frac{\partial}{\partial\tau}S'((v', z', u), \al', \tau') \tag{I1}\label{eqn:I1} \\
&+ 2\pi i c\al (c\tau+d) \frac{\partial}{\partial\al} S'((v', z', u), \al', \tau'). \tag{I2}\label{eqn:I2}
\end{align*}
Taking advantage of the calculations done above in Section \ref{sec:inv.DE.y}, we have
\begin{align*}
\text{\eqref{eqn:I2}} = 2\pi i c\al S'((v', z', \res_t \wp_1(t, \tau') h[t] u), \al', \tau').
\end{align*}
We now analyse \eqref{eqn:I1}.
\begin{align*}
\text{\eqref{eqn:I1}} = {} & 2\pi i(c\tau+d)^2 \frac{\partial}{\partial\tau}\left(e^{2\pi i \kappa \frac{c}{c\tau+d} \al^2} S((v, z, e^{2\pi i\frac{c\al}{c\tau+d} h_{[1]}}(c\tau+d)^{L_{[0]}} u), \al, \tau)\right) \\
= {} & (2\pi i)(c\tau+d)^2 \frac{\partial}{\partial\tau}\left(e^{2\pi i \kappa \frac{c}{c\tau+d} \al^2} (c\tau+d)^{\nabla_u} \sum_{j=0}^\infty \frac{1}{j!} \left(2\pi i\frac{c\al}{c\tau+d}\right)^j S((v, z,  h_{[1]}^j u), \al, \tau)\right) \\
= {} & (2\pi i)(c\tau+d)^2 \sum_{j=0}^\infty \frac{1}{j!} \frac{\partial}{\partial\tau}\left(e^{2\pi i \kappa \frac{c}{c\tau+d} \al^2} (c\tau+d)^{\nabla_u} \left(2\pi i\frac{c\al}{c\tau+d}\right)^j \right) S((v, z,  h_{[1]}^j u), \al, \tau) \tag{D}\label{eqn:D} \\
&+ (2\pi i)e^{2\pi i \kappa \frac{c}{c\tau+d} \al^2}(c\tau+d)^{\nabla_u+2} \sum_{j=0}^\infty \frac{1}{j!} \left(2\pi i\frac{c\al}{c\tau+d}\right)^j \frac{\partial}{\partial\tau} S((v, z,  h_{[1]}^j u), \al, \tau). \tag{C}\label{eqn:C}
\end{align*}
We leave \eqref{eqn:C} until later, focusing for now on \eqref{eqn:D}. We have
\begin{align*}
\text{\eqref{eqn:D}} = {} &
2\pi i (c\tau+d)^2 \frac{\partial}{\partial \tau}\left(e^{2\pi i \kappa \frac{c}{c\tau+d} \al^2} \right) \sum_{j=0}^\infty \frac{1}{j!} (c\tau+d)^{\nabla_u} \left(2\pi i\frac{c\al}{c\tau+d}\right)^j S((v, z,  h_{[1]}^j u), \al, \tau)  \tag{D1}\label{eqn:D1} \\
&+ 2\pi i (c\tau+d)^2 \frac{d}{d\tau}\left( (c\tau+d)^{\nabla_u}  \right) e^{2\pi i \kappa \frac{c}{c\tau+d} \al^2} \sum_{j=0}^\infty \frac{1}{j!} \left(2\pi i\frac{c\al}{c\tau+d}\right)^j S((v, z,  h_{[1]}^j u), \al, \tau)  \tag{D2}\label{eqn:D2} \\
&+ 2\pi i (c\tau+d)^{\nabla_u+2} e^{2\pi i \kappa \frac{c}{c\tau+d} \al^2} \sum_{j=0}^\infty \frac{1}{j!} \frac{\partial}{\partial\tau} \left(2\pi i\frac{c\al}{c\tau+d}\right)^j S((v, z,  h_{[1]}^j u), \al, \tau).  \tag{D3}\label{eqn:D3}
\end{align*}
We simplify each of these three terms in turn. We have
\begin{align*}
\text{\eqref{eqn:D1}} = {} & 2\pi i (c\tau+d)^2 \frac{\partial}{\partial\tau}\left(e^{2\pi i \kappa \frac{c}{c\tau+d} \al^2} \right) (c\tau+d)^{\nabla_u} S((v, z, e^{2\pi i\frac{c\al}{c\tau+d} h_{([1])} } u), \al, \tau) \\
= {} & 2\pi i (c\tau+d)^2 \frac{\partial}{\partial\tau}\left(e^{2\pi i \kappa \frac{c}{c\tau+d} \al^2} \right) S((v, z, e^{2\pi i\frac{c\al}{c\tau+d} h_{([1])} } (c\tau+d)^{L_{[0]}} u), \al, \tau) \\
= {} & -2\pi i (c\tau+d)^2 (2\pi i) \kappa \frac{c^2}{(c\tau+d)^2}\al^2  e^{2\pi i \kappa \frac{c}{c\tau+d} \al^2} S((v, z, e^{2\pi i\frac{c\al}{c\tau+d} h_{([1])} } (c\tau+d)^{L_{[0]}} u), \al, \tau) \\
= {} & -2\pi i (c\tau+d)^2 (2\pi i) \kappa \frac{c^2}{(c\tau+d)^2}\al^2 S'((v', z', u), \al', \tau') \\
= {} & -(2\pi i c \al)^2 \kappa S'((v', z', u), \al', \tau').
\end{align*}
Next
\begin{align*}
\text{\eqref{eqn:D2}} = {} & 2\pi i (c\tau+d)^2 \frac{d}{d\tau}\left( (c\tau+d)^{\nabla_u} \right) e^{2\pi i \kappa \frac{c}{c\tau+d} \al^2} S((v, z,  e^{2\pi i\frac{c\al}{c\tau+d} h_{([1])} } u), \al, \tau) \\
= {} & 2\pi i c \nabla_u (c\tau+d)^{\nabla_u+1} e^{2\pi i \kappa \frac{c}{c\tau+d} \al^2} S((v, z,  e^{2\pi i\frac{c\al}{c\tau+d} h_{([1])} } u), \al, \tau) \\
= {} & 2\pi i c \nabla_u (c\tau+d) e^{2\pi i \kappa \frac{c}{c\tau+d} \al^2} S((v, z,  e^{2\pi i\frac{c\al}{c\tau+d} h_{([1])} }(c\tau+d)^{L_{[0]}} u), \al, \tau) \\
= {} & 2\pi i c \nabla_u (c\tau+d) S'((v', z', u), \al', \tau').
\end{align*}
And
\begin{align*}
\text{\eqref{eqn:D3}} = {} & 2\pi i (c\tau+d)^{\nabla_u+2} e^{2\pi i \kappa \frac{c}{c\tau+d} \al^2} \sum_{j=0}^\infty \frac{1}{j!} \frac{\partial}{\partial\tau}\left(  \left(2\pi i\frac{c\al}{c\tau+d}\right)^j \right) S((v, z,  h_{[1]}^j u), \al, \tau) \\
= {} & 2\pi i (c\tau+d)^{\nabla_u+2} e^{2\pi i \kappa \frac{c}{c\tau+d} \al^2} \sum_{j=0}^\infty \frac{1}{j!} \frac{-2\pi i c^2\al}{(c\tau+d)^2} j \left(2\pi i\frac{c\al}{c\tau+d}\right)^{j-1}  S((v, z, h_{[1]}^j u), \al, \tau) \\
= {} & -(2\pi i)^2 c^2\al (c\tau+d)^{\nabla_u} e^{2\pi i \kappa \frac{c}{c\tau+d} \al^2} \sum_{j=0}^\infty \frac{1}{(j-1)!} \left(2\pi i\frac{c\al}{c\tau+d}\right)^{j-1}  S((v, z, h_{[1]}^j u), \al, \tau) \\
= {} & -(2\pi i)^2 c^2\al (c\tau+d)^{\nabla_u} e^{2\pi i \kappa \frac{c}{c\tau+d} \al^2} S((v, z, e^{2\pi i\frac{c\al}{c\tau+d}h_{[1]}} h_{([1])} u), \al, \tau) \\
= {} & -(2\pi i)^2 c^2\al (c\tau+d) e^{2\pi i \kappa \frac{c}{c\tau+d} \al^2} S((v, z, e^{2\pi i\frac{c\al}{c\tau+d}h_{[1]}} (c\tau+d)^{L_{[0]}} h_{([1])} u), \al, \tau) \\
= {} & -(2\pi i)^2 c^2\al (c\tau+d) S'((v', z', h_{[1]} u), \al', \tau').
\end{align*}

We use the DE to simplify \eqref{eqn:C}, obtaining
\begin{align*}
\text{\eqref{eqn:C}}
= {} & e^{2\pi i \kappa \frac{c}{c\tau+d} \al^2}(c\tau+d)^{\nabla_u+2} \sum_{j=0}^\infty \frac{1}{j!} \left(2\pi i\frac{c\al}{c\tau+d}\right)^j S((v, z, \res_t \wp_1(t, \tau) L[t] h_{[1]}^j u), \al, \tau) \\
= {} & e^{2\pi i \kappa \frac{c}{c\tau+d} \al^2}(c\tau+d)^{\nabla_u+2} S((v, z, \res_t \wp_1(t, \tau) L[t] e^{2\pi i\frac{c\al}{c\tau+d} h_{[1]}} u), \al, \tau).
\end{align*}
To proceed further we need to commute $L[t]$ with $e^{h_{[1]}}$, using equation \eqref{eq:L[t].exp.comm}. The term (C) equals
\begin{align*}
e^{2\pi i \kappa \frac{c}{c\tau+d} \al^2}(c\tau+d)^{\nabla_u+2} S\left(\left(v, z, \res_t \wp_1(t, \tau) e^{\la h_{[1]}} \left( L[t] - \la h[t] + \la^2\frac{\left<h, h\right> }{2} \right) u\right), \al, \tau \right)
\end{align*}
(where $\la = 2\pi i\frac{c\al}{c\tau+d}$). Let's identify the three terms with $L[t]$, $- \la h[t]$ and $\la^2 \frac{\left<h, h\right>}{2}$ as (C1), (C2) and (C3), respectively. First let's focus on
\begin{align*}
\text{(C1)} = {} & e^{2\pi i \kappa \frac{c}{c\tau+d} \al^2}(c\tau+d)^{\nabla_u+2} S((v, z, \res_t \wp_1(t, \tau) e^{2\pi i\frac{c\al}{c\tau+d} h_{[1]}} L[t] u), \al, \tau) \\
= {} & e^{2\pi i \kappa \frac{c}{c\tau+d} \al^2}  (c\tau+d)^{\nabla_u+2} \sum_i P^{i+2}(\tau) S((v, z, e^{2\pi i\frac{c\al}{c\tau+d} h_{[1]}} L_{[i]}u), \al, \tau) \\
= {} & e^{2\pi i \kappa \frac{c}{c\tau+d} \al^2}\sum_i (c\tau+d)^{\nabla_u-i} P^{i+2}(\tau') S((v, z, e^{2\pi i\frac{c\al}{c\tau+d} h_{[1]}} L_{[i]}u), \al, \tau) \tag{C10}\label{eqn:C10} \\
&- 2\pi i c e^{2\pi i \kappa \frac{c}{c\tau+d} \al^2} (c\tau+d)^{\nabla_u+1} S((v, z, e^{2\pi i\frac{c\al}{c\tau+d} h_{[1]}} L_{[0]}u), \al, \tau). \tag{C1'}\label{eqn:C1'}
\end{align*}
Notice that
\begin{align*}
\text{\eqref{eqn:C1'}} = {} & - 2\pi i c \nabla_u e^{2\pi i \kappa \frac{c}{c\tau+d} \al^2} (c\tau+d)^{\nabla_u+1} S((v, z, e^{2\pi i\frac{c\al}{c\tau+d} h_{[1]}} u), \al, \tau) \\
= {} & - 2\pi i c \nabla_u e^{2\pi i \kappa \frac{c}{c\tau+d} \al^2} (c\tau+d) S((v, z, e^{2\pi i\frac{c\al}{c\tau+d} h_{[1]}}(c\tau+d)^{L_{[0]}} u), \al, \tau) \\
= {} & - 2\pi i c \nabla_u (c\tau+d) S'((v', z', u), \al', \tau').
\end{align*}
The main term \eqref{eqn:C10} continues as
\begin{align*}
\text{\eqref{eqn:C10}} = {} & e^{2\pi i \kappa \frac{c}{c\tau+d} \al^2}\sum_i P^{i+2}(\tau') S((v, z, e^{2\pi i\frac{c\al}{c\tau+d} h_{[1]}} (c\tau+d)^{L_{[0]}} L_{[i]}u), \al, \tau) \\
= {} & \sum_i P^{i+2}(\tau') S'((v', z', L_{[i]}u), \al', \tau') \\
= {} & S'((v', z', \res_t \wp_1(t, \tau') L[t] u), \al', \tau').
\end{align*}

%

Next is
\begin{align*}
\text{(C2)} = {} & -2\pi i\frac{c\al}{c\tau+d} e^{2\pi i \kappa \frac{c}{c\tau+d} \al^2}(c\tau+d)^{\nabla_u+2} S((v, z, \res_t \wp_1(t, \tau) e^{2\pi i\frac{c\al}{c\tau+d} h_{[1]}} h[t]  u), \al, \tau) \\
= {} & -2\pi i {c\al} e^{2\pi i \kappa \frac{c}{c\tau+d} \al^2}(c\tau+d)^{\nabla_u+1} \sum_i P^{i+1}(\tau) S((v, z, e^{2\pi i\frac{c\al}{c\tau+d} h_{[1]}} h_{[i]} u), \al, \tau) \\
= {} & -2\pi i {c\al} e^{2\pi i \kappa \frac{c}{c\tau+d} \al^2} \sum_i (c\tau+d)^{\nabla_u-i} P^{i+1}(\tau') S((v, z, e^{2\pi i\frac{c\al}{c\tau+d} h_{[1]}} h_{[i]} u), \al, \tau) \tag{C20}\label{eqn:C20} \\
&+ (2\pi i c)^2\al e^{2\pi i \kappa \frac{c}{c\tau+d} \al^2}(c\tau+d)^{\nabla_u} S((v, z, e^{2\pi i\frac{c\al}{c\tau+d} h_{[1]}} h_{[1]} u), \al, \tau). \tag{C2'}\label{eqn:C2'}
\end{align*}
Note that
\begin{align*}
\text{\eqref{eqn:C2'}} = {} & (2\pi i c)^2\al e^{2\pi i \kappa \frac{c}{c\tau+d} \al^2} (c\tau+d) S((v, z, e^{2\pi i\frac{c\al}{c\tau+d} h_{[1]}} (c\tau+d)^{L_{[0]}} h_{[1]} u), \al, \tau) \\
= {} & (2\pi i c)^2\al (c\tau+d) S'((v', z', h_{[1]} u), \al', \tau').
\end{align*}
The main term \eqref{eqn:C20} continues as
\begin{align*}
\text{\eqref{eqn:C20}} = {} & -2\pi i {c\al} e^{2\pi i \kappa \frac{c}{c\tau+d} \al^2} \sum_i (c\tau+d)^{\nabla_u-i} P^{i+1}(\tau') S((v, z, e^{2\pi i\frac{c\al}{c\tau+d} h_{[1]}} h_{[i]} u), \al, \tau) \\
= {} & -2\pi i {c\al} e^{2\pi i \kappa \frac{c}{c\tau+d} \al^2}  S((v, z, \res_t \wp_1(t, \tau') e^{2\pi i\frac{c\al}{c\tau+d} h_{[1]}} (c\tau+d)^{L_{[0]}} h[t] u), \al, \tau) \\
= {} & -2\pi i {c\al} S'((v', z', \res_t \wp_1(t, \tau') h[t] u), \al', \tau').
\end{align*}
Next is (C3) which simplifies as:
\begin{align*}
\text{(C3)} = {} & \frac{\left<h,h\right>}{2} (2\pi i\frac{c\al}{c\tau+d})^2 e^{2\pi i \kappa \frac{c}{c\tau+d} \al^2}(c\tau+d)^{\nabla_u+2} S((v, z, e^{2\pi i\frac{c\al}{c\tau+d} h_{[1]}} u), \al, \tau) \\
= {} & \frac{\left<h,h\right>}{2} (2\pi i {c\al})^2 e^{2\pi i \kappa \frac{c}{c\tau+d} \al^2} S((v, z, e^{2\pi i\frac{c\al}{c\tau+d} h_{[1]}} (c\tau+d)^{L_{[0]}} u), \al, \tau) \\
= {} & \frac{\left<h,h\right>}{2} (2\pi i {c\al})^2 S'((v', z', u), \al', \tau').
\end{align*}
Next we note that \eqref{eqn:I2} + (C20) = 0, (D1) + (C3) = 0 (assuming $\kappa = \left<h,h\right>/2$), (D2) + (C1') = 0, (D3) + (C2') = 0. The only remaining term is (C10), so we have proved \eqref{eq:DE.xform.q} at last.

\subsection{Verification of invariance under elliptic transformations}\label{app.ell}

Let $(m, n) \in \Z^2$ and $(\al', \tau') = (m, n) \cdot (\al, \tau)$ as in Section \ref{sec:modularity}. The partial derivatives in $\tau$ and $\al$ transform as follows:
\begin{align*}
\frac{\partial}{\partial \al'} &= \frac{\partial}{\partial \al} \\
\frac{\partial}{\partial \tau'} &= \frac{\partial}{\partial \tau} - m \frac{\partial}{\partial \al}.
\end{align*}

\subsubsection{Elliptic invariance of the $y$-equation}\label{sec:inv.DE.y.ell}

Let $S((v, z, u), \al, \tau)$ be a flat section of $\CC$ and let $S'((v', z', u), \al', \tau')$ be given by equation \eqref{eq:ell.xform}. We need to show \eqref{eq:DE.xform.y}. We compute
\begin{align*}
\frac{\partial}{\partial\al'}S'((v', z', u), \al', \tau')
&= \frac{\partial}{\partial\al}S'((v', z', u), \al', \tau') \\
= {} & \frac{\partial}{\partial\al}\left( e^{- 2\pi i \kappa (m^2 \tau + 2m \al) } S( ( v, z, e^{2\pi i m h_{[1]}} u), \al, \tau) \right) \\
= {} & -2\pi i (2m \kappa) e^{- 2\pi i \kappa (m^2 \tau + 2m \al) } S( ( v, z, e^{-2\pi i m h_{[1]}} u), \al, \tau) \tag{D}\label{eqn:y.D.ell} \\
& + e^{- 2\pi i \kappa (m^2 \tau + 2m \al) } \frac{\partial}{\partial\al} S( ( v, z, e^{-2\pi i m h_{[1]}} u), \al, \tau). \tag{C}\label{eqn:y.C.ell}
\end{align*}

We simplify \eqref{eqn:y.C.ell}, rewriting it using \eqref{eq:DE.y} as
\begin{align*}
\text{\eqref{eqn:y.C.ell}}
= {} & e^{- 2\pi i \kappa (m^2 \tau + 2m \al) } S((v, z, \res_t \wp_1(t, \tau) h[t] e^{-2\pi i m h_{[1]}} u), \al, \tau).
\end{align*}
To proceed further we commute $h[t]$ with $e^{h_{[1]}}$ using equation \eqref{eq:h[t].exp.comm} (with $\la = -2\pi i m$). We have
\begin{align*}
\text{\eqref{eqn:y.C.ell}} = {} & e^{- 2\pi i \kappa (m^2 \tau + 2m \al) } S((v, z, \res_t \wp_1(t, \tau) e^{-2\pi i m h_{[1]}} h[t] u), \al, \tau) \tag{C1}\label{eqn:y.C1.ell} \\
&+ 2\pi i{m \left<h, h\right>} e^{- 2\pi i \kappa (m^2 \tau + 2m \al) } S((v, z, \res_t \wp_1(t, \tau)e^{-2\pi i m h_{[1]}} u), \al, \tau). \tag{C2}\label{eqn:y.C2.ell}
\end{align*}
We continue
\begin{align*}
\text{\eqref{eqn:y.C2.ell}} = {} & 2\pi i{m \left<h, h\right>} e^{- 2\pi i \kappa (m^2 \tau + 2m \al) } S((v, z, e^{-2\pi i m h_{[1]}} u), \al, \tau) \\
= {} & 2\pi i{m \left<h, h\right>} S'((v', z', u), \al', \tau').
\end{align*}
Since $\tau' = \tau$ the transformation of $\wp_1(t, \tau)$ is trivial and we have
\begin{align*}
\text{\eqref{eqn:y.C1.ell}} = {} & e^{- 2\pi i \kappa (m^2 \tau + 2m \al) } S((v, z, \res_t \wp_1(t, \tau) e^{-2\pi i m h_{[1]}} h[t] u), \al, \tau) \\
= {} & e^{- 2\pi i \kappa (m^2 \tau + 2m \al) } S((v', z', \res_t \wp_1(t, \tau') h[t] u), \al', \tau')
\end{align*}
as required.

\subsubsection{Elliptic invariance of the $q$-equation}\label{sec:inv.DE.q.ell}

Let $S((v, z, u), \al, \tau)$ be a flat section of $\CC$ and let $S'((v', z', u), \al', \tau')$ be given by equation \eqref{eq:ell.xform}. We need to show \eqref{eq:DE.xform.q}. We compute
\begin{align*}
2\pi i\frac{\partial}{\partial\tau'}S'((v', z', u), \al', \tau')
= {} & 2\pi i \frac{\partial}{\partial\tau}S((v', z', u), \al', \tau') \tag{I1}\label{eqn:I1.ell} \\
&- 2\pi i m \frac{\partial}{\partial\al} S'((v', z', u), \al', \tau'). \tag{I2}\label{eqn:I2.ell}
\end{align*}
Taking advantage of the calculations done above in Section \ref{sec:inv.DE.y.ell}, we have
\begin{align*}
\text{\eqref{eqn:I2.ell}} = -2\pi i m S'((v', z', \res_t \wp_1(t, \tau') h[t] u), \tau', \al' ).
\end{align*}
We now analyse \eqref{eqn:I1.ell}.

We compute
\begin{align*}
\frac{\partial}{\partial\tau}S'((v', z', u), \al', \tau')
= {} & \frac{\partial}{\partial\tau}\left( e^{- 2\pi i \kappa (m^2 \tau + 2m \al) } S( ( v, z, e^{2\pi i m h_{[1]}} u), \al, \tau) \right) \\
= {} & -2\pi i (m^2 \kappa) e^{- 2\pi i \kappa (m^2 \tau + 2m \al) } S( ( v, z, e^{-2\pi i m h_{[1]}} u), \al, \tau) \tag{D}\label{eqn:q.D.ell} \\
& + e^{- 2\pi i \kappa (m^2 \tau + 2m \al) } \frac{\partial}{\partial\tau} S( ( v, z, e^{-2\pi i m h_{[1]}} u), \al, \tau). \tag{C}\label{eqn:q.C.ell}
\end{align*}

We simplify \eqref{eqn:y.C.ell}, rewriting it using \eqref{eq:DE.y} as
\begin{align*}
\text{\eqref{eqn:y.C.ell}}
= {} & e^{- 2\pi i \kappa (m^2 \tau + 2m \al) } S((v, z, \res_t \wp_1(t, \tau) h[t] e^{-2\pi i m h_{[1]}} u), \al, \tau).
\end{align*}
To proceed further we commute $L[t]$ with $e^{h_{[1]}}$ using equation \eqref{eq:L[t].exp.comm} (with $\la = -2\pi i m$). The term (C) equals
\begin{align*}
e^{- 2\pi i \kappa (m^2 \tau + 2m \al) } S((v, z, \res_t \wp_1(t, \tau) e^{-2\pi i m h_{[1]}} \left(L[t] + (2\pi i m) h[t] + (2\pi i m)^2\frac{\left<h,h\right>}{2} \right) u), \al, \tau).
\end{align*}
Let's identify the three terms with $L[t]$, $(2\pi i m) h[t]$ and $(2\pi i m)^2\frac{\left<h,h\right>}{2}$ as (C1), (C2) and (C3), respectively. First we have
\begin{align*}
\text{(C1)} = {} & e^{- 2\pi i \kappa (m^2 \tau + 2m \al) } S((v, z, \res_t \wp_1(t, \tau) e^{-2\pi i m h_{[1]}} L[t] u), \al, \tau) \\
= {} & e^{- 2\pi i \kappa (m^2 \tau + 2m \al) } S'((v', z', \res_t \wp_1(t, \tau') L[t] u), \al', \tau').
\end{align*}
Now (C2) cancels (I2) and (C3) cancels (D).

\bigskip

\small

\noindent
T.A.:\newline
Okinawa Institute of Science and Technology (OIST)\\
Onna, Kunigami District, Okinawa 904-0495, Japan\\
{\tt tomoyuki.arakawa@oist.jp}

\vspace{5 mm}

\noindent
J.vE.:\newline
Instituto de Matem\'{a}tica Pura e Aplicada (IMPA)\newline
Jardim Bot\^{a}nico, Rio de Janeiro, RJ 22.460-320, Brazil\newline
{\tt jethro@impa.br}

\vspace{5 mm}

\noindent
H.L:\newline
Okinawa Institute of Science and Technology (OIST)\\
Onna, Kunigami District, Okinawa 904-0495, Japan\\
{\tt hao-li@oist.jp}

\end{document}